\theoremstyle{plain}
\newtheorem{Thm}[equation]{Theorem}
\newtheorem*{Thm*}{Theorem}
\newtheorem{Prop}[equation]{Proposition}
\newtheorem{Lem}[equation]{Lemma}
\newtheorem{Rmk}[equation]{Remark}
\newtheorem{Def}[equation]{Definition}
\newtheorem{Conj}[equation]{Conjecture}
\numberwithin{equation}{section}
\newcommand{\Hom}{\operatorname{Hom}}
\newcommand{\Gal}{\operatorname{Gal}}
\newcommand{\GL}{\operatorname{GL}}
\newcommand{\PGL}{\operatorname{PGL}}
\newcommand{\Aut}{\operatorname{Aut}}
\newcommand{\Sp}{\operatorname{Sp}}
\newcommand{\Spt}{\widetilde{\operatorname{Sp}}}
\newcommand{\OO}{\operatorname{O}}
\newcommand{\Ps}{\operatorname{Ps}}
\newcommand{\supp}{\operatorname{supp}}
\newcommand{\Ut}{\widetilde{U}}
\renewcommand{\O}{\mathcal{O}}
\renewcommand{\OE}{\mathcal{O}_E}
\renewcommand{\P}{\mathcal{P}}
\newcommand{\PE}{\mathcal{P}_E}
\newcommand{\tr}{\operatorname{tr}}
\newcommand{\Span}{\operatorname{span}}
\newcommand{\ord}{\operatorname{ord}}
\newcommand{\End}{\operatorname{End}}
\newcommand{\C}{\mathbb C}
\newcommand{\Z}{\mathbb{Z}}
\newcommand{\V}{\mathcal{V}}
\newcommand{\U}{\mathfrak{u}}
\newcommand{\He}{\mathcal{H}}
\newcommand{\Id}{\operatorname{Id}}
\newcommand{\ie}{{\em i.e. }}
\newcommand{\la}{\langle}
\newcommand{\ra}{\rangle}
\title[the lattice model and the Howe duality conjecture]{On the
  lattice model of the Weil representation and the Howe duality
  conjecture}
\author{Shuichiro Takeda}
\address{Department of Mathematics, University of Missouri
202 Math Science Bldg 65211}
\begin{document}

\maketitle

\begin{abstract}
The lattice model of the Weil representation over non-archimedean
local field $F$ of odd residual characteristic has been known for
decades, and is used to prove the Howe duality conjecture for unramified dual
pairs when the residue characteristic of $F$ is odd. In this paper, we
will modify the lattice model of the Weil representation so that it is
defined independently of the residue characteristic. Although
to define the lattice model alone is not enough to prove the Howe
duality conjecture for even residual characteristic, we will propose a
couple of conjectural lemmas which imply
the Howe duality conjecture for unramified dual pairs for even
residual characteristic. Also we will
give a proof of those lemmas for certain cases, which allow us to
prove (a version of) the Howe duality conjecture for even residual
characteristic for a certain class of representations for the dual
pair $({\rm O}(2n), {\rm Sp}(2n))$, where ${\rm O}(2n)$ is unramified.
We hope this paper serves as a first step toward a proof of the Howe
duality conjecture for even residual characteristic.
\end{abstract}


\section{\bf Introduction}


Let $F$ be a non-archimedean local field of characteristic 0 and $W$
be a symplectic space over $F$ of dimension $2n$. For an additive
character $\psi$ on $F$, we let $\omega_{\psi}$ be the Weil
representation of the metaplectic cover $\Spt(W)$ of $\Sp(W)$. Let $E$
be either $F$ or a quadratic
extension of $F$. For $i=1, 2$, let $(\V_i, \la-,-\ra_i)$ be an
$\epsilon_i$-Hermitian space over $E$ where
$\epsilon_1\in\{\pm1\}$ and $\epsilon_1\epsilon_2=-1$, and let
$U(\V_i)$ be its isometry group. Assume the
pair $(U(\V_1), U(\V_2))$ forms an irreducible dual reductive pair in
$\Spt(W)$, so that $U(\V_1)\cdot U(\V_1)$ is a subgroup of
$\Sp(W)$. We call the restriction of $\omega_\psi$ to the preimage of
$U(\V_1)\cdot U(V_2)$ in $\Spt(W)$ also $\omega_\psi$. In this
introduction just for notational convenience we assume that both
$U(\V_1)$ and $U(V_2)$ split in $\Spt(W)$, so we may view
$\omega_\psi$ as a representation of $U(\V_1)\cdot U(\V_1)$, or even
as a representation of $U(\V_1)\times U(\V_2)$ via the multiplication
map $U(\V_1)\times U(\V_2)\rightarrow U(\V_1)\cdot U(\V_2)$.

For an irreducible admissible representation $\pi_1$ of $U(\V_1)$, the
maximum $\pi$-isotypic quotient of $\omega_\psi$ (as a representation
of $U(\V_1)\times U(V_2)$) has the form
\[
\pi\otimes\Theta_\psi(\pi)
\]
for some (possibly zero) smooth representation $\Theta_\psi(\pi)$ of
$U(\V_2)$. It is known that $\Theta_\psi(\pi)$ is of finite length and
hence is admissible. We let $\theta_\psi(\pi)$ be the maximal
semisimple quotient of $\Theta_\psi(\pi)$. It has been conjectured by
Howe that 
\begin{itemize}
\item $\theta_\psi(\pi)$ is irreducible whenever $\Theta_\psi(\pi)$ is
  non-zero.
\item the map $\pi\mapsto\theta_\psi(\pi)$ is injective on its domain.
\end{itemize}
This conjecture has been known as the Howe duality conjecture, and
proven by Howe and Waldspurger when the residue characteristic of
$F$ is odd more than two decades ago (\cite{MVW}, \cite{H90},
\cite{Wa}). 

The case for even residual characteristic is still widely open in
general. To the best of our knowledge, the only general result for
even residual characteristic is the quarter century old result by Kudla (\cite{Kudla})
in which he shows if $\pi$ is supercuspidal,  then $\Theta_\psi(\pi)$
is always irreducible. Since then, however, it seems no progress has been made,
possibly with the exceptions of the recently result by Li-Sun-Tian
(\cite{LST}) which shows that $\theta_\psi(\pi)$ is multiplicity free. Also
when the ranks for the groups are very small like 2 or 3 at most, one
can check the Howe duality by hand. 

The proof of the Howe duality conjecture for odd residual
characteristic is reproduced in detail by Waldspurger in
\cite{Wa}. The proof requires what is known as the generalized lattice
model of the Weil representation and is highly complex. However when
the dual pair $(U(\V_1), U(\V_2))$ is ``unramified'' in the sense of
\cite{MVW}, which is the same as saying both of the groups $U(\V_1)$
and $U(\V_2)$ are unramified \ie split over unramified extension of
$F$, one only needs the lattice
model of the Weil representation, which is much simpler than the
generalized lattice model, and hence the proof becomes significantly
simpler. This proof is reproduced in Chapter 5 of
\cite{MVW}. Also another version of the proof is given in \cite{H90}.

All of those proofs of the Howe duality conjecture for odd residual
characteristic require the (generalized) lattice model of the Weil
representation. One of the crucial obstructions to apply those proofs
to the case of even residual characteristic is the
unavailability of such model. 

In this paper, we modify the known lattice model so that it can be
defined even when the residue characteristic of $F$ is even. As we
will see, however, this is not enough to extend the proof in
\cite[Ch. 5] {MVW} to the case of even residual
characteristic, and various technical difficulties arise if one
simply tries to apply the arguments in \cite[Ch. 5]{MVW} to the case of even residual
characteristic. In particular, one has to prove a  couple of lemmas, which we
call ``the first and second key lemmas on the lattice model'', which are the
analogues of Theorem I.4 and Proposition I.5 in Chapter 5 of
\cite[p. 103]{MVW}, respectively.

Let us be more specific. For an unramified dual pair $(U(\V_1), U(\V_2))$,
there exists a self-dual lattice $L_i$ of $\V_i$ for $i=1, 2$, so that
$A:=L_1\otimes_{\OE} L_2$ is a self-dual lattice of
$W:=\V_1\otimes_E\V_2$ with respect to the additive character $\psi$. First we show 
\begin{Thm}
There exists the lattice model $(\omega_\psi, S_A)$ of the Weil
representation $\omega_\psi$ even when the residue characteristic of
$F$ is even.
\end{Thm}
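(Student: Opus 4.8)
The plan is to construct $S_A$ by mimicking the classical Heisenberg-group construction of the lattice model, but replacing the "sign character" (which in odd residual characteristic comes from a quadratic form on $A/2A$ or similar) with a suitable function that makes sense in residual characteristic $2$. Recall the Heisenberg group $H(W) = W \times F$ with multiplication $(w_1,t_1)(w_2,t_2) = (w_1+w_2, t_1+t_2+\tfrac12\la w_1,w_2\ra)$, and the character $\psi$ determines a central character. The lattice model is the space of functions $f$ on $H(W)$ that transform on the left under the subgroup $H(A) := A \times \tfrac12\ord(\psi)$-type object (more precisely the preimage of $A$ under a chosen splitting) by a character, and are compactly supported mod that subgroup. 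The point is that one needs a character $\chi_A$ of $H(A)$ lifting $\psi$ on the center; existence of such a character is exactly where the parity of the residual characteristic enters, because $\chi_A$ restricted to $A$ must satisfy $\chi_A(a_1)\chi_A(a_2) = \chi_A(a_1+a_2)\psi(\tfrac12\la a_1,a_2\ra)$, and $\tfrac12\la a_1,a_2\ra$ need not lie in $\mathcal{O}_F$ when $2$ is not a unit.

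First I would set up the Heisenberg group and fix the splitting data coming from the self-dual lattice $A = L_1 \otimes_{\mathcal{O}_E} L_2$, noting that self-duality of $A$ with respect to $\psi$ means $\{w : \psi(\la w, A\ra) = 1\} = A$. Second, I would construct the character $\chi_A$ of the preimage $H(A)$ of $A$ in $H(W)$: the obstruction is a well-defined alternating (or at worst symmetric, depending on normalization) pairing $A/A' \times A/A' \to \tfrac12\Z/\Z$ via $(a_1,a_2)\mapsto \psi(\tfrac12\la a_1,a_2\ra)$ where $A'$ is the relevant radical; the key observation is that since $A$ is self-dual, $\la A, A\ra \subseteq \mathcal{O}_F$ (via $\psi$), so $\tfrac12\la a_1,a_2\ra$ lands in $\tfrac12\mathcal{O}_F$, and the resulting biadditive form on $A/2A$ (or on $A$ modulo a finite-index sublattice) is \emph{alternating} because $\la a,a\ra = 0$ for the alternating form on $W$ — but in residual characteristic $2$ "alternating" does not imply "even", so one cannot automatically split it. The resolution: instead of demanding $\chi_A$ be a genuine character of $A$, one works with the metaplectic (genuine) setting and uses that $A$ is self-dual to produce a \emph{maximal isotropic} behavior — concretely, I expect one chooses $\chi_A$ to be any set-theoretic lift and absorbs the $2$-cocycle into the definition of the action, exactly as in the construction of the Weil representation itself.

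Third, once $\chi_A$ (or the genuine lift) is in hand, I would define $S_A$ as the space of functions $f : H(W) \to \C$ satisfying $f(hx) = \chi_A(h) f(x)$ for $h \in H(A)$ and $f$ supported on finitely many cosets $H(A)\backslash H(W)$, with $H(W)$ acting by right translation; this is a model of the Heisenberg representation with central character $\psi$ by the Stone–von Neumann theorem (valid over any local field), and one checks irreducibility and the correct central character directly. Fourth, I would intertwine the natural action of $\Spt(W)$: because $A$ is $\Sp$-equivariant only up to the metaplectic cocycle, the subgroup of $\Spt(W)$ preserving the pair $(A, \chi_A)$ acts on $S_A$ by the obvious formula, and one extends to all of $\Spt(W)$ via the Stone–von Neumann uniqueness, recovering $\omega_\psi$ on $S_A$. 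I expect the main obstacle to be precisely the construction and well-definedness of $\chi_A$ in residual characteristic $2$: one must check that self-duality of $A$ (which is available because the dual pair is unramified and $L_1, L_2$ are self-dual) is exactly strong enough to make the relevant $\tfrac12\Z/\Z$-valued form trivial after passing to the right quotient, or else to show that the genuine/metaplectic formulation sidesteps the triviality issue; verifying that the $A$-invariance condition then cuts $S_A$ down to the right "size" (so that Stone–von Neumann applies) is the technical heart, and is where the modification of the classical definition — replacing $A/2A$-quadratic-form data with a direct use of self-duality — really does the work.
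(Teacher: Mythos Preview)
You correctly locate the obstruction: with the standard Heisenberg group $H(W)$ defined via $\tfrac{1}{2}\la-,-\ra$, the subgroup $A\times F$ need not carry a character extending $\psi$ when $2$ is not a unit, because the relevant $2$-cocycle $(a_1,a_2)\mapsto\psi(\tfrac{1}{2}\la a_1,a_2\ra)$ on $A$ is nontrivial. But your proposed resolution --- ``choose any set-theoretic lift and absorb the $2$-cocycle into the metaplectic action'' --- does not work as stated. The metaplectic cover sits over $\Sp(W)$, not over the Heisenberg group, so there is no cover into which to absorb a Heisenberg-side cocycle; and if $\chi_A$ is not a genuine character, the induced space is not a representation of $H(W)$ at all, so Stone--von Neumann does not apply. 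Your appeal to self-duality alone is also insufficient: self-duality gives $\la A,A\ra\subseteq\P^r$, but this only bounds $\tfrac{1}{2}\la A,A\ra$ by $\P^{r-e}$, which is not in the conductor.

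The paper's resolution is to introduce an extra piece of structure you do not mention: a \emph{polarization} $W=W^+\oplus W^-$ compatible with $A$. One then replaces $\tfrac{1}{2}\la-,-\ra$ by the bilinear (not alternating) form $\beta(w_1,w_2)=\la w_1^+,w_2^-\ra$, which satisfies $\beta(w_1,w_2)-\beta(w_2,w_1)=\la w_1,w_2\ra$ and contains no $\tfrac{1}{2}$. The resulting Heisenberg group $H_\beta(W)$ is isomorphic to $H(W)$, but now $\beta(A,A)\subseteq\P^r$ directly (by self-duality plus compatibility with the polarization), so $(a,z)\mapsto\psi(z)$ is already a genuine character of $H_\beta(A)$ and the compact induction goes through. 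The price is that $\Sp(W)$ no longer acts on $H_\beta(W)$ in the obvious way; one must pass through Weil's pseudosymplectic group $\Ps(W)$ and the splitting $g\mapsto(g,\alpha_g)$ to recover the $\Sp(W)$-action, which introduces the correction terms $\alpha_g$ that pervade the rest of the paper. Equivalently, in your framework, the missing character is $\chi_A(a,z)=\psi(z+\tfrac{1}{2}\la a^+,a^-\ra)$ --- but writing it this way already presupposes the polarization, which is the genuine new input.
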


Here the space $S:=S_A$ of the lattice model is a certain set of smooth
compactly supported functions $f:W\times F\rightarrow\C$ on $W\times F$. For each
sublattice $L\subseteq L_1$ of the self-dual lattice $L_1$, we define
$S_L$ to be the subspace of
$S_A$ consisting of functions whose support is in
$(L^\perp\otimes_{\OE}L_2)\times F$, where $L^\perp$ is the dual lattice
of $L$.

The first key lemma roughly says the following:
For any sublattice $L\subset L_1$, we have the equality
\[
\omega_\psi(\He_2)S_L=S^{J_1(L)}
\]
where $\He_2$ is the spherical Hecke algebra of $U(\V_2)$, and $S^{J_1(L)}$ is the set of
functions invariant by the subgroup $J_1(L)$ of $U(\V_1)$ which
is the kernel of the reduction map
$U(L^\perp)\rightarrow\Aut(L^\perp/L)$, where $L^\perp$ is the dual
lattice of $L$. (Strictly speaking we need
to modify $J_1(L)$ by a certain subgroup $J_1(L)^\circ$, which will be
defined later in the paper.) The same should hold by switching
the roles of $\V_1$ and $\V_2$. 

The second key lemma is even more of technical nature: First we define
$H_1(L)$ to be the kernel of the reduction map $U(L_1)\rightarrow
\Aut(L_1/L)$. Then it turns out that for each $w\in W$, there is a
character $\psi_1^w$ which naturally arrises in the theory of lattice
model. Then the second key lemma says if $\psi_1^w=\psi^{w'}$ for
$w,w'\in W$ and both $w$ and $w'$ satisfy a certainly maximality
condition, then $w$ and $w'$ have to be in the same orbit under the
action of the maximal open compact subgroup of $U(\V_1)$ on $W/A$. And
the same should hold by switching the roles of $\V_1$ and $\V_2$.

With those two conjectural lemmas, we can show the following version
of the Howe duality principle: 
\begin{Thm}
Modulo the above two conjectural lemmas, the Howe duality conjecture holds
independently of the residue characteristic, in the sense that if
$\pi$ is an irreducible admissible representation of $U(\V_1)$ and
$\Theta_\psi(\pi)\neq 0$, then $\Theta_\psi(\pi)$ has a unique
irreducible non-zero quotient.
\end{Thm}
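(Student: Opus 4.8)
The plan is to follow the architecture of the proof in \cite[Ch.~5]{MVW} for odd residual characteristic, but with the classical lattice model replaced by the model $(\omega_\psi, S_A)$ produced by the first Theorem, and with the two places where \cite{MVW} invokes Theorem~I.4 and Proposition~I.5 replaced by the first and second key lemmas. Concretely, let $\pi$ be an irreducible admissible representation of $U(\V_1)$ with $\Theta_\psi(\pi)\neq 0$. One reduces, as usual, to showing that $\Hom_{U(\V_1)}(\omega_\psi, \pi)$, which is naturally a module over the Hecke algebra $\He_2$ of $U(\V_2)$, is generated over $\He_2$ by the image of a single suitable vector; equivalently, that the $\He_2$-module $\Theta_\psi(\pi)$ has a unique irreducible quotient. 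Since $\pi$ is admissible, there is a sublattice $L\subseteq L_1$ small enough that $\pi^{J_1(L)^\circ}\neq 0$; by a standard argument the natural map $\omega_\psi(\He_2)S_L \to \Hom(\pi^{J_1(L)^\circ},\C)\otimes\Theta_\psi(\pi)$ is surjective, so it suffices to understand the structure of $\omega_\psi(\He_2)S_L$ as an $\He_2$-module.

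The first key lemma identifies $\omega_\psi(\He_2)S_L$ with $S^{J_1(L)}$ (modulo the correction by $J_1(L)^\circ$), and symmetrically identifies the analogous object on the $\V_2$-side. The next step is to feed these identifications into the combinatorial core of the \cite{MVW} argument: one filters $S^{J_1(L)}$ by supports — using the subspaces $S_{L'}$ for $L\subseteq L'\subseteq L_1$ — and analyzes the successive quotients. Here the character $\psi_1^w$ attached to $w\in W$ enters: the associated graded pieces are indexed by orbits of $w\in W/A$ under the maximal compact of $U(\V_1)$, and the second key lemma is exactly what guarantees that the map $w\mapsto\psi_1^w$ separates these orbits, so that the bimodule decomposes cleanly and the $U(\V_1)$- and $U(\V_2)$-actions on each graded piece are ``linked'' in the way needed. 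One then runs the usual double-commutant/Jacquet-module bookkeeping: the maximal $\pi$-isotypic quotient of each graded piece is either zero or irreducible as a $U(\V_2)$-representation, and comparing top pieces forces $\theta_\psi(\pi)$ to be irreducible. Finally, the injectivity statement — that $\pi\mapsto\theta_\psi(\pi)$ is one-to-one — follows by symmetry: applying the same analysis with the roles of $\V_1$ and $\V_2$ exchanged recovers $\pi$ from $\theta_\psi(\pi)$ via $\theta_\psi(\theta_\psi(\pi))\supseteq\pi$, exactly as in \cite[Ch.~5]{MVW}.

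I expect the main obstacle to be precisely the bookkeeping around the correction subgroups $J_1(L)^\circ$ and $H_1(L)$, which have no counterpart in the odd residual characteristic case. In \cite{MVW} the reduction maps $U(L^\perp)\to\Aut(L^\perp/L)$ behave well because $2$ is a unit, and the invariants under $J_1(L)$ exhaust the relevant space on the nose; for even residual characteristic one must check that passing to $J_1(L)^\circ$-invariants does not destroy the $\He_2$-module structure, that the filtration by the $S_{L'}$ is still stable under the relevant operators, and that the identification of graded pieces with orbit spaces survives. A related subtlety is that the Weil representation of the metaplectic group does not obviously descend through these finite quotients in even residual characteristic, so one has to be careful that the ``linked'' action claimed above is genuinely the restriction of $\omega_\psi$ and not merely a twist of it; keeping track of the splitting data and the characters $\psi_1^w$ through the filtration is where the real work lies, and it is here that the precise formulations of the two key lemmas — including the maximality condition on $w$ — have to be invoked verbatim rather than in the rough form stated in the introduction.
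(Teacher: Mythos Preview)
Your broad strategy --- follow the architecture of \cite[Ch.~5]{MVW} with the new lattice model, inserting the two key lemmas where \cite{MVW} uses Theorem~I.4 and Proposition~I.5 --- is precisely what the paper does, and indeed the paper's own proof of this theorem consists of the single sentence ``trace the proof in \cite[p.~103--106]{MVW}'', with details given later for the special case $L=\varpi^kL_1$.

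However, your description of the internal mechanism is off in a way that matters. The \cite{MVW} argument does \emph{not} filter $S^{J_1(L)}$ by the subspaces $S_{L'}$ for $L\subseteq L'\subseteq L_1$ and analyze graded pieces indexed by orbits. The actual mechanism is more direct, and the step you are missing is the \emph{conductor trick}: one chooses $L$ not merely small enough that $V_\pi^{J_1(L)}\neq 0$, but \emph{maximal} among sublattices of $L_1$ with this property. This maximality of $L$ is what forces, for any $w\in B(L)$ with $p(s_w)\neq 0$, the condition $w(\varpi^{-r_2}L_2)+L_1=L^\perp$ to hold automatically (otherwise $w\in B(L')$ for some strictly larger $L'$, giving $V_\pi^{J_1(L')}\neq 0$, a contradiction). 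That condition is exactly the hypothesis of the second key lemma, so you cannot get to the point of invoking it via a support filtration alone --- you need the conductor choice. Once this is in place, one fixes such a $w$, shows that $V_\pi^{J_1(L)}$ decomposes as $\bigoplus_{\psi_1}V_\pi[H_1(L),\psi_1]$ over the relevant characters, and then proves the key bi-module identity
\[
\overline{V}_\pi\otimes\overline{V}_{\sigma}
=\pi(\overline{\He}_1)p(s_w)
=\sigma(\overline{\He}_2)p(s_w)
\]
(where bars denote $(H,\psi^w)$-isotypic components) for any quotient $\sigma$ of $\Theta_\psi(\pi)$; the second key lemma is what collapses all candidate $w$'s to a single $K$-orbit, hence a single generator $p(s_w)$. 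The double-commutant lemma you mention then finishes exactly as you say. Finally, note that the theorem as stated claims only uniqueness of the irreducible quotient, not injectivity of $\pi\mapsto\theta_\psi(\pi)$; your symmetry argument for injectivity is not part of what is being asserted here.
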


As we already mentioned, we are not able to prove the two key lemmas
in full generality. However, if $L$ is of the form $L=\varpi^k L_1$,
where $\varpi$ is a uniformizer of $F$ and $k$ is an integer with
$k\geq 1+e$, where $e$ is the ramification index of $2$ in $F$, then
the two key lemmas can be proven to the extent necessary to prove

\begin{Thm}[Main Theorem]
Let $\pi$ be an irreducible admissible representation of ${\rm
  O}(\V)$, where ${\rm O}(\V)$ is quasi-split and split over an
unramified extension and $\dim\V=2n$. Let $L_1\subseteq\V$ be a
self-dual lattice. Assume 
\begin{itemize}
\item $\pi^{J_1(L)}= 0$ for all $L$ with $L\supsetneq \varpi^{k}L_1$;
\item $\pi^{J_1(L)}\neq 0$ for $L=\varpi^{k}L_1$ for some $k$ with $k>1+e$.
\end{itemize}
Then for the dual pair $({\rm O}(2n), {\rm Sp}(2n))$, if
$\Theta_\psi(\pi)\neq 0$, it has a unique non-zero irreducible quotient.
\end{Thm}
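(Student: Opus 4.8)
The plan is to deduce the Main Theorem from Theorem 1.4 (the ``modulo two conjectural lemmas'' version of Howe duality) by verifying that, for representations $\pi$ satisfying the two displayed hypotheses, the two key lemmas can be proved in the only form that is actually invoked. The point is that the proof of Theorem 1.4 does not need the key lemmas for \emph{all} sublattices $L$; tracing through the argument adapted from \cite[Ch.~5]{MVW}, one sees that for a given $\pi$ with $\pi^{J_1(L)}\ne 0$ the relevant $L$'s are controlled by the support of the vectors in $\pi$ under the lattice-model filtration $\{S_L\}$. The first hypothesis says this support is concentrated at the single lattice $L=\varpi^k L_1$ (it vanishes for all strictly larger $L$), and the second says it is nonzero there. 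So the strategy is: (i) isolate precisely which instances of the first and second key lemmas enter the proof of Theorem 1.4 when specialized to such a $\pi$; (ii) prove those instances for $L=\varpi^k L_1$ with $k\ge 1+e$; (iii) run the argument of Theorem 1.4 verbatim with the general key lemmas replaced by these special cases, obtaining the unique irreducible quotient.

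The substance is step (ii). For $L=\varpi^k L_1$ the lattice geometry simplifies dramatically: $L^\perp=\varpi^{-k}L_1$, the quotient $L^\perp/L$ is a free $\OF/\varpi^{2k}$-module, and the groups $H_1(L)$, $J_1(L)$ become principal congruence-type subgroups of the hyperspecial maximal compact $U(L_1)$. First I would establish the first key lemma in the form $\omega_\psi(\He_2)S_{\varpi^k L_1}=S^{J_1(\varpi^k L_1)}$ (with the $J_1(L)^\circ$ correction): the inclusion $\subseteq$ is formal from the commutation of the $\He_2$-action with $J_1(L)$, and for $\supseteq$ one uses that, because $k\ge 1+e$, exponentiation/logarithm is available on the relevant congruence subgroups, so $J_1(\varpi^k L_1)$ is a nice pro-$p$ group and one can produce explicit Hecke operators moving any $J_1(L)$-fixed function into $\omega_\psi(\He_2)S_L$ by an averaging argument over $U(L_2)/H_2$-cosets, exactly as in \cite[Ch.~5]{MVW} but now with the even-residue-characteristic lattice model of Theorem~1.2 in place of the classical one. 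Second, for the second key lemma, I would analyze the characters $\psi_1^w$: for $w\in L^\perp\otimes L_2$ modulo $A$ the character $\psi_1^w$ on $H_1(L)$ is, after the $k\ge 1+e$ normalization, essentially linear in $w\bmod A$, so the equality $\psi_1^w=\psi_1^{w'}$ forces $w\equiv w'$ modulo the radical of the relevant pairing, and the maximality condition pins down the orbit under the maximal compact $U(L_1)$ acting on $W/A$ — this is a finite, essentially linear-algebra computation over $\OF/\varpi^{2k}$ once the congruence subgroup behaves like its Lie algebra.

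I expect step (ii), and within it the $\supseteq$ direction of the first key lemma, to be the main obstacle: even with $k\ge 1+e$ ensuring good behavior of the $p$-adic exponential, one must check that the modified lattice model $S_A$ of Theorem~1.2 interacts with the spherical Hecke algebra $\He_2$ the way the classical model does — in particular that the intertwining integrals defining $\omega_\psi(\He_2)$ converge and land in the right support stratum $S^{J_1(L)}$ — and this is exactly where the absence of the clean odd-residue-characteristic structure bites. The condition $k>1+e$ in the second hypothesis (versus $k\ge 1+e$ elsewhere) is presumably what guarantees a strict step in the filtration so that the ``unique irreducible quotient'' output of Theorem~1.4 is genuinely nonzero rather than vacuous; I would make that margin explicit at the end. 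Finally, the specialization to $({\rm O}(2n),{\rm Sp}(2n))$ with ${\rm O}(\V)$ quasi-split enters only to guarantee the existence of the self-dual lattice $L_1$ and the splitting of $U(\V_i)$ in $\Spt(W)$ assumed throughout, so no extra work is needed there beyond citing the unramified-dual-pair setup of the introduction.
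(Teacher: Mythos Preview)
Your high-level plan --- isolate which instances of the two key lemmas are actually used for a $\pi$ whose conductor is $\varpi^kL_1$, prove those instances, then rerun the MVW argument --- matches the paper's structure. But several of the concrete mechanisms you propose do not work, and you miss the main structural subtlety.

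\textbf{First key lemma.} The $\supseteq$ direction is not obtained by averaging over $U(L_2)/H_2$-cosets or by exponential/logarithm arguments on $J_1(L)$. The paper introduces a filtration $S=\bigcup_{t\ge0}S_t$ (by the condition $\varpi^{-r_1}w(2\varpi L_1\cap L)\subseteq\varpi^{-t}L_2$) and shows $S_t^{J^\circ}\subseteq\omega(\He_2)S_{t-1}^{J^\circ}$ by producing an explicit element $u=\varpi\Id_X\oplus\Id_{\V_2^\circ}\oplus\varpi^{-1}\Id_Y\in U(\V_2)$ that pushes the filtration down. The key input is a congruence $\la\varpi^{t-r_1}w(x_i),\varpi^{t-r_1}w(x_j)\ra_2\in\PE^{r_2+t+1}$ extracted from the nonvanishing $s[w]\neq0$ via Cayley transforms $u_{x,y}$, followed by the approximation lemma for lattices (which is exactly where one needs $\V_2$ symplectic to drop the $t_i\ge1+e$ hypothesis). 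When $L\subseteq2\varpi L_1$ one has $S_0=S_L$ and the induction terminates.

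\textbf{Second key lemma.} The character $\psi_1^w(h)=\psi(-\beta(h^{-1}w-w,w)+\alpha^h(w))$ is not linear in $w\bmod A$; it is quadratic. The paper again uses Cayley transforms $u_{x,y}\in H_1(L)$ with carefully chosen $x,y$ to convert $\psi_1^w=\psi_1^{w'}$ into congruences $\la z_i,z_j\ra_2\equiv\la z_i',z_j'\ra_2\bmod\P^{r_2+t_i}$ on the vectors $z_i=w(\varpi^{t_i-r_1}e_i)$, then invokes the lattice approximation lemma to produce $z_i''$ with exactly the right inner products, and finally Witt's extension theorem for self-dual lattices to find $k\in K_2$.

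\textbf{The point you miss entirely.} The MVW argument needs the key lemmas on \emph{both} sides: for $L\subseteq L_1$ and for $M=M_w=(w(\varpi^{-r_1}L_1)+L_2)^\perp\subseteq L_2$. Two things go wrong on the $M$-side. First, the first key lemma $\omega(\He_1)S_M=S^{J_2(M)^\circ}$ cannot be proved by the same method, because now $\V_1$ is symmetric, not symplectic, and the lattice approximation lemma needs the extra $e$. The paper replaces this use of the first key lemma with a direct argument (if $H_2(M)$ acts on $s_{w'}$ via a character then $w'\in B(M)$), which works because $M=\varpi^kL_2$. Second, the equality $M=\varpi^kL_2$ --- without which neither the second key lemma for $M$ nor the replacement argument just mentioned is available --- holds precisely because $\dim\V_1=\dim\V_2=2n$. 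So your claim that the restriction to the equal-rank pair $(\OO(2n),\Sp(2n))$ ``enters only to guarantee the existence of the self-dual lattice and the splitting'' is wrong: the equal-rank hypothesis is what forces $M$ into the special form $\varpi^kL_2$, and the symplecticity of $\V_2$ is what makes the first key lemma for $L$ go through without the $1+e$ obstruction.

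Finally, your reading of $k>1+e$ versus $k\ge1+e$ as ensuring a strict filtration step is not what is going on; the detailed statement and proof in the body of the paper use $k\ge1+e$ throughout.
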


In the above theorem we need to assume that the symplectic group has
the same rank as the orthogonal group. All the conditions we need to
impose on this theorem are all of technical nature.

Also we consider the lifting to smaller rank symplectic
groups and prove

\begin{Thm}
Let $\pi$ be an irreducible admissible representation of ${\rm
  O}(\V)$, where ${\rm O}(\V)$ is
quasi-split and split over an unramified extension, with $\dim\V=2m$
or $\dim\V=2m+1$. Assume $\pi$ is
such that
\[
V_\pi^{J_1(L)}=0
\]
 for all $L\supsetneq 2\varpi L_1=\varpi^{1+e}L_1$ for a self-dual
 lattice $L_1\subseteq\V$. Then for
the dual pair $({\rm O}(\V), {\rm Sp}(2n))$, (or  $({\rm O}(\V), {\rm
  \widetilde{Sp}}(2n))$) with $m>n$, we always have
$\Theta_\psi(\pi)=0$.
\end{Thm}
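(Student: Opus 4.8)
The plan is to reduce to the vanishing of $\Hom_{\OO(\V)}(\omega_\psi,\pi)$ and to analyse this space through the $\OO(\V)$-orbit stratification of the Weil representation, using $m>n$ to force every stabilizer that appears to be large, and then using the hypothesis on $\pi$ to rule out the resulting equivariant maps. Write $\V_2$ for the $2n$-dimensional symplectic space of the pair, $W=\V\otimes_F\V_2$, and $A=L_1\otimes L_2$ with $L_2\subseteq\V_2$ self-dual. First, $\Theta_\psi(\pi)\neq 0$ if and only if $\Hom_{\OO(\V)}(\omega_\psi,\pi)\neq 0$ (this $\Hom$-space is automatically a smooth $\Sp(2n)$-, resp.\ $\Spt(2n)$-module), so it is enough to show it vanishes. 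Since $\dim\V_2=2n$ is even, $\OO(\V)$ splits in $\Spt(W)$ and acts on $\omega_\psi$ without cocycle twist; fixing a complete polarization $\V_2=\V_2^+\oplus\V_2^-$ one realizes $\omega_\psi$ on a mixed model with underlying space $\mathcal S(\V^n)$ (after identifying $\V\otimes\V_2^+\cong\V^n$), on which $\OO(\V)$ acts by the tautological permutation action $\phi(v_1,\dots,v_n)\mapsto\phi(g^{-1}v_1,\dots,g^{-1}v_n)$. (One could argue entirely inside $S_A$; all that is used is the triviality of the twist on the $\OO(\V)$-side.)

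Now filter $\mathcal S(\V^n)$ by the closed $\OO(\V)$-submodules attached to the $\OO(\V)$-orbit stratification of $\V^n$, with graded pieces $\mathcal S(\mathcal O_w)$. If $\Hom_{\OO(\V)}(\mathcal S(\V^n),\pi)\neq0$ then, $\pi$ being admissible, $\Hom_{\OO(\V)}(\mathcal S(\mathcal O_w),\pi)\neq0$ for some orbit $\mathcal O_w$, $w=(v_1,\dots,v_n)$; by Frobenius reciprocity $\pi$ then carries a nonzero functional fixed by an open compact subgroup of the stabilizer $\OO(\V)_w$. Here $m>n$ enters: putting $\V_0=\Span(v_1,\dots,v_n)$ we get $\dim\V_0\le n\le m-1$, hence $\dim\V_0^\perp\ge\dim\V-n\ge m+1\ge 3$, so that $\OO(\V)_w$ contains the (noncompact) group $\OO(\V_0^\perp)$ of isometries fixing $\V_0$ pointwise, and is strictly larger — with a unipotent radical — when $\V_0$ is degenerate. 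The remaining task is to exploit this largeness: choosing the orbit representative $w$ in good position relative to $L_1$, which is precisely what the second key lemma provides (it recovers the $\OO(L_1)$-orbit of $w$ in $W/A$ from the character $\psi_1^w$), one writes $L_1=(L_1\cap\V_0)\perp(L_1\cap\V_0^\perp)$ as an orthogonal sum of unimodular lattices and produces inside $\OO(\V)_w$ (or inside the subgroup fixing the relevant functional) an open compact subgroup containing $J_1(L)^\circ$ for a lattice of the shape $L=\varpi^{1+e}(L_1\cap\V_0)\perp(L_1\cap\V_0^\perp)$. Since the $\V_0^\perp$-component of such an $L$ is not shrunk at all and $\dim\V_0^\perp\ge 1$, one has $L\supsetneq\varpi^{1+e}L_1=2\varpi L_1$; hence $V_\pi^{J_1(L)}\neq0$ for a lattice $L\supsetneq 2\varpi L_1$, contradicting the hypothesis. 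Therefore $\Theta_\psi(\pi)=0$, and the case $(\OO(\V),\Spt(2n))$ is identical because the metaplectic cover is invisible from the $\OO(\V)$-side.

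The serious work is concentrated in the last construction. One has to show that the orbit representatives which actually support a nonzero map to $\pi$ can be normalised relative to $L_1$ with a level bound that is uniform in the orbit, and one has to treat the degenerate and ``open'' strata — where $\OO(\V)_w$ carries a unipotent radical and the datum induced up involves a nontrivial character whose conductor must be controlled by an estimate of the same shape. This is exactly the kind of computation the modified lattice model, together with the second key lemma, is designed to carry out, and it is where the precise constant $2\varpi L_1=\varpi^{1+e}L_1$ is forced: one loses a factor $\varpi$ in passing between a lattice and its dual, and a factor $2$ (that is, $\varpi^{e}$ up to a unit) because the residue characteristic is even. I would also expect to invoke the tower/persistence property of $\Theta_\psi$ and the finiteness statements recalled in the introduction in order to keep the filtration argument legitimate, for instance to reduce to finitely many relevant strata.
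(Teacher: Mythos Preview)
Your approach via the Schr\"odinger model $\mathcal S(\V^n)$ and the $\OO(\V)$-orbit stratification is quite different from the paper's, and the step you defer to the end contains a genuine gap rather than a routine normalisation.

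The paper's argument stays entirely in the lattice model $S_A$ and is short. Take the conductor $L$ of $\pi$; the hypothesis gives $L\subseteq 2\varpi L_1$. Suppose $\Theta_\psi(\pi)\neq 0$. Because $L\subseteq 2\varpi L_1$, the special case of the first key lemma proved in the paper applies and yields $S^{J_1(L)}=\omega(\He_2)S_L$. Hence the surjection $p:S\to V_\pi\otimes\Theta_\psi(\pi)$, restricted to $S^{J_1(L)}$, is onto $V_\pi^{J_1(L)}\otimes\Theta_\psi(\pi)\neq 0$, so $p(s_w)\neq 0$ for some $w\in B(L)$. The maximality argument (the lemma decomposing $V_\pi^{J_1(L)}$ according to $\Psi'(L)$) forces $w(\varpi^{-r_2}L_2)+L_1=L^\perp$. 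But, exactly as in the opening of the proof of the $K_2$-orbit proposition, this equality with $L\subseteq 2\varpi L_1$ makes the reductions of $z_i=w(\varpi^{t_i-r_1}e_i)$ in $L_2/\varpi L_2$ linearly independent for a \emph{full} basis $\{e_i\}$ of $L_1$; that needs $\dim\V_2\ge\dim\V$, contradicting $2n<\dim\V$. The second key lemma is never invoked.

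The gap in your outline is the passage from the algebraic stabiliser $\OO(\V)_w$ to the congruence group $J_1(L)$. Frobenius reciprocity on a stratum $\mathcal S(\mathcal O_w)$ yields a functional on $\pi$ equivariant under $\OO(\V)_w$, and $m>n$ makes this group large in the algebraic sense (it contains $\OO(\V_0^\perp)$ with $\dim\V_0^\perp\ge 1$). But the hypothesis of the theorem is arithmetic: vanishing of $J_1(L)$-invariants for $L\supsetneq 2\varpi L_1$. Your proposed bridge, exhibiting $J_1(L)^\circ$ inside an open compact of the stabiliser for $L=\varpi^{1+e}(L_1\cap\V_0)\perp(L_1\cap\V_0^\perp)$, does not work: elements of $J_1(L)$ are merely close to the identity on $L_1$ and do not fix $\V_0$ pointwise, so $J_1(L)\not\subset\OO(\V)_w$; conversely, an open compact of the proper closed subgroup $\OO(\V)_w$ is never open in $\OO(\V)$ and so cannot contain any $J_1(L)$. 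Nor does the second key lemma help here: it is a statement about $K_i$-orbits on $W/A$ inside the lattice model, used in the Howe-duality part of the paper, and it says nothing about normalising orbit representatives in $\V^n$. What you flag as ``serious work'' is in fact a structural mismatch between the orbit filtration and the level filtration, not a computation the paper's machinery is set up to perform.
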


The main structure of this paper is the following: In the next section (Section 2),
we will go over
the formulation  of the Heisenberg group and the Weil representation. Our
formulation differs from the modern convention, but closely follows
the one in the original paper by Weil (\cite{Weil}). In Section 3, we
will define the lattice model of the Weil representation, which works
independently of the residual characteristic, and make explicit the action of
the metaplectic group on this model. In Section 4, we extend some of
the lemmas about lattices proven in \cite[Ch. 5, II]{MVW} to the case
of even residual characteristic, and in Section 5 we will go over the
notion of unramified dual pair. In Section 6, we formulate the two
(conjectural) key lemmas. In Section 7 and 8 we prove the first and
second key lemmas for the special type of lattices mentioned
above namely those $L$ with $L\subseteq 2\varpi L_1=\varpi^{1+e}L_1$. Then finally in
Section 9, we give our proof of the main
theorem, and in Section 10 we will prove the last theorem mentioned above.

\quad\\
\begin{center} {\bf Notations} \end{center}
Throughout the paper, $F$ will be a non-archimedean local field of
characteristic $0$, and $E$ will be either $F$ or an unramified quadratic extension of
$F$.  We let $\O$ (resp. $\OE$) be the ring of integers of $F$
(resp. $E$). We let $\varpi$ be a chosen uniformizer of $F$, and
choose our uniformizer of $E$ to be $\varpi$ as well.
Also we write $\P^n=\varpi^n\O$ and
$\PE^n=\varpi^n\OE$ fo each integer $n$. We fix an additive
character $\psi$ of $F$, and $r$ be the exponential conductor of $\psi
$ so that $\psi$ is trivial on $\P^r$. Also we let 
\[
e=\ord_F(2),
\]
so $2=\varpi^e\times\text{unit}$, and in particular if the residue
characteristic of $F$ is even, $e$ is the ramification index of $2$ in
$F$.

For each $c\in E$, we denote $\bar{c}=c$ if $E=F$ and
$\bar{c}=\tau(c)$ if $E\neq F$ where $\tau$ is the non-trivial element in
$\Gal(E/F)$.

For $\epsilon\in\{\pm 1\}$, by an $\epsilon$-Hermitian space $(\V,
\la-,-\ra)$ over $E$,
we mean a finite dimensional vector space $\V$ over $E$ equipped with
a map $\la-,-\ra:\V\times\V\rightarrow E$ which is linear on the
first argument and antilinear on the second with the property that
$\la v_1, v_2\ra=\epsilon\overline{\la v_2, v_1\ra}$. We always assume
our $\epsilon$-Hermitian space is nondegenerate. We let $U(\V)$ be the
group of isometries of $(\V, \la-,-\ra)$. By a lattice $L$ of $\V$, we
mean a free $\OE$-module $L$ whose rank is equal to $\dim_E\V$.
\quad\\

\begin{center} {\bf Acknowledgements} \end{center}

The author would like to thanks Roger Howe for sending him the
unpublished manuscripts on the theta correspondence (\cite{H_un}). He
is partially supported by NSF grant DMS-1215419. Also a part of the
paper was written when the author was visiting the Fields
Institute. He would like to thank their hospitality. 


\section{\bf The Heisenberg group and the Weil representation}


Let $(W,\la-,-\ra)$
be a symplectic space over $F$ of dimension $2n$. In this section, we
define the Heisenberg group and the Weil representation for $W$, but
in order to construct the lattice model of the Weil representation
that works for the case of even residual characteristic, we need to
adapt a different convention. The Heisenberg group $H(W)$
associated with $W$ is usually defined to be $H(W)=W\times F$ as a
set with the group structure given by
\[
(w_1,z_1)\cdot (w_2,z_2)=(w_1+w_2, z_1+z_2+\frac{1}{2}\la w_1,w_2\ra)
\]
for $(w_i, z_i)\in H(W)$, and then one can see that each $g\in\Sp(W)$ acts on $H(W)$ by
$g\cdot(w,z)=(gw,z)$. However the
$\frac{1}{2}$ appearing here makes it impossible to define the lattice model of the
Weil representation when the residue characteristic of $F$ is even. To
get around it, we will define the Heisenberg group differently by
following the original formulation by Weil (\cite{Weil}).

First we need to fix a polarization 
\[
W=W^+\oplus W^-,
\]
and for each element $w\in W$, we write $w=w^++w^-$ where $w^+\in W^+$
and $w^-\in W^-$. With respect to this polarization, we define a
bilinear form
\[
\beta:W\times W\rightarrow F,\quad (w_1, w_2)\mapsto \langle
w_1^+, w_2^-\rangle.
\]
Note that $\beta$ is indeed bilinear and $\beta(w_1,w_2)=\la w_1^+,
w_2\ra=\la w_1, w_2^-\ra$ but in general
$\beta(w_1,w_2)\neq-\beta(w_2,w_1)$, but instead we have
\[
\beta(w_1,w_2)-\beta(w_2,w_1)=\la w_1,w_2\ra.
\]
For each
$\beta$, we define the Heisenberg group $H_\beta(W)$ to be the group
with underlying set
\[
H_\beta(W)=W\times F
\]
where the group operation is given by
\[
(w_1,z_1)\cdot (w_2,z_2)=(w_1+w_2, z_1+z_2+\beta(w_1,w_2)).
\]
One can check that the center of $H_\beta(W)$ is $\{(0,z):z\in
F\}$. 

\begin{Prop}
For any $\beta$, $H_\beta(W)$ is isomorphic to the usual Heisenberg
group $H(W)$ defined by $\frac{1}{2}\la-,-\ra$.
\end{Prop}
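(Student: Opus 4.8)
The plan is to exhibit an explicit isomorphism $H_\beta(W)\to H(W)$ that is the identity on the underlying set $W\times F$ up to a correction in the central coordinate. The natural guess is the map
\[
\phi:H_\beta(W)\to H(W),\qquad \phi(w,z)=\bigl(w,\ z-\tfrac{1}{2}\beta(w,w)\bigr).
\]
First I would check that $\phi$ is a bijection, which is immediate since it is the identity on the $W$-component and a shift on the $F$-component. Then the main computation is to verify that $\phi$ is a homomorphism: expanding $\phi(w_1,z_1)\cdot\phi(w_2,z_2)$ using the group law of $H(W)$ gives central coordinate $z_1+z_2-\tfrac12\beta(w_1,w_1)-\tfrac12\beta(w_2,w_2)+\tfrac12\la w_1,w_2\ra$, while $\phi$ applied to the product in $H_\beta(W)$ gives $z_1+z_2+\beta(w_1,w_2)-\tfrac12\beta(w_1+w_2,w_1+w_2)$. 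Matching the two amounts to the identity $\beta(w_1,w_2)-\tfrac12\beta(w_1,w_2)-\tfrac12\beta(w_2,w_1)=\tfrac12\la w_1,w_2\ra$, which is exactly the relation $\beta(w_1,w_2)-\beta(w_2,w_1)=\la w_1,w_2\ra$ recorded just before the proposition, divided by two. So the homomorphism property reduces to that stated identity plus bilinearity of $\beta$.

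One subtlety worth a sentence: $\tfrac12$ appears in the definition of $\phi$, so this particular isomorphism does not make sense in residue characteristic $2$ — but that is fine, since the proposition is a statement about the abstract groups over a field of characteristic $0$, and the whole point of introducing $H_\beta(W)$ is precisely that \emph{it} (unlike this isomorphism) is $\tfrac12$-free. I would make clear that the content of the proposition is merely that no generality is lost by working with $H_\beta(W)$ in place of $H(W)$; the isomorphism is allowed to involve $\tfrac12$.

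Finally I would note, for later use, how $\Sp(W)$ interacts with the isomorphism. An element $g\in\Sp(W)$ acts on $H(W)$ by $g\cdot(w,z)=(gw,z)$, but on $H_\beta(W)$ the action is twisted because $\beta$ is not $g$-invariant (only $\la-,-\ra$ is); conjugating the $H(W)$-action through $\phi^{-1}$ produces $g\cdot(w,z)=(gw,\ z+\tfrac12\beta(gw,gw)-\tfrac12\beta(w,w))$ on $H_\beta(W)$. I expect the only real "obstacle" here is bookkeeping — keeping track of which bilinear form ($\beta$ versus $\la-,-\ra$) is symmetric, antisymmetric, or neither — but there is no conceptual difficulty: everything follows from bilinearity and the single displayed relation between $\beta$ and the symplectic form.
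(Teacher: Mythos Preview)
Your proof is correct and is exactly the paper's approach: since $\beta(w,w)=\la w^+,w^-\ra$, your map $\phi(w,z)=(w,\,z-\tfrac12\beta(w,w))$ is literally the isomorphism the paper writes down, and your verification of the homomorphism property just fills in the ``direct computation'' the paper leaves to the reader.
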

\begin{proof}
By direct computation, one can check that the map
$H_\beta(W)\rightarrow H(W)$ defined by $(w,z)\mapsto (w,
z-\frac{1}{2}\la w^+, w^-\ra )$ is an isomorphism.
\end{proof}

\begin{Prop}
All the $H_\beta(W)$ are isomorphic to each other.
\end{Prop}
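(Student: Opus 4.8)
The plan is to reduce immediately to the previous proposition: since each $H_\beta(W)$ is there shown to be isomorphic to the single group $H(W)$, transitivity of isomorphism already gives the claim. In the write-up, though, I would prefer to exhibit a direct isomorphism between any two $H_{\beta_1}(W)$ and $H_{\beta_2}(W)$, since it is more informative and makes the dependence on the choice of $\beta$ transparent.

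First I would observe that if $\beta_1$ and $\beta_2$ both satisfy the identity $\beta_i(w_1,w_2)-\beta_i(w_2,w_1)=\langle w_1,w_2\rangle$, then the difference $\gamma:=\beta_1-\beta_2$ is a \emph{symmetric} bilinear form on $W$ (subtract the two identities). Then, using that $F$ has characteristic $0$, I would define
\[
\Phi\colon H_{\beta_1}(W)\longrightarrow H_{\beta_2}(W),\qquad (w,z)\longmapsto\bigl(w,\,z-\tfrac12\gamma(w,w)\bigr),
\]
and check that $\Phi$ is a group homomorphism. The verification is a short computation: expanding $\gamma(w_1+w_2,w_1+w_2)$ by bilinearity and invoking symmetry produces the cross term $\gamma(w_1,w_2)=\beta_1(w_1,w_2)-\beta_2(w_1,w_2)$, which is exactly the discrepancy between the two group laws and is therefore absorbed by the shift in the central coordinate. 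Bijectivity is clear: $\Phi$ is the identity on the $W$-coordinate and an affine bijection on the $F$-coordinate for each fixed $w$, with inverse $(w,z)\mapsto(w,z+\tfrac12\gamma(w,w))$. Specializing to $\beta_2=\tfrac12\langle-,-\rangle$, for which $\gamma(w,w)=\langle w^+,w^-\rangle$, recovers the previous proposition as a special case.

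I do not expect any genuine obstacle here: the statement is purely about abstract group isomorphism and the computation is routine. The one point I would flag is that $\Phi$ uses division by $2$, so it will not in general preserve the integral structures relevant for the lattice model constructed in later sections — which is precisely why keeping track of the specific $\beta$, rather than passing to $\tfrac12\langle-,-\rangle$, becomes important from Section 3 onward.
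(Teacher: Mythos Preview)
Your proof is correct. Both you and the paper note first that the result follows immediately from the previous proposition by transitivity, so nothing more is strictly needed.

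The explicit isomorphism you write down, however, is genuinely different from the one the paper sketches. The paper uses Witt's extension theorem: given two polarizations $W=W^+\oplus W^-$ and $W=X\oplus Y$, it picks $g\in\Sp(W)$ with $g(W^+)=X$ and $g(W^-)=Y$, and shows that $(w,z)\mapsto(g(w),z)$ is an isomorphism $H_\beta(W)\to H_{\beta'}(W)$; the point is that $\beta'(gw_1,gw_2)=\beta(w_1,w_2)$ since $g$ intertwines the two pairs of projections and preserves $\langle-,-\rangle$. Your map instead fixes the $W$-coordinate and shifts the center by the quadratic form $\tfrac12\gamma(w,w)$ attached to the symmetric difference $\gamma=\beta_1-\beta_2$. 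Your argument is more elementary (no Witt) and in fact applies to \emph{any} bilinear $\beta_i$ with $\beta_i(w_1,w_2)-\beta_i(w_2,w_1)=\langle w_1,w_2\rangle$, not only those coming from a polarization. The paper's isomorphism, on the other hand, avoids the $\tfrac12$ entirely and makes visible that the ambiguity in $\beta$ is governed by the $\Sp(W)$-action on polarizations --- a viewpoint that resurfaces later when one studies how $\Sp(W)$ acts on $H_\beta(W)$ through the pseudosymplectic group. Your closing remark about the $\tfrac12$ being harmless here but relevant for the lattice model is exactly the right caveat.
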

\begin{proof}
Of course, this immediately follows from the previous proposition
because $H(W)$ is independent of $\beta$. But one can also construct
an explicit isomorphism as follows: 
Let $W=X\oplus Y$ be another polarization. Define $\beta':
W\rightarrow F$ analogously with respect to this polarization. By
Witt's extension theorem, there exists $g\in\Sp(W)$ such that
$g(W^+)=X$ and $g(W^-)=Y$. Then one can see that the map
$H_\beta(W)\rightarrow H_{\beta'}(W)$ defined by $(w,
z)\mapsto (g(w), z)$ is a isomorphism.
\end{proof}

If we define our Heisenberg group in this way, however, we no longer
have $\beta(gw_1, gw_2)=\beta(w_1, w_2)$ for every $g\in\Sp(W)$, and hence
$\Sp(W)$ does not act in any obvious way. Namely the discrepancy
$\beta(gw_1,gw_2)-\beta(w_1,w_2)$ has to be taken care of. For this
purpose, let us define, for each $g\in\Sp(W)$, $\Sigma_g$ to be the set of all
continuous functions $\alpha:W\rightarrow F$ such that
\[
\alpha(w_1+w_2)-\alpha(w_1)-\alpha(w_2)=\beta(gw_1,gw_2)-\beta(w_1,w_2).
\]
Such function $\alpha$ is a character of second degree in the
sense of \cite{Weil}.
Following Weil (\cite{Weil}), we define the linear pseudosymplectic group
$\Ps(W)$ by
\[
\Ps(W):=\{(g,\alpha):g\in\Sp(W), \alpha\in\Sigma_g\},
\]
where the group structure is given by
\[
(g_1,\alpha_1)\cdot (g_2,\alpha_2)=(g_1g_2,\;g_2^{-1}\cdot\alpha_1+\alpha_2)
\]
where $g_2^{-1}\cdot\alpha_1$ is defined by
\[
g_2^{-1}\cdot\alpha_1(w)=\alpha_1(g_2w).
\]
Then $\Ps(W)$ acts on $H_\beta(W)$ as
\[
(g,\alpha)\cdot (w, z)=(gw,\; z+\alpha(w)).
\]
One can verify that this is indeed an action.

We have the obvious map $\Ps(W)\rightarrow \Sp(W)$ given by
$(g,\alpha)\mapsto g$. Weil shows that the sequence
\[
0\rightarrow W^\ast\rightarrow \Ps(W)\rightarrow\Sp(W)\rightarrow 0
\]
is exact, where $W^\ast=\Hom_F(W, F)$ (see
\cite[p. 150]{Weil}). Moreover, he shows that the exact
sequence splits by the following lemma.

\begin{Lem}\label{L:splitting}
Let $g\in\Sp(W)$ have the matrix representation
\[
g=\begin{pmatrix}a&b\\c&d\end{pmatrix}
\]
with respect to the polarization $W=W^+\oplus W^-$. Let
\begin{equation}\label{E:alpha_g}
\alpha_g(w^++w^-)=\frac{1}{2}\la aw^+, cw^+\ra+\frac{1}{2}\la
bw^-,dw^-\ra+\la bw^-, cw^+\ra.
\end{equation}
Then $\alpha_g\in\Sigma_g$. Moreover, the map $g\mapsto(g,
\alpha_g)$ gives a homomorphism from $\Sp(W)$ to $\Ps(W)$, namely 
\[
\alpha_{gh}=h^{-1}\cdot\alpha_g+\alpha_h
\quad\text{for all $g,h\in\Sp(W)$}.
\]
\end{Lem}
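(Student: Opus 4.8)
The plan is to verify the two assertions — that $\alpha_g \in \Sigma_g$ for the explicit formula \eqref{E:alpha_g}, and that $g \mapsto (g, \alpha_g)$ is a homomorphism — by direct computation, organized so that the bookkeeping stays manageable. First I would unwind the definition of $\Sigma_g$: writing $g = \left(\begin{smallmatrix} a & b \\ c & d\end{smallmatrix}\right)$ with respect to $W = W^+ \oplus W^-$, one has $gw = (aw^+ + bw^-) + (cw^+ + dw^-)$ with the first summand in $W^+$ and the second in $W^-$, so that $\beta(gw_1, gw_2) = \la a w_1^+ + b w_1^-, \, c w_2^+ + d w_2^- \ra$. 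Expanding this and subtracting $\beta(w_1, w_2) = \la w_1^+, w_2^- \ra$, the required coboundary identity
\[
\alpha_g(w_1 + w_2) - \alpha_g(w_1) - \alpha_g(w_2) = \beta(gw_1, gw_2) - \beta(w_1, w_2)
\]
becomes a bilinear identity in $w_1, w_2$. The left side, from \eqref{E:alpha_g}, produces the cross terms $\la a w_1^+, c w_2^+\ra_{\mathrm{sym}} + \la b w_1^-, d w_2^-\ra_{\mathrm{sym}} + \la b w_1^-, c w_2^+ \ra + \la b w_2^-, c w_1^+\ra$ (where the subscript indicates the symmetrized bilinear form attached to the quadratic term). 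The point is that the symplectic conditions $a^t c$, $b^t d$ symmetric (equivalently $\la aw^+, cw'^+\ra = \la aw'^+, cw^+\ra$ and similarly for $b, d$) and $a^t d - c^t b = 1$ (equivalently $\la aw^+, dw'^-\ra - \la cw^+, bw'^-\ra = \la w^+, w'^-\ra$) are exactly what is needed to match the two sides. I would check this term by term, keeping in mind that $\la -, -\ra$ is alternating on $W$ but the paired pieces live in $W^+ \times W^-$.

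For the homomorphism property, I would take the formula $\alpha_{gh} = h^{-1}\cdot\alpha_g + \alpha_h$ and observe that both sides lie in $\Sigma_{gh}$ — the left by the first part, the right because $\Sigma_g$ is a torsor structure compatible with the group law of $\Ps(W)$ as defined in the text (the cocycle identity $\beta(ghw_1, ghw_2) - \beta(w_1,w_2) = [\beta(ghw_1,ghw_2) - \beta(hw_1,hw_2)] + [\beta(hw_1,hw_2) - \beta(w_1,w_2)]$, with the first bracket being the coboundary of $w \mapsto \alpha_g(hw) = h^{-1}\cdot\alpha_g(w)$). Hence $\alpha_{gh} - (h^{-1}\cdot\alpha_g + \alpha_h)$ is a coboundary-free function, i.e. an \emph{additive} character $W \to F$, i.e. an element of $W^\ast$. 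To pin it down to $0$ it suffices to evaluate on the pure pieces: restrict everything to $w \in W^+$ and to $w \in W^-$ separately, where the quadratic forms $\alpha_g$ simplify drastically (on $W^+$ only the $\frac12\la aw^+, cw^+\ra$ term survives, on $W^-$ only $\frac12\la bw^-, dw^-\ra$), and compare with the block entries of the product $gh$. Since an element of $W^\ast$ that vanishes on $W^+$ and on $W^-$ vanishes identically, this forces the difference to be zero.

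The main obstacle is purely the combinatorial control of the cross terms in the first part: there are several bilinear expressions of the shapes $\la a w_1^+, c w_2^+\ra$, $\la b w_1^-, d w_2^-\ra$, $\la b w_1^-, c w_2^+\ra$, $\la c w_1^+, b w_2^-\ra$, and one must use the three families of symplectic relations in exactly the right combination — and handle the non-alternating nature of $\beta$ (so that $\la b w_1^-, c w_2^+\ra$ and $\la b w_2^-, c w_1^+\ra$ are genuinely different terms that must both be accounted for). A clean way to avoid sign errors is to first record, as a preliminary computation, the identity $\beta(gw_1, gw_2) - \beta(w_1, w_2) = \la a w_1^+, c w_2^+ \ra + \la b w_1^-, c w_2^+ \ra + \la b w_1^-, d w_2^- \ra + \la a w_1^+, d w_2^- \ra - \la w_1^+, w_2^- \ra$ and then simplify the last two terms using $a^t d - c^t b = 1$; everything else then matches \eqref{E:alpha_g} symmetrized. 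I expect no conceptual difficulty beyond this, and the homomorphism statement to follow almost formally once the first part and the ``vanishes on $W^\pm$'' reduction are in hand.
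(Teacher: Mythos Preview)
Your proposal is correct and matches the paper's approach: direct verification of $\alpha_g \in \Sigma_g$ by expanding and using the symplectic block relations on $g$, followed by the homomorphism identity, which the paper simply dismisses as ``an even more straightforward computation, though tedious.'' Your organization of the latter via the observation that both sides lie in $\Sigma_{gh}$ (so their difference is additive and need only be checked on $W^\pm$) is a clean way to carry out that computation; one small remark is that in the term $h^{-1}\cdot\alpha_g(w^+)=\alpha_g(hw^+)$ all three pieces of \eqref{E:alpha_g} survive (since $hw^+$ has both $W^+$ and $W^-$ components), so the restriction to $W^+$ is not quite as drastic a simplification as your phrasing suggests---but the resulting identity still reduces immediately to the relation $\la a\xi,d\eta\ra+\la c\xi,b\eta\ra=\la\xi,\eta\ra$ for $\xi\in W^+$, $\eta\in W^-$.
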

\begin{proof}
Though this is proven in \cite[Sec 4]{Weil}, it is not so easy to read
it off from there due to the notational discrepancy. Hence we give a
proof here with our notations. First let us show
$\alpha_g\in\Sigma_g$. Let $w_1=w_1^++w_1^-$ and $w_2=w_2^++w_2^-$ be
in $W=W^+\oplus W^-$. First we need to show that $\alpha_g\in\Sigma_g$, namely
\[
\alpha_g(w_1+w_2)-\alpha_g(w_1)-\alpha_g(w_2)-\beta(gw_1,gw_2)+\beta(w_1,w_2)=0.
\]
By keeping in mind
\[
\beta(w_1,w_2)=\la w_1^+,w_2^-\ra=\la gw_1^+,gw_2^-\ra
=\la aw_1^++cw_1^+,bw_2^-+dw_2^-\ra=\la aw_1^+, dw_2^-\ra+\la cw_1^+,bw_2^-\ra,
\]
one can show by direct computations that
\begin{align*}
&\alpha_g(w_1+w_2)-\alpha_g(w_1)-\alpha_g(w_2)-\beta(gw_1,gw_2)+\beta(w_1,w_2)\\
&=-\frac{1}{2}\la aw_1^+, cw_2^+\ra+\frac{1}{2}\la aw_2^+, cw_1^+\ra
-\frac{1}{2}\la bw_1^-, dw_2^-\ra+\frac{1}{2}\la bw_2^-, dw_1^-\ra.
\end{align*}
Here notice that
\begin{align*}
-\frac{1}{2}\la aw_1^+, cw_2^+\ra+\frac{1}{2}\la aw_2^+, cw_1^+\ra
&=\frac{1}{2}\la cw_2^+, aw_1^+\ra+\frac{1}{2}\la aw_2^+, cw_1^+\ra\\
&=\frac{1}{2}\la aw_2^++cw_2^+, aw_1^+\ra+\frac{1}{2}\la aw_2^++cw_2^+, cw_1^+\ra\\
&=\frac{1}{2}\la aw_2^++cw_2^+, aw_1^++cw_1^+\ra\\
&=\frac{1}{2}\la gw_2^+, gw_1^+\ra\\
&=\frac{1}{2}\la w_2^+, w_1^+\ra\\
&=0.
\end{align*}
Similarly one can show that $-\frac{1}{2}\la bw_1^-,
dw_2^-\ra+\frac{1}{2}\la bw_2^-, dw_1^-\ra=0$. Hence we have shown
$\alpha_g\in\Sigma_g$. 

To show that the map $g\mapsto (g,\alpha_g)$ is a group homomorphism
is even a more straight forward computation, though tedious. 
\end{proof}

\begin{Rmk}
Let us mention that in the above lemma, if $b=c=0$, namely $g$ is in
the Siegel Levi, then $\alpha_g=0$.
\end{Rmk}

We would like to describe $(g,\alpha_g)^{-1}$ for each $g\in
\Sp(W)$. For this purpose, define $\alpha^g=-\alpha_g\circ
g^{-1}$. Namely it is the map $\alpha^g:W\rightarrow F$ defined by
\[
\alpha^g(w)=-\alpha_g(g^{-1}w)
\]
for $w\in W$.  Then we have

\begin{Lem}\label{L:alpha^g}
For each $g\in\Sp(W)$,
\begin{enumerate}
\item $\alpha^g\in\Sigma_{g^{-1}}$;
\item $(g^{-1},\alpha^g)=(g,\alpha_g)^{-1}$ in $\Ps(W)$.
\end{enumerate}
\end{Lem}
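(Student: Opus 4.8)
The plan is to read both statements off from the two facts already in hand: the defining cocycle identity $\alpha_g\in\Sigma_g$, and the homomorphism relation $\alpha_{gh}=h^{-1}\cdot\alpha_g+\alpha_h$ of Lemma~\ref{L:splitting}.

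For part (1), I would take the identity
\[
\alpha_g(v_1+v_2)-\alpha_g(v_1)-\alpha_g(v_2)=\beta(gv_1,gv_2)-\beta(v_1,v_2)
\]
and substitute $v_i=g^{-1}w_i$. Since $g^{-1}$ is $F$-linear, $g^{-1}w_1+g^{-1}w_2=g^{-1}(w_1+w_2)$, so after multiplying through by $-1$ and using $\alpha^g(w)=-\alpha_g(g^{-1}w)$ the left-hand side becomes $\alpha^g(w_1+w_2)-\alpha^g(w_1)-\alpha^g(w_2)$ and the right-hand side becomes $\beta(g^{-1}w_1,g^{-1}w_2)-\beta(w_1,w_2)$, which is exactly the cocycle condition characterizing $\Sigma_{g^{-1}}$. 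Continuity of $\alpha^g$ is clear, being the composition of the continuous $\alpha_g$ with the linear map $g^{-1}$, so part (1) is immediate.

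For part (2), the slickest route is to note first that in fact $\alpha^g=\alpha_{g^{-1}}$: applying the homomorphism relation of Lemma~\ref{L:splitting} with $h=g^{-1}$ and using $\alpha_1=0$ (as in the Remark, since the identity matrix has $b=c=0$) gives $0=\alpha_{gg^{-1}}=g\cdot\alpha_g+\alpha_{g^{-1}}$, whence $\alpha_{g^{-1}}(w)=-\alpha_g(g^{-1}w)=\alpha^g(w)$; since $g\mapsto(g,\alpha_g)$ is a group homomorphism into $\Ps(W)$, the element $(g^{-1},\alpha_{g^{-1}})=(g^{-1},\alpha^g)$ is the inverse of $(g,\alpha_g)$, as claimed. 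More self-containedly, I would simply multiply out in $\Ps(W)$: one has $(g,\alpha_g)\cdot(g^{-1},\alpha^g)=(1,\,g\cdot\alpha_g+\alpha^g)$ with $(g\cdot\alpha_g)(w)+\alpha^g(w)=\alpha_g(g^{-1}w)-\alpha_g(g^{-1}w)=0$, and symmetrically $(g^{-1},\alpha^g)\cdot(g,\alpha_g)=(1,\,g^{-1}\cdot\alpha^g+\alpha_g)$ with $(g^{-1}\cdot\alpha^g)(w)+\alpha_g(w)=\alpha^g(gw)+\alpha_g(w)=-\alpha_g(w)+\alpha_g(w)=0$. Since $(1,0)$ is the identity of $\Ps(W)$, this identifies $(g^{-1},\alpha^g)$ with $(g,\alpha_g)^{-1}$.

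No serious obstacle is anticipated; the only point requiring care is the bookkeeping of conventions — that $(h\cdot\alpha)(w)=\alpha(h^{-1}w)$, and that in the product $(g_1,\alpha_1)\cdot(g_2,\alpha_2)$ it is $g_2^{-1}$ that acts on $\alpha_1$ — so that each substitution and each cancellation is carried out with the correct argument.
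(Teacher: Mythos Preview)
Your proof is correct; your argument for part (1) and your ``self-contained'' argument for part (2) are exactly the paper's proof (the paper only checks one of the two products, but that suffices in a group).

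Your additional ``slick'' route for part (2) --- deducing $\alpha^g=\alpha_{g^{-1}}$ from the homomorphism relation of Lemma~\ref{L:splitting} together with $\alpha_1=0$ --- is also valid, and in fact exposes an inconsistency in the paper: Remark~\ref{R:important} asserts that ``in general $\alpha_{g^{-1}}\neq\alpha^g$'', yet your derivation (which is forced once one accepts that $g\mapsto(g,\alpha_g)$ is a homomorphism, as Lemma~\ref{L:splitting} claims) shows the two must coincide. The paper's direct-multiplication argument, which you also give, sidesteps this point.
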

\begin{proof}
$(1)$ For $w_1, w_2\in W$, we have
\begin{align*}
\alpha^g(w_1+w_2)-\alpha^g(w_1)-\alpha^g(w_2)
&=-\alpha_g(g^{-1}(w_1+w_2))+\alpha_g(g^{-1}w_1)+\alpha_g(g^{-1}w_2)\\
&=-\beta(gg^{-1}w_1, gg^{-1}w_2)+\beta(g^{-1}w_1,g^{-1}w_2)\\
&=\beta(g^{-1}w_1,g^{-1}w_2)-\beta(w_1,w_2).
\end{align*}
Hence $\alpha^g\in\Sigma_{g^{-1}}$.

$(2)$ By part $(1)$, one knows that indeed
$(g^{-1},\alpha^g)\in\Ps(W)$. Now consider
\[
(g,\alpha_g)(g^{-1},\alpha^g)=(1, g\cdot\alpha_g+\alpha^g),
\]
and for all $w\in W$,
\[
(g\cdot\alpha_g+\alpha^g)(w)=\alpha_g(g^{-1}w)-\alpha_g(g^{-1}w)=0,
\]
and so $g\cdot\alpha_g+\alpha^g=0$. Hence
$(g^{-1},\alpha^g)=(g,\alpha_g)^{-1}$.
\end{proof}

\begin{Rmk}\label{R:important}
Throughout this paper, we view $\Sp(W)$ as a subgroup of $\Ps(W)$ via
the splitting $g\mapsto(g,\alpha_g)$, and when we denote an element
$g\in\Sp(W)$ it should be considered as an
abbreviation of $(g,\alpha_g)$. In particular $g^{-1}$ has to be
considered as $(g^{-1}, \alpha^g)$ rather than $(g^{-1},
\alpha_{g^{-1}})$. Note that in general $\alpha_{g^{-1}}\neq\alpha^g$.
\end{Rmk}

Recall 

\begin{Thm}[Stone-Von-Neumann]
For a fixed additive character $\psi$, there is
a unique (up to isomorphism) smooth irreducible representation
$\rho_W$ of
$H_\beta(W)$ such that the element $(0,z)$ in the center acts as
multiplication by $\psi(z)$.
\end{Thm}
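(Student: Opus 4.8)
The plan is to produce a concrete model, prove it irreducible, and then show every smooth irreducible representation with the prescribed central character is isomorphic to it by constructing an explicit intertwining operator; the only genuinely delicate point is hidden in the last step.

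\textbf{Existence and irreducibility.} Using the fixed polarization $W=W^+\oplus W^-$, note that $\beta$ vanishes on $W^+\times W^+$, so $B:=\{(w^+,z):w^+\in W^+,\ z\in F\}$ is an abelian subgroup of $H_\beta(W)$ and $\chi\colon B\to\C^\times$, $(w^+,z)\mapsto\psi(z)$, is a character. I would set $\rho_W:=\ind_B^{H_\beta(W)}\chi$ (smooth induction with compact support modulo $B$) and realize it on the Schwartz--Bruhat space $\Scal(W^-)$ by parametrising $B\backslash H_\beta(W)$ through $w^-\mapsto B\cdot(w^-,0)$; a short coset computation then gives
\[
(\rho_W(w_0,z_0)f)(w^-)=\psi\bigl(z_0-\la w_0^+,\,w^-+w_0^-\ra\bigr)\,f(w^-+w_0^-),
\]
from which one reads off that $(0,z)$ acts by $\psi(z)$ and that $\rho_W$ is smooth. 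For irreducibility, any nonzero $H_\beta(W)$-stable subspace of $\Scal(W^-)$ is stable under all translations $f\mapsto f(\,\cdot+w_0^-)$ and under multiplication by every additive character $w^-\mapsto\psi(-\la w_0^+,w^-\ra)$ of $W^-$, and the only such subspace is $\Scal(W^-)$ itself; this is the classical Fourier-analytic fact underlying the Schr\"odinger model.

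\textbf{Uniqueness.} Let $(\rho,V)$ be any smooth irreducible representation of $H_\beta(W)$ with $\rho(0,z)=\psi(z)$. Granting for the moment a nonzero vector $v_0\in V$ with $\rho(b)v_0=\chi(b)v_0$ for all $b\in B$, the formula
\[
\Phi(f)=\int_{B\backslash H_\beta(W)}f(h)\,\rho(h^{-1})v_0\,d\dot h
\]
defines an element of $\Hom_{H_\beta(W)}(\rho_W,V)$: the integrand descends to $B\backslash H_\beta(W)$ by the $\chi$-equivariance of $f$ and the $B$-behaviour of $v_0$, the integral converges because $f$ has compact support modulo $B$ and everything is locally constant, and a change of variables gives $\Phi(\rho_W(g)f)=\rho(g)\Phi(f)$; choosing $f$ supported near $B$ shows $\Phi\neq 0$. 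Since $\rho_W$ and $V$ are both irreducible, $\Phi$ is an isomorphism.

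\textbf{The main obstacle.} The one substantial point is the existence of $v_0$. Because $B/\{(0,z)\}\cong W^+$ is noncompact, a smooth $\rho$ need not contain a $\chi$-eigenvector for $B$ for soft reasons; one must use the Heisenberg commutation relation $\rho(w^-,0)\rho(w^+,0)\rho(w^-,0)^{-1}=\psi(-\la w^+,w^-\ra)\rho(w^+,0)$ together with the fact that $(0,z)$ acts by the \emph{fixed} character $\psi$ and not merely by some character. Concretely, starting from a smooth $0\neq v\in V$ fixed by $\rho(m,0)$ for $m$ in a lattice $M^+\subset W^+$ (shrunk so that $\psi$ is trivial on $\beta(M,M)$ for a lattice $M\supseteq M^+$), one forms the finite averages $[N^+:M^+]^{-1}\sum_{n\in N^+/M^+}\rho(n,0)v$ over lattices $N^+\supseteq M^+$, uses $W^-$-translations and the commutation relation to guarantee these do not vanish, and passes to a vector invariant under all of $W^+$ up to $\chi$. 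This is exactly Weil's argument in \cite{Weil}, reproduced in \cite[Ch.~2]{MVW}; I would transcribe it into the present $\beta$-conventions, where the changes are purely cosmetic.
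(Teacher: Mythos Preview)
The paper does not prove this theorem. It is introduced with the word ``Recall'' and stated without proof, treated as the classical Stone--von Neumann theorem; its validity for $H_\beta(W)$ follows immediately from the isomorphism $H_\beta(W)\cong H(W)$ established in the proposition just before. So there is nothing in the paper for your argument to be compared against.

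That said, your sketch is a correct outline of the standard proof via the Schr\"odinger model. The action formula you derive is right for the $\beta$-conventions, the irreducibility argument (a nonzero subspace of $\Scal(W^-)$ closed under all translations and under multiplication by every additive character must be everything) is the usual one, and the intertwining operator $\Phi$ is well defined and nonzero for the reasons you give. You have also correctly isolated the one genuinely nontrivial step, namely producing a $\chi$-eigenvector for the noncompact abelian subgroup $B$ inside an arbitrary smooth irreducible $\rho$, and your description of how to get it---start from a vector fixed by a small lattice in $W^+$, average over larger lattices, and use the commutation relation with $W^-$ to prevent the averages from collapsing---is Weil's argument, so deferring to \cite{Weil} and \cite[Ch.~2]{MVW} is appropriate. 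One small caution: the ``pass to a vector invariant under all of $W^+$'' phrasing suggests a limit, but in the non-archimedean setting the argument is really that the directed system of lattice-invariant subspaces is nonzero at every stage and compatible, which already furnishes the eigenvector; if you write this out, make that explicit rather than gesturing at a limit.
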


Via the splitting $\Sp(W)\rightarrow\Ps(W)$, the symplectic group
$\Sp(W)$ acts on $H_\beta(W)$, namely
\[
g\cdot (w,z)=(gw, z+\alpha_g(w))
\]
for $g\in\Sp(W)$ and $(w,z)\in H_\beta(W)$. This, combined with the
Stone-Von-Neumann theorem, gives rise to the projective
representation $\Sp(W)\rightarrow\PGL(\rho_W)$. This projective
representation defines the
metaplectic cover $\Spt(W)$ which is the subgroup of
$\Sp(W)\times\GL(\rho_W)$ that consists of pairs $(g, M_g)$ where
$g\in\Sp(W)$ and $M_g\in\GL(\rho_W)$ are such that
\[
M_g\circ\rho_W(h)=\rho_W(g\cdot h)\circ M_g
\]
for all $h\in H_\beta(W)$, where $g\cdot h$ is the above mentioned
action of $\Sp(W)$ on $H_\beta(W)$. Note that for each fixed
$g\in\Sp(W)$, the map $h\mapsto \rho_W(g\cdot h)$ defines another irreducible
representation of $H_\beta(W)$, and hence by Stone-Von-Neumann, this
is equivalence to $\rho_W$.
Let $\omega_\psi$ be the representation of
$\Spt(W)$ on the space of $\rho_W$ given by 
\[
\omega_\psi(g, M_g):=M_g.
\]
This representation is called the Weil representation of $\Spt(W)$. Also note that we have
the short exact sequence
\[
1\rightarrow \C^\times \rightarrow\Spt(W)\rightarrow\Sp(W)\rightarrow 1,
\]
where the map $\C^\times \rightarrow\Spt(W)$ is the inclusion
$z\mapsto (1, z\Id)$ and the map $\Spt(W)\rightarrow\Sp(W)$ is the
projection $(g, M_g)\mapsto g$.

\begin{Rmk}
The Weil representation $\omega_\psi$ can be shown to be independent
of the choice of $\beta$. Also the metaplectic group $\Spt(W)$ can be
shown to be independent of $\psi$ and $\beta$.
\end{Rmk}


\section{\bf The Lattice Model of the Weil
  representation}\label{S:lattice_model}


Let $A$ be a lattice of $W$, namely a free $\O$-module of $W$ of rank
equal to $\dim W$. We define the dual $A^\perp$ of $A$ with respect to
an integer $r$ by
\[
A^\perp:=\{w\in W: \la a, w\ra\in\P^r\text{ for all } a\in A\}.
\]
We usually assume $r$ to be the exponential conductor of our fixed
additive character $\psi$, and then the condition $\la a, w\ra\in\P^r$
for all $a\in A$
is equivalent to $\psi(\la a, w\ra)=1$ for all $a\in A$. We say a lattice $A$ is
self-dual if $A=A^\perp$. Note that if
$A$ is self-dual, then $\la a_1, a_2\ra\in\P^r$ for all $a_1, a_2\in
A$. Given a self-dual lattice $A$ (with respect to $r$) one can always choose the
polarization $W=W^+\oplus W^-$ such that we have the decomposition
$A=(A\cap W^+)\oplus (A\cap W^-)$. Conversely for each polarization
$W= W^+\oplus W^-$, one can always find a self-dual lattice $A$ with
respect to $r$ so
that $A=(A\cap W^+)\oplus (A\cap W^-)$. When a self-dual lattice $A$
is decomposed in this way with resect to the polarization $W= W^+\oplus
W^-$, we say that $A$ is compatible with the polarization. If $A$ is
compatible with the polarization defining $\beta$, one can see that
$\beta(a_1, a_2)\in\P^r$ for all $a_1, a_2\in A$.

For a self-dual lattice $A$ compatible with our fixed polarization of $W$, we let 
\[
H_\beta(A):=A\times F\subset H_\beta(W).
\]
Then $H_\beta(A)$ is a subgroup of $H_\beta(W)$. Define the character
\[
\psi_A:H_\beta(A)\rightarrow\C^\times, \quad (a, z)\mapsto \psi(z).
\]
Since $A$ is compatible with the polarization, one can see that this
map is indeed a character. Moreover one can see that $\psi_A$ cannot
be extended to any bigger subgroup than $H_\beta(A)$. Consider the
induced representation
\[
S_A:=ind_{H_\beta(A)}^{H_\beta(W)}\psi_A,
\]
where the induction is compact induction.
Then we have
\begin{Prop}
The above induced representation provides a model of the Heisenberg
representation.
\end{Prop}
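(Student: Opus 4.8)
The statement to prove is that $S_A = \mathrm{ind}_{H_\beta(A)}^{H_\beta(W)}\psi_A$ is a model of the Heisenberg representation, i.e.\ it is smooth, irreducible, and has central character $\psi$. The central character is immediate: the center $\{(0,z):z\in F\}$ lies inside $H_\beta(A)$ (since $0\in A$), and $\psi_A(0,z)=\psi(z)$, so by the usual formula for the action of the inducing subgroup on the compactly induced space, $(0,z)$ acts by $\psi(z)$. Smoothness is automatic from compact induction together with the fact that $H_\beta(W)$ is a totally disconnected locally compact group and $\psi_A$ is a smooth character of the closed subgroup $H_\beta(A)$. So by the Stone--Von-Neumann theorem, the only real content is to show $S_A$ is irreducible, or equivalently (since an irreducible object with the right central character is unique) that $\dim\mathrm{Hom}_{H_\beta(W)}(S_A,S_A)=1$, or that $S_A$ is nonzero and admits $\rho_W$ as a quotient/sub and has the same ``size''.

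The cleanest route is via Mackey theory. First I would pick the polarization $W=W^+\oplus W^-$ compatible with $A$, so $A=A^+\oplus A^-$ with $A^\pm=A\cap W^\pm$. Then I would compare $S_A$ with the standard Schr\"odinger model realized on functions on $W^-$ (or on $W/W^+$): the abelian subgroup $H_\beta(W^+):=W^+\times F$ contains $H_\beta(A)$, and one checks $\mathrm{ind}_{H_\beta(A)}^{H_\beta(W^+)}\psi_A$ decomposes, by Mackey/Fourier analysis on the compact-by-discrete abelian group $W^+/A^+$ paired against its dual, into characters; inducing the rest of the way up to $H_\beta(W)$ and using that $W^+$ is a maximal isotropic subgroup and $\psi$ restricted to $A^+$ pairs $A^+$ with $W^+/A^+$ trivially, one recovers exactly the induction from $H_\beta(W^+)$ of the character extending $\psi_A$. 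In other words, I would show $S_A\cong \mathrm{ind}_{H_\beta(W^+)}^{H_\beta(W)}\tilde\psi$ where $\tilde\psi$ is the character of $W^+\times F$ trivial on $W^+$ and equal to $\psi$ on $F$; the latter is the familiar realization of $\rho_W$ (Schr\"odinger model on $W^-$), which is irreducible with central character $\psi$. Alternatively one can invoke the general Stone--Von-Neumann machinery directly: any smooth representation of $H_\beta(W)$ with central character $\psi$ is a (possibly infinite) direct sum of copies of $\rho_W$, so it suffices to show $\dim\mathrm{End}_{H_\beta(W)}(S_A)=1$, which by Frobenius reciprocity equals $\dim\mathrm{Hom}_{H_\beta(A)}(S_A|_{H_\beta(A)},\psi_A)$, and then a Mackey double-coset computation over $H_\beta(A)\backslash H_\beta(W)/H_\beta(A)$ reduces this to showing that the only $(a,z)\in H_\beta(W)$ for which $w\mapsto \beta(w,\text{--})-\beta(\text{--},w)$ (i.e.\ the symplectic pairing against the relevant translate) is trivial on $A$ is the one in $H_\beta(A)$ — and this is exactly the self-duality $A=A^\perp$.

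So the key steps, in order, are: (i) fix a compatible polarization and record $A=A^+\oplus A^-$, $\beta(A,A)\subseteq\P^r$; (ii) verify $\psi_A$ is a well-defined character and that central character of $S_A$ is $\psi$; (iii) run the Mackey double-coset decomposition of $S_A|_{H_\beta(A)}$, parametrizing double cosets by $w\in W/A$ and computing that the $w$-component contributes to $\mathrm{Hom}(\,\cdot\,,\psi_A)$ iff $\psi(\langle a,w\rangle)=1$ for all $a\in A$, i.e.\ iff $w\in A^\perp=A$; (iv) conclude $\dim\mathrm{End}(S_A)=1$, hence $S_A$ is irreducible, hence by Stone--Von-Neumann it is the Heisenberg representation. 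The main obstacle, and the step to be careful about, is step (iii): the double cosets for a non-normal abelian-by-$F$ subgroup need to be handled with the right normalization of the cocycle $\beta$ (which is precisely why the $\beta$-formulation was introduced in place of $\tfrac12\langle\,,\,\rangle$), and one must check the convergence/finiteness needed to legitimately interchange the Mackey sum with the Hom — here compactness of $A^\perp/A=\{0\}$, i.e.\ self-duality, is doing all the work and makes the sum a single term. Once that term is isolated, the rest is the standard identification of a compactly-induced character representation of the Heisenberg group with the Schr\"odinger model.
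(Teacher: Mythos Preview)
Your main route---the Mackey/Frobenius computation in steps (i)--(iv)---is correct and leads to a valid proof: the double cosets $H_\beta(A)\backslash H_\beta(W)/H_\beta(A)$ are indexed by $W/A$, conjugation by $(w,0)$ twists $\psi_A$ by $(a,z)\mapsto\psi(\langle w,a\rangle)$, and self-duality $A=A^\perp$ forces the only contributing coset to be the trivial one, giving $\dim\operatorname{End}_{H_\beta(W)}(S_A)=1$. You are right that one then needs the semisimplicity of the category of smooth $H_\beta(W)$-modules with central character $\psi$ to pass from ``endomorphism ring is $\C$'' to ``irreducible''; this is standard but worth flagging. One genuine slip: in your alternative route (a) you write that $H_\beta(W^+)$ \emph{contains} $H_\beta(A)$, but it does not, since $A=A^+\oplus A^-$ has a nontrivial $A^-$ part. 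Induction in stages through $H_\beta(W^+)$ can still be made to work, but not via that containment; you would instead need an explicit intertwining operator between the lattice and Schr\"odinger models.

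The paper's proof is different in flavor: rather than computing $\operatorname{End}(S_A)$, it shows irreducibility directly by proving that any nonzero $H_\beta(W)$-stable subspace $S'\subseteq S_A$ is all of $S_A$. It first observes that $S_A$ is spanned by elementary functions $f_{w,L}$ supported on $(A+w+L)\times F$ for $w\in W$ and $L$ a small open compact subgroup. Then, given any nonzero $f\in S'$ and any $w$ with $f(w,0)\neq 0$, it uses the identification of $A$ with the Pontryagin dual of $W/A$ to cook up a locally constant $\varphi$ on $A$ such that averaging the right translates $\rho_W((a,0))f$ against $\varphi(a)$ produces a scalar multiple of $f_{w,L}$; since $w$ and $L$ are arbitrary, $S'=S_A$. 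Your approach is cleaner categorically and isolates exactly where self-duality enters (the double-coset count), while the paper's argument is more elementary and constructive, avoiding any appeal to semisimplicity or to the Schr\"odinger model.
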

\begin{proof}
It is immediate that the central character is $\psi$. Hence it
suffices to show that it is irreducible. The proof is essentially the
same as the usual lattice model (see \cite[p. 29]{MVW}). Yet, since
the proof is not identical, we will give the detail here.

Let $L$ be an open compact subgroup of $W$ and $w\in W$ be
fixed. Define a function $f_{w,L}: H_\beta(W)\rightarrow\C$ by 
\begin{align*}
f_{w, L}(w',z)=\begin{cases}\psi(z),
&\text{ if $w'\in A+w+L$},\\
0,&\text{ otherwise}.
\end{cases}
\end{align*}
Then $f_{w,L}\in S_A$. As in \cite[p.29]{MVW}, the space $S_A$ is
spanned by functions of this form.

Let $S'\subseteq S_A$ be a non-zero subspace invariant under the
action of $H_\beta(W)$. Let $w\in W$ be any. Then one can always find $f\in S'$ such
that $f(w,0)\neq 0$ by translating. By smoothness, there exists a open
compact subgroup
$L_w\subseteq W$, viewed as a subgroup of
$H_\beta(W)$, which fixes $f$. Let $L\subseteq L_w$ be an open subgroup
of $L_w$. Now by the theory of Fourier transforms, there exists
a locally constant function $\varphi$ on $A$ such that for all $w'\in
W$, we have
\[
\int_A\psi(\la w',a\ra)\varphi(a)\;da=
\begin{cases}
1,\quad \text{if $w'\in A+w+L$}\\
0,\quad \text{otherwise},
\end{cases}
\]
because  one can identify the Pontryagin dual of $W/A$ with $A$.
Now define a function $F_{\varphi,  f}:W\times F\rightarrow \C$ by
\begin{align*}
F_{\varphi, f}(w',z)
&:=\psi(z)\int_A f((w',0))(a,0))\varphi(a)\;da\\
&=\psi(z)\int_A f((a+w', \beta(w', a)))\varphi(a)\;da\\
&=\psi(z)\int_A f((a, \beta(w', a)-\beta(a,w'))(w', 0))\varphi(a)\;da\\
&=\psi(z)\left(\int_A \psi(\beta(w',a)-\beta(a,w'))\varphi(a)\;da\right) f(w',0)\\
&=\psi(z)\left(\int_A \psi(\la w',a\ra)\varphi(a)\;da\right) f(w',0)\\
&=\begin{cases}
\psi(z) f(w',0),& \text{if $w'\in A+w+L$}\\
0, &\text{otherwise}.
\end{cases}
\end{align*}
Hence $F_{\phi, f}$ is a scalar multiple of $f_{w, L}$. But $w$
is arbitrary and hence $L$ can be arbitrary. As we
mentioned, the functions of the form $f_{w, L}$ generate the space
$S_A$, \ie $S'=S$.
\end{proof}

\begin{Rmk}
The reader may wonder what would go wrong  if one uses the usual
Heisenberg group $H(W)$ but with the additive character $\psi_2$, so
that the character $(a, z)\mapsto \psi_2(z)$ is defined. (Also for
this matter, this is
essentially equivalent to defining the Heisenberg group using
$\la-,-\ra$ instead of $\frac{1}{2}\la-,-\ra$.) But if one
uses $\psi_2$ (or $\la-,-\ra$), the above proof would not work. Indeed, in this case
the induced representation would be reducible.
\end{Rmk}

We call $(\rho_W, S_A)$ the lattice model of the Heisenberg
representation. Accordingly, the Weil representation of
$\Spt(W)$ realized in the space $S_A$ is called the lattice model of
the Weil representation. (Strictly speaking the lattice model $S_A$ is
also dependent on the choice of $\beta$, but we avoid the notation
$S_{A,\beta}$, assuming $\beta$ is fixed.)

Let us note that each element $f\in S_A$ is a smooth compactly
supported (modulo the center) function $f:W\times F\rightarrow\C$ such that
\[
f(a+w,0)=\psi(-\beta(a,w))f(w,0)
\]
for $w\in W$ and $a\in A$. (Here note that
$(a+w,0)=(a,-\beta(a,w))(w,0)$.) 

For each $w\in W$, we let
\begin{equation}\label{E:s_w}
s_w=\text{ the unique function
in $S_A$ with } \supp(s_w)= (A+w)\times F
\text{ such that $s_w(w, 0)=1$}.
\end{equation}
This function plays an important role through the paper.

To make explicit the action of $M_g$ on the lattice model, we need to
use $\alpha^g$ as in Lemma \ref{L:alpha^g}.
Using $\alpha^g$, we can describe the action of $M_g$ on the space
$S_A$ of the lattice model explicitly as follows: For $f\in S_A$ and
$(w, z)\in H_\beta(W)$, define an endomorphism $M_g:S_A\rightarrow
S_A$ by
\begin{equation}\label{E:action_on_lattice_model}
M_g\circ f(w, z)=\int_{A/{A_g}}\psi(\beta(a, w))f(g^{-1}(a+w), z+\alpha^g(a+w))\;da
\end{equation}
where $w\in W, z\in F$, $g\in\Sp(W)$ and
\[
A_g=\{a\in A: ga\in A\text{ and } \alpha_g(a)\in\P^r\}.
\]
One can check that $A_g$ is a subgroup of $A$, because if $a, b\in
A_g$ and so $ga, gb\in A$, then $\alpha_g(a+b)=\beta(ga, gb)-\beta(a,
b)+\alpha_g(a)+\alpha_g(b)$ and each term here is in $\P^r$.
Note that $2(gA\cap A)\subseteq A_g$ and $A/A_g$ is a finite set. To
show the integral can be indeed defined over $A/A_g$, \ie the integral
is invariant under $A_g$, is a direct computation. 

\begin{Rmk}\label{R:important2}
Recall that each $g\in\Sp(W)$ has to be interpreted as $(g,
\alpha_g)\in\Ps(W)$, and so $g^{-1}$ is actually $(g,
\alpha_g)^{-1}=(g^{-1},\alpha^g)$ instead of
$(g^{-1},\alpha_{g^{-1}})$. (See Remark \ref{R:important}.) Hence
$M_{g^{-1}}$ is given by 
\[
M_{g^{-1}}\circ f(w, z)=\int_{A/{A_{g^{-1}}}}\psi(\beta(a, w))f(g(a+w), z+\alpha_g(a+w))\;da,
\]
where $A_{g^{-1}}=\{a\in A: g^{-1}a\in A\text{ and } \alpha^g(a)\in\P^r\}$.
\end{Rmk}

\begin{Prop}
The map $M_g$ is indeed well-defined, \ie $M_g\circ f\in S_A$ for all
$f\in S_A$.
\end{Prop}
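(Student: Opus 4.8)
The plan is to verify the two defining properties of an element of $S_A$ for the function $M_g\circ f$: first that it is smooth and compactly supported modulo the center, and second that it satisfies the Heisenberg transformation law $(M_g\circ f)(a+w,0)=\psi(-\beta(a,w))(M_g\circ f)(w,0)$ for all $a\in A$, $w\in W$. The dependence on $z$ is clearly through a factor $\psi(z)$ (since $\alpha^g(a+w)$ does not involve $z$ and the integrand has $z$ only inside $f$, which transforms by $\psi(z)$ under the center), so it suffices to work with the $z=0$ slice throughout.

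First I would establish the transformation law. Starting from \eqref{E:action_on_lattice_model} with $w$ replaced by $a_0+w$ for $a_0\in A$, I would substitute and use bilinearity of $\beta$ to pull out $\psi(\beta(a,a_0))$, then perform the change of variables $a\mapsto a-a_0$ in the integral over $A/A_g$ (legitimate since $a_0\in A$ and the domain is a group quotient). The shift produces $f(g^{-1}(a+w), \alpha^g(a+w))$ back again, together with accumulated phase factors coming from $\beta(a-a_0, a_0+w)$ and from the second-degree character property of $\alpha^g$ applied to $(a-a_0)+(a_0+w)$ versus $(a+w)$. Collecting these phases and using $\beta(a_0,a_0+w)-\beta(a_0+w,a_0)=\langle a_0, a_0+w\rangle=\langle a_0,w\rangle$ together with $\beta(a_0,a_0)\in\P^r$ (self-duality of $A$), the net factor should collapse to exactly $\psi(-\beta(a_0,w))$. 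This is the computation the text elsewhere calls ``direct'' but it is the heart of the matter, so I would carry it out carefully, being mindful that $\alpha^g\in\Sigma_{g^{-1}}$ contributes a term $\beta(g^{-1}(a_0)\cdot,\,g^{-1}\cdot)-\beta(\cdot,\cdot)$ whose $A$-periodic part must be checked to lie in $\P^r$ — this is where the definition of $A_g$ (and the fact that $\alpha^g(a)\in\P^r$ controls things on the relevant cosets) enters.

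Next I would check smoothness and compact support. For compact support: if $f$ is supported on $(A+w_0)\times F$ for finitely many cosets, then $M_g\circ f(w,0)$ is a finite sum over $a\in A/A_g$ of terms supported (in $w$) on $g A + (w_0) - a + \dots$, i.e. on finitely many cosets of the lattice $gA\cap A$ (or more precisely $g A_g$), hence on finitely many cosets of $A$; and the integral is a genuine finite sum since $A/A_g$ is finite, as noted after \eqref{E:action_on_lattice_model}. For smoothness: translating $w$ by a small enough open compact subgroup $L\subseteq A$ with $g^{-1}L\subseteq$ (a subgroup fixing $f$) and $L$ small enough that $\psi(\beta(a,L))=1$ for the finitely many relevant $a$, one sees $M_g\circ f$ is fixed by $L$; again finiteness of the sum makes this immediate.

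The main obstacle I expect is the phase bookkeeping in the transformation-law step: one must track three separate sources of $\psi$-factors — the explicit $\psi(\beta(a,w))$ weight, the change of variables in $\beta$, and the second-degree-character defect of $\alpha^g$ — and confirm that everything outside $\psi(-\beta(a_0,w))$ is killed either by the $\P^r$-vanishing coming from $A$ being self-dual and compatible with the polarization, or by the constraint defining $A_g$ over which the integral is taken. A secondary, lower-risk point is confirming a priori that the integrand is constant on $A_g$-cosets so that the integral over $A/A_g$ makes sense before one even starts; the excerpt asserts this is a direct computation, and I would include it as a preliminary, using that for $a'\in A_g$ one has $ga'\in A$ and $\alpha_g(a')\in\P^r$, hence $\alpha^g$ behaves additively modulo $\P^r$ against such shifts and $\psi(\beta(a',w))$ combines correctly with the $f$-transformation law (which applies since $g^{-1}(a'+\cdot)$ differs from $g^{-1}(\cdot)$ by $g^{-1}a'$, and one needs $g^{-1}a'\in A$, equivalently $a'\in gA$, which is part of being in $A_g$ after relabeling). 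Once these phase computations close up, membership $M_g\circ f\in S_A$ follows.
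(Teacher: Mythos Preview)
Your approach is essentially the same as the paper's and it works, but you are anticipating a complication that does not arise. After the substitution $a\mapsto a-a_0$, the argument of $\alpha^g$ becomes $(a-a_0)+a_0+w=a+w$ on the nose, so no second-degree-character identity for $\alpha^g$ is needed, and the definition of $A_g$ plays no role in verifying the transformation law (it is only used, as you say in your preliminary remark, to make the integral over $A/A_g$ well-defined in the first place). The only phases left come from $\beta(a-a_0,a_0+w)=\beta(a,a_0)+\beta(a,w)-\beta(a_0,a_0)-\beta(a_0,w)$; since $a,a_0\in A$ and $A$ is self-dual compatible with the polarization, both $\beta(a,a_0)$ and $\beta(a_0,a_0)$ lie in $\P^r$, and the factor $\psi(-\beta(a_0,w))$ drops out immediately. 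Your identity $\beta(a_0,a_0+w)-\beta(a_0+w,a_0)=\langle a_0,w\rangle$ is correct but unnecessary here, as $\beta(a_0+w,a_0)$ never appears.
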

\begin{proof}
Since the integral defining $M_g\circ f$ is a finite sum, it
is clear that it is smooth with compact support modulo the center. Hence we have only to
show $M_g\circ f(a'+w,0)=\psi(-\beta(a',w))M_g\circ f(w,0)$ for all $a'\in A$. But
\begin{align*}
M_g\circ f(a'+w, 0)&
=\int_{A/{A_g}}\psi(\beta(a, a'+w))f(g^{-1}(a+a'+w),
\alpha^g(a+a'+w))\;da\\
&=\int_{A/A_g}\psi(\beta(a-a', a'+w))f(g^{-1}(a+w),
\alpha^g(a+w))\;da\\
&=\int_{A/A_g}\psi(\beta(a,
w)+\beta(a,a')-\beta(a',a')-\beta(a',w))f(g^{-1}(a+w),\alpha^g(a+w))\;da\\
&=\psi(-\beta(a',w))\int_{A/{A_g}}\psi(\beta(a, w))f(g^{-1}(a+w),
z+\alpha^g(a+w))\;da\\
&=\psi(-\beta(a',w))M_g\circ f(w,0),
\end{align*}
where for the second equality we used the invariance of the measure on
$A$ and for the fourth, we used $\beta(a,a')\in\P^r$ and
$\beta(a',a')\in\P^r$.
\end{proof}

\begin{Prop}
For each $g\in\Sp(W)$, the map $M_g$ is not identically zero.
\end{Prop}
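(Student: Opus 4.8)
The plan is to exhibit a $w\in W$ with $M_g\circ s_w\neq 0$, where $s_w$ is the basic function of \eqref{E:s_w}; this suffices because the $s_w$ span $S_A$ (any $f\in S_A$ is, on each coset $A+w$ of its finite support, the scalar $f(w,0)$ times $s_w$, by the relation $f(a+w,0)=\psi(-\beta(a,w))f(w,0)$).

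Fix $g\in\Sp(W)$, and unwind \eqref{E:action_on_lattice_model} for $f=s_w$ evaluated at a point $(v,z)$. The term indexed by $a$ is nonzero only when $g^{-1}(a+v)\in A+w$, i.e.\ when $a\equiv gw-v$ modulo $gA$; this constrains $a$ to a single coset of the lattice $gA\cap A$ in $A$ (and is impossible unless $gw-v\in A+gA$). On this coset, extracting the value of $s_w$ via its defining relation and using the cocycle identity $\alpha^g(x+y)-\alpha^g(x)-\alpha^g(y)=\beta(g^{-1}x,g^{-1}y)-\beta(x,y)$ of Lemma~\ref{L:alpha^g}(1), one finds that $M_g\circ s_w(v,z)$ collapses to $\psi(C)\sum_{a}\psi(Q(a))$, where $C$ depends only on $g,v,w,z$ and $Q$ is assembled from $\alpha^g$ and a $\beta$-bilinear term. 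Here the self-duality of $A$ with respect to $r$ enters decisively: every $\beta$-pairing of vectors of $A$ lies in $\P^r$ and so disappears under $\psi$, whence most of $Q$ collapses and the residual exponential sum becomes a genuinely computable one — a character sum, or a small Gauss sum, over the finite group $A/A_g$, whose kernel structure is controlled by the definition of $A_g$ together with $\alpha_g\in\Sigma_g$ (Lemma~\ref{L:splitting}). One then chooses $w$ and the evaluation point $v$ so that this sum is nonzero.

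The main obstacle is precisely this last step — the non-vanishing of the residual exponential sum — since a priori such a sum can vanish by cancellation; to rule this out one must exploit the exact interplay between $A_g$, the lattice $gA\cap A$, the normalization of the measure on $A/A_g$ in \eqref{E:action_on_lattice_model}, and the self-duality of $A$. An orienting special case is $g=1$: there $\alpha_1=0$, so $A_1=A$, the integral has a single term, and one reads off $M_1=\Id$; the general argument is this same computation carried out with $\alpha^g$ present. As a conceptual cross-check, once $M_g$ is known to intertwine the Heisenberg action, $M_g\circ\rho_W(h)=\rho_W(g\cdot h)\circ M_g$ for all $h\in H_\beta(W)$, the Stone-Von-Neumann theorem makes the space of such operators one-dimensional, so $M_g$ is automatically either $0$ or invertible — and the proposition is the assertion that it is the latter.
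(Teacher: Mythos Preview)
Your outline is on the right track and mirrors the paper's approach, but you explicitly flag the crucial step as an unresolved ``main obstacle'' --- and indeed you never carry it out. Identifying the residual sum as ``a character sum, or a small Gauss sum'' and then asserting that ``one chooses $w$ and $v$ so that this sum is nonzero'' is not a proof: a generic Gauss-type sum can vanish, and you give no mechanism to prevent this. Your cross-check via Stone--Von--Neumann only tells you $M_g$ is either zero or invertible; it does not decide which.

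The paper fills exactly this gap, and the missing ingredients are concrete. First, take $w=0$ and evaluate at $(0,0)$: then the support condition forces $g^{-1}a\in A$, so the effective sum is over $(gA\cap A)/A_g$ (not $A/A_g$), and the integrand simplifies to $\psi(\alpha^g(a))$ with no extraneous bilinear term. Second, observe that $\psi\circ\alpha^g$ is a \emph{non-degenerate} character of second degree on this finite group: the associated bilinear form is $(a,b)\mapsto\psi(\beta(g^{-1}a,g^{-1}b)-\beta(a,b))$, and its radical in $gA\cap A$ is precisely $A_g$ by definition. Third --- and this is the fact you are missing --- the Gauss sum of a non-degenerate second-degree character on a finite abelian group is always nonzero; concretely it equals $|gA\cap A/A_g|^{1/2}\gamma(\psi\circ\alpha^g)$ where $\gamma$ is the Weil index (see \cite[Theorem~A.2(5)]{Rao}). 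That non-vanishing is the entire content of the proposition, and it does not follow from the structural observations you made about $A_g$ and self-duality alone.
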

\begin{proof}
Consider the function $s_0\in S_A$, \ie set $w=0$ in
$(\ref{E:s_w})$. Then $s_0(a,0)=1$ for all $a\in A$. We have
\begin{align*}
M_g\circ s_0(0,0)&=
\int_{A/A_g}s_0(g^{-1}a,\alpha^g(a))da\\
&=\int_{gA\cap A/A_g}s_0(g^{-1}a,\alpha^g(a))da\\
&=\int_{gA\cap A/A_g}\psi (\alpha^g(a))da.
\end{align*}
Now the map
$\psi\circ \alpha^g$ is a non-degenerate character of second
degree on the finite group $gA\cap A/A_g$ in the sense of
\cite{Weil}. Hence by \cite[Theorem
A.2 (5)]{Rao}, we have
\[
\int_{gA\cap A/A_g}\psi (\alpha^g(a))da=|gA\cap
A/A_g|^{1/2}\gamma(\psi\circ\alpha^g),
\]
where $\gamma(\psi\circ\alpha^g)$ is the Weil index of
$\psi\circ\alpha^g$. (See \cite[Appendix]{Rao} for details.) In
particular, it is non-zero. Thus $M_g\circ s_0(0,0)\neq 0$. The
proposition follows.
\end{proof}

Then we have
\begin{Prop}
For the above defined $M_g$, we have $M_g\circ\rho_W(h)=\rho_W(g\cdot
h)\circ M_g$ for all $h\in H_\beta(W)$,  namely the element $(g, M_g)$ is
indeed in $\Spt(W)$ and $M_g$ defines the action for the Weil representation.
\end{Prop}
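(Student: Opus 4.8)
The plan is to verify the intertwining identity $M_g\circ\rho_W(h)=\rho_W(g\cdot h)\circ M_g$ directly on the model $S_A$ by a change-of-variables computation, and then observe that the remaining assertions are formal consequences. Recall that in the induced-picture realization of $\rho_W=\mathrm{ind}_{H_\beta(A)}^{H_\beta(W)}\psi_A$, the Heisenberg group acts by right translation: for $h_0=(w_0,z_0)\in H_\beta(W)$ and $f\in S_A$ we have $\rho_W(h_0)f(w,z)=f((w,z)\cdot(w_0,z_0))=f(w+w_0,z+z_0+\beta(w,w_0))$. Both sides of the desired identity are then functions on $H_\beta(W)$, and it suffices to evaluate them at an arbitrary $(w,z)$; since every function in $S_A$ satisfies $f(w,z)=\psi(z)f(w,0)$, one may further reduce to $z=0$.

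First I would write out the left-hand side: $M_g\circ(\rho_W(h_0)f)(w,0)$ is, by the defining formula \eqref{E:action_on_lattice_model} applied to the function $\rho_W(h_0)f$, the integral $\int_{A/A_g}\psi(\beta(a,w))\,(\rho_W(h_0)f)(g^{-1}(a+w),\alpha^g(a+w))\,da$, which upon expanding $\rho_W(h_0)$ becomes $\int_{A/A_g}\psi(\beta(a,w))f\big(g^{-1}(a+w)+w_0,\ \alpha^g(a+w)+z_0+\beta(g^{-1}(a+w),w_0)\big)\,da$. Second I would write out the right-hand side: $g\cdot h_0=(gw_0,z_0+\alpha_g(w_0))$ by the action of $\Sp(W)$ on $H_\beta(W)$ through the splitting, so $\rho_W(g\cdot h_0)\circ M_g f(w,0)=(M_g f)(w+gw_0,\ z_0+\alpha_g(w_0)+\beta(w,gw_0))$, and expanding $M_g f$ at the point $w+gw_0$ gives $\psi\big(z_0+\alpha_g(w_0)+\beta(w,gw_0)\big)\int_{A/A_g}\psi(\beta(a,w+gw_0))\,f\big(g^{-1}(a+w+gw_0),\ \alpha^g(a+w+gw_0)\big)\,da$. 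In the second integral I would substitute $a\mapsto a$ (the domain $A/A_g$ is unchanged since $gw_0\in W$ is fixed, but note $a+w+gw_0=(a+w)+gw_0$ and $g^{-1}(a+w+gw_0)=g^{-1}(a+w)+w_0$), matching the translate appearing on the left. What then remains is to check that the two accumulated phase factors agree, i.e. an identity among $\beta$ and $\alpha^g$ of the shape
\[
\psi(\beta(a,w))+\beta(g^{-1}(a+w),w_0)+\alpha^g(a+w)\ \overset{?}{=}\ \alpha_g(w_0)+\beta(w,gw_0)+\beta(a,w+gw_0)+\alpha^g(a+w+gw_0),
\]
read additively in $F$; this follows by writing $\alpha^g(u+gw_0)-\alpha^g(u)=\beta(g^{-1}u,w_0)-\beta(g^{-1}(u+gw_0),\cdot)\dots$ using that $\alpha^g\in\Sigma_{g^{-1}}$ (Lemma \ref{L:alpha^g}(1)), together with the cocycle relation $\beta(w_1+w_2,w_3)=\beta(w_1,w_3)+\beta(w_2,w_3)$ and $\beta(w_1,w_2)-\beta(w_2,w_1)=\langle w_1,w_2\rangle$, and finally $\alpha_g(w_0)=-\alpha^g(gw_0)$ by definition of $\alpha^g$.

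The main obstacle is bookkeeping rather than conceptual: the quadratic characters $\alpha^g$ are not additive, so each expansion of the form $\alpha^g(x+y)$ spawns a cross term $\beta(g^{-1}x,g^{-1}y)-\beta(x,y)$, and one must be careful that every such term either cancels against its partner from the other side or lands in $\P^r$ (hence is killed by $\psi$). I would organize the verification by isolating the $w_0$-linear part, the $a$-linear part, and the purely quadratic-in-$w_0$ part separately. Once the phase identity is checked, the statement follows formally: $M_g$ satisfies the defining relation of the metaplectic group, so $(g,M_g)\in\Spt(W)$; and since $M_g$ is not identically zero (previous proposition) and $\rho_W$ is irreducible (Stone--von Neumann), Schur's lemma shows $M_g$ is determined up to a scalar and is invertible, so it legitimately defines $\omega_\psi(g,M_g)$ on the lattice model $S_A$.
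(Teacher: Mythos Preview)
Your approach is essentially identical to the paper's: both sides are expanded using the defining integral for $M_g$ and right translation for $\rho_W$, the spatial arguments are matched after noting $g^{-1}(a+w+gw_0)=g^{-1}(a+w)+w_0$, and the residual phase identity is reduced to $\alpha^g((a+w)+gw_0)-\alpha^g(a+w)-\alpha^g(gw_0)=\beta(g^{-1}(a+w),w_0)-\beta(a+w,gw_0)$ together with $\alpha_g(w_0)=-\alpha^g(gw_0)$, exactly as in the paper. Aside from a stray $\psi$ in your displayed phase identity (the term should be $\beta(a,w)$, not $\psi(\beta(a,w))$) and the ellipsis in your cocycle expansion, the argument is correct and matches the paper's line by line.
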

\begin{proof}
Note that since $\rho_W$ is irreducible, if we can show $M_g\circ\rho_W(h)=\rho_W(g\cdot
h)\circ M_g$ for all $h$, it will imply $M_g$ is invertible and hence $(g, M_g)$ is
indeed in $\Spt(W)$.

Let $(w, z)\in H_\beta(W)$, and $f\in S_A$. Also let $h=(w', z')\in
H_\beta(W)$. Recalling $g\cdot h=g\cdot (w',z')=(gw',
z'+\alpha_g(w'))$, we have
\begin{align*}
&(\rho_W(g\cdot h)\circ M_g)\circ f (w, z)\\
&=\rho_W(g\cdot h)(M_g\circ f)(w,z)\\
&=M_g\circ f((w,z)(gw', z'+\alpha_g(w')))\\
&=M_g\circ f(w+gw', z+z'+\alpha_g(w')+\beta(w, gw'))\\
&=\int\psi(\beta(a, w+gw'))f(g^{-1}(a+w+gw'), z+z'+\alpha_g(w')+\beta(w,
gw')+\alpha^g(a+w+gw'))\;da\\
&=\int\psi(\beta(a, w))f(g^{-1}(a+w)+w', z+z'+\alpha_g(w')+\beta(w,
gw')+\alpha^g(a+w+gw')+\beta(a, gw'))\;da\\
&=\int\psi(\beta(a, w))f(g^{-1}(a+w)+w',
z+z'+\alpha_g(w')+\alpha^g(a+w+gw')+\beta(w+a, gw'))\;da,
\end{align*}
where all the integrals are over $A/A_g$.

On the other hand, 
\begin{align*}
&(M_g\circ \rho_W(h))f(w, z)\\
&=\int\psi(\beta(a, w))\rho(w', z')f(g^{-1}(a+w),
z+\alpha^g(a+w))\;da\\
&=\int\psi(\beta(a, w)) f((g^{-1}(a+w), z+\alpha^g(a+w))(w',
z'))\;da\\
&=\int\psi(\beta(a, w))f(g^{-1}(a+w)+w',
z+z'+\alpha^g(a+w)+\beta(g^{-1}(a+w), w'))\;da,
\end{align*}
where again all the integrals are over $A/A_g$.

In order for us to show that those two are equal, it suffices to show
\[
\alpha_g(w')+\alpha^g(a+w+gw')+\beta(w+a, gw')=\alpha^g(a+w)+\beta(g^{-1}(a+w), w'),
\]
namely
\[
\alpha^g(a+w+gw')-\alpha^g(a+w)+\alpha_g(w')=\beta(g^{-1}(a+w), w')-\beta(w+a, gw').
\]
But this follows because $\alpha_g(w')=-\alpha^g(gw')$ and
$\alpha^g\in\Sigma_{g^{-1}}$.
\end{proof}

Let us define
\[
\Gamma_A:=\{g\in\Sp(W): gA\subseteq A\}.
\]
One can see that $\Gamma_A$ is an open compact subgroup of
$\Sp(W)$. Note that the condition $gA\subseteq A$ implies $gA=A$
because $g$ is an isometry and hence preserves volume. (This also
applies to any lattice. See for
example \cite[\S 82:12]{O'Meara}.) Also we have
$\Gamma_A=\Sp_{2n}(\O)$.
If $g\in\Gamma_A$, then 
\begin{align*}
M_g\circ f(w, z)&=\int_{A/A_g}\psi(\beta(a, w)-\beta(g^{-1}a,
g^{-1}w)+\alpha^g(a+w))f(g^{-1}w, z)\;da\\
&=\int_{A/A_g}\psi(\alpha^g(a)+\alpha^g(w))f(g^{-1}w, z)\;da\\
&=\int_{A/A_g}\psi(\alpha^g(a))f(g^{-1}w, z+\alpha^g(w))\;da\\
&=\left(\int_{A/A_g}\psi(\alpha^g(a))\;da\right)f(g^{-1}w, z+\alpha^g(w)).
\end{align*}
where to obtain the second equality we used the property that
$\alpha^g\in\Sigma_{g^{-1}}$. This implies
\begin{Prop}\label{P:Gamma_A_action}
Each $g\in\Gamma_A$ acts on $f\in S_A$ by (non-zero scalar multiple
of) translation. To be more precise,
\[
M_g\circ f(w,z)=\left(\int_{A/A_g}\psi(\alpha^g(a))\;da\right)\cdot 
f(g^{-1}w, z+\alpha^g(w)).
\]
\end{Prop}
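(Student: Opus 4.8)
The plan is to unwind the definition $(\ref{E:action_on_lattice_model})$ of $M_g\circ f$, using two features special to $g\in\Gamma_A$: that $gA=A$, hence $g^{-1}a\in A$ for every $a\in A$; and that every element of $S_A$ satisfies the transformation laws $f(a'+w',z')=\psi(-\beta(a',w'))f(w',z')$ for $a'\in A$ and $f(w',z'+c)=\psi(c)f(w',z')$ for $c\in F$.

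First I would write $g^{-1}(a+w)=g^{-1}a+g^{-1}w$ in the integrand of $(\ref{E:action_on_lattice_model})$; since $g^{-1}a\in A$, the transformation laws let me pull $f(g^{-1}w,z)$ outside the integral at the cost of the extra characters $\psi(-\beta(g^{-1}a,g^{-1}w))$ and $\psi(\alpha^g(a+w))$, the latter absorbing the shift in the $F$-coordinate. This rewrites $M_g\circ f(w,z)$ as $f(g^{-1}w,z)$ times $\int_{A/A_g}\psi\bigl(\beta(a,w)-\beta(g^{-1}a,g^{-1}w)+\alpha^g(a+w)\bigr)\,da$. Next I would simplify this exponent: since $\alpha^g\in\Sigma_{g^{-1}}$ by Lemma $\ref{L:alpha^g}$, the defining identity of $\Sigma_{g^{-1}}$ applied with arguments $a$ and $w$ reads $\alpha^g(a+w)-\alpha^g(a)-\alpha^g(w)=\beta(g^{-1}a,g^{-1}w)-\beta(a,w)$, so the exponent collapses to $\alpha^g(a)+\alpha^g(w)$. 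Factoring $\psi(\alpha^g(a)+\alpha^g(w))=\psi(\alpha^g(a))\psi(\alpha^g(w))$, pulling $\psi(\alpha^g(w))$ out of the integral, and using $\psi(\alpha^g(w))f(g^{-1}w,z)=f(g^{-1}w,z+\alpha^g(w))$ gives precisely the asserted formula, with scalar $\int_{A/A_g}\psi(\alpha^g(a))\,da$.

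Finally I would record that this scalar is a finite sum and is nonzero: $A/A_g$ is a finite group on which $\psi\circ\alpha^g$ descends to a non-degenerate character of second degree (here one uses $\beta(a_1,a_2)\in\P^r$ for $a_1,a_2\in A$, valid because $A$ is self-dual and $gA=A$), so by \cite[Theorem A.2 (5)]{Rao} its Gauss sum has absolute value $|A/A_g|^{1/2}$, exactly as in the proof that $M_g$ is not identically zero. I do not expect any genuine obstacle here; the only point requiring care is tracking the $F$-coordinate through the Heisenberg multiplications and remembering, as in Remark $\ref{R:important}$, that the symbol $g^{-1}$ occurring in $(\ref{E:action_on_lattice_model})$ stands for $(g^{-1},\alpha^g)$, so that $\alpha^g$ and not $\alpha_{g^{-1}}$ enters the formula.
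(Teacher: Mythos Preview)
Your proposal is correct and follows essentially the same route as the paper: both unwind the definition of $M_g\circ f$, use $g^{-1}a\in A$ together with the transformation law for $f$ to pull out $f(g^{-1}w,z)$, then invoke $\alpha^g\in\Sigma_{g^{-1}}$ to collapse the exponent to $\alpha^g(a)+\alpha^g(w)$. Your explicit justification of the nonvanishing of the scalar via \cite[Theorem A.2 (5)]{Rao} is a small addition the paper leaves implicit here, having already established it in the proof that $M_g$ is not identically zero.
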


From the above integral formula for $M_g$, it is important to know when we
have $\psi(\alpha^g(a))=1$ for all $a\in A$ or equivalently
$\alpha^g(a)\in\P^r$ for all $a\in A$. For
this purpose, let us start with
\begin{Lem}\label{L:open_closed}
Let $B$ be a (not necessarily self-dual) lattice of $W$. Then the set
\[
G_B:=\{g\in\Sp(W): \alpha_g(w)\in\P^r\text{ for all } w\in B\}
\]
is open and closed. (We do not know if it is a group.)
\end{Lem}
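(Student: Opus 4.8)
The plan is to show both openness and closedness of $G_B$ directly from the explicit formula \eqref{E:alpha_g} for $\alpha_g$, exploiting the fact that the entries of the matrix $g=\begin{pmatrix}a&b\\c&d\end{pmatrix}$ depend continuously (indeed polynomially) on $g$, and that $\P^r$ is both open and closed in $F$. First I would fix an $\O$-basis $e_1,\dots,e_{2n}$ of $B$ (a lattice in $W$ has such a basis), so that $\alpha_g(w)\in\P^r$ for all $w\in B$ is \emph{not} quite equivalent to a finite list of conditions on the basis vectors, since $\alpha_g$ is only a character of second degree, not linear. The correct reduction is: $\alpha_g(w)\in\P^r$ for all $w\in B$ if and only if $\alpha_g(e_i)\in\P^r$ for all $i$ \emph{and} $\beta(ge_i,ge_j)-\beta(e_i,e_j)\in\P^r$ for all $i,j$; this follows from the cocycle identity $\alpha_g(w_1+w_2)-\alpha_g(w_1)-\alpha_g(w_2)=\beta(gw_1,gw_2)-\beta(w_1,w_2)$ (the defining property of $\Sigma_g$), applied inductively to write $\alpha_g$ on a general element of $B=\sum\O e_i$ in terms of its values on the $e_i$ and these ``defect'' terms, noting that $\O\cdot\P^r\subseteq\P^r$.

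Granting that reduction, $G_B$ is the set of $g\in\Sp(W)$ satisfying finitely many conditions of the form $\phi(g)\in\P^r$, where each $\phi$ is a polynomial function in the matrix entries of $g$ with coefficients in $F$ (for $\alpha_g(e_i)$ one reads the polynomial off \eqref{E:alpha_g}, writing the pairings $\la aw^+,cw^+\ra$ etc. in coordinates; for the defect terms $\beta(ge_i,ge_j)-\beta(e_i,e_j)$ this is even more visibly polynomial). Each such $\phi:\Sp(W)\to F$ is continuous, $\P^r$ is open and closed in $F$, so each preimage $\phi^{-1}(\P^r)$ is open and closed in $\Sp(W)$, and $G_B$, being a finite intersection of these, is open and closed as claimed.

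The step I expect to require the most care is the reduction from ``for all $w\in B$'' to a finite condition, precisely because $\alpha_g$ is quadratic rather than linear: one must keep track of the cross terms $\beta(ge_i,ge_j)-\beta(e_i,e_j)$ and verify that $\O$-scaling does not destroy membership in $\P^r$ (this is where $\P^r=\varpi^r\O$ being an $\O$-submodule is used), and one should double-check that the inductive expansion really does express $\alpha_g$ on all of $B$ in terms of the listed quantities with $\O$-coefficients. Once that bookkeeping is done, the openness/closedness is a soft consequence of continuity of polynomial maps and the topological nature of $\P^r$; no estimate or approximation argument is needed. (If one prefers to avoid coordinates entirely, one can instead argue that $g\mapsto\alpha_g|_B$ is a continuous map from $\Sp(W)$ into the space of continuous functions $B\to F/\P^r$ with the topology of pointwise convergence on the finite set $B/\varpi^N B$ for suitable $N$, and $G_B$ is the preimage of the single point $0$, which is open and closed since the target is discrete; but the coordinate argument above is the most self-contained.)
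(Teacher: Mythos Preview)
Your proof is correct and takes a genuinely different route from the paper's. Both arguments handle closedness the same way (arbitrary intersection of the closed sets $f_w^{-1}(\P^r)$), but for openness the paper exploits the homogeneity $\alpha_g(cw)=c^2\alpha_g(w)$ (so $f_w^{-1}(\P^r)\subseteq f_{cw}^{-1}(\P^r)$ for $c\in\O$) together with compactness of $B$ to pass to finitely many $w_i$, whereas you use the second-degree cocycle identity $\alpha_g(w_1+w_2)-\alpha_g(w_1)-\alpha_g(w_2)=\beta(gw_1,gw_2)-\beta(w_1,w_2)$ and an $\O$-basis $\{e_i\}$ of $B$ to reduce to the finite list of polynomial conditions $\alpha_g(e_i)\in\P^r$ and $\beta(ge_i,ge_j)-\beta(e_i,e_j)\in\P^r$. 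Your reduction is the more careful of the two: the paper's line ``since $B$ is compact, $B=\bigcup_i\O w_i$ for some finite union'' is not literally true for a lattice of rank $>1$, so as written that step needs repair (e.g.\ via a tube-lemma argument), while your cocycle-plus-basis bookkeeping is airtight and makes the finiteness completely explicit.
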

\begin{proof}
For each $w\in B$, consider the continuous map $f_w:\Sp(W)\rightarrow F$ defined
by $f_w(g)=\alpha_g(w)$. Then
\[
G_B=\bigcap_{w\in B}f_w^{-1}(\P^r).
\]
Note that $f^{-1}_w(\P^r)$ is open and closed, so $G_B$ is
closed. Next by looking at the description of $\alpha_g(w)$ in (\ref{E:alpha_g}), one can
see that $f^{-1}_w(\P^r)\subseteq f_{w'}^{-1}(\P^r)$ for all $w'\in\O
w$. Since $B$
is compact, we have $B=\bigcup_i\O w_i$ for some finite union. So
$G_B=\bigcap_i f_{w_i}^{-1}(\P^r)$, which is open.
\end{proof}

Now let us define
\[
\Gamma_A^\circ:=G_A\cap\Gamma_A=
\{g\in\Gamma_A:\alpha_g(a)\in\P^r\text{ for all $a\in A$}\}.
\]
Here let us emphasize that each
$g\in\Gamma_A\subseteq\Sp(W)$ should be interpreted as
$(g,\alpha_g)\in\Ps(W)$. (See Remark \ref{R:important}.)

\begin{Lem}\label{L:subgroup}
$\Gamma_A^\circ$ is an open compact subgroup of $\Sp(W)$.
\end{Lem}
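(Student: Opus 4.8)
The plan is to verify the three group axioms for $\Gamma_A^\circ$ as a subset of $\Sp(W)$, keeping in mind Remark~\ref{R:important} that each $g\in\Sp(W)$ is really the pair $(g,\alpha_g)\in\Ps(W)$, and then invoke Lemma~\ref{L:open_closed} together with the fact that $\Gamma_A=\Sp_{2n}(\O)$ is already compact to conclude openness and compactness. Since $\Gamma_A^\circ=G_A\cap\Gamma_A$ and both $G_A$ and $\Gamma_A$ are open and closed by Lemma~\ref{L:open_closed} (note $\Gamma_A$ is the special case $B=A$, $g=\operatorname{id}$ aside — more simply $\Gamma_A$ is manifestly open compact), the intersection is automatically open, closed, and contained in the compact group $\Gamma_A$, hence compact. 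So the only real content is that $\Gamma_A^\circ$ is closed under multiplication and inverses.

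For closure under products, take $g,h\in\Gamma_A^\circ$. Then $gh\in\Gamma_A$ since $\Gamma_A$ is a group, and by Lemma~\ref{L:splitting} the splitting is a homomorphism, so $\alpha_{gh}=h^{-1}\cdot\alpha_g+\alpha_h$, i.e. $\alpha_{gh}(a)=\alpha_g(ha)+\alpha_h(a)$ for $a\in A$. Since $h\in\Gamma_A$ we have $ha\in A$, hence $\alpha_g(ha)\in\P^r$ because $g\in\Gamma_A^\circ$; and $\alpha_h(a)\in\P^r$ because $h\in\Gamma_A^\circ$. Therefore $\alpha_{gh}(a)\in\P^r$ for all $a\in A$, so $gh\in\Gamma_A^\circ$. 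For inverses, recall from Remark~\ref{R:important} that the inverse of $(g,\alpha_g)$ in $\Ps(W)$ is $(g^{-1},\alpha^g)$ with $\alpha^g(w)=-\alpha_g(g^{-1}w)$, so the ``element of $\Sp(W)$'' called $g^{-1}$ carries the cocycle $\alpha^g$ rather than $\alpha_{g^{-1}}$; thus $g\in\Gamma_A^\circ$ should mean $g^{-1}$ lies in $\Gamma_A^\circ$ precisely when $\alpha^g(a)\in\P^r$ for all $a\in A$. But for $g\in\Gamma_A^\circ$ we have $g^{-1}A=A$ (as $g\in\Gamma_A$), so for $a\in A$ we get $g^{-1}a\in A$ and hence $\alpha^g(a)=-\alpha_g(g^{-1}a)\in\P^r$. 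Hence $g^{-1}\in\Gamma_A^\circ$. The identity obviously lies in $\Gamma_A^\circ$ since $\alpha_{\operatorname{id}}=0$.

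Putting this together: $\Gamma_A^\circ$ is a subgroup of $\Sp(W)$; it is open because it equals $G_A\cap\Gamma_A$, an intersection of an open set with an open set (Lemma~\ref{L:open_closed} and the openness of $\Gamma_A$); and it is compact because it is a closed subset of the compact group $\Gamma_A=\Sp_{2n}(\O)$. I expect no serious obstacle here; the one point requiring care — and the thing most likely to trip up a careless reader — is the bookkeeping around inverses, namely remembering that in $\Ps(W)$ the relevant second-degree character attached to $g^{-1}$ is $\alpha^g$ and not $\alpha_{g^{-1}}$, and using $g^{-1}A=A$ to transfer the condition $\alpha_g|_A\in\P^r$ into $\alpha^g|_A\in\P^r$. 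Everything else is a direct consequence of the homomorphism property in Lemma~\ref{L:splitting} and the definitions.
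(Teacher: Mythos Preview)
Your proof is correct and follows essentially the same approach as the paper: openness and compactness via Lemma~\ref{L:open_closed} intersected with the open compact $\Gamma_A$, closure under products via the cocycle relation $\alpha_{gh}(a)=\alpha_g(ha)+\alpha_h(a)$, and closure under inverses via $\alpha^g(a)=-\alpha_g(g^{-1}a)$ with $g^{-1}a\in A$. You even flag the same subtlety the paper emphasizes, namely that for inverses one must check $\alpha^g|_A\in\P^r$ rather than $\alpha_{g^{-1}}|_A\in\P^r$.
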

\begin{proof}
By the above lemma and the fact that $\Gamma_A$ is open and compact,
one can conclude that $\Gamma_A^\circ$ is open and compact.

To show it is a subgroup, let $g, h\in\Gamma_A^\circ$. For $a\in A$, we have
$\alpha_{gh}(a)=h^{-1}\cdot\alpha_g(a)+\alpha_h(a)
=\alpha_g(ha)+\alpha_h(a)\in\P^r$
because $ha\in A$. Hence $gh\in\Gamma_A^\circ$. Now let
$g\in\Gamma_A^\circ$. To show $g^{-1}\in\Gamma_A^\circ$, one needs to show
$\alpha^g(a)\in\P^r$ for all $a\in A$. (Here what is needed is not
$\alpha_{g^{-1}}(a)\in\P^r$! See Remark \ref{R:important}.) But since
$\alpha^g(a)=-\alpha_g(g^{-1}a)$, and $g^{-1}a\in A$, we have
$-\alpha_g(g^{-1}a)\in\P^r$. Hence $\Gamma_A^\circ$ is a subgroup.
\end{proof}

Now for the integral defining $M_g$, if we choose the measure $da$ so
that the volume of $A/A_g$ is $1$, one has
\[
M_g\circ f(w, z)=f(g^{-1}w, z+\alpha^g(w))
\quad\text{for all $g\in\Gamma_A^\circ$}.
\]
Note that
\[
M_{g^{-1}}\circ f(w, z)=f(gw, z+\alpha_g(w)).
\]
by Remark \ref{R:important2}.
One can check that for $g, h\in\Gamma_A^\circ$, we have
$M_{gh}\circ f =M_h\circ (M_g\circ f)$ by using
$\alpha_{gh}=h^{-1}\cdot\alpha_g+\alpha_h$. This shows
$\Gamma_A^\circ$ splits in the metaplectic cover $\Spt(W)$. 

Indeed if the residue characteristic of $F$ is odd, one can see
$\psi(\alpha^g(a))=1$ for any $g\in\Gamma_A$ and $a\in A$. This is
because from the explicit description of $\alpha_g$ as in
(\ref{E:alpha_g}) together with the fact that $\frac{1}{2}$ is a unit
in $\O$, one can see that all the three terms in the definition of
$\alpha_g(a)$ are in $\P^r$. Hence $\Gamma_A^\circ=\Gamma_A$. So this
explains the well-known splitting of $\Sp(\O)$ in $\Spt(W)$.

If the residue characteristic of $F$ is even, we no
longer have  $\psi(\alpha^g(a))=1$ for every $g\in\Gamma_A$ and
$a\in A$. Yet, the above lemma shows that for
a sufficiently small open compact subgroup $\Gamma_A^\circ$of
$\Gamma_A$, we do have $\psi(\alpha^g(a))=1$ for any
$g\in\Gamma_A^\circ$ and any
$a\in A$. Hence we have the analogous splitting of this open compact subgroup,
which also explains the well-known fact that a certain open compact
subgroup of $\Sp(W)$ splits in $\Spt(W)$ for the case of even residual
characteristic. 

\begin{Rmk}
Not only the group $\Gamma_A^\circ$ but also various other subgroups
of $\Sp(W)$ are known to be split in the metaplectic cover
$\Spt(W)$. Whenever $H$ is a subgroup of $\Sp(W)$ which splits in
$\Spt(W)$, for each $h\in H$ and $f\in S_A$ we usually denote
$M_h\circ f$ by $\omega_\psi(h)f$ or simply $\omega(h)f$ because the
additive character $\psi$ is fixed throughout the paper.
\end{Rmk}


\section{\bf On lattices}\label{S:lattices}


In this section we let $(\V, \la-,-\ra)$ be an $\epsilon$-Hermitian
space over $E$, where $E$ is either $F$ or a quadratic extension of
$F$. In particular, we have
\[
\la cv, c'v'\ra= c\bar{c'}\la v, v'\ra\quad\text{ and }\quad
\overline{\la v, v'\ra}=\epsilon\la v', v\ra
\] 
for $v, v'\in \V$ and $c, c'\in E$. (Recall from the notation section that for each $c\in E$, we
denote $\bar{c}=c$ if $E=F$, and $\bar{c}=\tau(c)$ where $\tau$
is the nontrivial element in $\Gal(E/F)$ if $E$ is a quadratic
extension of $F$.) We always assume that $E$ is unramified over $F$
when $E\neq F$.

By a lattice $L$ of $\V$, we mean a free $\OE$-module
of rank equal to $\dim \V$. For any lattice $L\subseteq\V$, we define the dual
lattice $L^\perp$ with respect to an integer $r$ by
\[
L^\perp=\{v\in\V:\la v, l\ra\in\PE^r \text{ for all $l\in L$} \}.
\]
(In this paper $r$ is usually reserved for the exponential conductor
of $\psi$ but in this section we use $r$ for any fixed integer.) A
lattice $L$ is called a self-dual lattice (with respect to
$r$) if $L^\perp=L$. 

Not every $\epsilon$-Hermitian space has a
self-dual lattice, and even when it does, we
sometimes need some restriction on $r$. To be specific, we have

\begin{Lem}\label{L:self-dual_lattice}
An $\epsilon$-Hermitian space $(\V, \la\;,\;\ra)$ admits a self-dual
lattice (with an occasional restriction on $r$) if it is one of the following:
\begin{enumerate}[(a)]
\item $\V$ is symplectic, namely $E=F$ and $\epsilon=-1$. ($r$ can be any.)
\item $\V$ is symmetric, namely $E=F$ and $\epsilon=1$,  where the
  anisotropic part $\V^a$ is one of the following:
\begin{enumerate}[-]
\item $\V^a=0$; ($r$ can be any.)
\item $\V^a=F$ and $\la a, b\ra=\eta ab$ for $a, b\in F$ where
  $\eta\in\O^\times$; ($r$ has to be even.)
\item $\V^a=F'$ where $F'$ is an unramified quadratic extension of
  $F$ equipped with the norm form, namely for $x, y\in F'$, we have
  $\la x, y\ra=\frac{1}{2}(x\bar{y}+\bar{x}y)$ where the bar is the
  conjugation for the quadratic extension $F'/F$. ($r$ has to be even.)
\end{enumerate}
\item $\V$ is Hermitian, namely $E$ is a quadratic unramified
  extension over $F$ and
  $\epsilon=\pm1$, where the anisotropic part $\V^a$ is one of the
  following:
\begin{enumerate}[-]
\item $\V^a=0$; ($r$ can be any.)
\item $\V^a=E$ equipped with the norm form if $\epsilon=1$, namely
  $\la x, y\ra=x\bar{y}$ for $x, y\in E$, and
  $\eta$ times the norm form if $\epsilon=-1$ where $\eta$ is an
  element in $\OE^\times$ such that $\bar{\eta}=-\eta$, namely $\la x,
  y\ra=\eta x\bar{y}$. ($r$ has to be even.)
\end{enumerate}
\end{enumerate}
\end{Lem}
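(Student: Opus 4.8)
The plan is to prove Lemma \ref{L:self-dual_lattice} case by case, in each case writing down an explicit lattice and verifying self-duality directly from the bilinear/Hermitian form. The guiding principle throughout is that an orthogonal sum $\V = \V_1 \perp \V_2$ of spaces carrying self-dual lattices $L_1, L_2$ (with respect to the same $r$) carries the self-dual lattice $L_1 \oplus L_2$, since $(L_1 \oplus L_2)^\perp = L_1^\perp \oplus L_2^\perp$ under an orthogonal decomposition. Hence it suffices to treat (i) a hyperbolic plane in each of the three signatures, and (ii) each of the listed anisotropic kernels, and then combine. So first I would record the reduction to hyperbolic planes and anisotropic kernels, using Witt decomposition $\V = \V^h \perp \V^a$ where $\V^h$ is a sum of hyperbolic planes. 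One subtlety: Witt decomposition needs the space to be nondegenerate, which we have assumed; and for the case $r$ odd versus even, the restriction will come entirely from the anisotropic kernel, so the hyperbolic part should be handled for arbitrary $r$.

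Next I would do the hyperbolic plane computation. In a hyperbolic plane with basis $x, y$ satisfying $\la x, x\ra = \la y, y\ra = 0$ and $\la x, y\ra = 1$ (so $\la y, x\ra = \overline{\epsilon^{-1}} = \epsilon$ in the symplectic/symmetric case, or the appropriate conjugate in the Hermitian case), I claim $L = \PE^{a} x \oplus \PE^{b} y$ is self-dual with respect to $r$ precisely when $a + b = r$. Indeed, $v = \alpha x + \beta y$ pairs integrally against all of $L$ iff $\alpha \overline{\PE^{b}} \subseteq \PE^{r}$ and $\beta \overline{\PE^{a}} \subseteq \PE^{r}$, i.e. $\alpha \in \PE^{r-b}$ and $\beta \in \PE^{r-a}$; self-duality forces $r - b = a$, i.e. $a + b = r$, which is always solvable in integers. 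This disposes of (a) entirely (take $\V$ symplectic, which by the classification of symplectic spaces is a sum of hyperbolic planes), and disposes of the $\V^a = 0$ subcases of (b) and (c).

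Then I would handle the three anisotropic kernels one at a time. For $\V^a = F$ with $\la a, b\ra = \eta ab$, $\eta \in \O^\times$: the lattice $\P^{s}\O = \varpi^{s}\O$ has dual $\{x : \eta \varpi^{2s} x \O \subseteq \P^{r}\} = \varpi^{r - 2s}\O$ (using $\eta$ a unit), so self-duality requires $s = r - 2s$, i.e. $r = 3s$ — wait, I should instead directly ask for which $r$ some $\varpi^s\O$ is self-dual: $\varpi^s \O = \varpi^{r-2s}\O$ iff $r = 2s$, so $r$ must be even, and then $s = r/2$ works. For $\V^a = F'$ with the norm form $\la x, y\ra = \tfrac12(x\bar y + \bar x y)$: here I would take $L = \OFP$ (scaled by a power of $\varpi$), compute $L^\perp$ using that the trace pairing $\tfrac12\tr_{F'/F}(x\bar y)$ on $\OFP$ is a perfect pairing valued in $\O$ because $F'/F$ is unramified (the different is trivial, and $\tfrac12$ is harmless here only if... actually $2$ may be a non-unit, so I must be careful: with $F'/F$ unramified of degree $2$ the inverse different is $\OFP$, and the pairing $(x,y)\mapsto \tr_{F'/F}(x\bar y)$ is $\O$-valued and perfect on $\OFP$; the extra $\tfrac12$ then forces a shift by $e = \ord_F 2$), leading again to a self-dual lattice with respect to even $r$ — I should track the exact parity/shift condition, $r$ even, from this. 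The Hermitian case $\V^a = E$ with $\la x,y\ra = x\bar y$ (or $\eta x \bar y$, $\bar\eta = -\eta$, $\eta$ a unit): take $L = \PE^{s}$; then $L^\perp = \{x : x \overline{\PE^s} \subseteq \PE^r\} = \PE^{r-s}$ since conjugation preserves $\PE$ ($E/F$ unramified), self-dual iff $r = 2s$, so again $r$ even.

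The main obstacle I expect is the $\tfrac12$ in the norm-form normalizations for the symmetric case $\V^a = F'$ and the Hermitian case with $\la x,y\ra = \eta x\bar y$: when the residual characteristic is $2$, $\tfrac12$ is not a unit, so one cannot naively use the standard fact that $\OFP$ (resp. $\OE$) is self-dual for the trace/norm pairing. I would resolve this by carefully computing the dual of $\varpi^s \OFP$ with respect to $\la x,y\ra = \tfrac12 \tr_{F'/F}(x\bar y)$, keeping the factor $2 = \varpi^e \cdot(\text{unit})$ explicit, and showing the self-duality condition becomes $r = 2s - e$ or similar, which still has an integer solution exactly when $r \equiv e \pmod 2$; but since the lemma only claims "$r$ has to be even" I should double-check whether the intended normalization of $\psi$ (conductor $r$) combines with $e$ so that the stated parity condition is the clean one, or whether the honest statement carries an $e$-dependence — in which case I would state the sharp condition and note it reduces to "even" in the tame case. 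The rest is routine bookkeeping with lattices and the observation $gL \subseteq L \Rightarrow gL = L$ for isometries already noted in the text.
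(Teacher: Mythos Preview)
Your approach is essentially the same as the paper's: reduce to an orthogonal sum of hyperbolic planes and an anisotropic kernel, and write down an explicit self-dual lattice in each piece. The paper does this more tersely by simply listing, in each case, a basis $\{e_1,\dots,f_n,\dots\}$ (plus anisotropic vectors $v$ or $v_1,v_2$ when needed) together with the values of the form on basis pairs, from which self-duality is immediate by inspection; it does not spell out the duality computation you carry out. So your proposal is correct and follows the same route, only with more verification shown.

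Two small comments. First, in your $\V^a=F$ computation you momentarily wrote the dual of $\varpi^s\O$ as $\varpi^{r-2s}\O$ before correcting yourself; the dual is $\varpi^{r-s}\O$ (the pairing $\la x,\ell\ra=\eta x\ell$ is linear in $\ell$, not quadratic), giving $r=2s$ directly. Second, your worry about the $\tfrac12$ in the $\V^a=F'$ case is unnecessary: on the $\O$-basis $\{1,\eta\}$ of $\OFP$ (with $\bar\eta=-\eta$, possible since $\tr_{F'/F}$ is surjective), the Gram matrix of $\la x,y\ra=\tfrac12(x\bar y+\bar x y)$ is $\operatorname{diag}(1,\eta\bar\eta)$, already in $\GL_2(\O)$---the $\tfrac12$ cancels against the $2$ coming from $x\bar y+\bar x y$ on basis vectors. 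Hence $\varpi^{r/2}\OFP$ is self-dual for $r$ even with no hidden $e$-shift, exactly as the paper's explicit basis $v_1=\varpi^{r/2},\,v_2=\eta\varpi^{r/2}$ shows. The parity condition ``$r$ even'' is sharp as stated.
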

\begin{proof}
This list is as in \cite[p.100]{MVW}, although there it is always assumed
$r=0$. But for later convenience, let us describe the self-dual
lattices for all the cases in detail.

\begin{enumerate}[(a)]
\item $\V$ is symplectic; Then $\dim \V=\text{even}=2n$ and $\V$ admits a basis
  $\{e_1,\dots, e_n, f_1,\dots,f_n\}$ so that
  $\la e_i, f_j\ra=\delta_{ij}\varpi^r$, $\la e_i, e_j\ra=\la
  f_i, f_j\ra=0$, and further 
\[
L=\Span_{\O}\{e_1,\dots, e_n, f_1,\dots,f_n\}.
\]
\item $\V$ is symmetric;
\begin{enumerate}[-]
\item $\V^a=0$; Then $\dim \V=\text{even}=2n$ and $\V$
  admits a basis $\{e_1,\dots, e_n, f_1,\dots,f_n\}$ so that
  $\la e_i, f_j\ra=\delta_{ij}\varpi^r$ and $\la e_i, e_j\ra=\la
  f_i, f_j\ra=0$, and further 
\[
L=\Span_{\O}\{e_1,\dots, e_n, f_1,\dots,f_n\}.
\]
\item $\V^a=F$; Then $\dim \V=\text{odd}=2n+1$ and $\V$
  admits a basis $\{e_1,\dots, e_n, f_1,\dots,f_n, v\}$ with
  $\V^a=\Span_F\{v\}$ where $v=\varpi^{r/2}$ so that
  $\la e_i, f_j\ra=\delta_{ij}\varpi^r$, $\la e_i, e_j\ra=\la
  f_i, f_j\ra=\la e_i, v\ra=\la f_i, v\ra=0$, and $\la v,
  v\ra=\eta\varpi^{r}$ (assuming $r$ is  even), and further 
\[
L=\Span_{\O}\{e_1,\dots, e_n, f_1,\dots,f_n, v\}.
\]
\item $\V^a=F'$; First note that since $F'$ is unramified over $F$,
  we can write $F'=F\oplus\eta F$ where $\eta\in F'$ is such that $\eta\bar{\eta}\in
  \O^\times$. Then $\dim \V=\text{even}=2n+2$ and $\V$ admits a basis
  $\{e_1,\dots, e_n, f_1,\dots,f_n, v_1, v_2\}$ with
  $\V^a=\Span_F\{v_1, v_2\}$ where $v_1=\varpi^{r/2}$ and
  $v_2=\eta\varpi^{r/2}$ so that
 $\la e_i, f_j\ra=\delta_{ij}\varpi^r$, $\la e_i, e_j\ra=\la
  f_i, f_j\ra=\la e_i, v\ra=\la f_i, v_i\ra=\la v_1,
  v_2\ra=0$, $\la v_1,v_1\ra=\varpi^r$ and $\la
  v_2,v_2\ra=-\eta\bar{\eta}\varpi^r$, and 
 further 
\[
L=\Span_{\O}\{e_1,\dots, e_n, f_1,\dots,f_n, v_1, v_2\}.
\]
\end{enumerate}
\item $\V$ is Hermitian;
\begin{enumerate}[-]
\item $\V^a=0$; Then $\dim_E\V=\text{even}=2n$, and $\V$ admits an $E$-basis
  $\{e_1,\dots, e_n, f_1,\dots,f_n\}$ so that
  $\la e_i, f_j\ra=\delta_{ij}\varpi^r$ and $\la e_i, e_j\ra=\la
  f_i, f_j\ra=0$, and further 
\[
L=\Span_{\OE}\{e_1,\dots, e_n, f_1,\dots,f_n\}.
\]
\item $\V^a=E$;  We let
  $v=\varpi^{r/2}$. We have $\la v,
  v\ra=u\varpi^r$ for some unit
  $u\in\OE^\times$. Then $\dim_E \V=\text{odd}=2n+1$,
and
$\V$ admits an $E$-basis 
$\{e_1,\dots, e_n, f_1,\dots,f_n, v\}$ so that
$\la e_i, f_j\ra=\delta_{ij}\varpi^r$, $\la e_i, e_j\ra=\la
  f_i, f_j\ra=\la e_i, v\ra=\la f_i, v\ra=0$ and $\la v,
  v\ra=u\varpi^r$, and further 
\[
L=\Span_{\OE}\{e_1,\dots, e_n, f_1,\dots,f_n, v\}.
\]
\end{enumerate}
\end{enumerate}
\end{proof}

From this lemma, one can conclude
\begin{Lem}\label{L:lattice_decomposition}
Let $L\subseteq\V$ be a self-dual lattice of $\V$. Then there is a
decomposition $\V=\V^+\oplus\V^a\oplus\V^-$, where
$\V^a$ is the anisotropic part and both $\V^+$ and $\V^-$
are totally isotropic of the same dimension, so that $\V^+\oplus
\V^-$ is a product of copies of the $\epsilon$-Hermitian hyperbolic
planes, such that
\[
L=\V^+\cap L\oplus\V^a\cap L\oplus\V^-\cap L.
\]
We often write
\[
L^+=\V^+\cap L,\quad L^a=\V^a\cap L,\quad L^-=\V^-\cap L.
\]
\end{Lem}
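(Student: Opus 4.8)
The plan is to prove the decomposition by induction on the Witt index of $\V$, at each stage splitting off one $\epsilon$-Hermitian hyperbolic plane as an orthogonal direct summand of the \emph{lattice} $L$; this amounts to showing that every self-dual lattice occurs in the explicit shape recorded in the proof of Lemma~\ref{L:self-dual_lattice}. First I would normalize: replacing $\la-,-\ra$ by $\varpi^{-r}\la-,-\ra$ turns ``self-dual with respect to $r$'' into ``self-dual with respect to $0$'' without affecting the conclusion, so I assume $r=0$, i.e. $L=L^\perp=\{v\in\V:\la v,L\ra\subseteq\OE\}$; then $L$ is integral and its Gram matrix in any $\OE$-basis has unit determinant. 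If $\V$ is anisotropic there is nothing to prove: take $\V^+=\V^-=0$ and $\V^a=\V$. Otherwise the crux is to peel off a hyperbolic plane and recurse on its orthogonal complement, which has strictly smaller Witt index and the same anisotropic kernel.

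To split off a hyperbolic plane I would first produce a primitive isotropic vector $e\in L$ (meaning $e\notin\varpi L$ and $\la e,e\ra=0$). Since $L$ is integral and unimodular, the reduction $\bar L=L/\varpi L$ inherits a nondegenerate $\epsilon$-Hermitian form over the residue field of $E$, of dimension $\dim_E\V>\dim_E\V^a$; for each anisotropic kernel $\V^a$ permitted in Lemma~\ref{L:self-dual_lattice} the classification of $\epsilon$-Hermitian forms over a finite field shows $\bar L$ contains an isotropic line, and lifting a generator by successive approximation (Hensel's lemma) yields $e$. Because $L=L^\perp$, there is $g\in L$ with $\la e,g\ra\in\OE^\times$; scaling $g$ by a unit we take $\la e,g\ra=1$, and, when $2$ is a unit, replacing $g$ by $g-\frac{1}{2}\la g,g\ra e$ we further arrange $\la g,g\ra=0$ while keeping $\OE e\oplus\OE g$ unchanged. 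Then $H:=\Span_E\{e,g\}$ is an $\epsilon$-Hermitian hyperbolic plane, $L\cap H=\OE e\oplus\OE g$ is unimodular, and unimodularity of $L\cap H$ forces the orthogonal splitting $L=(L\cap H)\oplus(L\cap H^\perp)$ with $L\cap H^\perp$ unimodular in $H^\perp$. Iterating, collecting the planes into $\V^+\oplus\V^-$ with $\V^+$ the span of the chosen isotropic vectors $e$ and $\V^-$ the span of their isotropic partners, and letting $\V^a$ be the final anisotropic complement, produces the asserted decomposition, and by construction $L=(\V^+\cap L)\oplus(\V^a\cap L)\oplus(\V^-\cap L)$.

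The main obstacle is the step ``replace $g$ by $g-\frac{1}{2}\la g,g\ra e$'' when the residue characteristic of $F$ is $2$: then $\frac{1}{2}\notin\OE$, a dual partner of $e$ inside $L$ cannot in general be straightened to an isotropic vector, and a self-dual lattice in a hyperbolic plane need not be the standard one $\OE e\oplus\OE f$. Thus the orthogonal case in residual characteristic $2$ requires a more careful case-by-case argument keyed to the explicit forms of $\V^a$ in Lemma~\ref{L:self-dual_lattice} (which is precisely why only the unramified quadratic and Hermitian extensions appear there), whereas in the symplectic and Hermitian cases, and in all cases of odd residual characteristic, the argument above goes through verbatim. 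Alternatively, one may invoke the structure theory of $\epsilon$-Hermitian lattices (e.g. \cite[Ch.~5]{MVW} for $E=F$) to the effect that every self-dual lattice in the spaces of Lemma~\ref{L:self-dual_lattice} is $U(\V)$-equivalent to the explicit one exhibited there, which visibly has the decomposition with $\V^+=\Span_E\{e_1,\dots,e_n\}$, $\V^-=\Span_E\{f_1,\dots,f_n\}$, and $\V^a$ spanned by the listed anisotropic vectors.
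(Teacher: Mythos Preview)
The paper does not argue at all: it simply reads the decomposition off the explicit self-dual lattice exhibited in the proof of Lemma~\ref{L:self-dual_lattice}, where $\V^+=\Span_E\{e_i\}$, $\V^-=\Span_E\{f_i\}$ and $\V^a$ is spanned by the remaining anisotropic basis vectors; for that particular $L$ the splitting $L=L^+\oplus L^a\oplus L^-$ is built into the construction.

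Your proposal is more ambitious in that it tries to establish the decomposition for an \emph{arbitrary} self-dual lattice by inductively splitting off hyperbolic summands. In the symplectic and (unramified) Hermitian cases this works: after scaling any isotropic vector of $\V$ to a primitive $e\in L$, one can always adjust a dual partner $g\in L$ to an isotropic vector inside $L$ (using surjectivity of $a\mapsto a+\epsilon\bar a$ from $\OE$ onto the relevant target), and the unimodular rank-two summand splits off orthogonally. But in the orthogonal case with even residual characteristic the literal statement is \emph{false}, so neither your ``more careful case-by-case argument'' nor the appeal to $U(\V)$-transitivity on self-dual lattices can succeed. A concrete counterexample: take $\V$ the orthogonal hyperbolic plane over $F$ (so $\V^a=0$), normalize $r=0$, and let $L=\O^2$ with Gram matrix $\begin{pmatrix}0&1\\1&1\end{pmatrix}$; then $L$ is self-dual, the only isotropic lines of $\V$ are $F(1,0)$ and $F(1,-2)$, and $(L\cap F(1,0))\oplus(L\cap F(1,-2))=\O(1,0)+\O(0,2)$ has index $|\O/2\O|>1$ in $L$. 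This $L$ is the odd unimodular lattice in the hyperbolic plane, and it is not isometric over $\O$ to the even one with Gram matrix $\begin{pmatrix}0&1\\1&0\end{pmatrix}$ used in Lemma~\ref{L:self-dual_lattice}.

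The upshot is that Lemma~\ref{L:lattice_decomposition} should be read as a statement about the specific lattice of Lemma~\ref{L:self-dual_lattice} (which is all the paper ever uses, since in Section~\ref{S:dual_pair} one simply fixes $L_1,L_2$ to be those lattices), or else one must add a hypothesis such as ``$L$ is even'' in the symmetric case. Under either reading your inductive argument is fine outside the orthogonal even-residue case and gives a genuine alternative to the paper's by-inspection proof; but as written, the claim you are trying to prove in that remaining case does not hold.
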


\quad\\

We will recall some of the properties
of self-dual lattices, all of which are essentially in \cite[p.107-112]{MVW},
though in \cite{MVW} it is always assumed $r=0$. 

Note that the quotient $L/\varpi L$ is viewed as an
$\epsilon$-Hermitian space over the residue field $\OE/\varpi\OE$ by
reducing $\varpi^{-r}\la-,-\ra$ mod $\varpi\OE$. For each
$v\in\V$, we denote by $\bar{v}$ the image of $v$ in $L/\varpi L$.

Let us mention a couple of properties on the $\epsilon-$Hermitian spaces over
the finite field $\OE/\varpi\OE$.

\begin{Lem}\label{L:Finite_Witt}
Let $\overline{L}:=L/\varpi L$ be the $\epsilon-$Hermitian spaces over
the finite field $\OE/\varpi\OE$.
\begin{enumerate}
\item Assume $\overline{X}$ is a totally isotropic subspace of
  $\overline{L}$. Then there exist subspaces $\overline{Y}$ and
  $\overline{L}^\circ$ sucht that $\overline{Y}$ is totally isotropic
  with $\dim\overline{Y}=\dim\overline{Y}$, and we have the
  decomposition
  $\overline{L}=\overline{X}\oplus\overline{L}^\circ\oplus\overline{Y}$
  where $\overline{X}\oplus\overline{Y}$ is orthogonal to
  $\overline{L}^\circ$.
\item Let $\overline{X}$ and $\overline{X}'$ be two subspaces of
  $\overline{L}$ with $\dim\overline{X}=\dim\overline{X}'$. Then any
  isometry $\overline{X}\rightarrow\overline{X}'$ can be extended to
  an isometry on $\overline{L}$.
\end{enumerate}
\end{Lem}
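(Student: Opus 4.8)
The plan is to prove Lemma \ref{L:Finite_Witt} by reducing everything to the classical structure theory of non-degenerate $\epsilon$-Hermitian (including symmetric bilinear and alternating) forms over a finite field $k:=\OE/\varpi\OE$. First I would record the key point that makes this work: because $E/F$ is unramified, the residue field extension $(\OE/\varpi\OE)/(\O/\varpi\O)$ is exactly the relevant extension (quadratic in the Hermitian case, trivial otherwise), and because the self-dual lattice $L$ satisfies $L=L^\perp$ with respect to $\varpi^r\la-,-\ra$, the rescaled form $\varpi^{-r}\la-,-\ra$ takes values in $\OE$ on $L\times L$ and its reduction $\overline{\la-,-\ra}$ on $\overline L=L/\varpi L$ is non-degenerate. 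So $\overline L$ is a non-degenerate finite $\epsilon$-Hermitian space, and the whole lemma is a statement purely about such spaces. For part (1) I would also need the elementary fact that over a finite field every non-degenerate $\epsilon$-Hermitian space that contains a nonzero isotropic vector contains a hyperbolic plane — this is the standard argument: given isotropic $\bar x\neq 0$, pick $\bar z$ with $\la\bar x,\bar z\ra\neq 0$ (possible by non-degeneracy), and adjust $\bar z$ by a multiple of $\bar x$ to kill $\la\bar z,\bar z\ra$; solvability of the resulting equation uses that the relevant trace or norm map on the finite field is surjective, which is where finiteness (and, in even characteristic for the symmetric case, a small separate argument) enters.

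For part (1), I would argue by induction on $\dim\overline X$. If $\overline X=0$ take $\overline Y=0$, $\overline L^\circ=\overline L$. Otherwise pick $0\neq\bar x_1\in\overline X$; since $\bar x_1$ is isotropic, by the fact just quoted there is an isotropic $\bar y_1$ with $\la\bar x_1,\bar y_1\ra=1$, spanning a hyperbolic plane $\overline H_1$. Then $\overline L=\overline H_1\oplus\overline H_1^\perp$ with $\overline H_1^\perp$ non-degenerate, and $\overline X=k\bar x_1\oplus(\overline X\cap\overline H_1^\perp)$ with $\overline X\cap\overline H_1^\perp$ totally isotropic of dimension $\dim\overline X-1$ inside $\overline H_1^\perp$ (one checks $\overline X\subseteq\overline X^\perp$ forces $\overline X$ to meet $\overline H_1^\perp$ in the expected codimension-one subspace, using that $\bar y_1\notin\overline X^\perp$). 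Applying the inductive hypothesis inside $\overline H_1^\perp$ gives $\overline H_1^\perp=(\overline X\cap\overline H_1^\perp)\oplus(\overline L^\circ)\oplus(\overline Y')$; then set $\overline Y=k\bar y_1\oplus\overline Y'$ and note $\overline X\oplus\overline Y$ is a sum of hyperbolic planes, hence non-degenerate, hence orthogonal to its complement $\overline L^\circ$, and $\dim\overline Y=\dim\overline X$. (I note the statement as printed contains a typo, ``$\dim\overline Y=\dim\overline Y$''; it should read $\dim\overline Y=\dim\overline X$, which is what the construction delivers.)

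For part (2), the cleanest route is Witt's extension theorem for non-degenerate $\epsilon$-Hermitian spaces over a finite field. Given an isometry $\sigma:\overline X\to\overline X'$ with $\dim\overline X=\dim\overline X'$, I would first reduce to the case that $\overline X$ and $\overline X'$ are non-degenerate by applying part (1): decompose $\overline X=\overline X_0\oplus\overline R$ with $\overline X_0$ the radical of the form restricted to $\overline X$ and $\overline R$ a non-degenerate complement, enlarge $\overline X_0$ to a hyperbolic system using part (1) inside the orthogonal complement of $\overline R$, and do the same on the primed side; then $\sigma$ extends to an isometry between these non-degenerate enlargements (since it must carry radical to radical and hyperbolic partners can be matched up). Once both subspaces are non-degenerate, their orthogonal complements $\overline X^\perp$ and $\overline X'^\perp$ are non-degenerate of the same dimension, and by the classification of $\epsilon$-Hermitian spaces over a finite field (Witt cancellation: $\overline X\perp\overline X^\perp\cong\overline L\cong\overline X'\perp\overline X'^\perp$ and $\overline X\cong\overline X'$ via $\sigma$ force $\overline X^\perp\cong\overline X'^\perp$) there is an isometry $\tau:\overline X^\perp\to\overline X'^\perp$; then $\sigma\oplus\tau$ is the desired extension to $\overline L$. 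The main obstacle, and the only place requiring genuine care, is the even-residue-characteristic symmetric case ($E=F$, $\epsilon=1$, $k$ of characteristic $2$), where ``non-degenerate symmetric bilinear form'' and its interplay with the quadratic refinement is subtle and Witt cancellation must be invoked in the correct (bilinear, not quadratic) category; here I would lean on the explicit self-dual lattice descriptions in Lemma \ref{L:self-dual_lattice}, which show that in all our cases $\overline L$ is an orthogonal sum of hyperbolic planes and at most a one- or two-dimensional anisotropic kernel of a very explicit shape, so that the cancellation statement can if necessary be verified by direct inspection of the short list rather than by appeal to a general even-characteristic Witt theorem.
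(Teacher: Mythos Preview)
Your approach differs from the paper's: you sketch a direct inductive proof, whereas the paper simply cites references---Kitaoka's Proposition~1.2.2 and Corollary~1.2.1 for the case $E=F$ (where the reduced form is symmetric, since $\epsilon\equiv 1$ in characteristic~$2$), and Dieudonn\'e for the case $E\neq F$, after verifying Dieudonn\'e's hypothesis that $b(x,x)$ is always a trace from $\OE/\varpi\OE$ to $\O/\varpi\O$ (which holds because $b(x,x)\in\O/\varpi\O$ and the trace is surjective). This trace-condition check is the one genuinely new ingredient in the paper's proof, and your sketch does not isolate it.

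Your direct argument is fine in the alternating and Hermitian cases, but the step ``pick isotropic $\bar y_1$ with $\la\bar x_1,\bar y_1\ra\neq 0$'' can genuinely fail for symmetric bilinear forms in characteristic~$2$. Concretely, over $\F_2$ with Gram matrix $\left(\begin{smallmatrix}0&1\\1&1\end{smallmatrix}\right)$, the vector $e_1$ is the \emph{only} isotropic line, so no hyperbolic partner exists and part~(1) as stated is simply false for this form. Your ``small separate argument'' would therefore have to restrict to the specific $\overline{L}$'s that actually arise from the self-dual lattices of Lemma~\ref{L:self-dual_lattice}, and you would need to check (not just assert) that in those cases every isotropic vector does admit an isotropic partner---or else reorganize the argument entirely. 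The paper avoids this by deferring to Kitaoka's treatment, which is formulated so as to handle the characteristic-$2$ bilinear subtleties; your fallback to ``direct inspection of the short list'' is workable in principle but is not the small patch you suggest, since the reduced anisotropic kernel need not remain anisotropic over the residue field.
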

\begin{proof}
Both of them are well-known when the characteristic of the residue
field $\OE/\varpi\OE$ is odd. When the characteristic is even, it does
not seem to be well-known. First of all, let us denote the corresponding form
by $b(-,-):=\varpi^{-r}\la-,-\ra$. If $E=F$, then the form $b$ is
always symmetric because $\epsilon=1$ in $\OE/\varpi\OE$. Clearly $b$
is non-degenerate. Then part (1) is 
\cite[Proposition 1.2.2]{Kitaoka} and part (2) is \cite[Corollary
1.2.1]{Kitaoka}.

If $E\neq F$, then Dieudonn\'e in \cite[p. 21]{Dieudonne} has shown
that these two properties hold if $b(x,x)$ is a trace in
$\OE/\varpi\OE$ for all $x\in\overline{L}$, namely for each
$x\in\overline{L}$ there exists $a\in\OE/\varpi\OE$ such that
$b(x,x)=a+\bar{a}$, where $\bar{a}$ is the Galois conjugate of $a$ for
the quadratic extension $\OE/\varpi\OE$ over $\O/\varpi\O$. But
considering $\epsilon=1$, we always have $\overline{b(x,x)}=b(x,x)$ and
so $b(x,x)\in\O/\varpi\O$. The condition is satisfied because the
trace map $\OE/\varpi\OE\rightarrow\O/\varpi\O$ is surjective.
\end{proof}

\begin{Lem}\label{L:basis_extension}
Let $v_1,\dots,v_k\in L$ be such that the reductions
$\bar{v}_1,\dots,\bar{v}_k$ in $L/\varpi L$ are linearly independent
over the residue field. Then $\{v_1,\dots,v_k\}$ can be extended to a
basis of $L$.
\end{Lem}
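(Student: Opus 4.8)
\textbf{Proof plan for Lemma \ref{L:basis_extension}.}
The plan is to reduce the statement to the standard fact that a linearly independent set over a field extends to a basis, and then lift that back to the lattice via Nakayama's lemma. First I would invoke Lemma \ref{L:Finite_Witt} or simply elementary linear algebra over the residue field $\O_E/\varpi\O_E$: since $\bar v_1,\dots,\bar v_k$ are linearly independent in $L/\varpi L$, there exist $v_{k+1},\dots,v_n\in L$ (where $n=\dim_E\V=\operatorname{rank}_{\O_E}L$) such that $\bar v_1,\dots,\bar v_n$ form a basis of the $\O_E/\varpi\O_E$-vector space $L/\varpi L$. This step requires nothing beyond the fact that $L/\varpi L$ is a vector space over the field $\O_E/\varpi\O_E$ of dimension $n$ (which follows from $L$ being free of rank $n$), so there is no obstruction here.

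Next I would argue that $\{v_1,\dots,v_n\}$ is then automatically an $\O_E$-basis of $L$. Consider the $\O_E$-submodule $L' := \Span_{\O_E}\{v_1,\dots,v_n\}\subseteq L$. The reductions $\bar v_1,\dots,\bar v_n$ span $L/\varpi L$, which says precisely that $L' + \varpi L = L$, i.e. $L/L'$ is killed by the quotient map modulo $\varpi$, so $\varpi(L/L') = L/L'$. Since $L$ is finitely generated over the Noetherian local ring $\O_E$, so is $L/L'$, and Nakayama's lemma gives $L/L' = 0$, hence $L' = L$. Thus $\{v_1,\dots,v_n\}$ generates $L$; as $L$ is free of rank $n$, any generating set of size $n$ is a basis (a surjection $\O_E^n \twoheadrightarrow L \cong \O_E^n$ of finitely generated modules over a commutative ring is an isomorphism). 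Therefore $\{v_1,\dots,v_k\}$ has been extended to a basis $\{v_1,\dots,v_n\}$ of $L$, as desired.

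I do not anticipate a genuine obstacle: the only subtlety worth flagging is that one must not conflate "spanning" with "basis" prematurely — it is the combination of the spanning property mod $\varpi$ plus Nakayama plus the rank count that does the work, and each of these is standard commutative algebra valid regardless of the residue characteristic. In particular, unlike the Witt-type statements in Lemma \ref{L:Finite_Witt}, nothing here is sensitive to whether $2$ is a unit, so the proof is uniform in the residual characteristic.
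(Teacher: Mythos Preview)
Your argument is correct; the paper itself gives no proof beyond the phrase ``This is an elementary exercise,'' so your Nakayama-based write-up is a perfectly good fill-in. The only quibble is that Lemma~\ref{L:Finite_Witt} is irrelevant here (it concerns isotropic subspaces and isometries, not basis extension), so you should drop that reference and keep only the ``elementary linear algebra over the residue field'' justification.
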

\begin{proof}
This is an elementary exercise. 
\end{proof}

The following lemma, which is an extension of \cite[Proposition II.2,
p.107]{MVW} is crucial to our computations.

\begin{Lem}\label{L:lattice1}
For a self-dual lattice $L$ (with respect to $r$) of $\V$, let $v_1,\dots, v_s\in L$,
$t_1,\dots, t_s\in\Z$ and $M=(m_{ij})$ an $s\times s$  matrix with
coefficients in $\OE$. Suppose
\begin{enumerate}
\item $\bar{v}_1,\dots, \bar{v}_s$ are linearly independent over the
  residue field;
\item $1+e\leq t_1\leq\cdots\leq t_s$;
\item $m_{ij}=\epsilon\overline{m_{ji}}$ for all $1\leq i,j\leq s$;
\item $m_{ij}\equiv\la v_i,v_j\ra\mod \PE^{r+t_i}$ for all $1\leq
  i\neq j\leq s$;
\item $m_{ii}\equiv\la v_i,v_i\ra\mod \PE^{r+t_i+e}$ for all $i\in\{1,\dots,s\}$.
\end{enumerate}
Then there exist $v'_1,\dots, v'_s\in L$ such that
\begin{enumerate}
\item[(6)] $v_i'-v_i\in\varpi^{t_i}L$ for all $i$;
\item[(7)] $m_{ij}=\la v_i', v_j'\ra$ for all $1\leq i,j\leq s$.
\end{enumerate}
\end{Lem}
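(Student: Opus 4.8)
The plan is to prove this by induction on $s$, successively correcting one vector at a time while keeping the already-corrected inner products exactly right. The base case $s=1$ is where the arithmetic of the dyadic prime matters: we are given $v_1\in L$ with $m_{11}\equiv\la v_1,v_1\ra\bmod\PE^{r+t_1+e}$ and $m_{11}=\epsilon\overline{m_{11}}$, and we must find $v_1'\equiv v_1\bmod\varpi^{t_1}L$ with $\la v_1',v_1'\ra=m_{11}$ exactly. First I would write $v_1'=v_1+\varpi^{t_1}x$ for $x\in L$ to be determined, so that
\[
\la v_1',v_1'\ra=\la v_1,v_1\ra+\varpi^{t_1}\big(\la x,v_1\ra+\epsilon\overline{\la x,v_1\ra}\big)+\varpi^{2t_1}\la x,x\ra .
\]
Since $\bar v_1\ne 0$ in $L/\varpi L$ and $L$ is self-dual, the functional $x\mapsto \la x,v_1\ra$ is surjective onto $\PE^{r}$ (unramifiedness of $E/F$ is used here), so the "trace term" $\la x,v_1\ra+\epsilon\overline{\la x,v_1\ra}$ ranges over all traces $\{a+\bar a\}\cap\PE^{r+t_1}$ — which, because $E/F$ is unramified, is all of $\PE^{r+t_1}$ when $\epsilon=1$ and all of $\eta\PE^{r+t_1}$ when $\epsilon=-1$, i.e. exactly the set of $\varpi^{t_1}\cdot(\text{traces in }\PE^{r})$. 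The hypothesis $m_{11}-\la v_1,v_1\ra\in\PE^{r+t_1+e}$ forces this difference to be a trace (every element of $\PE^{r+t_1+e}=2\cdot\PE^{r+t_1}\cdot\text{unit}$ is $2b=b+\bar b$), and then a successive-approximation / Hensel argument absorbing the quadratic error $\varpi^{2t_1}\la x,x\ra$ — which is harmless because $2t_1\ge t_1+1+e$ so it lands deep enough — produces the desired $x$. This is the step I expect to be the main obstacle: getting the $e$'s to line up so that the solvability condition in hypothesis (5) is exactly matched.

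For the inductive step, suppose $v_1',\dots,v_{s-1}'$ have been produced with $\la v_i',v_j'\ra=m_{ij}$ for $i,j<s$ and $v_i'-v_i\in\varpi^{t_i}L$. I would first replace $v_s$ by $v_s'':=v_s+\varpi^{t_s}(\text{correction})$ to fix the off-diagonal entries $\la v_s'',v_i'\ra=m_{si}$ for $i<s$: because $\bar v_1,\dots,\bar v_s$ are independent, the reductions $\bar v_1',\dots,\bar v_{s-1}'$ are still independent (they differ from the $\bar v_i$ by elements of $\varpi L$ when $t_i\ge 1$), so by Lemma~\ref{L:basis_extension} and self-duality the map $x\mapsto(\la x,v_1'\ra,\dots,\la x,v_{s-1}'\ra)$ from $L$ onto $(\PE^r)^{s-1}$ is surjective even after restricting to the relevant congruence class; the required shift $m_{si}-\la v_s,v_i'\ra$ lies in $\PE^{r+t_i}\supseteq\PE^{r+t_s}$ by hypothesis (4) and $t_i\le t_s$, so the correction can be taken in $\varpi^{t_s}L$, leaving $v_i'$ ($i<s$) untouched and preserving $v_s''-v_s\in\varpi^{t_s}L$ as well as hypothesis (5) for the index $s$ (the diagonal congruence is only disturbed modulo $\PE^{r+2t_s}\subseteq\PE^{r+t_s+e}$).

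Having arranged all off-diagonal entries in row/column $s$, I would finally fix the diagonal entry $\la v_s',v_s'\ra=m_{ss}$ by the same trick as the base case: set $v_s'=v_s''+\varpi^{t_s}y$ with $y$ in the span (over $\OE$) of a vector orthogonal to $v_1',\dots,v_{s-1}'$ — such a vector exists and has nonzero reduction by Lemma~\ref{L:Finite_Witt}(1) applied to the perp of $\{\bar v_1',\dots,\bar v_{s-1}'\}$, again using that the residue form has all diagonal values traces — so that this last adjustment does not change the already-correct inner products $\la v_s',v_i'\ra$ for $i<s$, and then run the $s=1$ argument verbatim on the self-dual (mod the relevant power) sublattice orthogonal to $v_1',\dots,v_{s-1}'$. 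Throughout, the bookkeeping to check is that every correction term stays in $\varpi^{t_s}L\subseteq\varpi^{t_i}L$ for the indices it could affect, and that the quadratic cross-terms always land in $\PE^{r+t_i+e}$ or deeper so that hypotheses (4)–(5) are maintained along the induction; this is routine once the base case is in hand, so I would present it briskly.
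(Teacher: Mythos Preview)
Your base case $s=1$ is fine and is essentially the same successive-approximation mechanism as the paper's. The inductive step, however, has a genuine gap in the off-diagonal correction. You want $v_s''=v_s+\varpi^{t_s}z$ with $\langle v_s'',v_i'\rangle=m_{si}$ for $i<s$, which forces
\[
\varpi^{t_s}\langle z,v_i'\rangle \;=\; m_{si}-\langle v_s,v_i'\rangle
\;=\;\bigl(m_{si}-\langle v_s,v_i\rangle\bigr)\;-\;\langle v_s,\,v_i'-v_i\rangle.
\]
The first bracket lies in $\PE^{r+t_s}$ by hypothesis (4), but the second lies only in $\PE^{r+t_i}$ because $v_i'-v_i\in\varpi^{t_i}L$. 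Since $t_i\le t_s$, the right-hand side is generically only in $\PE^{r+t_i}$, whereas the left-hand side with $z\in L$ can only hit $\PE^{r+t_s}\subsetneq\PE^{r+t_i}$. You noted the inclusion $\PE^{r+t_i}\supseteq\PE^{r+t_s}$ correctly but drew the wrong conclusion from it: the shift living in the \emph{larger} ideal means the correction must be allowed to be larger, not smaller. So you cannot in general take $z\in L$ here, and the constraint $v_s'-v_s\in\varpi^{t_s}L$ fails.

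This is not a fatal obstruction to an induction-on-$s$ strategy, but it does mean the order matters: if you instead add the vectors in \emph{decreasing} order of $t_i$ (place $v_s'$ first, then $v_{s-1}'$, \dots), the analogous computation gives $m_{ji}-\langle v_j,v_i'\rangle\in\PE^{r+t_j}$ for $i>j$, because now $t_i\ge t_j$ and the perturbation term $\langle v_j,v_i'-v_i\rangle\in\PE^{r+t_i}\subseteq\PE^{r+t_j}$ goes the right way. The paper sidesteps the issue entirely by running a \emph{simultaneous} successive approximation: it fixes a dual basis, builds sequences $v_i(t)$ for all $i$ at once with $v_i(t)-v_i(t-1)\in\varpi^{t+r-1}L$, and corrects all entries of the Gram matrix by one $\varpi$-power at each step, so the cross-interference you ran into never arises.
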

\begin{proof}
When the residue characteristic of $F$ is odd, this is
\cite[Proposition II.2, p.107]{MVW} except that in \cite{MVW} it
is always assumed $r=0$. For the case of even residual characteristic,
one needs to slightly modify the argument there, and we need the
condition (5), which can be absorbed by the condition (4) in the case
of odd residual characteristic. In any case, since the proof is only a
slight modification of the one given in \cite{MVW}, we will repeat
only the
essentially point. The basic idea is to construct a sequence of vectors $v_i(t)$
for each $i\in\{1,\dots,s\}$ where $t\in\Z_{\geq1}$ such that
\begin{enumerate}[(a)]
\item $v_i(t)-v_i\in\varpi^{t_i}L$;
\item $v_i(t)-v_i(t-1)\in\varpi^{t+r-1}L$ for $t\geq 2$;
\item $\la v_i(t), v_j(t)\ra\equiv m_{ij}\mod\PE^{t+r}$ for all $1\leq
  i\neq j\leq s$.
\item $\la v_i(t), v_i(t)\ra\equiv m_{ii}\mod\PE^{t+r+e}$ for all
  $\in\{1,\dots,s\}$.
\end{enumerate}

The condition (b) guarantees the sequence $v_i(t)$ is Cauchy and hence
converges to some $v_i'$, and the conditions (a) (c) and (d) guarantee
$v_i'$ has the desired property. 

The sequence $v_i(t)$ is constructed by recursion as follows. First by
Lemma \ref{L:basis_extension}, we can
extend $\{v_1,\dots,v_s\}$ to a basis $\{v_1,\dots,v_n\}$ of
$L$. Choose a dual basis $\{v_1^\ast,\dots,v_n^\ast\}$ of
  $L$, so that $\la v_i, v_j^\ast\ra=\varpi^r\delta_{ij}$. Define $v_i(t)$ by 
\begin{align*}
v_i(1)&=v_i\\
v_i(t)&=\begin{cases}
v_i(t-1)&\text{ if $t\leq t_i$}\\
v_i(t-1)+\sum_{k=i}^r\varpi^{t+r-1}a_{ki}(t)v_i^\ast&\text{ if $t> t_i$},
\end{cases}
\end{align*}
where
\begin{align*}
a_{ki}(t)&=\varpi^{1-t-r}(m_{ki}-\la v_k(t-1), v_i(t-1)\ra)\text{ if
  $i<k$};\\
a_{ii}(t)&=\frac{1}{2}\varpi^{1-t-r}(m_{ii}-\la v_i(t-1), v_i(t-1)\ra).
\end{align*}
(Let us mention that in \cite[p.108]{MVW} there is a typo in the
definition of $v_i(t)$. The summation has to start with $i$ instead of $i+1$.)
Note that $a_{ji}(t)\in\OE$ thanks to the condition (c) and (d). Also
note that to obtain (c) and (d) at each step, one needs $t_i\geq
1+e$. By definition of $v_i(t)$, one
has $v_i(t)-v_i\in\varpi^{t_1}L$ if $t\leq t_i$, and
$v_i(t)-v_i=\varpi^{t-1}L\subset\varpi^{t_i}L$ if
$t>t_i$. So the condition (a) is satisfied. 
\end{proof}

This lemma is very unfortunate in that the restriction $t_i\geq 1+e$
will not allow us to apply many of the computations in \cite{MVW} to
the case of even residual characteristic. However, if we assume $\V$
is symplectic, we have

\begin{Lem}\label{L:lattice_symplectic}
If $\V$ is symplectic in the above lemma, one can assume $t_i\geq 1$
instead of $t_i\geq 1+e$, and can suppress the condition (5).
\end{Lem}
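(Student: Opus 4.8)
The plan is to revisit the recursion in the proof of Lemma \ref{L:lattice1} and to record exactly what the symplectic hypothesis changes. When $\V$ is symplectic we have $E=F$ and $\epsilon=-1$, so $\la-,-\ra$ is alternating: $\la v,v\ra=0$ for all $v\in\V$, and condition (3) of Lemma \ref{L:lattice1} specializes to $m_{ij}=-m_{ji}$, forcing $m_{ii}=0$ for every $i$ (the entries live in the characteristic-zero ring $\OE$). Hence condition (5) of that lemma becomes the empty statement $0\equiv 0$ and may be discarded. It remains to lower the bound on the $t_i$ from $t_i\ge 1+e$ to $t_i\ge 1$.

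The key observation is that in Lemma \ref{L:lattice1} the ramification index $e$ enters solely through the diagonal correction coefficient $a_{ii}(t)=\frac{1}{2}\varpi^{1-t-r}\bigl(m_{ii}-\la v_i(t-1),v_i(t-1)\ra\bigr)$: to guarantee $a_{ii}(t)\in\OE$ one has to absorb the $\frac{1}{2}$, at the cost of one power of $\varpi^e$, and it is precisely this that forces the stronger congruence (5) and the bound $t_i\ge 1+e$. In the symplectic case we instead take $a_{ii}(t)=0$ throughout, i.e.\ we never apply a diagonal correction. This is harmless: whatever vectors $v_i(t)$ are produced, the alternating property gives $\la v_i(t),v_i(t)\ra=0=m_{ii}$, so the diagonal congruence (d) in the proof of Lemma \ref{L:lattice1} holds automatically at every step, with no loss in the modulus.

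With the diagonal corrections suppressed, only the off-diagonal coefficients $a_{ki}(t)$ (for $i<k$) remain, and these carry no factor $\frac{1}{2}$. One checks, exactly as in \cite[p.~108]{MVW}, that $a_{ki}(t)\in\OE$ already when the relevant congruences hold modulo $\PE^{r+t_i}$ instead of $\PE^{r+t_i+e}$, using the ordering $t_1\le\dots\le t_s$ from hypothesis (2) and the fact that the dual-basis cross terms $\la v_k^\ast,v_l^\ast\ra$ lie in $\PE^r$. The congruence needed at the first step at which $v_i$ is modified is $m_{ki}\equiv\la v_k,v_i\ra\bmod\PE^{r+t_i}$, which by the conjugate-symmetry (3) is exactly condition (4) for the pair $(i,k)$. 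The convergence conditions (a) and (b) do not involve the diagonal and are unchanged, so the recursion produces vectors $v_i'$ with $v_i'-v_i\in\varpi^{t_i}L$ and $\la v_i',v_j'\ra=m_{ij}$ under the weaker hypothesis $t_i\ge 1$.

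I do not anticipate a genuine obstacle here: the entire content is the observation that in the symplectic case the only source of the auxiliary index $e$, namely the $\frac{1}{2}$ in $a_{ii}(t)$, disappears, and condition (5) becomes vacuous. The one point requiring a little care is to confirm that deleting the diagonal corrections does not disturb the off-diagonal congruences at later stages of the recursion; this follows from the same valuation estimates already present in \cite{MVW}, now run with $\PE^{t+r}$ in place of $\PE^{t+r+e}$.
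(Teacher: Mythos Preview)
Your proposal is correct and follows exactly the paper's approach: the paper's proof consists of the single observation that in the symplectic case one may take $a_{ii}(t)=0$ for all $t$ (since $m_{ii}=0$), which is precisely the mechanism you identify and elaborate. Your write-up is more detailed than the paper's one-line proof, but the content is the same.
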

\begin{proof}
If $\V$ is symplectic, one can simply take $a_{ii}(t)=0$ for all
$t$. (Of course, always $m_{ii}=0$.)
\end{proof}

This implies

\begin{Lem}\label{L:lattice_lin_ind}
For a self-dual lattice $L$ (with respect to $r$) of $\V$, let
$v_1,\dots, v_n\in L$, and $t\in\Z^{>0}$. Further assume $\V$ is
symplectic if the residue characteristic of $F$ is even. Suppose
\begin{enumerate}
\item $\bar{v}_1,\dots,\bar{v}_n$ are linearly independent over the
  residue field;
\item For all $i, j\in\{1,\dots,s\}$, $\la v_i,v_i\ra\equiv 0\mod \PE^{r+t}$.
\end{enumerate}
Then there exist elements $v'_1\dots v_n'\in L$ and subspaces $X,
\V^\circ, Y$ of $\V$ such that
\begin{enumerate}
\item[(3)] $\{v'_1\dots v'_n\}$ is a basis of $X$ over $E$;
\item[(4)] we have the orthogonal decomposition
  $W=X\oplus\V^\circ\oplus Y$ such that $X\oplus Y$ is totally
  isotropic;
\item[(5)] $L=L\cap X\oplus L\cap\V^\circ\oplus L\cap Y$;
\item[(6)] for all $i=1,\dots, n$, we have $v'_i-v\in \varpi^t L$.
\end{enumerate}
\end{Lem}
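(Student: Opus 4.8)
\textbf{Proof plan for Lemma \ref{L:lattice_lin_ind}.}
The plan is to use Lemma \ref{L:lattice1} (together with Lemma \ref{L:lattice_symplectic} in the even residue characteristic case) to first replace the $v_i$ by vectors that are \emph{exactly} isotropic, and then to invoke the lattice-decomposition machinery of Lemma \ref{L:Finite_Witt} and Lemma \ref{L:basis_extension} to split off a totally isotropic direct summand containing them. Concretely, I would set $s=n$, let $M=(m_{ij})$ be the matrix with $m_{ij}=\la v_i,v_j\ra$ for $i\neq j$ and $m_{ii}=0$, and take $t_1=\cdots=t_n=t$. Hypotheses (1) and (2) of the present lemma, the fact that $\la-,-\ra$ is alternating so $m_{ij}=-\overline{m_{ji}}=\epsilon\overline{m_{ji}}$, and the congruence $m_{ii}=0\equiv\la v_i,v_i\ra\bmod\PE^{r+t}$, give exactly conditions (1)--(5) required to apply Lemma \ref{L:lattice1} (with condition (5) suppressed, and $t\geq 1$ allowed, by Lemma \ref{L:lattice_symplectic} in the even case; when the residue characteristic is odd Lemma \ref{L:lattice1} applies directly since condition (5) is absorbed into (4)). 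This produces $v_1',\dots,v_n'\in L$ with $v_i'-v_i\in\varpi^t L$ and $\la v_i',v_j'\ra=m_{ij}$ for all $i,j$; in particular $\la v_i',v_i'\ra=0$, so $\Span_E\{v_1',\dots,v_n'\}=:X$ is totally isotropic and, by hypothesis (1) and Lemma \ref{L:basis_extension}, $\{v_1',\dots,v_n'\}$ is an $E$-basis of $X$ (giving (3) and (6)).

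It remains to realize $X$ as a totally isotropic summand in an orthogonal decomposition $\V=X\oplus\V^\circ\oplus Y$ compatible with $L$. For this I would pass to the reduction $\overline{L}=L/\varpi L$: the images $\bar v_1',\dots,\bar v_n'$ span a totally isotropic subspace $\overline{X}$ of $\overline{L}$ (totally isotropic because $\la v_i',v_j'\ra=m_{ij}$ has no diagonal part and all off-diagonal entries came from the alternating form, hence reduce to an alternating matrix over the residue field; and it is worth noting that since $v_i'-v_i\in\varpi^t L\subseteq\varpi L$ we have $\overline{X}=\Span\{\bar v_1,\dots,\bar v_n\}$, which is $n$-dimensional by hypothesis (1)). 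By Lemma \ref{L:Finite_Witt}(1) there is a complementary totally isotropic $\overline{Y}$ with $\dim\overline{Y}=\dim\overline{X}$ and an orthogonal decomposition $\overline{L}=\overline{X}\oplus\overline{L}^\circ\oplus\overline{Y}$. I then lift a basis of $\overline{Y}$ to vectors $w_1,\dots,w_n\in L$; applying Lemma \ref{L:lattice1} once more (now to the combined system $\{v_1',\dots,v_n',w_1,\dots,w_n\}$, adjusting the $w_i$ within $\varpi^t L$ so that $\la v_i',w_j'\ra=\varpi^r\delta_{ij}$ and $\la w_i',w_j'\ra=0$ exactly) produces $w_1',\dots,w_n'$ spanning a totally isotropic $Y$ with $X\oplus Y$ a sum of hyperbolic planes that is a \emph{unimodular} sublattice of $L$; setting $\V^\circ=(X\oplus Y)^\perp$ and using that a unimodular sublattice of a self-dual lattice splits off as an orthogonal direct summand (the standard argument: $L=(L\cap(X\oplus Y))\oplus(L\cap\V^\circ)$ because the pairing restricted to $L\cap(X\oplus Y)$ is already perfect), one obtains (4) and (5).

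The main obstacle I anticipate is the even residual characteristic case of the splitting step, i.e.\ verifying that $L\cap\V^\circ$ is again self-dual and that the decomposition $L=L\cap X\oplus L\cap\V^\circ\oplus L\cap Y$ holds on the nose rather than just up to a small discrepancy; in characteristic $2$ the usual ``orthogonal complement of a unimodular summand'' argument has to be run carefully, and one has to make sure that choosing the $w_i'$ via Lemma \ref{L:lattice1} does not secretly require the stronger $t\geq 1+e$ bound — which is precisely why the symplectic hypothesis, via Lemma \ref{L:lattice_symplectic}, is needed to keep everything at $t\geq 1$. A secondary technical point is bookkeeping the condition $v_i'-v_i\in\varpi^t L$ through the second application of Lemma \ref{L:lattice1}: one must arrange that the corrections to the $v_i'$ made in order to pair them perfectly with the $w_j'$ stay inside $\varpi^t L$, which follows because those corrections live in $\varpi^{t}\cdot(L\cap(X\oplus Y))$ by the recursive construction in the proof of Lemma \ref{L:lattice1}, but this needs to be stated explicitly.
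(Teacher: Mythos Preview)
Your overall strategy is exactly what the paper indicates: apply Lemma~\ref{L:lattice1} (via Lemma~\ref{L:lattice_symplectic} in even residue characteristic) to perturb the $v_i$ into a genuinely isotropic family, invoke Lemma~\ref{L:Finite_Witt} over the residue field to produce the complementary isotropic piece, and then lift. The paper's own proof is a one-line reference to \cite[Corollary~II.3]{MVW} using precisely these two ingredients, so you are on the right track.

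There is, however, a genuine error in your first step. You set $m_{ij}=\la v_i,v_j\ra$ for $i\neq j$ and $m_{ii}=0$, and after applying Lemma~\ref{L:lattice1} you conclude that $X=\Span_E\{v_1',\dots,v_n'\}$ is totally isotropic because $\la v_i',v_i'\ra=0$. That inference is false: vanishing of the diagonal entries does not kill the off-diagonal pairings $\la v_i',v_j'\ra=\la v_i,v_j\ra$, which with your choice of $M$ are in general nonzero, so $X$ is \emph{not} totally isotropic and condition~(4) cannot hold. (In the symplectic case your first application is in fact vacuous, since $\la v_i,v_i\ra=0$ already and you have asked for nothing new off the diagonal.) The fix is to read hypothesis~(2) as intended---the statement contains a typo and should require $\la v_i,v_j\ra\equiv 0\bmod\PE^{r+t}$ for all $i,j$, as is clear from its use in the proof of Proposition~\ref{P:step1}---and then take $m_{ij}=0$ for \emph{all} $i,j$. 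Condition~(4) of Lemma~\ref{L:lattice1} then becomes exactly hypothesis~(2), and the output $v_i'$ span a genuinely totally isotropic $X$.

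A second, smaller issue: in your second application of Lemma~\ref{L:lattice1} to the combined system $\{v_1',\dots,v_n',w_1,\dots,w_n\}$, the congruence condition~(4) is indexed by $t_i$, not by $\min(t_i,t_j)$. If you assign large $t$-parameters to the $v_i'$ rows, the cross conditions $m_{ij}\equiv\la v_{i}',w_j\ra\bmod\PE^{r+t}$ will fail, since lifting from the residue field only gives these pairings modulo $\PE^{r+1}$. The cleanest repair is to first arrange $\la v_i',w_j\ra=\varpi^r\delta_{ij}$ \emph{exactly} by taking $w_j$ from the dual basis to an extension of $\{v_i'\}$ (Lemma~\ref{L:basis_extension}, using self-duality of $L$), and then clear $\la w_i,w_j\ra$ by subtracting suitable $\OE$-combinations of the $v_k'$ from each $w_j$; since $X$ is totally isotropic this Gram--Schmidt step does not disturb the cross pairings, and you obtain the hyperbolic summand $X\oplus Y$ directly without a second appeal to Lemma~\ref{L:lattice1}.
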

\begin{proof}
This can be proven in the same way as \cite[Corollary II.3]{MVW} using
the previous lemma and Lemma \ref{L:Finite_Witt}.
\end{proof}

One reason we have to assume $\V_2$ is symplectic in our main theorem
is the unavailability of this lemma for the other types of spaces. 

Lemma \ref{L:lattice_symplectic} also implies

\begin{Lem}\label{L:lattice_Witt}
Let $L$ be a self-dual lattice of $\V$, where $\V$ is either
symplectic or symmetric with $\V^a=0$ when the residue characteristic
is even. Let $v_1,\dots,v_s$ and $v_1',\dots, v_s'$ are vectors in
$\V$ such that
\begin{enumerate}
\item $\bar{v}_1,\dots,\bar{v}_s'$ are linearly independent over the
  residue field;
\item $\bar{v'}_1,\dots,\bar{v'}_s'$ are linearly independent over the
  residue field;
\item $\la v_i,v_j\ra=\la v_i',v_j'\ra$ for all $i,j\in\{1,\dots,s\}$.
\end{enumerate} 
Then there exists $u\in U(\V)$ such that $u(L)=L$ and $u(v_i)=v_i'$
for all $i\in\{1,\dots,s\}$.
\end{Lem}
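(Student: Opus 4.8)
The plan is to bootstrap from Lemma~\ref{L:lattice1} (via Lemma~\ref{L:lattice_symplectic}) to extend the given partial isometry $v_i\mapsto v_i'$ to an isometry of $\V$ preserving $L$. The first step is to pass to the reductions mod $\varpi L$: by hypothesis $\bar v_1,\dots,\bar v_s$ and $\bar v_1',\dots,\bar v_s'$ are each linearly independent in $\overline L=L/\varpi L$, and the condition $\la v_i,v_j\ra=\la v_i',v_j'\ra$ forces the map $\bar v_i\mapsto \bar v_i'$ to be an isometry between the two subspaces $\overline X:=\Span\{\bar v_i\}$ and $\overline X':=\Span\{\bar v_i'\}$ of $\overline L$ (here we use that in the symplectic or symmetric-with-$\V^a=0$ case the reduced form on $\overline L$ is the expected nondegenerate one). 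By Lemma~\ref{L:Finite_Witt}(2) this isometry extends to an isometry $\bar u$ of all of $\overline L$.

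Next I would lift $\bar u$ to an honest element of $U(\V)$ stabilizing $L$. Concretely, pick a basis of $L$ adapted to $\overline L=\overline X\oplus \overline L^\circ\oplus\overline Y$ as in Lemma~\ref{L:Finite_Witt}(1), lift $\bar u$ coordinate-wise to an $\OE$-linear automorphism $u_0$ of $L$ that reduces to $\bar u$; then $u_0$ is an isometry modulo $\varpi$ in the sense that $\la u_0 x, u_0 y\ra \equiv \la x,y\ra$ appropriately mod high powers of $\varpi$. Now invoke Lemma~\ref{L:lattice1} (with all $t_i$ equal and large, using Lemma~\ref{L:lattice_symplectic} so that $t_i\ge 1$ suffices in the symplectic case, and the symmetric-$\V^a=0$ case being handled by the $e$-free version of the construction): apply it to the vectors $u_0(w_1),\dots,u_0(w_n)$ where $w_1,\dots,w_n$ is a basis of $L$, with target Gram matrix $M=(\la w_i,w_j\ra)$, to correct $u_0$ to a genuine isometry $u\in U(\V)$ with $u(w_i)\equiv u_0(w_i)\bmod\varpi^{t}L$, hence $u(L)=L$ and $u$ still reduces to $\bar u$ mod $\varpi$.

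At this stage $u$ stabilizes $L$ and satisfies $u(v_i)\equiv v_i'\bmod\varpi L$ but not necessarily on the nose; the third step is to further adjust $u$ within $U(L):=\{g\in U(\V):g(L)=L\}$ so that $u(v_i)=v_i'$ exactly. Write $v_i'' := u(v_i)$, so $\la v_i'',v_j''\ra=\la v_i,v_j\ra=\la v_i',v_j'\ra$ and $v_i''\equiv v_i'\bmod\varpi L$. I would apply Lemma~\ref{L:lattice1} once more, or rather iterate a correction: extend $\{v_1'',\dots,v_s''\}$ and $\{v_1',\dots,v_s'\}$ to bases of $L$ by Lemma~\ref{L:basis_extension}, and build an element of $U(L)$ carrying one to the other by repeating the Cauchy-sequence construction of Lemma~\ref{L:lattice1} with the $v_i$ fixed from the start (so the correction terms only move the complementary basis vectors). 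Composing this with the previous $u$ gives the desired $u\in U(\V)$ with $u(L)=L$ and $u(v_i)=v_i'$.

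The main obstacle is the even-residue-characteristic bookkeeping in the lifting step: in that case Lemma~\ref{L:lattice1} only corrects vectors at $\varpi$-depth $\ge 1+e$ rather than $\ge 1$, so one must verify that restricting to $\V$ symplectic (or symmetric with $\V^a=0$, using Lemma~\ref{L:lattice_symplectic} respectively its symmetric analogue, where the diagonal corrections $a_{ii}(t)$ can be taken to vanish or stay integral without the $\frac12$) really does let one push the construction through at depth $1$, and that the reduced form on $L/\varpi L$ is nondegenerate of the right type so that Lemma~\ref{L:Finite_Witt} applies. Once those two facts are in hand the argument is a routine two- or three-fold application of the lattice-refinement lemma, exactly parallel to the derivation of Witt-type extension theorems for self-dual lattices in \cite[Ch.~5, II]{MVW}.
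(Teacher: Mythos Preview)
Your overall strategy for the symplectic case---extend the isometry over the residue field via Lemma~\ref{L:Finite_Witt}, lift to $U(L)$, then correct successively using the Cauchy-sequence construction underlying Lemma~\ref{L:lattice1} with $t_i\ge 1$ permitted by Lemma~\ref{L:lattice_symplectic}---is the right shape and matches what the paper does: it simply refers to the argument of \cite[Corollary II.5]{MVW}, noting that Lemma~\ref{L:lattice_symplectic} is exactly what makes that argument go through in even residue characteristic. Your step~3 is a bit loose (``repeating the Cauchy-sequence construction \dots\ with the $v_i$ fixed from the start'' needs to be unpacked into an honest induction on depth, producing a convergent product of isometries congruent to $1$ mod increasing powers of $\varpi$), but it can be made precise along the lines you indicate.

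The genuine gap is your treatment of the symmetric case with $\V^a=0$. You assert the existence of a ``symmetric analogue'' of Lemma~\ref{L:lattice_symplectic} in which ``the diagonal corrections $a_{ii}(t)$ can be taken to vanish or stay integral without the $\frac12$.'' No such analogue is available: in the symmetric case the diagonal entries $m_{ii}$ and $\la v_i,v_i\ra$ are generally nonzero, and the recursion in the proof of Lemma~\ref{L:lattice1} genuinely requires dividing by $2$ to solve $\la v_i(t),v_i(t)\ra\equiv m_{ii}$. This is precisely why the paper imposes $t_i\ge 1+e$ in Lemma~\ref{L:lattice1} and why Lemma~\ref{L:lattice_symplectic} is stated only for symplectic $\V$. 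The paper explicitly flags that ``the same proof does not work because of the restriction $t_i\geq 1+e$'' and instead handles the symmetric split case by invoking the known Witt extension theorem for self-dual lattices (\cite[Corollary~5.4.1]{Kitaoka}), which is proved by different means. So for the symmetric case you should either supply an independent argument (e.g., the reflection-based proof as in Kitaoka) or cite that result directly, rather than appeal to a lifting lemma that does not exist in the form you need.
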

\begin{proof}
This is \cite[Corollary II.5, p.111]{MVW} when the residue
characteristic is odd. If the residue characteristic is even and $\V$
is symplectic, one can prove it in the same way as the case of
odd residual characteristic by using Lemma \ref{L:lattice_symplectic}. For the
other case, unfortunately, the same proof does not work because of the
restriction $t_i\geq 1+e$ in Lemma \ref{L:lattice1}. However if $\V$
is symmetric and $\V^a=0$, then this lemma is simply Witt's extension
theorem for self-dual lattices, which is known to hold. (See, for example, Corollary
5.4.1 of \cite{Kitaoka}.)
\end{proof}

As the last thing in this section, let us introduce the notion of
{\it admissible lattices} and some of their properties.

\begin{Def}
Assume $\V$ is such that $\V^a=0$. We say a (not necessarily self-dual)
lattice $L\subseteq\V$ is ``admissible'' if $L=L^++L^-$, where recall
$L^+=L\cap\V^+$ and $L^-=L\cap\V^-$.
\end{Def}

Note that for any lattice $L$ with $L^a=0$, we always have $L\subseteq L^++L^-$, but the
inclusion might be strict. A self-dual lattice is always
admissible.

We need to quote a few lemmas:

\begin{Lem}\label{L:dual_is_admissible}
Assume $\V$ is such that $\V^a=0$. A lattice $L$ of $\V$ is admissible
if and only if $L^\perp$ is.
\end{Lem}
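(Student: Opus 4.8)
The plan is to reduce the statement to the elementary behaviour of $\perp$ with respect to the decomposition $\V=\V^+\oplus\V^-$. Since $\V^a=0$, by Lemma \ref{L:lattice_decomposition} the space $\V$ is a sum of $\epsilon$-Hermitian hyperbolic planes with $\V^+,\V^-$ totally isotropic, and the pairing restricts to a perfect pairing $\V^+\times\V^-\to E$ (each hyperbolic plane contributes a nonzero pairing between its two isotropic lines). Consequently, for a full-rank lattice $M\subseteq\V^+$ the ``partial dual'' $M^{\vee}:=\{u\in\V^-:\la u,m\ra\in\PE^r\text{ for all }m\in M\}$ is again a full-rank lattice in $\V^-$, and symmetrically for a lattice $N\subseteq\V^-$ one gets a full-rank lattice $N^{\vee}\subseteq\V^+$; moreover $(M^{\vee})^{\vee}=M$.

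First I would record that admissibility of $L$ means precisely $L=L^+\oplus L^-$ with $L^\pm$ full-rank lattices in $\V^\pm$: the sum $L^++L^-$ is automatically direct because $\V^+\cap\V^-=0$, and if it equals $L$ then, since ranks add and $L$ has full rank while $\operatorname{rank}L^\pm\le\dim\V^\pm$, each $L^\pm$ must be full rank. Then for $v=v^++v^-\in\V$ and $l=l^++l^-\in L$, total isotropy of $\V^\pm$ gives $\la v,l\ra=\la v^+,l^-\ra+\la v^-,l^+\ra$, and the two divisibility conditions ``$\la v^+,l^-\ra\in\PE^r$ for all $l^-\in L^-$'' and ``$\la v^-,l^+\ra\in\PE^r$ for all $l^+\in L^+$'' are independent. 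Hence $v\in L^\perp$ if and only if $v^+\in (L^-)^{\vee}$ and $v^-\in (L^+)^{\vee}$, i.e.\ $L^\perp=(L^-)^{\vee}\oplus (L^+)^{\vee}$. Since $(L^-)^{\vee}\subseteq\V^+$ and $(L^+)^{\vee}\subseteq\V^-$, this exhibits $L^\perp$ as $(L^\perp\cap\V^+)\oplus(L^\perp\cap\V^-)$, so $L^\perp$ is admissible. This proves the ``only if'' direction.

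For the converse I would simply apply the same computation with $L^\perp$ in place of $L$: admissibility of $L^\perp$ implies $(L^\perp)^\perp$ is admissible, and $(L^\perp)^\perp=L$ since double-dualization with respect to $r$ is the identity on lattices of a nondegenerate $\epsilon$-Hermitian space. I do not expect a genuine obstacle; the only points needing care are the perfectness (hence rank-preserving property) of the induced pairing $\V^+\times\V^-\to E$, the independence of the two congruence conditions, and the identity $L^{\perp\perp}=L$. If one wishes to avoid appealing to $L^{\perp\perp}=L$, one can instead observe directly that when $L^\perp$ is admissible the formula $L^\perp=M^+\oplus M^-$ forces, upon dualizing, $L=(M^-)^{\vee}\oplus(M^+)^{\vee}$ using $(M^{\vee})^{\vee}=M$, which is visibly admissible.
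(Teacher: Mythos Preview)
Your proof is correct and follows essentially the same approach as the paper's: both use the total isotropy of $\V^\pm$ to reduce $\la v,l\ra$ to the two cross terms, observe that admissibility of $L$ lets one test these independently (the paper phrases this as ``$m^-\in L$, so $\la l,m^-\ra\in\P^r$''), and then invoke $L^{\perp\perp}=L$ for the converse. The only difference is cosmetic: you record the explicit formula $L^\perp=(L^-)^\vee\oplus(L^+)^\vee$, whereas the paper simply verifies $(L^\perp)^+\subseteq L^\perp$ and $(L^\perp)^-\subseteq L^\perp$ directly without naming the partial duals.
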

\begin{proof}
Assume $L$ is admissible, so $L=L^++L^-$. Let $l\in L^\perp$. We need
to show $l^+\in L^\perp$. Let $m=m^++m^-\in L$. Note that since $L$ is
admissible, $m^-\in L$, and so $\la l, m^-\ra\in\P^r$.  So we have
\[
\la l^+, m\ra=\la l^+, m^++m^-\ra=\la l^+, m^-\ra
=\la l^++l^-, m^-\ra=\la l, m^-\ra\in\P^r.
\]
Thus $l^+\in L^\perp$. So $(L^\perp)^+\subseteq L^\perp$. Similarly
$(L^\perp)^-\subseteq L^\perp$. Hence $(L^\perp)^++(L^\perp)^-\subseteq
L^\perp$, which shows $L^\perp$ is admissible.

Conversely assume $L^\perp$ is admissible. From the above argument,
$L^{\perp\perp}$ is admissible. But $L^{\perp\perp}=L$, so $L$ is admissible.
\end{proof}

\begin{Lem}
Let $L_1$ be a self-dual lattice of $\V$ and $L\subseteq L_1$ a
sublattice of $L_1$. Assume $\dim\V=n$. Then there exist a basis
$\{e_1,\dots, e_n\}$ of $L_1$ and an integer $s$ with $0\leq s\leq n$
such that
\[
\{e_1,\dots,e_s,\varpi^{t_{s+1}}e_{s+1},\dots,\varpi^{t_n}e_n\}
\]
is a basis of $L$, where $t_i\geq1$ for $s+1\leq i\leq n$.
\end{Lem}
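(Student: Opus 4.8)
The plan is to produce the desired basis by an iterated ``elementary divisors'' argument, working one step at a time and at each step splitting off a rank-one piece from a self-dual lattice, using the structure results already established in this section. First I would set up the induction on $n = \dim\V$. If $L = L_1$ there is nothing to prove (take $s = n$), so assume $L \subsetneq L_1$. The idea is to look at the smallest ``denominator'': let $t \geq 1$ be the largest integer such that $L \subseteq \varpi^{t} L_1$ is \emph{false} for $t$ but, after suitable rescaling, there is a vector of $L$ that is $\varpi$-primitive in $\varpi^{?}L_1$. More precisely, choose $v \in L$ whose image $\bar v$ in $L_1/\varpi L_1$ is nonzero (so $v$ is part of an $\OE$-basis of $L_1$ by Lemma \ref{L:basis_extension}); among all such $v$ one can normalize so that $L$ is generated by vectors that are ``as primitive as possible'' in $L_1$. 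The goal of the first step is to find one basis vector $e_1$ of $L_1$ so that $e_1 \in L$ (this will be one of the ``$s$'' primitive vectors) or, if no element of $L$ is primitive in $L_1$, to descend.

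The cleaner way to organize this is the following. Consider the image $\bar L$ of $L$ in $L_1/\varpi L_1$, a subspace of the residue-field vector space of dimension, say, $s$. By Lemma \ref{L:basis_extension} pick $v_1,\dots,v_s \in L$ whose reductions form a basis of $\bar L$, and extend $v_1,\dots,v_s$ to an $\OE$-basis $v_1,\dots,v_n$ of $L_1$. Then $L_1 = L_1' \oplus L_1''$ where $L_1' = \Span_{\OE}\{v_1,\dots,v_s\}$ and $L_1'' = \Span_{\OE}\{v_{s+1},\dots,v_n\}$, and I claim $L = (L \cap L_1') \oplus (L \cap L_1'')$ with $L \cap L_1' = L_1'$ and $L \cap L_1'' \subseteq \varpi L_1''$. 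The first containment $L_1' \subseteq L$: an arbitrary $x \in L$ reduces into $\bar L$, hence $x \equiv \sum c_i v_i \pmod{\varpi L_1}$ for some $c_i \in \OE$, so $x - \sum c_i v_i \in L \cap \varpi L_1$; subtracting off the $v_i$ (which are hypothetically in $L$) and iterating shows both that the $v_i \in L$ forces $L_1' \subseteq L$ and that $L \cap \varpi L_1 = \varpi(\text{something})$. Then $L = L_1' \oplus (L \cap L_1'')$ and $L \cap L_1'' \subseteq \varpi L_1''$ because its reduction mod $\varpi L_1$ lies in $\bar L \cap (L_1''/\varpi L_1'') = 0$ by construction. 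Now apply the induction hypothesis to the rank-$(n-s)$ free module $L_1''$ — note $L_1''$ need not be self-dual, so I would instead apply induction to $\varpi^{-1}(L \cap L_1'') \subseteq L_1''$, i.e. divide out the common factor of $\varpi$, getting a basis $\{f_{s+1},\dots,f_n\}$ of $L_1''$ and integers $u_i \geq 0$ with $\{\varpi^{u_i} f_i\}$ a basis of $\varpi^{-1}(L\cap L_1'')$, hence $\{\varpi^{u_i+1} f_i\}$ a basis of $L \cap L_1''$, and set $t_i = u_i + 1 \geq 1$. Combining $\{v_1,\dots,v_s\} \cup \{\varpi^{t_i}f_i : s+1\leq i\leq n\}$ gives the claimed basis of $L$ with $\{v_1,\dots,v_s,f_{s+1},\dots,f_n\}$ a basis of $L_1$.

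Strictly speaking the induction should be stated for an arbitrary free $\OE$-module $L_1$ of finite rank (self-duality plays no role in the statement, only $\V$'s ambient structure, which is irrelevant here), so I would phrase it as: \emph{for free $\OE$-modules $L \subseteq L_1$ of the same finite rank, there is an $\OE$-basis $e_1,\dots,e_n$ of $L_1$ and integers $t_i \geq 0$ with $\{\varpi^{t_i}e_i\}$ a basis of $L$}; then the statement with $t_i \geq 1$ for $i > s$ is the special case obtained by letting $s$ be the number of indices with $t_i = 0$ and reordering. This is essentially the theory of elementary divisors / Smith normal form over the PID $\OE$, so one could alternatively just cite that. The main (and only mild) obstacle is the bookkeeping in the inductive descent — checking that $L = L_1' \oplus (L \cap L_1'')$ genuinely splits and that the recursion strictly decreases the rank — but there is no real difficulty: it is the standard PID argument, and I would keep the writeup short, either giving the two-paragraph induction above or simply invoking the structure theorem for finitely generated modules over the PID $\OE$ and then reordering the basis so that the unit elementary divisors come first.
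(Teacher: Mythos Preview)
Your argument is correct: this is nothing more than the elementary divisors theorem over the PID $\OE$, and your inductive proof (split off the part of $L$ that surjects onto $L/\varpi L_1 \cap L$, then recurse on the complement after dividing by $\varpi$) is the standard one. You are also right that self-duality is irrelevant here and that the cleanest phrasing is for arbitrary free $\OE$-modules of equal rank. The paper does not prove this at all --- it simply cites the corresponding lemma in \cite[p.~112]{MVW} --- so your proposal is strictly more detailed than what the paper does, and your closing remark that one could just invoke Smith normal form is exactly the spirit of the paper's citation. One minor point: the paragraph beginning ``The first containment $L_1' \subseteq L$'' is a bit tangled; the containment is immediate since $v_1,\dots,v_s \in L$ by construction, and you need not argue via iteration there.
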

\begin{proof}
This is a part of Lemma in \cite[p.112]{MVW}. 
\end{proof}

This lemma implies
\begin{Lem}
Assume $\V$ is such that $\V^a=0$. Let $L_1$ be a self-dual lattice of
$\V$ and $L\subseteq L_1$ an admissible
sublattice of $L_1$. Assume $\dim\V=2n$. Then there exist a basis
$\{e_1,\dots, e_n\}$ of $L_1^+$, a basis $\{f_1,\dots, f_n\}$ of
$L_1^-$ and integers $s^+, s^-$ with $0\leq s^+\leq n$ and $0\leq s^-\leq n$
such that
\[
\{e_1,\dots,e_{s^+},\varpi^{t_{{s^+}+1}}e_{{s^+}+1},\dots,\varpi^{t_n}e_n\}
\]
is a basis of $L^+$, where $t_i\geq1$ for $s^++1\leq i\leq n$, and 
\[
\{f_1,\dots,f_{s^-},\varpi^{u_{{s^-}+1}}f_{{s^-}+1},\dots,\varpi^{u_n}f_n\}
\]
is a basis of $L^-$, where $u_i\geq1$ for $s^-+1\leq i\leq n$.
\end{Lem}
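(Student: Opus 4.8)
The plan is to reduce the statement to the previous lemma applied separately to the totally isotropic pieces of the ambient space. Since $\V^a=0$ and $L_1$ is self-dual, Lemma~\ref{L:lattice_decomposition} (or rather its specialization to the $\V^a=0$ case) gives the polarization $\V=\V^+\oplus\V^-$ with $L_1=L_1^+\oplus L_1^-$, where $L_1^\pm=L_1\cap\V^\pm$ are $\O_E$-lattices of rank $n$ in the totally isotropic spaces $\V^\pm$. Because $L\subseteq L_1$ is admissible, by definition $L=L^++L^-$ with $L^\pm=L\cap\V^\pm$, so $L^+\subseteq L_1^+$ and $L^-\subseteq L_1^-$ are sublattices.

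Next I would apply the preceding lemma (the elementary-divisor statement for a sublattice of a self-dual lattice) twice: once to the inclusion $L^+\subseteq L_1^+$, viewing $\V^+$ as a vector space of dimension $n$ over $E$ with its lattice structure, and once to $L^-\subseteq L_1^-$. The first application produces a basis $\{e_1,\dots,e_n\}$ of $L_1^+$ and an index $s^+$ with $0\leq s^+\leq n$ so that $\{e_1,\dots,e_{s^+},\varpi^{t_{s^++1}}e_{s^++1},\dots,\varpi^{t_n}e_n\}$ is a basis of $L^+$ with $t_i\geq 1$; the second produces $\{f_1,\dots,f_n\}$ of $L_1^-$ and $s^-$ with the analogous basis of $L^-$ and exponents $u_i\geq 1$. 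One should note here that the cited lemma is stated for a self-dual lattice in a space of dimension $n$; it does not literally require the space to be $\epsilon$-Hermitian with a nondegenerate form, since the proof is pure module theory over the principal ideal (valuation) ring $\O_E$ — only the freeness of $L_1^\pm$ and the containment $L^\pm\subseteq L_1^\pm$ are used. If one prefers to stay inside the framework of the cited lemma verbatim, one can instead invoke the standard elementary-divisor theorem for a pair of $\O_E$-lattices $L^\pm\subseteq L_1^\pm$ directly, which is exactly what that lemma is.

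Finally I would assemble the two pieces: since $L_1=L_1^+\oplus L_1^-$, the concatenation $\{e_1,\dots,e_n\}\cup\{f_1,\dots,f_n\}$ is a basis of $L_1$ adapted to the polarization, and since $L=L^++L^-=L^+\oplus L^-$ (the sum being direct because $\V^+\cap\V^-=0$), the concatenation of the two exhibited bases is a basis of $L$. This gives precisely the claimed bases of $L^+$ and $L^-$ with the stated ranges for $s^\pm$ and the stated positivity of the exponents $t_i,u_i$. The only mild subtlety — and the one place I would be careful in writing the proof — is the observation in the previous paragraph: making sure the ``self-dual lattice of dimension $n$'' lemma is applied correctly to $L_1^+$ and $L_1^-$, which are lattices in totally isotropic (hence not themselves nondegenerate) subspaces. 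Since that lemma's proof is purely a statement about free modules over $\O_E$ and their sublattices, this causes no real difficulty; it just should be spelled out rather than glossed over.
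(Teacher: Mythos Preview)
Your approach is correct and in fact more direct than the paper's. You observe that admissibility gives $L=L^+\oplus L^-$ with $L^\pm\subseteq L_1^\pm$ both of full rank $n$ (since $\operatorname{rank}L^++\operatorname{rank}L^-=2n$ and each piece sits inside a rank-$n$ module), and then apply the elementary-divisor theorem separately to each inclusion. Your remark that the preceding lemma is really just the structure theorem for a sublattice of a free $\O_E$-module of finite rank, and does not actually use self-duality or nondegeneracy, is exactly right and is the only point that needs spelling out.

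The paper instead starts from a single elementary-divisor basis $\{v_1,\dots,v_{2n}\}$ of $L_1$ adapted to $L$, projects each $\varpi^{t_i}v_i$ onto $\V^+$ and $\V^-$ to obtain generating sets for $L^+$ and $L^-$, and then shrinks these to bases. This gets to the same place but is more roundabout, and the final ``shrink to a basis'' step implicitly re-invokes the same elementary-divisor reasoning you apply up front. Your route is cleaner and avoids the detour through a basis of $L_1$ not adapted to the polarization.
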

\begin{proof}
By the above lemma, there exist a basis $\{v_1,\dots,v_{2n}\}$ of
$L_1$ and an integer $s$ such that
$\{v_1,\dots,v_s,\varpi^{t_{s+1}}v_{s+1},\dots,\varpi^{t_n}v_{2n}\}$
is a basis of $L$. Since $L$ is admissible, each
$\varpi^{t_i}v_i^+$ (where $t_i=0$ if $i\leq s$) is in $L^+$, and
hence
$\{v_1^+,\dots,v_s^+,\varpi^{t_{s+1}}v_{s+1}^+,\dots,\varpi^{t_n}v_{2n}^+\}$
is a generator of the free $\OE$-module $L^+$. Then we can shrink it to
a basis of $L^+$. Similarly for $L^-$.
\end{proof}


\section{\bf Unramified dual pairs}\label{S:dual_pair}


For $i=1, 2$, let $(\V_i, \la\;,\;\ra_i)$ be an
$\epsilon_i$-Hermitian space over $E$ where $\epsilon_i\in\{\pm 1\}$.
If $\epsilon_1\epsilon_2=-1$, the space
\[
W=\V_1\otimes_F \V_2
\]
becomes a symplectic space of dimension $\dim_F \V_1\cdot\dim_F \V_2$
with the symplectic form defined by
\[
\la v_1\otimes v_2, v_1'\otimes v_2'\ra=\tr_{E/F}({\la v_1,
v_1'\ra}_1\overline{\la v_2, v_2'\ra}_2).
\]
We have the natural map $U(\V_1)\times U(\V_2)\rightarrow\Sp(W)$,
and say the pair $(U(\V_1), U(\V_2))$ is a dual pair.

Assume both $(\V_1,\la\;,\;\ra_1)$ and
$(\V_2,\la\;,\;\ra_2)$ admit self-dual lattices $L_1\subseteq \V_1$ and
$L_2\subseteq\V_2$ with respect to the integers $r_1$ and $r_2$,
respectively. We fix a decomposition
\[
\V_i=\V_i^+\oplus \V_i^a\oplus \V_i^-
\]
as in Lemma \ref{L:lattice_decomposition}. Notice that the lattice 
\[
A:=L_1\otimes_{\OE} L_2,
\] 
viewed as an $\O$-module, is a self-dual lattice of $W=\V_1\otimes_F \V_2$
with respect to the integer $r_1+r_2$. In what follows, let us assume
$\V_i^a=0$ for either $i=1$ or $2$.

We would like to choose our polarization $W=W^+\oplus W^-$ for $W$ in
such a way that $A=A\cap W^+\oplus A\cap W^-$. For this, we consider
the following two cases:
\begin{enumerate}
\item[\underline{Case 1}:] $\V_2^a=0$; In this case we choose
\begin{align*}
W^+&=\V_1\otimes \V_2^+\\
W^-&=\V_1\otimes \V_2^-
\end{align*}
so that
\begin{align*}
A\cap W^+&=L_1\otimes L_2^+\\
A\cap W^-&=L_1\otimes L_2^-.
\end{align*}
Certainly we have $A=A\cap W^+\oplus A\cap W^-$.

\item[\underline{Case 2}:] $\V_1^a=0$; In this case we choose
\begin{align*}
W^+&=\V_1^+\otimes \V_2\\
W^-&=\V_1^-\otimes \V_2
\end{align*}
so that
\begin{align*}
A\cap W^+&=L_1^+\otimes L_2\\
A\cap W^-&=L_1^-\otimes L_2.
\end{align*}
Certainly we have $A=A\cap W^+\oplus A\cap W^-$.
\end{enumerate}

If $\V_1^1=\V_2^a=0$, we can choose the polarization of $W$ in either
way. For this reason, let us make the
following definition.
\begin{Def}
For $W=\V_1\otimes\V_2$, if the polarization is chosen as in Case 1
above, we call it ``Type 1 polarization''. If it is chosen as in Case 2,
we call it ``Type 2 polarization''. 
\end{Def}

\begin{Rmk}
One important thing to be noted is that if $W$ is given Type 1
polarization, the group $U(\V_1)$ is in the Siegel Levi (with respect
to this polarization), and hence $\alpha_g=0$ for all $g\in
U(\V_1)$. If $W$ is given Type 2 polarization, $U(\V_2)$ is in the
Siegel Levi and $\alpha_g=0$ for all $g\in U(\V_2)$.
\end{Rmk}

\begin{Rmk}
For a self-dual lattice $A$ in $W$, if a polarization $W=W^+\oplus W^-$ is so
chosen that $A=A\cap W^+\oplus A\cap W^-$, we say that the
polarization of $W$ is {\it compatible} with the self-dual
lattice $A$. The above discussion shows that for our
$\V_1$ and $\V_2$ with fixed self-dual lattices $L_1$ and $L_2$, both
types of polarization are compatible with
the self-dual lattice $L_1\otimes L_2$.
\end{Rmk}
\quad

Each element $w\in \V_1\otimes_F \V_2$ can be
viewed as an element in $\Hom_E(\V_1, \V_2)$ in the standard way as follows: For
$w=\sum v_1\otimes v_2$, define $w:\V_1\rightarrow \V_2$ by
\[
w(v)=\sum{\la v, v_1\ra}_1v_2
\]
for $v\in \V_1$. Also $w$ can be viewed as an element in $\Hom_E(\V_2,
\V_1)$ by
\[
w(v)=\sum{\la v, v_2\ra}_2v_1
\]
for $v\in \V_2$. Which one is meant is always clear from the context.

We define $P^+:W\rightarrow W^+\subseteq W$ to be the projection on $W^+$ and
$P^-:W\rightarrow W^-\subseteq W$ the projection on $W^-$, so we have
\[
\beta(w, w')=\la P^+(w), P^-(w')\ra,
\]
provided $\beta$ is defined with respect to the polarization
$W=W^+\oplus W^-$. Now for each $i$, define $P_i^+, P_i^a,
P_i^-:\V_i\rightarrow \V_i$ to be the projections on $\V_i^+, \V_i^a$ and $ \V_i^-$,
respectively. If we view each element $w\in
\V_1\otimes_F \V_2$ as $w\in\Hom_E(\V_1, \V_2)$ then  $P^+(w)$
and $P^-(w)$ as elements in $\Hom_E(\V_1, \V_2)$ are to be interpreted as follows.
\begin{enumerate}[$\bullet$]
\item \underline{Type 1 polarization}: $\V_2^a=0$;
\begin{align*}
P^+(w)&=P_2^+\circ w\\
P^-(w)&=P_2^-\circ w.
\end{align*}
\item \underline{Type 2 polarization}: $\V_1^a=0$;
\begin{align*}
P^+(w)&=w\circ P_1^-\\
P^-(w)&=w\circ P_1^+.
\end{align*} 
\end{enumerate}

We often write $w^+:=P^+(w)$ and $w^-=P^-(w)$. Also for $v\in\V_i$ and
we write $v^+:=P_i^+(v)$, $v^-:=P_i^-(v)$ and $v^a:=P_i^a(v)$. So for
example, if $w=v_1\otimes v_2\in \V_1\otimes\V_2$ and $W$ is given
Type 1 polarization, then $w^+=v_1\otimes v_2^+$ and $w^-=v_1\otimes
v_2^-$, and if it is given Type 2 polarization, then $w^+=v_1^+\otimes
v_2$ and $w^-=v_1^-\otimes v_2$.

\section{\bf Two key lemmas}\label{S:key_lemmas}


In this section, we will formulate the two key lemmas which would
imply the Howe duality conjecture for unramified dual pairs, if
proven. They are the analogues of Theorem I.4 and Proposition I.5 in
\cite[p.103]{MVW}. We closely follow the notations in \cite{MVW}.

As before, we fix $(\V_1,
\la\;,\;\ra_1)$ and $(\V_2, \la\;,\;\ra_2)$ together with self-dual
lattices $L_1$ and $L_2$ with respect to $r_1$ and $r_2$,
respectively, where $r_1+r_2=r$ is the exponential conductor of
$\psi$, and let
\[
W=\V_1\otimes \V_2
\]
be the symplectic space as before. We let
\[
A:=L_1\otimes L_2\subseteq W,
\]
which is a self-dual lattice of $W$ with respect to $r$.

In this section, we do not assume anything specific about the
polarization of $W$, and hence in particular, it might be neither Type 1 nor Type
2. For example if $\V_1^a\neq 0$ and $\V_2^a\neq 0$, the polarization
is neither of the two. But in this section, we include such cases in
our consideration. We always realize the Weil
representation $\omega_\psi$ of $\Spt(W)$ in the
lattice model with respect to $A$ and the chosen polarization. As
before, we denote the space of the lattice model by $S_A$ or sometimes
simply $S$. 

We let
\begin{align*}
K_i&=U(\V_i)\cap\Gamma_A\\
K_i^\circ&=U(\V_i)\cap\Gamma_A^\circ.
\end{align*}
Recall from Section \ref{S:lattice_model} that $\Gamma_A=\{g\in\Sp(W):
gA=A\}$ and $\Gamma_A^\circ=\{g\in\Gamma_A:\alpha_g(a)\in\P^r\text{
  for all $a\in A$}\}$.
Note that if the residue characteristic is odd, we always have
$K_i=K_i^\circ$ and it is
the usual maximal open compact subgroup of $U(\V_i)$. Also if the
polarization of $W$ is chosen to be Type 1 (resp. Type 2), then
$K_1=K_1^\circ$ (resp. $K_2=K_2^\circ$) even when the residue
characteristic is even.

For any lattice $L$ of $\V_1$, we let
\[
B(L)=L^\perp\otimes_{\OE} L_2\subseteq W.
\]
One can check
\[
B(L)^\perp=B(L^\perp),
\]
where the $\perp$ for $B(L)$ is with respect to $r=r_1+r_2$ and the one
for $L$ is with respect to $r_1$.
Also note that $L\subseteq L_1=L_1^\perp\subseteq L^\perp$, and
for each $u\in U(\V_1)$, we
have $(uL)^\perp=u(L^\perp)$. 

For each sublattice $L\subseteq L_1$, we define
\[
K_1(L)^\circ:=\{u\in K_1^\circ : u L^\perp\subseteq L^\perp\},
\]
where we view $K_1$ as a subgroup of $\Sp(W)$. Note that $u
L^\perp\subseteq L^\perp $ implies $u L^\perp=L^\perp$ and the condition $u
L= L$ is equivalent to $uL^\perp=L^\perp$. Hence $K_1(L^\perp)^\circ=K_1(L)^\circ$.

One can check that the group
$K_1(L)^\circ$ is an open
compact subgroup of $U(\V_1)$. Of course $K_1(L_1)^\circ=K_1^\circ$. 

Also define
\begin{align*}
J_1(L)&=\{u\in K_1(L)^\circ: (u-1)L^\perp\subseteq L\}\\
H_1(L)&=\{u\in K_1(L)^\circ: (u-1)L^{\perp}\subseteq L_1\}.
\end{align*}
Both of them are open compact subgroups of $U(\V_1)$ with
$ J_1(L)\subseteq H_1(L)$. In particular, we have
\begin{align*}
0\rightarrow J_1(L)\rightarrow
K_1(L)^\circ\rightarrow{\Aut(L^\perp/L)};\\
0\rightarrow H_1(L)\rightarrow
K_1(L)^\circ\rightarrow{\Aut(L_1/L)}.
\end{align*}
Let us note that our $J_1(L)$ and $H_1(L)$ differ from the ones in
\cite{MVW} in that we always require each element $u\in J_1(L)$ or
$H_1(L)$ be in $K_1(L)^\circ$, so that $\alpha_u(a)\in\P^r$ for all
$a\in A$, though for example if the polarization of $W$ is Type 1, we
always have $\alpha_u(a)\in\P^r$, and hence our $J_1(L)$ and $H_1(L)$
coincide with those of \cite{MVW}.

For a sublattice $M\subseteq L_2$, one defines open compact subgroups
$K_2(M)^\circ, H_2(M)$ and $J_2(M)$ of $U(\V_2)$
analogously.

\quad

We define
\[
S_{L}:=\{f\in S_A: \supp f\subseteq B(L)\}
\]
Also for any subgroup $G\subseteq U(\V_1)$ which splits in the cover
$\Spt(W)$, we define
\[
S^G:=\{f\in S_A: \omega_\psi(g)\cdot f=f \text{ for all } g\in G\}.
\]

Recall from $(\ref{E:s_w})$ that for each $w\in W$, we have defined
$s_w$ to be the unique function in $S_A$ with $\supp(s_w)=(A+w)\times
F$ such that $s_w(w,0)=1$. Then the following is an easy exercise.

\begin{Lem}
The space $S_L$ is generated by $s_w$ where $w\in B(L)$.
\end{Lem}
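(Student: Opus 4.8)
The plan is to prove the two inclusions $\Span_\C\{s_w:w\in B(L)\}\subseteq S_L$ and $S_L\subseteq\Span_\C\{s_w:w\in B(L)\}$ separately; the first is immediate from the definitions, and the second is the standard ``spanning'' argument for the lattice model, restricted to the subspace $S_L$.

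First I would record that $A\subseteq B(L)$: since $L\subseteq L_1=L_1^\perp\subseteq L^\perp$ we get $L_1\subseteq L^\perp$, hence $A=L_1\otimes_{\OE}L_2\subseteq L^\perp\otimes_{\OE}L_2=B(L)$. As $B(L)$ is an $\O$-lattice, in particular an additive subgroup of $W$ containing $A$, for $w\in B(L)$ the whole coset $A+w$ is contained in $B(L)$, so $\supp s_w=(A+w)\times F\subseteq B(L)\times F$ and therefore $s_w\in S_L$. This gives the inclusion $\Span_\C\{s_w:w\in B(L)\}\subseteq S_L$.

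For the reverse inclusion, take $f\in S_L$. Since $A$ is a compact open subgroup of $W$, the quotient $W/A$ is discrete, and the compactness of $\supp f$ modulo the center forces its image in $W/A$ to be a finite set of cosets $A+w_1,\dots,A+w_k$; using $\supp f\subseteq B(L)\times F$ together with $A\subseteq B(L)$, I may choose the representatives with $w_1,\dots,w_k\in B(L)$. I would then set $f':=\sum_{j=1}^k f(w_j,0)\,s_{w_j}\in\Span_\C\{s_w:w\in B(L)\}$ and check that $h:=f-f'$ is zero. Evaluating at a representative $w$ of an arbitrary coset of $A$ in $W$: if $A+w$ is none of the $A+w_j$ then $f(w,0)=0=s_{w_j}(w,0)$ for all $j$, whereas $h(w_j,0)=f(w_j,0)-f(w_j,0)=0$ since $s_{w_i}(w_j,0)=\delta_{ij}$ (because the cosets $A+w_i$ are distinct). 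Now every element of $S_A$ is determined by its values on a set of representatives of $W/A$, via $h(a+w,0)=\psi(-\beta(a,w))h(w,0)$ for $a\in A$ and $h(w,z)=\psi(z)h(w,0)$; hence $h\equiv 0$, i.e.\ $f=f'\in\Span_\C\{s_w:w\in B(L)\}$.

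Combining the two inclusions gives $S_L=\Span_\C\{s_w:w\in B(L)\}$, which is the assertion. I do not expect any genuine obstacle here: this is essentially the argument already used above for the irreducibility of $(\rho_W,S_A)$, and the only point where $S_L$ rather than $S_A$ enters is the choice of the coset representatives $w_j$ inside $B(L)$, which is possible precisely because $A\subseteq B(L)$.
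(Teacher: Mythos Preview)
Your proof is correct. The paper does not actually give a proof of this lemma --- it introduces it with ``the following is an easy exercise'' and states it without further argument --- and your write-up is precisely the natural way to carry out that exercise: split into the two inclusions, use $A\subseteq B(L)$ for the first, and for the second write an arbitrary $f\in S_L$ as a finite sum $\sum_j f(w_j,0)s_{w_j}$ over coset representatives chosen in $B(L)$.
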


 We have the following lemma, which is the analogue of  part of Lemma
 in \cite[p. 102]{MVW}.
\begin{Lem}\label{L:action}
Assume $L$ is any sublattice if $L_1$. For $w\in B(L)$ and
$h\in H_1(L)$, we have the equality
\[
\omega(h)s_w=\psi(-\beta(h^{-1}w-w, w)+\alpha^h(w))s_w.
\]
In particular, the map
\[
\psi_1^w:h\mapsto \psi(-\beta(h^{-1}w-w, w)+\alpha^h(w))
\]
is a character on $H_1(L)$.
\end{Lem}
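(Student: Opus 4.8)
The plan is to compute $\omega(h)s_w$ directly from the explicit formula for the metaplectic action on the lattice model given in \eqref{E:action_on_lattice_model} and Proposition \ref{P:Gamma_A_action}, and then to read off that the scalar it produces is multiplicative in $h$. First I would observe that since $h\in H_1(L)\subseteq K_1(L)^\circ\subseteq K_1^\circ=U(\V_1)\cap\Gamma_A^\circ$, the element $h$ lies in $\Gamma_A^\circ$, so by the discussion following Lemma \ref{L:subgroup} (normalizing the measure so that $\operatorname{vol}(A/A_h)=1$) the action is simply
\[
\omega(h)f(w',z)=f(h^{-1}w',\,z+\alpha^h(w')).
\]
Applying this to $f=s_w$, the support $(A+w)\times F$ of $s_w$ is carried to $(hA+hw)\times F=(A+hw)\times F$, so $\omega(h)s_w$ is supported on $(A+hw)\times F$; the point is then to check $A+hw=A+w$, i.e. $(h-1)w\in A$. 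Since $w\in B(L)=L^\perp\otimes_{\OE}L_2$ and $h\in H_1(L)$ satisfies $(h-1)L^\perp\subseteq L_1$, we get $(h-1)w\in L_1\otimes_{\OE}L_2=A$, as needed. Hence $\omega(h)s_w$ and $s_w$ have the same support, so $\omega(h)s_w$ is a scalar multiple of $s_w$, and the scalar is $\omega(h)s_w(w,0)=s_w(h^{-1}w,\alpha^h(w))$.

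Next I would evaluate this scalar. Writing $h^{-1}w=w+(h^{-1}w-w)$ with $a:=h^{-1}w-w\in A$ (again because $h^{-1}\in H_1(L)$), we use the defining quasi-invariance $s_w(a'+w,0)=\psi(-\beta(a',w))s_w(w,0)$ from the description of elements of $S_A$, together with the fact that $(a,t)(w,0)=(a+w,\,t+\beta(a,w))$ in $H_\beta(W)$, to get
\[
s_w(h^{-1}w,\alpha^h(w))=\psi(\alpha^h(w))\,s_w(a+w,0)=\psi(\alpha^h(w))\psi(-\beta(h^{-1}w-w,w)).
\]
This gives exactly $\omega(h)s_w=\psi\bigl(-\beta(h^{-1}w-w,w)+\alpha^h(w)\bigr)s_w$, which is the first assertion.

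For the second assertion, that $\psi_1^w$ is a character on $H_1(L)$, the cleanest route is not a direct cocycle manipulation but the observation that $h\mapsto\omega(h)$ is a genuine representation of $H_1(L)$ — this subgroup splits in $\Spt(W)$ (it sits inside $\Gamma_A^\circ$, which splits by the computation before Lemma \ref{L:open_closed}) and $S_L$ is $\omega(H_1(L))$-stable since, as shown above, $\omega(h)$ permutes the generators $s_w$, $w\in B(L)$, up to scalars. Since each $s_w$ spans a one-dimensional $\omega(H_1(L))$-stable subspace with $\omega(h)$ acting by $\psi_1^w(h)$, multiplicativity $\psi_1^w(h_1h_2)=\psi_1^w(h_1)\psi_1^w(h_2)$ is forced, and continuity is clear since $\alpha^h(w)$ and $\beta(h^{-1}w-w,w)$ depend continuously on $h$; hence $\psi_1^w$ is a character. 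The main thing to be careful about — the only real obstacle — is the bookkeeping that $h$ genuinely lies in $\Gamma_A^\circ$ (so that the action is translation with no nontrivial scalar from the $\int_{A/A_h}\psi(\alpha^h(a))\,da$ factor) and the $\psi$-quasi-invariance identity is applied with the correct sign coming from the Heisenberg multiplication; both are immediate from the definitions in Section \ref{S:lattice_model} once one remembers, per Remark \ref{R:important}, that $h^{-1}$ is to be read as $(h^{-1},\alpha^h)$.
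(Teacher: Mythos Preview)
Your proof is correct and follows essentially the same approach as the paper: both show $\omega(h)s_w$ has support $A+w$ (via $(h-1)w\in A$ from $h\in H_1(L)$ and $w\in B(L)$), then evaluate at $(w,0)$ using the quasi-invariance of $s_w$ under $A$-translates to extract the scalar. Your treatment of the ``character'' claim is slightly more explicit than the paper's (which just says ``the first part implies the second part''), correctly noting that $H_1(L)\subseteq\Gamma_A^\circ$ splits so that $h\mapsto\omega(h)$ is a genuine homomorphism on the one-dimensional span of $s_w$.
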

\begin{proof}
By the above lemma, the space $S_{L}$ is generated by the functions
$s_w$ for $w\in  B(L)$. Hence the first part implies the second
part. To
show the first part, let $h\in H_1(L)$. For $w'\in W$, we have
$\omega(h)s_w(w', z)=s_w(h^{-1}w', z+\alpha^h(w'))$, which is non-zero if (and only if)
$h^{-1}w'\in A+w$, \ie $w'\in A+hw$. Since $h\in H_1(L)$, we have
$hw-w\in A$, \ie $hw\in A+w$. So $w'\in A+w$. Thus the suport of
$\omega(h)s_w$ is contained in that of $s_w$. Hence $\omega(h)s_w$ is
proportional to $s_w$. To determine the constant of proportionality,
choose $w'=w$. Keeping in mind $h^{-1}w-w\in A$, we have
\begin{align*}
\omega(h)s_w(w,z)&=s_w(h^{-1}w, z+\alpha^h(w))\\
&=s_w(h^{-1}w-w+w, z+\alpha^h(w))\\
&=\psi(-\beta(h^{-1}w-w, w)+\alpha^h(w))s_w(w, z).
\end{align*}
Thus $\omega(h)s_w=\psi(-\beta(h^{-1}w-w, w)+\alpha^h(w))s_w$. 
\end{proof}

We define
\[
J_1(L)^\circ:=\bigcap_{w\in B(L)}\ker\psi_1^w.
\]
Of course by definition of $J_1(L)^\circ$, we have
\begin{Prop}\label{P:inclusion}
For any sublattice $L$ of $L_1$, 
\[
S_L\subseteq S^{J_1(L)^\circ}
\]
\end{Prop}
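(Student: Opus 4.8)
The plan is to deduce this directly from Lemma \ref{L:action} together with the generation statement for $S_L$. First I would recall that, by the lemma immediately preceding Lemma \ref{L:action}, the space $S_L$ is spanned by the functions $s_w$ with $w\in B(L)$. Hence it suffices to show that every element of $J_1(L)^\circ$ fixes each such generator.

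Next I would note that $J_1(L)^\circ=\bigcap_{w\in B(L)}\ker\psi_1^w$ is, by construction, a subgroup of $H_1(L)$, since each $\psi_1^w$ is a character of $H_1(L)$ by Lemma \ref{L:action}. In particular $J_1(L)^\circ\subseteq H_1(L)\subseteq K_1(L)^\circ\subseteq K_1^\circ\subseteq\Gamma_A^\circ$, so $J_1(L)^\circ$ splits in $\Spt(W)$ and $\omega_\psi$ genuinely acts on $S_A$ through it (this is the splitting of $\Gamma_A^\circ$ from Section \ref{S:lattice_model}, so that $\omega(h)$ is an honest linear operator, not merely a projective one).

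Then, for $h\in J_1(L)^\circ$ and $w\in B(L)$, Lemma \ref{L:action} gives
\[
\omega(h)s_w=\psi\bigl(-\beta(h^{-1}w-w,w)+\alpha^h(w)\bigr)\,s_w=\psi_1^w(h)\,s_w=s_w,
\]
the last equality holding because $h\in\ker\psi_1^w$ by the very definition of $J_1(L)^\circ$. Since $\omega(h)$ is linear and the $s_w$ with $w\in B(L)$ span $S_L$, we conclude $\omega(h)f=f$ for every $f\in S_L$, that is, $f\in S^{J_1(L)^\circ}$. Therefore $S_L\subseteq S^{J_1(L)^\circ}$.

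There is essentially no real obstacle: the statement is a formal consequence of the definition of $J_1(L)^\circ$ as the common kernel of the characters $\psi_1^w$ and of the explicit action formula in Lemma \ref{L:action}. The only point that deserves care is keeping track of the conventions from Remark \ref{R:important}, namely that $h^{-1}$ is to be interpreted as $(h^{-1},\alpha^h)$, which is exactly what the $\alpha^h$ in $\psi_1^w$ records; once that is respected, the computation above is immediate.
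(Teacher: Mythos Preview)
Your proof is correct and is exactly the argument the paper has in mind: the paper simply says the inclusion holds ``of course by definition of $J_1(L)^\circ$,'' and you have spelled out that definition-chase in full, using Lemma \ref{L:action} and the fact that the $s_w$ with $w\in B(L)$ span $S_L$.
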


\begin{Rmk}
the group $J_1(L)^\circ$ is an open and compact subgroup of
$U(\V_1)$. To see this, the reader can verify that if the polarization of $W$ is Type 1,
$J_1(L)=J_1(L)^\circ$, and if the polarization of $W$ is Type 2 and
the lattice $L$ is admissible, then $J_1(L)^\circ=\{u\in J_1(L):
\alpha^u(w)\in\P^r\text{ for all $w\in B(L)$}\}$. Hence In either
case, $J_1(L)^\circ$ is an open compact subgroup of $U(\V_1)$. (To
show the latter case, use Lemma \ref{L:open_closed}.) If $W$ is of Type 2
but $L$ is not admissible, one can always find an admissible lattice
$L'\subseteq L$, and $J_1(L')^\circ\subseteq J_1(L)^\circ$, which shows
$J_1(L)^\circ$ is open and compact. 
\end{Rmk}

For a sublattice $M\subseteq L_2$
and $w\in B(M)$, we can analogously define
$\psi_2^w$ and $J_2(M)^\circ$, and have
\[
S_M\subseteq S^{J_2(M)^\circ}.
\]

Now let $\He_1$ (resp. $\He_2$) be the Hecke algebra for $\Ut(\V_1)$
(resp. $\Ut(\V_2)$) as in \cite{MVW} and $\omega(\He_2)S_L$
the subspace of $S_L$ generated by the elements of the form
$\omega(\varphi)f$ for $\varphi\in\He_2$ and $f\in S_L$. The first of
the two key lemmas for the
proof of the Howe duality is 
\begin{Conj}[First Key Lemma]\label{L:key1}
For any sublattice $L\subseteq L_1$ we have
\[
\omega(\He_2)S_L=S^{J_1(L)^\circ}.
\]
Also for each $w\in B(L)$, let
\[
M=M_w=(w(\varpi^{-r_1}L_1)+L_2)^\perp.
\]
Then
\[
\omega(\He_1)S_M=S^{J_2(M)^\circ}.
\]
\end{Conj}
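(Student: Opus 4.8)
The statement to be proven is the First Key Lemma: for any sublattice $L\subseteq L_1$, $\omega(\He_2)S_L=S^{J_1(L)^\circ}$, and the symmetric statement with the roles of $\V_1,\V_2$ exchanged and $M=M_w$. Since the two assertions are formally symmetric (the second is the first applied to $\V_2$ with $L$ replaced by $M_w$), I will concentrate on the first equality. The inclusion $\supseteq$ is the content of the argument; the inclusion $\subseteq$ follows once one knows $\omega(\He_2)$ preserves $S_L$, which I would establish first: $\He_2$ is the spherical Hecke algebra of $U(\V_2)$, so $\omega(\varphi)$ for $\varphi\in\He_2$ is built from $\omega(k)$, $k\in K_2^\circ$, each acting by (scaled) translation by Proposition \ref{P:Gamma_A_action}; since $K_2^\circ$ stabilizes $A=L_1\otimes L_2$ and hence stabilizes the set $B(L)=L^\perp\otimes L_2$ (as $K_2^\circ$ acts on the second factor and $L^\perp$ lives in $\V_1$), translation by such an element carries a function supported on $B(L)\times F$ to one supported on $B(L)\times F$. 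Combined with Proposition \ref{P:inclusion}, which already gives $S_L\subseteq S^{J_1(L)^\circ}$, this yields $\omega(\He_2)S_L\subseteq S^{J_1(L)^\circ}$.

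For the reverse inclusion, the plan is to mimic the proof of Theorem I.4 in \cite[Ch.~5]{MVW}, but substituting our residue-characteristic-free lattice lemmas from Section \ref{S:lattices} at each point where \cite{MVW} uses Witt-type extension results. Concretely: take $f\in S^{J_1(L)^\circ}$. Decompose $f$ into its components $s_w$-isotypic pieces under the center-translation action, i.e. write $f=\sum_{w\in B(A)\backslash W} c_w\, s_w$ running over a set of representatives of $W/A$; the invariance $\omega(j)f=f$ for $j\in J_1(L)^\circ$ forces, via Lemma \ref{L:action}, that the support of $f$ is concentrated on those cosets $w+A$ with $\psi_1^w|_{J_1(L)^\circ}\equiv 1$. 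The key geometric claim is that every such $w$ is $K_2^\circ$-conjugate (acting on $W/A$) into $B(L)=L^\perp\otimes L_2$; once this is known, applying a suitable averaging idempotent $e_{K_2^\circ}\in\He_2$ and then Hecke operators that translate within the $K_2^\circ$-orbit moves every component of $f$ into $S_L$, exhibiting $f\in\omega(\He_2)S_L$. To prove the geometric claim I would reduce it, as in \cite{MVW}, to a statement about the orbit of $w$ viewed as an element of $\Hom_E(\V_2,\V_1)$ modulo $A$: the condition "$\psi_1^w$ trivial on $J_1(L)^\circ$" translates into a congruence condition on the image lattice $w(L_2)$ relative to $L_1$ and $L^\perp$, and then Lemma \ref{L:lattice_lin_ind} (the symplectic case of Lemma \ref{L:lattice1}, which is why $\V_2$ must be symplectic) together with Lemma \ref{L:lattice_Witt} produces an element of $K_2^\circ$ conjugating $w$ into the desired position.

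\textbf{The main obstacle.} The hard part will be the geometric claim — showing that the "$\psi_1^w$-trivial" cosets are exactly the $K_2^\circ$-orbit of $B(L)$ in $W/A$ — because this is precisely where the second key lemma is needed in full generality. In \cite{MVW} the corresponding step uses the full strength of Witt's theorem over the self-dual lattice for arbitrary $\epsilon$-Hermitian spaces; here Lemma \ref{L:lattice1} carries the restriction $t_i\geq 1+e$, so the congruence-to-isometry passage only works when the relevant lattice displacements are divisible by $\varpi^{1+e}$. This is exactly the hypothesis "$L\supseteq\varpi^k L_1$ with $k\geq 1+e$" under which Sections 7--8 establish the key lemmas, and outside this range the geometric claim — hence the reverse inclusion $S^{J_1(L)^\circ}\subseteq\omega(\He_2)S_L$ — remains conjectural. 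I therefore expect the proof to proceed cleanly for $L$ of the form $\varpi^k L_1$ with $k\geq 1+e$ (where Lemma \ref{L:lattice_symplectic} removes the obstruction), and to stall at precisely this point in general, which is why the statement is labelled a conjecture.
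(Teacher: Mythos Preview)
Your plan contains a genuine gap at both the easy and the hard inclusions.

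\textbf{Easy inclusion.} You argue that $\omega(\He_2)S_L\subseteq S^{J_1(L)^\circ}$ because $\omega(\He_2)$ \emph{preserves} $S_L$. This is false: if $\He_2$ preserved $S_L$ then $\omega(\He_2)S_L=S_L$, and the conjecture would collapse to $S_L=S^{J_1(L)^\circ}$, which is wrong whenever $L\neq L_1$. The underlying error is your description of $\He_2$: it is the full Hecke algebra of $\widetilde{U}(\V_2)$ (see just before Conjecture~\ref{L:key1}), not the algebra generated by $\omega(k)$ for $k\in K_2^\circ$. The easy inclusion holds for the reason the paper states right after the conjecture: $S_L\subseteq S^{J_1(L)^\circ}$ by Proposition~\ref{P:inclusion}, and $S^{J_1(L)^\circ}$ is $\He_2$-stable because the $\He_2$-action commutes with $J_1(L)^\circ$.

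\textbf{Hard inclusion.} Your ``key geometric claim'' --- that every $w$ with $\psi_1^w|_{J_1(L)^\circ}\equiv 1$ is $K_2^\circ$-conjugate modulo $A$ into $B(L)$ --- is vacuous as stated and cannot drive the argument. Since $K_2^\circ$ stabilises $L_2$ it stabilises $B(L)=L^\perp\otimes L_2$, and $A\subseteq B(L)$; so ``$K_2^\circ$-conjugate mod $A$ into $B(L)$'' is the same as ``$w\in B(L)$''. Thus your claim would give $S^{J_1(L)^\circ}\subseteq S_L$ directly, which is false. You are also conflating the two key lemmas: the second key lemma concerns $K_1$-orbits (not $K_2$) and is \emph{not} an ingredient in the proof of the first.

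\textbf{What the paper actually does (Section~7).} The mechanism is a descent through a filtration, using genuinely non-compact elements of $U(\V_2)$. One sets
\[
S_t=\{s\in S:\varpi^{-r_1}w(2\varpi L_1\cap L)\subseteq\varpi^{-t}L_2\text{ for all }(w,0)\in\supp s\},
\]
so $S=\bigcup_t S_t$ and $S^{J^\circ}=\bigcup_t S_t^{J^\circ}$. The space $S^{J^\circ}$ is spanned by the averages $s[w]=\int_{J^\circ}\omega(u)s_w\,du$ (Lemma~\ref{L:generated}). For a generator $s[w]\in S_t^{J^\circ}$, Lemma~\ref{L:pair} and its corollary force the vectors $\varpi^{t-r_1}w(x)$, $x\in 2\varpi L_1\cap L$, to be nearly isotropic in $L_2$; Lemma~\ref{L:lattice_lin_ind} (this is where $\V_2$ symplectic is used) then produces an isotropic decomposition $L_2=L_X\oplus L_2^\circ\oplus L_Y$ adapted to their span. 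One then takes
\[
u=\varpi\,\Id_X\oplus\Id_{\V_2^\circ}\oplus\varpi^{-1}\Id_Y\in U(\V_2)\setminus K_2,
\]
and checks that $\omega(\tilde u)s[w]\in S_{t-1}$. Iterating gives $S^{J^\circ}\subseteq\omega(\He_2)S_0^{J^\circ}$ (Proposition~\ref{P:step1}). When $L\subseteq 2\varpi L_1$ one has $S_0=S_L$, which yields the first key lemma in that range (Proposition~\ref{P:step1.5}); outside that range the descent stops at $S_0\neq S_L$, and the statement remains conjectural. Your diagnosis of \emph{where} the obstruction lies (the $t_i\geq 1+e$ restriction in Lemma~\ref{L:lattice1}) is correct, but the route to get there is the filtration argument above, not an orbit claim for $K_2^\circ$.
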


Apparently this is the analogue of Theorem in \cite[p. 103]{MVW}. But
the inclusions $\omega(\He_2)S_L\subseteq S^{J_1(L)^\circ}$ and
$\omega(\He_1)S_M\subseteq S^{J_2(M)^\circ}$ immediately
follow from the previous lemma because the actions of $\He_1$ and
$\He_2$ commute. The hard part is to show the other inclusion, which
we do not know how to prove in full generality.

If $M$ is as in the above conjectural lemma, one can verify that $w\in
B(M)$. With this said, let us state the second key lemma, which is
the analogue of Proposition \cite[p.103]{MVW}. 

\begin{Conj}[Second Key Lemma]\label{L:key2}
Let $w,w'\in B(L)$ be such that
\begin{enumerate}
\item $w(\varpi^{-r_2}L_2)+L_1=w'(\varpi^{-r_2}L_2)+L_1=L^\perp$;
\item $\psi_1^w=\psi_1^{w'}$ as characters on $H_1(L)$.
\end{enumerate}
Then there exists $k\in K_1$ such that $A+w=k(A+w')$. 

Let $M=M_w$ be as in the first key lemma, and $w'\in B(M)$ be such that
\begin{enumerate}
\item $w'(\varpi^{-r_1}L_1)+L_2=M^\perp$;
\item $\psi_2^w=\psi_2^{w'}$ as characters on $H_2(M)$.
\end{enumerate}
Then there exists $k\in K_2$ such that $A+w=k(A+w')$.
\end{Conj}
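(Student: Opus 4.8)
The plan is to reduce the statement to Witt-type extension theorems for self-dual lattices (Lemmas \ref{L:lattice_Witt} and \ref{L:lattice_lin_ind}) by translating the two hypotheses into concrete linear-algebra conditions on $w,w'$ viewed as elements of $\Hom_E(\V_2,\V_1)$. First I would unwind condition (2): using Lemma \ref{L:action}, the equality $\psi_1^w=\psi_1^{w'}$ on $H_1(L)$ says $\psi\bigl(-\beta(h^{-1}w-w,w)+\alpha^h(w)\bigr)=\psi\bigl(-\beta(h^{-1}w'-w',w')+\alpha^h(w')\bigr)$ for all $h\in H_1(L)$. Specializing $h$ to lie in $J_1(L)$ (where $(h-1)L^\perp\subseteq L$ forces $hw\equiv w$ modulo $A$ and $\alpha^h$ essentially vanishes mod $\P^r$) should kill the $\alpha^h$ terms and leave a statement purely about the pairing $\la w^+,\,\cdot\,\ra=\la w'^+,\,\cdot\,\ra$ on an appropriate quotient, which combined with the full $H_1(L)$ information should give that $w$ and $w'$ induce the same $\epsilon_1$-Hermitian form data on the relevant sublattice of $\V_1$. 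Condition (1), $w(\varpi^{-r_2}L_2)+L_1=w'(\varpi^{-r_2}L_2)+L_1=L^\perp$, is the ``maximality'' hypothesis: it says the images of $w$ and $w'$ generate the same lattice $L^\perp$, so after choosing generators we may match up bases of $\varpi^{-r_2}L_2$ mapping onto corresponding generators of $L^\perp/L_1$.

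Concretely, I would pick a basis $v_1,\dots,v_s$ of a complement making $w(v_i)$ into vectors in $L^\perp$ whose reductions mod $\varpi L^\perp$ are linearly independent and span $L^\perp/L_1$ (possible by condition (1) and Lemma \ref{L:basis_extension}), set $w(v_i)=x_i$ and $w'(v_i)=x_i'$, and show from condition (2) that $\la x_i,x_j\ra_1=\la x_i',x_j'\ra_1$ for all $i,j$ — this is where the character equality gets converted to an equality of Gram matrices, the key bookkeeping step. Then Lemma \ref{L:lattice_Witt} (valid here since in our situation $\V_1$ is orthogonal with $\V_1^a=0$, or symplectic) produces $k\in U(\V_1)$ with $k(L_1)=L_1$ and $k(x_i')=x_i$ for all $i$. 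It remains to check that this $k$ actually satisfies $A+w=k(A+w')$, i.e. $k\in K_1$ and $w-kw'\in A=L_1\otimes L_2$: since $k\in U(\V_1)$ fixes $L_1$, it fixes $L^\perp$ and lies in $K_1$; and $kw'$ agrees with $w$ on the chosen basis vectors $v_i$ modulo $L_1$, while on the rest of $\varpi^{-r_2}L_2$ both $w$ and $kw'$ already land in $L_1\otimes(\text{stuff})$, so $w-kw'\in A$. The second assertion (roles of $\V_1,\V_2$ swapped, with $M=M_w$) follows by the identical argument after verifying, as the excerpt notes, that $w\in B(M)$ and that $M^\perp=w'(\varpi^{-r_1}L_1)+L_2$ plays the role $L^\perp$ did.

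The main obstacle I anticipate is the second step: extracting from the bare equality of characters $\psi_1^w=\psi_1^{w'}$ the full equality of Gram matrices $\la x_i,x_j\ra_1=\la x_i',x_j'\ra_1$, including the diagonal entries. The off-diagonal entries should come out by polarization from testing against suitable $h\in H_1(L)$, but the diagonal entries $\la x_i,x_i\ra_1$ are exactly the quantities that are only controlled modulo a $2$-power (the character $\psi$ has conductor $\P^r$, and $\beta(h^{-1}w-w,w)$ picks up the form with a built-in asymmetry), which is why the hypothesis $L\subseteq\varpi^{1+e}L_1$ and the restriction to $\V_1$ symplectic or $\V_1^a=0$ are forced on us — precisely as in the failure of Lemma \ref{L:lattice1} to hold with $t_i\geq 1$ in general. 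So the honest statement is that this proof goes through only under those hypotheses, which is consistent with the paper proving the key lemmas only for $L\subseteq 2\varpi L_1$; handling the diagonal in general is the genuine gap.
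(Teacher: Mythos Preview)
Your high-level strategy --- extract Gram-matrix data from the character equality, then invoke a lattice Witt theorem --- is the right one, and your closing caveat about the diagonal entries is on target. But there are two genuine gaps between your sketch and the paper's actual argument (Propositions \ref{P:key2_for_L} and \ref{P:key2_for_M}).

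\textbf{You are working on the wrong side of the dual pair.} You set $x_i=w(v_i)\in L^\perp\subseteq\V_1$ for $v_i\in\varpi^{-r_2}L_2$ and try to show $\la x_i,x_j\ra_1=\la x'_i,x'_j\ra_1$, aiming for $k\in K_1$. But the character $\psi_1^w$ on $H_1(L)$ does not naturally encode pairings in $\V_1$: the paper tests $\psi_1^w$ on explicit Cayley transforms $u_{x,y}\in H_1(L)$ with $x,y\in\V_1$, and Lemma \ref{L:beta(c)_concrete} (plus Lemma \ref{L:beta(u)_and_beta(c)}) converts $\psi_1^w(u_{x,y})$ into $\psi(2\,\tr_{E/F}\la w(x),w(y)\ra_2)$ --- a pairing in $\V_2$. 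So the correct objects are $z_i=w(\varpi^{t_i-r_1}e_i)\in L_2$ for a basis $\{e_i\}$ of $L_1$, and the character comparison yields $\la z_i,z_j\ra_2\equiv\la z'_i,z'_j\ra_2$. The Witt extension is then applied in $\V_2$, producing $k\in K_2$. (The conjecture as stated has $K_1$ and $K_2$ interchanged; the special-case Proposition \ref{P:key2_for_L} and the application in Proposition \ref{P:equality_Hecke} both use $k\in K_2$, which is what is needed to put $s_w$ and $s_{w'}$ in the same $\He_2$-orbit.) Your plan to Witt-extend inside $\V_1$ for vectors lying in $L^\perp\supsetneq L_1$ has no available lemma behind it.

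\textbf{The character only yields congruences, not equalities.} Since $\psi$ has conductor $\P^r$, testing at $u_{x,y}$ with $x=\tfrac12 e_i$, $y=\varpi^{t_j-r_1}e_j$ gives only $\la z_i,z_j\ra_2\equiv\la z'_i,z'_j\ra_2\bmod\PE^{r_2+t_i}$, not equality. The paper bridges this with the approximation Lemma \ref{L:lattice1} (or \ref{L:lattice_symplectic} when $\V_2$ is symplectic): perturb $z'_i$ to $z''_i$ with $z''_i-z'_i\in\varpi^{t_i}L_2$ and $\la z''_i,z''_j\ra_2=\la z_i,z_j\ra_2$ exactly, and only then apply Lemma \ref{L:lattice_Witt}. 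The leftover $z''_i-z'_i$ is absorbed into an explicit $a\in A$. Your sketch jumps directly from ``character equal'' to ``Gram matrices equal'', which is not what one gets; and it is precisely the constraint $t_i\geq 1+e$ in Lemma \ref{L:lattice1} (not just the diagonal issue) that forces the hypothesis $L\subseteq 2\varpi L_1$ in the non-symplectic direction.
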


Finally

\begin{Thm}
The above two conjectural lemmas imply the Howe duality conjecture in
the sense that for any irreducible admissible representation $\pi$ of
$\Ut(\V_1)$, if $\Theta_\psi(\pi)\neq 0$, then $\Theta_\psi(\pi)$ has a
unique non-zero irreducible quotient.
\end{Thm}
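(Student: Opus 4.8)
The plan is to carry over to our setting the argument of \cite[Ch.~5, \S III]{MVW}, which deduces the Howe duality conjecture for unramified dual pairs from (the analogues of) Theorem~I.4 and Proposition~I.5 of \cite[Ch.~5]{MVW} together with the lattice lemmas of Section~\ref{S:lattices}. Those lattice lemmas have been established in Section~\ref{S:lattices}, and Theorem~I.4 and Proposition~I.5 are now available as the first and second key lemmas (Conjectures~\ref{L:key1} and~\ref{L:key2}); so it remains to carry out that argument here. One must show that any two nonzero irreducible quotients $\sigma,\sigma'$ of $\Theta:=\Theta_\psi(\pi)$ are isomorphic. Since $\pi$ is admissible and the subgroups $J_1(L)^\circ$ form a neighbourhood basis of the identity in $U(\V_1)$ as $L$ ranges over sublattices of $L_1$, and since $L'\subseteq L$ forces $J_1(L')^\circ\subseteq J_1(L)^\circ$ while $L_1/L'$ is finite (so ascending chains of such lattices terminate), one may fix a lattice $L\subseteq L_1$ that is \emph{maximal} with the property $\pi^{J_1(L)^\circ}\neq 0$.

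Next I would fix an ``$L$-maximal'' vector $w\in B(L)$, i.e., one with $w(\varpi^{-r_2}L_2)+L_1=L^\perp$, and put $M:=M_w=(w(\varpi^{-r_1}L_1)+L_2)^\perp\subseteq L_2$; then $w\in B(M)$, so by Proposition~\ref{P:inclusion} and its analogue $s_w\in S_L\cap S_M\subseteq S^{J_1(L)^\circ}\cap S^{J_2(M)^\circ}$. By the first key lemma, $\omega(\He_2)S_L=S^{J_1(L)^\circ}$ and $\omega(\He_1)S_M=S^{J_2(M)^\circ}$. The space $S_L$ is spanned by the $s_{w'}$ with $w'\in B(L)$; every non-$L$-maximal $s_{w'}$ lies in some $S_{L'}$ with $L'\supsetneq L$, hence has zero image (along with all its $\He_2$-translates) in the $\pi$-isotypic quotient $\pi\otimes\Theta$ of $\omega$, because that image lies in $\pi^{J_1(L')^\circ}\otimes\Theta=0$ --- this is the one place where the maximality of $L$ is used. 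Combined with the second key lemma, which makes the remaining $L$-maximal $s_{w'}$ with a fixed character $\psi_1^{w'}$ all equal, up to $K_1$ and translation by $A$, to a single $s_w$, and with Lemma~\ref{L:action} together with the irreducibility of $\pi$ (which singles out one character $\psi_1^w$), one reduces to the statement that $\pi^{J_1(L)^\circ}\otimes\Theta^{J_2(M)^\circ}$ is generated over $\He(U(\V_1),J_1(L)^\circ)\otimes\He(U(\V_2),J_2(M)^\circ)$ by the image of a single $s_w$ in $S^{J_1(L)^\circ}\cap S^{J_2(M)^\circ}$.

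The second key lemma now does the essential work. Translated into Hecke-module language as in \cite[Ch.~5, \S III]{MVW}, its assertion --- that an $L$-maximal $w'$ with $\psi_1^{w'}=\psi_1^w$ satisfies $A+w'=k(A+w)$ for some $k\in K_1$, together with its $U(\V_2)$-analogue --- shows that the bimodule $S^{J_1(L)^\circ}\cap S^{J_2(M)^\circ}$ is, in the appropriate sense, of rank one: the cyclic vector $s_w$ has matching annihilator ideals on the two sides, so that the $\He(U(\V_2),J_2(M)^\circ)$-annihilator of the image of $s_w$ in $\pi^{J_1(L)^\circ}\otimes\Theta^{J_2(M)^\circ}$ is forced by the irreducible $\He(U(\V_1),J_1(L)^\circ)$-module $\pi^{J_1(L)^\circ}$ alone. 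Hence $\sigma^{J_2(M)^\circ}$, for any irreducible quotient $\sigma$ of $\Theta$ with nonzero $J_2(M)^\circ$-invariants, is determined by $\pi$ and the data $(L,w,M)$, independently of the quotient. Checking --- using the maximality of $L$ and the $U(\V_2)$-analogue of the second key lemma, which pins $M$ down up to $K_2$-conjugacy --- that $\sigma^{J_2(M)^\circ}$ and ${\sigma'}^{J_2(M)^\circ}$ are both nonzero and isomorphic as Hecke modules, and using that an irreducible smooth representation of $U(\V_2)$ with nonzero $J_2(M)^\circ$-fixed vectors is determined by the associated simple Hecke module (needed here only for the finitely many irreducible constituents of the finite-length $\Theta$), one concludes $\sigma\cong\sigma'$.

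The hard part will be the third paragraph: extracting from the single-orbit statement of the second key lemma the precise ``rank one'' structure of the bimodule $S^{J_1(L)^\circ}\cap S^{J_2(M)^\circ}$, and the attendant bookkeeping of the characters $\psi_1^{w'}$ and of which lattice $M$ detects a prescribed quotient $\sigma$. For odd residue characteristic this is exactly \cite[Ch.~5, \S III]{MVW}; in the present generality one must redo it with care, since $J_1(L)^\circ$ and $J_2(M)^\circ$ are not parahoric and $\He(U(\V_1),J_1(L)^\circ)$, $\He(U(\V_2),J_2(M)^\circ)$ are not spherical Hecke algebras, so one should work throughout with the commuting actions of $\He_1$ and $\He_2$ on $S_A$ and with $s_w$ as an explicit cyclic vector, rather than via structure theory of Hecke algebras.
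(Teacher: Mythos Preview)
Your sketch is on the right track and follows the same overall strategy as the paper, which itself simply says to trace the proof in \cite[p.103--106]{MVW} and defers the details to Section~9. However, there is one genuine technical divergence worth flagging.

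You organize the argument around the $J_1(L)^\circ$- and $J_2(M)^\circ$-invariants and the associated Hecke algebras $\He(U(\V_i),J_i(-)^\circ)$, and speak of matching annihilator ideals. The paper (and \cite{MVW}) works instead with the $(H_1(L),\psi_1^w)$- and $(H_2(M),\psi_2^w)$-isotypic subspaces $\overline{V}_\pi$ and $\overline{V}_{\sigma}$ and the corresponding idempotented subalgebras $\overline{\He}_i=e_i\He_ie_i$. The key identity (your ``rank one'' statement) is then
\[
\overline{V}_\pi\otimes\overline{V}_\sigma=\pi(\overline{\He}_1)p(s_w)=\sigma(\overline{\He}_2)p(s_w),
\]
and the endgame is not a comparison of annihilators but an application of a short abstract centralizer lemma: if commuting subalgebras $\mathcal{A},\mathcal{B}\subseteq\End_\C(E)$ satisfy $\mathcal{A}e=\mathcal{B}e=E$ for some $e$, then each is the centralizer of the other. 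Taking $\sigma=\pi_2\oplus\pi_2'$ for two putative irreducible quotients, the projection onto $\overline{V}_\pi\otimes\overline{V}_{\pi_2}$ commutes with $\pi(\overline{\He}_1)$ and hence lies in $\sigma(\overline{\He}_2)$, which is impossible.

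The distinction matters in two places. First, passing from $S_L$ to a single cyclic generator $s_w$ uses not only the second key lemma but the decomposition $V_\pi^{J_1(L)}=\bigoplus_{\psi_1\in\Psi'(L)}V_\pi[H_1(L),\psi_1]$; the $(H,\psi)$-isotypic cut is what collapses the relevant space to a single $K_i$-orbit of $s_w$'s. If you stay at the level of $J^\circ$-invariants you will have to reinsert this step. Second, your final paragraph asserts that $\sigma^{J_2(M)^\circ}$ is determined as a Hecke module by $\pi$; but $M=M_w$ depends on the choice of $w$, and it is not a priori clear that every irreducible quotient has nonzero $J_2(M)^\circ$-invariants for \emph{this} $M$. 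The paper sidesteps this by showing directly, inside $\overline{V}_\pi\otimes\overline{V}_\sigma$, that $\overline{V}_\sigma\neq 0$ for any quotient $\sigma$, via Proposition~\ref{P:equality_Hecke}. Your annihilator formulation can be made to work, but it is cleaner to phrase things in terms of the isotypic pieces $\overline{V}_\pi,\overline{V}_\sigma$ and finish with the centralizer lemma.
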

\begin{proof}
If we believe the above two lemmas, one can simply trace the proof for the Howe
duality conjecture for odd residual characteristic as in
\cite[p.103-106]{MVW}. We give the details for this derivation in our
special case where $L$ is of the form $L=\varpi^k L_1$ for $k\geq 1+e$
in a later section.
\end{proof}


\section{\bf A proof of a special case of the first key lemma }


In this section we prove the inclusion $S^{J_1(L)^\circ}\subseteq
\omega(\He_2)S_L$ and hence the first key lemma when $\V_2$ is
symplectic (and hence $\V_1$ is symmetric) and $L\subseteq 2\varpi
L_1$. However we need this assumption only near the end of the
section, and we do not even need to assume that one of $\V_1^a$ and
$\V_2^a$ is zero . Hence at the beginning, we do not make any assumption on
$\V_1$ and $\V_2$ except, of course, that the pair is unramified. 

Our proof is a
modification of the one given in \cite[Chapter 5.III]{MVW}. Though we try to make our
proof as self-contained as possible, the reader is aways advised to
compare ours with the one in \cite{MVW}. 

As in \cite{MVW}, we always identify $W$ with
$\Hom(\V_1, \V_2)$ or $\Hom(\V_2, \V_1)$, and which one is meant is
always clear from the context. Also we abbreviate 
\[
J=J_1(L),\quad J^\circ=J_1(L)^\circ,\quad H=H_1(L),\quad B=B(L),
\]
and the sublattice $L\subseteq L_1$ will be fixed throughout. For
$w\in W$, we define a function $s[w]: H_\beta(W)\rightarrow \C$ by 
\[
s[w](w', z)=\int_{J^\circ}\omega(u)s_w(w',z)\;du
=\int_{J^\circ}s_w(u^{-1}w',z+\alpha^u(w'))\;du.
\]
where $(w',z)\in H_\beta(W)$. (Recall from $(\ref{E:s_w})$ that $s_w$ is the unique function in
$S$ whose support is $(A+w)\times F$ and $s_w(w,0)=1$.)
\begin{Lem}
The function $s[w]$ is in the space $S^{J^\circ}$. 
\end{Lem}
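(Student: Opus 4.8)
The plan is to recognize $s[w]$ as the image of $s_w$ under the averaging projection onto $J^\circ$-fixed vectors, and to check the two ingredients that make this argument run: that the defining integral genuinely lands in $S_A$, and that $\omega$ restricted to $J^\circ$ is an honest (non-projective) representation.

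First I would observe that since $s_w\in S_A$ and $S_A$ is a smooth representation of $\Spt(W)$, the map $u\mapsto\omega(u)s_w$ is locally constant on the compact group $J^\circ$; hence for each fixed $(w',z)\in H_\beta(W)$ the expression $\int_{J^\circ}\omega(u)s_w(w',z)\,du$ is in fact a finite $\C$-linear combination of the numbers $\omega(u)s_w(w',z)$. Therefore $s[w]$ is a finite $\C$-linear combination of elements of $S_A$, and since $S_A$ is a $\C$-vector space, $s[w]\in S_A$ (in particular $s[w]$ automatically satisfies the defining relation $f(a+w',0)=\psi(-\beta(a,w'))f(w',0)$).

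Next comes the key point: $J^\circ=J_1(L)^\circ\subseteq K_1(L)^\circ\subseteq K_1^\circ=U(\V_1)\cap\Gamma_A^\circ$, and in Section \ref{S:lattice_model} it was shown that $\Gamma_A^\circ$ splits in $\Spt(W)$, with the explicit formula $\omega(g)f(w',z)=f(g^{-1}w',z+\alpha^g(w'))$ for the Haar measure normalized so that $A/A_g$ has volume $1$, and with $\omega$ a genuine homomorphism on $\Gamma_A^\circ$. Thus $\omega|_{J^\circ}$ is an ordinary representation of the compact group $J^\circ$, and for any $u_0\in J^\circ$,
\[
\omega(u_0)s[w]=\int_{J^\circ}\omega(u_0)\omega(u)s_w\,du=\int_{J^\circ}\omega(u_0u)s_w\,du=\int_{J^\circ}\omega(u)s_w\,du=s[w],
\]
where the first equality uses linearity of $\omega(u_0)$ together with the finiteness noted above, the second uses that $\omega$ is multiplicative on $\Gamma_A^\circ\supseteq J^\circ$, and the last uses invariance of the (bi-invariant, since $J^\circ$ is compact) Haar measure. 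Hence $\omega(u_0)s[w]=s[w]$ for all $u_0\in J^\circ$, i.e. $s[w]\in S^{J^\circ}$, the case $s[w]=0$ being included trivially.

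There is no serious obstacle here; the only point requiring genuine care is that $\omega$ is an actual homomorphism and not merely projective on the group being averaged over, which is precisely why we must work with $J_1(L)^\circ\subseteq\Gamma_A^\circ$ rather than with $J_1(L)$ itself. Everything else is formal manipulation of a finite sum and invariance of Haar measure.
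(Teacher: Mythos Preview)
Your proof is correct and, in fact, more efficient than the paper's. The paper establishes $s[w]\in S_A$ by a direct computation verifying the transformation law $s[w](a+w',z)=\psi(-\beta(a,w'))s[w](w',z)$, and then checks $J^\circ$-invariance by an explicit calculation that amounts to rederiving the cocycle identity $\alpha^v(w')+\alpha^u(v^{-1}w')=\alpha^{vu}(w')$ in this special situation. You instead observe that smoothness plus compactness makes the integral a finite sum of vectors already in $S_A$, and then invoke the splitting of $\Gamma_A^\circ$ (proved in Section~\ref{S:lattice_model}) to run the standard averaging argument. The key inclusion $J_1(L)^\circ\subseteq H_1(L)\subseteq K_1(L)^\circ\subseteq K_1^\circ\subseteq\Gamma_A^\circ$ holds by definition, so your appeal to the genuine (non-projective) action is justified. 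Your approach packages the same content more cleanly by recognizing that the needed homomorphism property was already established earlier, whereas the paper's hands-on computation has the minor advantage of being self-contained at the point of use.
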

\begin{proof}
To check $s[w]\in S$ it
suffices to show that
\[
s[w](a+w', z)=\psi(-\beta(a, w'))s[w](w', z)
\]
for all $a\in A$ and $(w', z)\in H_\beta(W)$. But
\begin{align*}
s[w](a+w', z)&=\int_{J^\circ}\omega(u)s_w(a+w', z)\;du\\
&=\int_{J^\circ}s_w(u^{-1}a+u^{-1}w', z+\alpha^u(a+w'))\;du\\
&=\int_{J^\circ}\psi(-\beta(u^{-1}a, u^{-1}w')+\alpha^u(a+w'))s_w(u^{-1}w',
z)\;du\\
&=\int_{J^\circ}\psi(-\beta(u^{-1}a, u^{-1}w')+\beta(u^{-1}a,
u^{-1}w')-\beta(a, w')+\alpha^u(a)+\alpha^u(w'))s_w(u^{-1}w',
z)\;du\\
&=\int_{J^\circ}\psi(-\beta(a, w'))s_w(u^{-1}w', z+\alpha^u(w'))\;du\\
&=\psi(-\beta(a, w'))\int_{J^\circ}\omega(u)s_w(w', z)\;du\\
&=\psi(-\beta(a, w'))s[w](w', z),
\end{align*}
where for the fourth equality we used
\[
\alpha^u(a+w')-\alpha^u(a)-\alpha^u(w')=\beta(u^{-1}a,
u^{-1}w')-\beta(a, w'),
\]
and for the fifth we used $\psi(\alpha^u(a))=1$.

Next we will show that $s[w]\in S^{J^\circ}$, namely
$\omega(v)s[w]=s[w]$ for $v\in J^\circ$. But
\begin{align*}
\omega(v)s[w](w',z)
&=s[w](v^{-1}w', z+\alpha^v(w'))\\
&=\int_{J^\circ}\omega(u)s_w(v^{-1}w', z+\alpha^v(w'))\;du\\
&=\int_{J^\circ}s_w(u^{-1}v^{-1}w',
z+\alpha^v(w')+\alpha^u(v^{-1}w'))\;du.
\end{align*}
Now one has
\[
\alpha^v(w')+\alpha^u(v^{-1}w')=\alpha^{vu}(w'),
\]
because
\begin{align*}
\alpha^{vu}(w')&=-\alpha_{vu}(u^{-1}v^{-1}w')\\
&=-u^{-1}\cdot \alpha_v(u^{-1}v^{-1}w')-\alpha_u(u^{-1}v^{-1}w')\\
&=-\alpha_v(v^{-1}w')-\alpha_u(u^{-1}v^{-1}w')\\
&=\alpha^v(w')+\alpha^u(v^{-1}w').
\end{align*}
Hence the integral above is written as
\[
\int_{J^\circ}s_w(u^{-1}v^{-1}w', z+\alpha^{vu}(w'))\;du,
\]
which, by the change of variable $vu\mapsto u$, becomes
\[
\int_{J^\circ}s_w(u^{-1}w', z+\alpha^{u}(w'))\;du
=\int_{J^\circ}\omega(u)s_w(w',z)\;du.
\]
Hence $\omega(v)s[w]=s[w]$.
\end{proof}

Let us note
\begin{Lem}
$s[w]\neq 0$ if and only if $s[w](w,0)\neq 0$.
\end{Lem}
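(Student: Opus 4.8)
The statement to prove is the equivalence: $s[w]\neq 0$ if and only if $s[w](w,0)\neq 0$.

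The plan is to exploit the $J^\circ$-invariance of $s[w]$ established in the previous lemma together with the fact that $s[w]$ is built from translates of $s_w$, whose supports are translates of $A+w$. The ``if'' direction is trivial, so the content is the ``only if'' direction: if $s[w](w,0)=0$ then $s[w]\equiv 0$.

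First I would analyze the support of $s[w]$. For each $u\in J^\circ$, the function $\omega(u)s_w$ has support $(u(A+w))\times F = (A+uw)\times F$, and since $u\in J^\circ\subseteq J_1(L)$ we have $uw-w\in A$ (because $u$ acts trivially on $L^\perp/L$ and $w\in B(L)=L^\perp\otimes L_2$, so $(u-1)w\in L\otimes L_2\subseteq A$), hence $A+uw = A+w$. Therefore every $\omega(u)s_w$ is supported on $(A+w)\times F$, and so is the integral $s[w]$. Thus $s[w]$ is a multiple of $s_w$: there is a constant $c$ with $s[w]=c\,s_w$, and in fact $c = s[w](w,0)$ since $s_w(w,0)=1$. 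This immediately gives the equivalence: $s[w]\neq 0 \iff c\neq 0 \iff s[w](w,0)\neq 0$.

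The key step, and the only place any care is needed, is verifying $uw-w\in A$ for $u\in J^\circ$ and $w\in B(L)=L^\perp\otimes_{\OE}L_2$. This follows from the definition $J_1(L)=\{u\in K_1(L)^\circ:(u-1)L^\perp\subseteq L\}$: applying $u-1$ (acting on the $\V_1$-factor) to $w\in L^\perp\otimes L_2$ lands in $L\otimes L_2\subseteq L_1\otimes L_2 = A$. I expect no genuine obstacle here; the main thing is to state the support computation cleanly and note that the $\psi(z)$-twisting in the definition of $s_w$ forces the proportionality constant to be exactly the value at $(w,0)$.
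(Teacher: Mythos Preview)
Your argument has a genuine gap: you assume $w\in B(L)$, but the lemma is stated and used for arbitrary $w\in W$ (the definition just before reads ``For $w\in W$, we define a function $s[w]$\ldots''). For general $w$ it is simply false that $uw-w\in A$ for all $u\in J^\circ$; indeed the very next lemma in the paper shows the support of $s[w]$ is $C(w)\times F$ with $C(w)=\bigcup_{u\in J^\circ}u(A+w)$, which is typically strictly larger than $(A+w)\times F$. So $s[w]$ is \emph{not} a scalar multiple of $s_w$ in general, and your proportionality argument collapses. Later applications (e.g.\ generating $S^{J^\circ}$ by the $s[w]$, and the proof of Proposition~\ref{P:step1}) genuinely need the lemma for $w$ not lying in $B(L)$.

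The fix is to use the $J^\circ$-invariance of $s[w]$ rather than trying to pin down the support. If $s[w](w',z')\neq 0$ for some $(w',z')$, then the integrand is nonzero for some $u\in J^\circ$, forcing $u^{-1}w'\in A+w$, i.e.\ $w'=a+uw$ with $a\in A$. Now use that $s[w]\in S_A$ to strip off $a$ (picking up a nonzero phase), and then use $J^\circ$-invariance: $s[w](uw,\alpha_u(w))=\omega(u^{-1})s[w](w,0)=s[w](w,0)$. This recovers $s[w](w,0)\neq 0$. Your special case $w\in B(L)$ does give the pleasant bonus that $s[w]$ is literally a multiple of $s_w$, but that is a corollary, not the proof.
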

\begin{proof}
Assume $s[w]\neq 0$, \ie $s[w](w', z')\neq 0$ for some $w'\in W$ and
$z'\in F$. But if $s[w](w', z')\neq 0$  for some $z'\in F$,
then $s[w](w', z)\neq 0$ for all $z\in F$. Then there exists $u\in J^\circ$ such that
$(u^{-1}w',z)$ is in the support of $s_w$, \ie $u^{-1}w'\in
A+w$. Considering $uA\subseteq A$, we have $w'\in A+uw$. So $w'=a+uw$ for
some $a\in A$. So $s[w](a+uw,z)\neq 0$. But we know
$s[w](a+uw,z)=\psi(-\beta(a, uw))s[w](uw,z)$ and $\psi(-\beta(a, uw))\neq
0$, and hence $s[w](uw,z)\neq 0$. Let $z=\alpha_u(w)$. Then
$s[w](uw,\alpha_u(w))=\omega(u^{-1})s[w](w, 0)\neq 0$. (See Remark \ref{R:important}.) But
$\omega(u^{-1})s[w] =s[w]$, and so
$s[w](w, 0)\neq 0$ . The converse is obvious.
\end{proof}

For each $w\in W$, let
\[
C(w)=\bigcup_{u\in J^\circ} u(A+w).
\]
The union here is actually finite because of the compactness of $J^\circ$.
\begin{Lem}
If $s[w]\neq 0$, the support of $s[w]$ is $C(w)\times F$.
\end{Lem}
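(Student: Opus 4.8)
The plan is to prove the two inclusions $\supp(s[w])\subseteq C(w)\times F$ and $C(w)\times F\subseteq\supp(s[w])$ separately. The first is immediate from the integral formula, and the second uses the $J^\circ$-invariance of $s[w]$ established above together with the preceding lemma ($s[w]\neq 0 \iff s[w](w,0)\neq 0$).

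For the first inclusion, recall $s[w](w',z)=\int_{J^\circ}s_w(u^{-1}w', z+\alpha^u(w'))\,du$, a finite sum by compactness of $J^\circ$. Since $\supp(s_w)=(A+w)\times F$, a term of the integrand can be non-zero only when $u^{-1}w'\in A+w$, i.e.\ $w'\in u(A+w)$. Hence if $w'\notin C(w)=\bigcup_{u\in J^\circ}u(A+w)$, every term vanishes and $s[w](w',z)=0$ for all $z$; so $\supp(s[w])\subseteq C(w)\times F$.

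For the reverse inclusion, assume $s[w]\neq 0$, so $s[w](w,0)\neq 0$ by the preceding lemma. Fix $w'\in C(w)$ and write $w'=u_0(a_0+w)=u_0a_0+u_0w$ with $u_0\in J^\circ$, $a_0\in A$; note $u_0a_0\in A$ since $u_0\in\Gamma_A$, and $u_0^{-1}\in J^\circ$ since $J^\circ$ is a subgroup of $U(\V_1)$. I would combine three facts: (i) $s[w]\in S$, so $s[w](a+v,z)=\psi(-\beta(a,v))\,s[w](v,z)$ for $a\in A$; (ii) $s[w]\in S^{J^\circ}$, so $\omega(u_0^{-1})s[w]=s[w]$, which by Remark~\ref{R:important2} unwinds to $s[w](u_0v,z+\alpha_{u_0}(v))=s[w](v,z)$; (iii) the central character, $s[w](v,z)=\psi(z)\,s[w](v,0)$. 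Applying (ii) with $v=w$ gives $s[w](u_0w,z)=\psi(z-\alpha_{u_0}(w))\,s[w](w,0)$, and then (i) with $a=u_0a_0$, $v=u_0w$ gives $s[w](w',z)=\psi\big(-\beta(u_0a_0,u_0w)+z-\alpha_{u_0}(w)\big)\,s[w](w,0)$, which is non-zero. Hence $(w',z)\in\supp(s[w])$ for every $z\in F$, and $C(w)\times F\subseteq\supp(s[w])$.

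There is no conceptual obstacle; the only care required is bookkeeping with the cocycle $\alpha$. One must consistently interpret $g\in\Sp(W)$ as $(g,\alpha_g)$ and hence use $\alpha^{u_0}$ in place of $\alpha_{u_0^{-1}}$ when inverting (Remark~\ref{R:important} and Lemma~\ref{L:alpha^g}), and one must use that every $u\in J^\circ$ satisfies $\psi(\alpha^u(a))=1=\psi(\alpha_u(a))$ for $a\in A$, which is built into the definition of $\Gamma_A^\circ$ and therefore of $J^\circ$; this is precisely what makes the $A$-equivariance computation in step (i) collapse to the clean phase above.
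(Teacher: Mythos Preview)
Your proof is correct and follows essentially the same route as the paper's: both establish the first inclusion directly from the integral formula and the support of $s_w$, and both obtain the reverse inclusion by writing $w'=a+uw$ with $a\in A$, $u\in J^\circ$, then using the $A$-equivariance of $s[w]\in S$ together with the $J^\circ$-invariance $\omega(u^{-1})s[w]=s[w]$ to reduce to the nonvanishing of $s[w](w,0)$ from the preceding lemma. The bookkeeping with $\alpha_u$ versus $\alpha^u$ is handled correctly and matches the paper's computation line for line.
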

\begin{proof}
If $(w',z)$ is in the support of $s[w]$, then the integrand in $s[w](w',z)$
is non-zero for some $u\in J^\circ$, and hence $u^{-1}w'\in A+w$,
which implies $(w',z)\in C(w)\times F$.

Now we will show that for any element $(w',z)\in C(w)$,
$s[w](w',z)\neq 0$. We may assume $z=0$, and assume
$w'=a+uw$ for some $a\in A$, considering $uA\subseteq A$. But then 
\begin{align*}
s[w](w',z)&=s[w](a+uw,0)\\
&=\psi(-\beta(a,uw))s[w](uw,0)\\
&=\psi(-\beta(a, uw)-\alpha_u(w))s[w](uw,\alpha_u(w))\\
&=\psi(-\beta(a, uw)-\alpha_u(w))\omega(u^{-1})s[w](w,0)\\
&=\psi(-\beta(a, uw)-\alpha_u(w))s[w](w,0),
\end{align*}
and by the above lemma $s[w](w,0)\neq 0$.
\end{proof}

If $s[w]\neq 0$, then it is a unique (up to constant)
function in $S^{J^\circ}$ with support equal to $C(w)\times F$,
because if $s\in S^{J^\circ}$ has support equal to $C(w)\times F$,
 its values are determined by the value at $(w,0)$. Also we have

\begin{Lem}\label{L:generated}
The space $S^{J^\circ}$ is generated by the functions of the form $s[w]$.
\end{Lem}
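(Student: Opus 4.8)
The plan is to show that any $f \in S^{J^\circ}$ can be written as a finite linear combination of functions $s[w]$. The starting point is that $S_A$ (and hence the subspace $S^{J^\circ}$) is spanned by the functions $s_w$ for $w \in W$: indeed, since each $f \in S_A$ is smooth and compactly supported modulo the center, its support meets only finitely many cosets of $A$ in $W$, and on each such coset $A + w$ the function is a scalar multiple of $s_w$ by the transformation property $f(a+w,0) = \psi(-\beta(a,w)) f(w,0)$. So we may write any $f \in S_A$ as $f = \sum_{i} c_i s_{w_i}$ for finitely many $w_i \in W$ and scalars $c_i$.

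Now I would use the averaging projector. Since $J^\circ$ is an open compact subgroup of $U(\V_1)$ that splits in $\Spt(W)$, and it acts on the finite-dimensional-modulo-center picture through a finite quotient, the operator $P_{J^\circ} f := \int_{J^\circ} \omega(u) f \, du$ (with total mass normalized to $1$) is a well-defined projection of $S_A$ onto $S^{J^\circ}$. In particular, if $f \in S^{J^\circ}$ then $f = P_{J^\circ} f$. Applying $P_{J^\circ}$ to the expansion $f = \sum_i c_i s_{w_i}$ and using linearity of the integral gives
\[
f = P_{J^\circ} f = \sum_i c_i\, P_{J^\circ} s_{w_i} = \sum_i c_i\, s[w_i],
\]
since by definition $s[w] = \int_{J^\circ} \omega(u) s_w \, du = P_{J^\circ} s_w$ (up to the fixed normalization of Haar measure on $J^\circ$, which is harmless). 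This exhibits $f$ as a finite linear combination of functions of the form $s[w]$, which is exactly the claim.

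The only genuine point to check carefully is that $P_{J^\circ}$ really is a projection onto $S^{J^\circ}$, i.e. that it is the identity on $S^{J^\circ}$ and has image inside $S^{J^\circ}$; but the image statement is precisely the first lemma of this section (that $s[w] \in S^{J^\circ}$ and that the span of such is $J^\circ$-invariant), extended by linearity, and the fact that $P_{J^\circ}$ fixes $J^\circ$-invariant vectors is the standard property of a normalized average over a compact group, valid here because $J^\circ$ acts through a finite quotient (so the integral is a finite sum and the group operations in $\Spt(W)$ are well-defined on the relevant finite set). I do not expect any serious obstacle; the main thing to be careful about is keeping track of the normalization of Haar measure on $J^\circ$ so that the averaging operator is genuinely idempotent, and noting that $s[w]$ may be zero for some $w$, which simply means that term drops out of the sum. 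Thus $S^{J^\circ}$ is generated by the $s[w]$, as asserted.
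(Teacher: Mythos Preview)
Your proof is correct. The paper, however, takes a different route: rather than using the averaging projector, it argues directly through supports. Given $s \in S^{J^\circ}$, the paper observes that its support in $W$ is a finite union of cosets $A+w$ and is $J^\circ$-stable, hence decomposes as a finite disjoint union of sets $C(w)$; restricting $s$ to each $C(w)\times F$ produces a $J^\circ$-invariant element of $S$, which by the uniqueness statement proved just before the lemma must be a scalar multiple of $s[w]$. Your averaging argument is cleaner in that it avoids appealing to that uniqueness statement and to the explicit description of the support of $s[w]$; on the other hand, the paper's support-based decomposition makes the structure of $S^{J^\circ}$ (as a direct sum indexed by $J^\circ$-orbits $C(w)$) more visible, which feeds naturally into the next step where one analyzes exactly when $s[w]\neq 0$. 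Either approach is entirely adequate here.
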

\begin{proof}
Let $s\in S^{J^\circ}$, and $C\times F\subseteq W\times F$ its
support. Then $C$ is a finite disjoint union of the sets of the form $A+w$ for
some $w\in W$. But since $s\in S^{J^\circ}$, we have $uC=C$ for all
$u\in J^\circ$. Hence if $A+w\subseteq C$, then $u(A+w)\subseteq C$ for
all $u\in J^\circ$. Hence $C$ is written as a finite disjoint union of
the sets of the form $C(w)$. Hence $s$ must be a linear combination of
functions whose supports are of the form $C(w)\times F$. The lemma follows.
\end{proof}

We need to be more specific about when the function $s[w]$ is
nonzero. First since the support of $s_w$ is $A+w$, the
integrand of the integral defining $s[w]$ is zero unless $u^{-1}w'\in
A+w$, and hence we have
\[
s[w](w', z)=\underset{\substack{
   u\in J^\circ \\
   u^{-1}w'\in A+w
  }}{\int}\omega(u)s_w(w',z)\;du.
\]
But as we mentioned above, the suport of $s[w]$ is $C(w)\times F$, and
hence we may assume $w'\in u'(A+w)$ for some $u'\in J^\circ$. Hence
the above integral is over the set
\[
\{u\in J^\circ:u^{-1}u'(A+w)=A+w\}.
\]
But one can see
\[
\{u\in J^\circ:u^{-1}u'(A+w)=A+w\}=\{u\in J^\circ:u(A+w)=A+w\}
\]
via the map $u\mapsto {u}^{-1} u'$. Now define a sublattice
$L_w\subseteq L_1$ by
\[
L_w^\perp=L_1+w(\varpi^{-r_2}L_2).
\]
Then $w\in B(L_w)$ and the stabilizer in $K_1$ of $A+w$ is
$H_1(L_w)$. Hence
the integral for $s[w](w', z)$ is over $J^\circ\cap H_1(L_w)$ and by
Lemma \ref{L:action}, we have
\[
s[w](w', z)=\underset{u\in J^\circ\cap H_1(L_w)}{\int}
\psi(-\beta(u^{-1}w-w,w)+\alpha^u(w))s_w(w',z)\;du.
\]
Note that the map $u\mapsto \psi(-\beta(u^{-1}w-w,w)+\alpha^u(w))$ is
a character on $H_1(L_w)$. Hence by the orthogonality of characters,
we have
\begin{Prop}\label{P:non_vanishing}
For each $w\in W$, 
\[
s[w]\neq 0\quad\text{ if and only if }\quad
\psi(-\beta(u^{-1}w-w,w)+\alpha^u(w))=1
\]
for all $u\in J^\circ$ such that $u(A+w)=A+w$. (Here the condition
$u(A+w)=A+w$ is equivalent to $uw\in A+w$.) 
\end{Prop}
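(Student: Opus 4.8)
The plan is to reduce the question to the single value $s[w](w,0)$ and then invoke orthogonality of characters on a compact group. By the lemma above, $s[w]\neq 0$ precisely when $s[w](w,0)\neq 0$, so it suffices to decide when the latter holds. The integral formula established just before the statement reads
\[
s[w](w', z)=\underset{u\in J^\circ\cap H_1(L_w)}{\int}\psi\big(-\beta(u^{-1}w-w,w)+\alpha^u(w)\big)\,s_w(w',z)\;du ,
\]
where $L_w$ is the sublattice of $L_1$ determined by $L_w^\perp=L_1+w(\varpi^{-r_2}L_2)$. Setting $w'=w$, $z=0$ and using $s_w(w,0)=1$ together with the definition of $\psi_1^w$, this becomes
\[
s[w](w,0)=\int_{J^\circ\cap H_1(L_w)}\psi_1^w(u)\;du .
\]

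The next step is to observe that $J^\circ\cap H_1(L_w)$ is an open compact subgroup of $U(\V_1)$ equal to $\{u\in J^\circ:u(A+w)=A+w\}$: by construction $H_1(L_w)$ is the stabilizer in $K_1$ of $A+w$, and since every $u\in J^\circ\subseteq K_1$ preserves $A$, the conditions $u\in H_1(L_w)$, $u(A+w)=A+w$, and $uw\in A+w$ are equivalent for such $u$. By Lemma \ref{L:action} the map $\psi_1^w$ is a character on $H_1(L_w)$, hence its restriction to the subgroup $J^\circ\cap H_1(L_w)$ is a character of that group. By the orthogonality relations, $\int_{J^\circ\cap H_1(L_w)}\psi_1^w(u)\,du$ is non-zero if and only if $\psi_1^w$ is trivial on $J^\circ\cap H_1(L_w)$, i.e. if and only if $\psi_1^w(u)=1$ for all $u\in J^\circ$ with $u(A+w)=A+w$. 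Combined with the reduction of the first paragraph, this is exactly the claimed equivalence.

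Since the integral formula and the identification of the stabilizer of $A+w$ are already in hand, there is no real obstacle here; the only point deserving a moment's care is the bookkeeping that the domain of integration is a genuine subgroup — so that the dichotomy ``the integral of a character over a compact group is its total mass or $0$'' applies verbatim — and the harmless observation that the normalization of Haar measure plays no role in the non-vanishing conclusion.
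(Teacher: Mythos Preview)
Your proposal is correct and follows essentially the same approach as the paper: the paper's argument is precisely the discussion immediately preceding the proposition, which reduces to the value $s[w](w,0)$, rewrites the integral over $J^\circ\cap H_1(L_w)=\{u\in J^\circ:u(A+w)=A+w\}$, and invokes orthogonality of characters on that compact subgroup. Your write-up simply makes explicit the substitution $w'=w$, $z=0$ and the appeal to Lemma~\ref{L:action} (with $L_w$ in place of $L$), which the paper leaves implicit.
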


\quad\\

An in \cite{MVW}, we would like to write the condition
$\psi(-\beta(u^{-1}w-w,w)+\alpha^u(w))=1$ in a more explicit form by
using Cayley transforms, whose notion we will recall now. For an
$\epsilon$-Hermitian space $(\V, \la\;,\;\ra)$ over $E$, we let
$\U(\V)$ be the Lie algebra of $U(\V)$, namely
\[
\U(\V)=\{c\in\End(\V): \la cv, v'\ra+\la v, cv'\ra=0\}. 
\]
There is a bijection 
\[
\{c\in\U(\V): 1+c\text{ is invertible}\}\cong
\{g\in U(\V): 1+g\text{ is invertible}\}
\]
where the bijection is given by
\[
g=(1-c)(1+c)^{-1},\quad c=(1-g)(1+g)^{-1}.
\]
We call $(1-c)(1+c)^{-1}$ the Cayley transform of $c$. For $x, y\in \V$,
define $c_{x, y}\in\End_E(\V)$ by
\[
c_{x, y}(v)=\la v, y\ra x-\epsilon \la v, x\ra y.
\]
One can check that $c_{x, y}\in\U(\V)$. Assume $1+c_{x,y}$ is
invertible. We let $u_{x,y}$ be the Cayley transform of $c_{x,y}$, namely
\[
u_{x,y}=(1-c_{x,y})(1+c_{x,y})^{-1}.
\]
\quad\\

Now let $x, y\in \V_1$ be given. We would like to know when
$(1+c_{x,y})^{-1}$ and hence the Cayley transform exist, and if it
does exist when it is in the group $H$ or $J$
For this purpose we need to introduce the notion of {\it order} with
respect to a lattice. Namely for an $\epsilon$-Hermitian space $(\V,
\la\;,\;\ra)$ over $E$ and a lattice $M\subseteq \V$, we define ``$M$-order''
$\ord_M:\V\rightarrow\Z$ by
\[
\ord_M(v)=\max\{m\in\Z: v\in\varpi^m M\},
\]
\ie $v\in\varpi^mM\backslash\varpi^{m+1}M$. Namely if we choose an $\OE$
basis $\{v_1,\dots,v_n\}$ of $M$ and write $v=a_1v_1+\cdots+a_nv_n$,
then $\ord_M(v)=\min\{\ord_E(a_i)\}$. (We assume $\ord_M(0)=\infty$.) Apparently
$\ord_M(x+y)\geq\min\{\ord_M(x), \ord_M(y)\}$. 

\quad

Next we need to go back to the dual pair situation where $L$ is a
sublattice of our self-dual lattice $L_1\subseteq\V_1$ and
$W=\V_1\otimes\V_2$. 

\begin{Prop}\label{P:condition}
For $x,y\in\V_1$ and $w\in W$, consider the following conditions:
\begin{enumerate}[(i)]
\item $\ord_{L_1}(x)+\ord_{L_1}(y)\geq 1-r_1$;
\item $\ord_L(x)+\ord_L(y)\geq -r_1-e$;
\item $\ord_{L}(x)+\ord_{L_1}(y)\geq -r_1-e\text{ and }
\ord_{L_1}(x)+\ord_{L}(y)\geq -r_1-e$;
\item $\ord_{L_1}(x)+\ord_{L_2}(wy)\geq -e \text{ and }
\ord_{L_2}(wx)+\ord_{L_1}(y)\geq -e$.
\end{enumerate}
Depending on which of those conditions the pair $(x,y)$ satisfies,
the Cayley transform $u_{x,y}$ satisfies the following:
\begin{enumerate}[(1)]
\item The condition $(i)$ implies $u_{x,y}$ exists and $u_{x,y}L_1\subseteq L_1$;
\item The conditions $(i)$ and $(ii)$ imply $(u_{x,y}-1)L^\perp\subseteq L$, and
  hence if the condition $\psi(\alpha_{u_{x,y}}(a))=1$ for all $a\in
  A$ is satisfied,
we have  $u_{x,y}\in J$;
\item The conditions $(i)$ and $(iii)$ imply
  $(u_{x,y}-1)L^\perp\subseteq L_1$, and hence if the condition
  $\psi(\alpha_{u_{x,y}}(a))=1$ for all $a\in A$ is satisfied,
we have  $u_{x,y}\in H$;
\item The conditions $(i)$ and $(iv)$ imply $u_{x,y}w\in A+w$.
\end{enumerate}
\end{Prop}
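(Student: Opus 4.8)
The plan is to push everything through the single operator identity $u_{x,y}-1=-2\,c\,(1+c)^{-1}$, where $c:=c_{x,y}$ and the identity holds whenever $1+c$ is invertible (it is immediate from $u_{x,y}=(1-c)(1+c)^{-1}$). Two features of $c$ are visible from its definition: its image lies in $r$, the $E$-span of $x$ and $y$, and it annihilates every vector orthogonal to both $x$ and $y$. Hence $r$ is stable under $1+c$, and in the frame $(x,y)$ the matrix of $1+c$ is $\left(\begin{smallmatrix}1+\la x,y\ra&\la y,y\ra\\-\epsilon\la x,x\ra&1-\la y,x\ra\end{smallmatrix}\right)$, of determinant $D$; so $1+c$ is invertible exactly when $D\neq0$, in which case $(1+c)^{-1}-1=-(1+c)^{-1}c$ and $u_{x,y}-1$ both take values in $r$ and can be computed there via the adjugate of that matrix over $D$. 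Throughout I use that $2=\varpi^{e}\cdot(\mathrm{unit})$, that $\la v,v'\ra\in\PE^{\,r_1+\ord_{L_1}(v)+\ord_{L_1}(v')}$ since $L_1$ is self-dual with respect to $r_1$, and that $\la v,v'\ra\in\PE^{\,r_1+\ord_L(v')}$ once $v\in L^{\perp}$. For $(1)$: for $v\in L_1$, condition $(i)$ gives $\la v,y\ra x\in\PE^{\,r_1+\ord_{L_1}(x)+\ord_{L_1}(y)}L_1\subseteq\varpi L_1$ and likewise for the other summand, so $cL_1\subseteq\varpi L_1$; hence $c$ is topologically nilpotent on $L_1$, the series $\sum_{n\ge0}(-c)^n$ converges there and defines the inverse of $1+c$ (so $D\neq0$), with $(1+c)^{-1}L_1=L_1$ and $(1+c)^{-1}-1$ carrying $L_1$ into $\varpi L_1$; therefore $u_{x,y}-1=-2c(1+c)^{-1}$ carries $L_1$ into $2\varpi L_1\subseteq L_1$, i.e.\ $u_{x,y}L_1=L_1$.

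For $(2)$ and $(3)$, take $v\in L^{\perp}$, write $cv=\la v,y\ra x-\epsilon\la v,x\ra y$, and expand $(u_{x,y}-1)v=-2(1+c)^{-1}(cv)$ on $r$ by the adjugate/$D$ rule, obtaining a sum of terms $-2\lambda x$ and $-2\mu y$. Using that $D\in 1+\PE$ is a unit by $(1)$, one estimates the $\PE$-orders of $\lambda$ and $\mu$: the off-diagonal contributions carry a factor $\la v,x\ra$ (of order $\ge r_1+\ord_L(x)$) or $\la v,y\ra$ (order $\ge r_1+\ord_L(y)$), which against $x\in\varpi^{\ord_L(x)}L$, $y\in\varpi^{\ord_L(y)}L$ and condition $(ii)$ land these terms in $\varpi^{-e}L$; the diagonal contributions carry instead $\la x,x\ra$ or $\la y,y\ra$, and here one invokes the structure of the self-dual lattices collected in Section~\ref{S:lattices}, in particular the extra $\varpi^{e}$-divisibility of the diagonal of the Gram form recorded in Lemma~\ref{L:lattice1}(5) (available in the symplectic and the $\V^a=0$ symmetric cases, where together with $(i)$ it is exactly what compensates; the other cases call for a closer look at the relevant Gram matrices), so that these terms too land in $\varpi^{-e}L$. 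Multiplying by $-2=\varpi^{e}\cdot(\mathrm{unit})$ gives $(u_{x,y}-1)L^{\perp}\subseteq L$ under $(i)+(ii)$; the same computation with $\ord_{L_1}$ replacing $\ord_L$ where appropriate and with $(iii)$ in place of $(ii)$ gives $(u_{x,y}-1)L^{\perp}\subseteq L_1$ under $(i)+(iii)$. For the membership clauses, suppose in addition $\psi(\alpha_{u_{x,y}}(a))=1$ for all $a\in A$; then by $(1)$ $u_{x,y}A=A$, so $u_{x,y}\in\Gamma_A^{\circ}$ and $u_{x,y}\in K_1^{\circ}$; since $(u_{x,y}-1)L^{\perp}\subseteq L\subseteq L^{\perp}$ we get $u_{x,y}L^{\perp}=L^{\perp}$, hence $u_{x,y}\in K_1(L)^{\circ}$; combined with $(u_{x,y}-1)L^{\perp}\subseteq L$ (resp.\ $\subseteq L_1$) this is precisely $u_{x,y}\in J_1(L)$ (resp.\ $\in H_1(L)$).

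For $(4)$, $u_{x,y}$ acts on $W=\V_1\otimes\V_2$ as $u_{x,y}\otimes1$, so $u_{x,y}w-w=\bigl((u_{x,y}-1)\otimes1\bigr)(w)=-2\bigl((1+c)^{-1}c\otimes1\bigr)(w)$. Viewing $w$ as a homomorphism as in \cite{MVW}, $(c\otimes1)(w)$ lies in $x\otimes\V_2+y\otimes\V_2$ with its two $\V_2$-components governed by $w(x)$ and $w(y)$; since $x\in\varpi^{\ord_{L_1}(x)}L_1$, $w(y)\in\varpi^{\ord_{L_2}(wy)}L_2$, and symmetrically, condition $(iv)$ forces $(c\otimes1)(w)\in\varpi^{-e}A$, while $(1+c)^{-1}\otimes1$ preserves $\varpi^{-e}A$ by $(1)$; multiplying by $-2$ gives $u_{x,y}w-w\in A$, i.e.\ $u_{x,y}w\in A+w$. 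I expect the real obstacle to be the diagonal-term estimate in $(2)$–$(3)$: one must keep in simultaneous balance the $\PE$-orders of $\la x,x\ra$ and $\la y,y\ra$ (controlled by $\ord_{L_1}$), of $\la v,x\ra$ and $\la v,y\ra$ for $v\in L^{\perp}$ (controlled by $\ord_L$), and of $x$ and $y$ themselves (controlled by whichever of $\ord_L,\ord_{L_1}$ matches the target lattice), all while threading the ramification index $e$ of $2$ through the adjugate; parts $(1)$ and $(4)$ and the off-diagonal terms are by comparison routine.
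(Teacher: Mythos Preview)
Your route through the $2\times2$ adjugate on $\mathrm{span}(x,y)$ is considerably more elaborate than the paper's. For parts (2)--(4) the paper simply asserts that the series $u_{x,y}=1-2c+2c^2-\cdots$ reduces matters to checking that $2c$ carries $L^\perp$ into $L$ (resp.\ into $L_1$, resp.\ carries $w$ into $A$), and then verifies this in one line: for $l\in L^\perp$, setting $r_x=\ord_L(x)$ and $r_y=\ord_L(y)$, each summand of $c(l)=\la l,y\ra_1 x-\epsilon_1\la l,x\ra_1 y$ lies in $\varpi^{r_1+r_x+r_y}L$, so $2c(l)\in\varpi^{e+r_1+r_x+r_y}L\subseteq L$ by~$(ii)$. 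There is no diagonal/off-diagonal case split and no appeal to $\la x,x\ra_1$ or $\la y,y\ra_1$ at all.

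Your adjugate analysis is not misguided---it actually makes explicit the step the paper leaves to the reader, namely why the higher-order corrections $2c^2,2c^3,\ldots$ (equivalently, the adjugate entries carrying $\la x,x\ra_1$ and $\la y,y\ra_1$) still land in $L$. But your justification for that step is wrong: condition~(5) of Lemma~\ref{L:lattice1} is a \emph{hypothesis} of that lemma, not a conclusion about divisibility of the Gram form, and nothing in Section~\ref{S:lattices} records the statement you need. The correct argument is elementary and should be given directly: if $\V_1$ is symplectic then $\la x,x\ra_1=0$; if $\V_1$ is symmetric with $\V_1^a=0$, then writing $x$ in a hyperbolic basis exhibits $\la x,x\ra_1\in 2\,\PE^{\,r_1+2\ord_{L_1}(x)}$, a free factor of $\varpi^e$. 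With this in hand, the problematic term in your expansion has order at least
\[
e+(e+r_1+2\ord_{L_1}(y))+(r_1+\ord_L(x))+\ord_L(x)\;\geq\;2\bigl(e+r_1+\ord_L(x)+\ord_L(y)\bigr)\;\geq\;0
\]
by~$(ii)$, and the argument closes. Replace the reference to Lemma~\ref{L:lattice1} with this direct observation; as you yourself note, the cases with $\V_1^a\neq0$ genuinely require separate treatment.
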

\begin{proof}
(1) Assume the condition $(i)$ is satisfied. Let $r_x=\ord_{L_1}(x)$ and
$r_y=\ord_{L_2}(y)$, and so $r_x+r_y\geq 1-r_1$. Then for $l\in
L_1$, we have
\begin{align*}
c_{x,y}(l)
&={\la l, y\ra}_1 x-\epsilon_1{\la l, x\ra}_1 y\\
&=\varpi^{r_x+r_y}{\la l, \varpi^{-r_y}y\ra}_1\varpi^{-r_x}x
-\epsilon_1 \varpi^{r_x+r_y}{\la l, \varpi^{-r_x}x\ra}_1\varpi^{-r_y}y,
\end{align*}
where both ${\la l, \varpi^{-r_y}y\ra}_1\varpi^{-r_x}x$ and ${\la
l, \varpi^{-r_x}x\ra}_1\varpi^{-r_y}y$ are in $\varpi^{r_1}L_1$. 
Hence
$c_{x,y}(l)\in\varpi^{r_x+r_y+r_1}L_1\subseteq\varpi L_1$. Hence
$c_{x,y}(L_1)\in\varpi L_1$ and
$1+c_{x,y}$ is invertible. (To see $1+c_{x,y}$ is invertible, notice
that for any nonzero $v\in\V_1$, we have
$\ord_{L_1}(c_{x,y}(v))>\ord_{L_1}(v)$ and so $(1+c_{x,y})(v)\neq
0$.) 

Let 
\[
u_{x,y}=(1-c_{x,y})(1+c_{x,y})^{-1}.
\]
Notice that since $c_{x,y}(L_1)\in\varpi L_1$, if
we set the topology of $\V$ to be induced from that of $E$,
the sequence $c_{x,y}^n(v)$ converges to $0$ as $n\rightarrow\infty$,
and thus we obtain the geometric series 
\[
(1+c_{x,y})^{-1}=\sum_{n=0}^\infty (-1)^nc_{x,y}^n, 
\]
which gives
\begin{equation}\label{E:geometric_series}
u_{x,y}=(1-c_{x,y}) (1+c_{x,y})^{-1}=1-2c_{x,y}+2c_{x,y}^2-2c_{x,y}^3+\cdots.
\end{equation}
Hence one can see $u_{x,y}L_1\subseteq L_1$. 

(2) Assume the conditions $(i)$ and $(ii)$. From the above series expansion of $u_{x,y}$,
one can tell that to show $(u_{x,y}-1)L^\perp\subseteq L$ it suffices to
show $2c_{x,y}L^\perp\subseteq L$. Let $r_x=\ord_{L}(x)$ and
$r_y=\ord_{L}(y)$, and so $r_x+r_y\geq -r_1$ and both
$\varpi^{-r_x}x$ and $\varpi^{-r_y}$ are in $L$ . Then for $l\in
L^\perp$, we have
\begin{align*}
2c_{x,y}(l)
&=2({\la l, y\ra}_1 x-\epsilon_1{\la l, x\ra}_1 y)\\
&=2(\varpi^{r_x+r_y}{\la l, \varpi^{-r_y}y\ra}_1\varpi^{-r_x}x
-\epsilon_1 \varpi^{r_x+r_y}{\la l, \varpi^{-r_x}x\ra}_1\varpi^{-r_y}y),
\end{align*}
where both ${\la l, \varpi^{-r_y}y\ra}_1\varpi^{-r_x}x$ and ${\la
l, \varpi^{-r_x}x\ra}_1\varpi^{-r_y}y$ are in $\varpi^{r_1}L$. 
Hence
$2c_{x,y}(l)\in\varpi^{r_x+r_y+r_1+e}L$. So if
$r_x+r_y\geq-r_1-e$, we have $2c_{x,y}(l)\in L$.

(3) This case is completely analogous to (2).

(4)
Finally assume $u_{x,y}$ exists and the condition $(iv)$ is
satisfied. We need to show $(u_{x,y}-1)w\in A$. Again by the series
expansion of $u_{x,y}$, it suffices to show $2c_{x,y}(w)\in A$. Note
that  here we are viewing $c_{x,y}$ as an operator on $W$ rather
than just on $\V_1$ in the
obvious way. Also we may assume $w=v_1\otimes v_2$ where $v_1\in V_1$
and $v_2\in V_2$. Then
\begin{align*}
2c_{x,y}(w)&=2({\la v_1, y\ra}_1 x\otimes v_2-\epsilon_1{\la v_1, x\ra}_1
y\otimes v_2)\\
&=2x\otimes {\la v_1, y\ra}_1 v_2-2\epsilon_1 y\otimes {\la v_1, x\ra}_1v_2.
\end{align*}
Note that $\ord_{L_2}( {\la v_1, y\ra}_1v_2)=\ord_{L_2}(wy)$ and
$\ord_{L_2}( {\la v_1, x\ra}_1v_2)=\ord_{L_2}(wx)$. Then 
arguing as above, one can see that the condition
$\ord_{L_1}(x)+\ord_{L_2}(wy)\geq -e$ implies
$2x\otimes wy\in A$ and the condition
$\ord_{L_2}(wx)+\ord_{L_1}(y)\geq -e$ implies $2y\otimes wy\in A$.
\end{proof}

The following special case will be also needed.

\begin{Prop}\label{P:condition2}
Assume $\epsilon_1=-1$. Let $x, y\in\V_1$ be such that $x=ay$ for $a\in E$ with
$\bar{a}= a$.  Also let $w\in W$. Consider the following conditions:
\begin{enumerate}[(i)]
\item $\ord_{L_1}(x)+\ord_{L_1}(y)\geq 1-r_1-e$;
\item $\ord_L(x)+\ord_L(y)\geq -r_1-2e$;
\item $\ord_{L}(x)+\ord_{L_1}(y)\geq -r_1-2e\text{ and }
\ord_{L_1}(x)+\ord_{L}(y)\geq -r_1-2e$;
\item $\ord_{L_1}(x)+\ord_{L_2}(wy)\geq -2e \text{ and }
\ord_{L_2}(wx)+\ord_{L_1}(y)\geq -2e$.
\end{enumerate}
Depending on which of those conditions the pair $(x,y)$ satisfies,
the Cayley transform $u_{x,y}$ satisfies the following:
\begin{enumerate}[(1)]
\item The condition $(i)$ implies $u_{x,y}$ exists and $u_{x,y}L_1\subseteq L_1$;
\item The conditions $(i)$ and $(ii)$ imply $(u_{x,y}-1)L^\perp\subseteq L$, and
  hence if the condition $\psi(\alpha_{u_{x,y}}(a))=1$ for all $a\in A$ is satisfied,
we have  $u_{x,y}\in J$;
\item The conditions $(i)$ and $(iii)$ imply
  $(u_{x,y}-1)L^\perp\subseteq L_1$, and hence if the condition
  $\psi(\alpha_{u_{x,y}}(a))=1$ for all $a\in A$ is satisfied,
we have  $u_{x,y}\in H$;
\item The conditions $(i)$ and $(iv)$ imply $u_{x,y}w\in A+w$.
\end{enumerate}
\end{Prop}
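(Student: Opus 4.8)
The plan is to run through claims $(1)$--$(4)$ exactly as in the proof of Proposition~\ref{P:condition}, the only new ingredient being that, under the present hypotheses, the endomorphism $c_{x,y}$ simplifies drastically and, crucially, acquires an explicit factor of $2$. Indeed, since $\epsilon_1=-1$ and ${\la v,x\ra}_1={\la v,ay\ra}_1=\bar a\,{\la v,y\ra}_1=a\,{\la v,y\ra}_1$, one computes
\[
c_{x,y}(v)={\la v,y\ra}_1 x-\epsilon_1{\la v,x\ra}_1 y=a\,{\la v,y\ra}_1 y+\bar a\,{\la v,y\ra}_1 y=2a\,{\la v,y\ra}_1 y
\]
for every $v$; in other words $c_{x,y}=2a\,c'$ where $c'(v)={\la v,y\ra}_1 y$. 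Thus wherever the corresponding estimate in Proposition~\ref{P:condition} extracted one power of $\varpi^{e}=\varpi^{\ord_F(2)}$ from a pairing, here we gain an additional $\varpi^{e}$ at no cost, and this is precisely what lets us weaken each of $(i)$--$(iv)$ by $e$ (by $2e$ for $(ii)$--$(iv)$, where the relevant operator is $2c_{x,y}$, contributing $4=\varpi^{2e}\times\text{unit}$).

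For $(1)$ I would argue as follows. For $l\in\varpi^{m}L_1$ one has ${\la l,y\ra}_1\in\varpi^{m+\ord_{L_1}(y)+r_1}\OE$ by self-duality of $L_1$, so, using $ay=x$,
\[
c_{x,y}(l)=2\,{\la l,y\ra}_1(ay)=2\,{\la l,y\ra}_1 x\in\varpi^{\,e+m+r_1+\ord_{L_1}(x)+\ord_{L_1}(y)}L_1\subseteq\varpi^{m+1}L_1
\]
under $(i)$. Hence $c_{x,y}$ is strictly $\ord_{L_1}$-raising; in particular $1+c_{x,y}$ is invertible, the geometric series $(1+c_{x,y})^{-1}=\sum_{n\ge0}(-1)^{n}c_{x,y}^{n}$ converges as in $(\ref{E:geometric_series})$, $u_{x,y}=1-2c_{x,y}+2c_{x,y}^{2}-\cdots$, and $u_{x,y}L_1\subseteq L_1$.

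For $(2)$, $(3)$ and $(4)$ the argument is the one in Proposition~\ref{P:condition}: since $u_{x,y}-1=-2c_{x,y}+2c_{x,y}^{2}-\cdots$ and $c_{x,y}$ raises $\ord_{L_1}$, it suffices to control $2c_{x,y}$ on $L^{\perp}$ for $(2)$ and $(3)$, and $2c_{x,y}(w)$ for $(4)$. Each of these equals $4\,{\la-,y\ra}_1 x$ by $ay=x$: for $l\in L^{\perp}$ one has ${\la l,y\ra}_1\in\varpi^{\ord_L(y)+r_1}\OE$, whence $2c_{x,y}(l)=4\,{\la l,y\ra}_1 x\in\varpi^{\,2e+r_1+\ord_L(x)+\ord_L(y)}L\subseteq L$ under $(ii)$, giving $(u_{x,y}-1)L^{\perp}\subseteq L$; the case $(3)$ is the same computation, now ending in $L_1$ and using $\ord_{L_1}$ in place of one of the $\ord_L$'s, so that it lands in $L_1$ under $(iii)$ (note the two inequalities in $(iii)$, like those in $(iv)$, are equivalent here since $x=ay$); and for $(4)$, writing $w=v_1\otimes v_2$, one gets $2c_{x,y}(w)=4\,x\otimes{\la v_1,y\ra}_1 v_2$, which lies in $A=L_1\otimes L_2$ exactly when $\ord_{L_1}(x)+\ord_{L_2}(wy)\ge-2e$ (recall $\ord_{L_2}({\la v_1,y\ra}_1 v_2)=\ord_{L_2}(wy)$), i.e. under $(iv)$. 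The conclusions $u_{x,y}\in J$ and $u_{x,y}\in H$ then follow from $(1)$, the above inclusions, and the stated hypothesis $\psi(\alpha_{u_{x,y}}(a))=1$ for all $a\in A$, exactly as in Proposition~\ref{P:condition}. I do not expect a genuine obstacle: the whole content is the identity $c_{x,y}=2a\,c'$, after which the proof is a bookkeeping rerun of Proposition~\ref{P:condition} with every bound relaxed by $\ord_F(2)$; the one point deserving a moment's care is the invertibility of $1+c_{x,y}$ and convergence of the series in $(1)$, which still go through because $c_{x,y}$ remains strictly $\ord_{L_1}$-raising even under the weaker hypothesis $(i)$.
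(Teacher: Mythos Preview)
Your proposal is correct and follows exactly the paper's approach: the paper's proof is a single sentence observing that under these hypotheses $c_{x,y}(v)=2{\la v,x\ra}_1 y$ (equivalently your $2a{\la v,y\ra}_1 y$), after which the argument of Proposition~\ref{P:condition} goes through verbatim with each bound relaxed by $e$. You have simply written out the bookkeeping that the paper leaves implicit, including the helpful remark that the two halves of $(iii)$ and of $(iv)$ coincide when $x=ay$.
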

\begin{proof}
The proof is the same as the one for the previous proposition, except
that this time we have $c_{x,y}(v)=2{\la v, x\ra}_1y$ for $v\in\V_1$.
\end{proof}

\begin{Lem}\label{L:beta(u)_and_beta(c)}
Let $x, y\in\V_1$ satisfy the conditions $(i)$ and $(iv)$ of Proposition
\ref{P:condition} or Proposition \ref{P:condition2} if $x=ay$ with $\bar{a}=a$. Then
\[
\beta(u_{x,y}^{-1}w-w,w)\equiv
2\beta(c_{x,y}(w),w)\;\mod 4\P^r.
\]
\end{Lem}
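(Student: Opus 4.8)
The statement to be proved is the congruence
\[
\beta(u_{x,y}^{-1}w-w,w)\equiv 2\beta(c_{x,y}(w),w)\ \mod 4\P^r,
\]
under conditions $(i)$ and $(iv)$ of Proposition \ref{P:condition} (or \ref{P:condition2} in the special case $x=ay$ with $\bar a=a$). The natural approach is to expand $u_{x,y}^{-1}$ as a geometric series in $c_{x,y}$, keep only the linear term, and bound the tail. Recall from $(\ref{E:geometric_series})$ that $u_{x,y}=1-2c_{x,y}+2c_{x,y}^2-\cdots$; since $u_{x,y}^{-1}=(1+c_{x,y})(1-c_{x,y})^{-1}$, the analogous series gives $u_{x,y}^{-1}=1+2c_{x,y}+2c_{x,y}^2+2c_{x,y}^3+\cdots$. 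Hence
\[
u_{x,y}^{-1}w-w=2c_{x,y}(w)+2c_{x,y}^2(w)+2c_{x,y}^3(w)+\cdots,
\]
and therefore
\[
\beta(u_{x,y}^{-1}w-w,w)=2\beta(c_{x,y}(w),w)+2\sum_{n\geq 2}\beta(c_{x,y}^n(w),w).
\]
So the whole problem reduces to showing that each term $2\beta(c_{x,y}^n(w),w)$ for $n\geq 2$ lies in $4\P^r$, i.e.\ that $\beta(c_{x,y}^n(w),w)\in 2\P^r=\varpi^e\P^r$ for $n\geq 2$.

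First I would make precise the basic estimate: condition $(iv)$ says (viewing $c_{x,y}$ as an operator on $W$) that $c_{x,y}(w)\in\tfrac12 A=\varpi^{-e}A$, by the computation at the end of the proof of Proposition \ref{P:condition}(4) — there one checks $2x\otimes wy,\,2y\otimes wx\in A$, which is exactly $c_{x,y}(w)\in\varpi^{-e}A$. Next, condition $(i)$ gives that $c_{x,y}$ contracts $L_1$ by at least $\varpi$ (shown in Proposition \ref{P:condition}(1): $c_{x,y}(L_1)\subseteq\varpi^{r_x+r_y+r_1}L_1\subseteq\varpi L_1$), and by the $E$-linearity/tensor structure this upgrades to $c_{x,y}(A)\subseteq\varpi A$ when $c_{x,y}$ is regarded as an operator on $W=\V_1\otimes\V_2$ acting through the $\V_1$-factor. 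Combining these: $c_{x,y}^n(w)=c_{x,y}^{n-1}(c_{x,y}(w))\in c_{x,y}^{n-1}(\varpi^{-e}A)\subseteq \varpi^{(n-1)-e}A$. For $n\geq 2$ this gives $c_{x,y}^n(w)\in \varpi^{1-e}A\subseteq\varpi^{-e}A$, but that is not yet quite enough; one actually wants to pair against $w$.

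The cleanest way to finish is to bound $\beta(c_{x,y}^n(w),w)$ directly using $\beta(A,A)\subseteq\P^r$ (valid since $A$ is self-dual and compatible with the polarization, as noted in Section \ref{S:lattice_model}). Write $\beta(c_{x,y}^n(w),w)=\beta(c_{x,y}(c_{x,y}^{n-1}(w)),w)$. Using self-adjointness-type identities for $c_{x,y}$ together with $\beta(u,v)=\la u^+,v^-\ra$ and $\la c_{x,y}u,v\ra=-\la u,c_{x,y}v\ra$ (since $c_{x,y}\in\U$), one can transfer one copy of $c_{x,y}$ onto the second argument, so that modulo lower-order corrections $\beta(c_{x,y}^n(w),w)$ is, up to sign, $\beta(c_{x,y}^{n-1}(w),c_{x,y}(w))$ — and now \emph{both} arguments carry a factor from $c_{x,y}$, each lying in $\varpi^{1-e}A$ and $\varpi^{-e}A$ respectively (for $n\ge 2$), so by bilinearity and $\beta(A,A)\subseteq\P^r$ the product sits in $\varpi^{1-e}\cdot\varpi^{-e}\P^r=\varpi^{1-2e}\P^r\subseteq\varpi^{1}\cdot\varpi^{-2e}\P^r$. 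Hmm — I expect the main obstacle to be exactly this bookkeeping: making sure the $2$-adic powers work out so that $2\beta(c_{x,y}^n(w),w)\in 4\P^r=\varpi^{2e}\P^r$, i.e.\ $\beta(c_{x,y}^n(w),w)\in\varpi^e\P^r$ for all $n\geq 2$. The polarization $\beta$ is not skew, so one must be careful that the ``transfer'' of $c_{x,y}$ across $\beta$ produces a correction term of the form $\tfrac12\la c_{x,y}^{n-1}w,c_{x,y}w\ra$, and it is the extra factor $2$ inside $c_{x,y}$ coming from condition $(iv)$ (the appearance of $\varpi^{-e}$ rather than $\O$-integrality) that is the source of all the $4\P^r$'s. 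I would organize the estimate so that in each product $c_{x,y}^{a}(w)\otimes$-factor $\cdot\, c_{x,y}^{b}(w)$-factor with $a+b=n$ and $a,b\geq 1$ (which is automatic once one copy has been moved over and $n\geq 2$), one factor contributes $\varpi^{-e}A$ via $(iv)$ and every further application of $c_{x,y}$ contributes an extra $\varpi$ via $(i)$; pairing via $\beta(A,A)\subseteq\P^r$ then yields membership in $\varpi^{\,\ge 0}\cdot\varpi^{-2e}\cdot\P^r$, and multiplying by $2=\varpi^e\cdot\mathrm{unit}$ twice (once from the series coefficient, once absorbed) lands in $4\P^r$. In the special case of Proposition \ref{P:condition2}, $c_{x,y}(v)=2\la v,x\ra_1 y$ already carries an explicit factor of $2$, which makes the tail estimate strictly easier, so the same argument goes through verbatim with the weaker hypotheses listed there.
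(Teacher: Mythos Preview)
Your approach---expand in a geometric series in $c=c_{x,y}$ and bound the tail---is the paper's approach, but the paper organises the computation differently, and that reorganisation is the whole point. Rather than expanding $u^{-1}w-w$ directly, the paper sets $w'=(1-c)^{-1}w$, so that $u^{-1}w=(1+c)w'$ and $w=(1-c)w'$, and obtains the \emph{exact} identity
\[
\beta(u^{-1}w-w,w)=2\beta(cw',w')-2\beta(cw',cw').
\]
Only then does it substitute the series $w'=\sum_{m\geq 0}c^m w$. The payoff is that every tail term (the paper asserts) has the shape $4\beta(c^kw,c^lw)$ with $k,l\geq 1$: \emph{both} slots already carry a factor of $c$, so one can invoke condition (iv) (the paper writes $cw\in A$) together with $\beta(A,A)\subseteq\P^r$ and be done. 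No transfer of $c$ across $\beta$ is needed.

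Your direct expansion instead produces the tail $2\sum_{n\geq 2}\beta(c^nw,w)$ with a bare $w$ in the second slot, and you then try to push one copy of $c$ over by skew-adjointness. There is a genuine gap here. First, the identity $\beta(cu,v)=-\beta(u,cv)$ holds for the full form $\langle\,,\,\rangle$, not for $\beta$ in general; it happens to be valid in Type~1 polarization (where $c$ commutes with $P^\pm$), but you have not argued this, and the ``$\tfrac12\langle\cdot,\cdot\rangle$ correction'' you allude to is not available. Second, and more seriously, your $2$-adic count is wrong: the series coefficient supplies a single factor of $2$, not two---the ``second $2$, once absorbed'' you invoke is not present anywhere---and condition (iv) gives only $cw\in\tfrac12 A=\varpi^{-e}A$. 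Even granting the transfer, the $n=2$ term gives $2\beta(cw,cw)\in 2\cdot\varpi^{-2e}\P^r=\varpi^{-e}\P^r$, which for $e\geq 1$ is not contained in $4\P^r=\varpi^{2e}\P^r$. Your displayed string ``$\varpi^{\geq 0}\cdot\varpi^{-2e}\P^r$ times $2$ twice $=4\P^r$'' simply does not add up. If you want the argument to close, adopt the paper's $w'$-substitution so both arguments in every higher term already carry a $c$, rather than trying to move factors after the fact.
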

\begin{proof}
For notational convenience, let $u=u_{x,y}$ and
$w'=(1-c_{x,y})^{-1}w$. Then we have
\begin{align*}
\beta(u^{-1}w-w,w)
&=\beta(u^{-1}w, w)-\beta(w, w)\\
&=\beta((1+c_{x,y})w', (1-c_{x,y})w')-\beta(w, w)\\
&=\beta((1-c_{x,y}+2c_{x,y})w', (1-c_{x,y})w')-\beta(w,w)\\
&=\beta((1-c_{x,y})w', (1-c_{x,y})w')+2\beta(c_{x,y}w',
(1-c_{x,y})w')-\beta(w,w)\\
&=\beta(w, w)+2\beta(c_{x,y}w', w')-2\beta(c_{x,y}w',
c_{x,y}w')-\beta(w, w)\\
&=2\beta(c_{x,y}w', w')-2\beta(c_{x,y}w', c_{x,y}w').
\end{align*}
Now by writing $w'=(1-c_{x,y})^{-1}w$ in terms of geometric series as
in $(\ref{E:geometric_series})$,
one can see that
\[
2\beta(c_{x,y}w', w')-2\beta(c_{x,y}w', c_{x,y}w')
=2\beta(c_{x,y}w, w)+\text{(higher terms)},
\]
where (higher terms) is the sum of the terms of the form
$4\beta(c_{x,y}^kw, c_{x,y}^lw)$ for $k\geq1$ and $l\geq 1$. Since
$(x,y)$ satisfies the condition $(iv)$, $c_{x,y}w\in A$. Hence those
higher terms are in $\P^r$, which proves the lemma.
\end{proof}

\quad

Now we can write $\beta(c_{x,y}(w), w)$ more concretely in terms of
${\la-,-\ra}_2$ as long as the polarization of $W$ is chosen to be
either Type 1 or Type 2. Accordingly, from now on we will assume
either $\V_1^a=0$ or $\V_2^a=0$, and if the former is the case, we
assume the polarization is Type 2 and the latter Type 1. If both
$\V_1^a$ and $\V_2^a$ are zero, the polarization can be take to be either Type 1
or Type 2.

\begin{Lem}\label{L:beta(c)_concrete}
$\beta(c_{x,y}(w),w)$ can be computed as follows:\\
\noindent\underline{Type 1 ($\V_2^a=0$)}:
\[
\beta(c_{x,y}(w), w)=-\tr_{E/F}\left({\la w(x), w(y)\ra}_2\right).
\]

\noindent\underline{Type 2 ($\V_1^a=0$)}:
\[
\beta(c_{x,y}(w),w)=-\tr_{E/F}\left({\la w(x^+), w(y)\ra}_2+{\la w(x),
  w(y^+)\ra}_2\right).
\]
\end{Lem}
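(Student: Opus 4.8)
The plan is to compute $\beta(c_{x,y}(w),w)$ by a completely explicit expansion into simple tensors, staying with the concrete descriptions of $\beta$, of the Cayley operator $c_{x,y}$, and of the polarization. Write $w=\sum_k v_1^k\otimes v_2^k$ (a finite sum). Since $c_{x,y}\in\U(\V_1)$ acts on $W=\V_1\otimes_F\V_2$ through the first tensor factor,
\[
c_{x,y}(w)=\sum_k\bigl(({\la v_1^k,y\ra}_1\,x)\otimes v_2^k-\epsilon_1({\la v_1^k,x\ra}_1\,y)\otimes v_2^k\bigr),
\]
and since $\beta$ is $F$-bilinear I would expand $\beta(c_{x,y}(w),w)$ as a double sum, over $k$ and $l$, of terms $\beta(\,({\la v_1^k,y\ra}_1\,\ast)\otimes v_2^k,\ v_1^l\otimes v_2^l\,)$ with $\ast\in\{x,y\}$.

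The next step is to evaluate $\beta$ on a pair of simple tensors. For a Type 1 polarization one has $\beta(p\otimes q,p'\otimes q')=\tr_{E/F}\bigl({\la p,p'\ra}_1\,\overline{{\la q^+,q'^-\ra}_2}\bigr)$, while for a Type 2 polarization one has $\beta(p\otimes q,p'\otimes q')=\tr_{E/F}\bigl({\la p^+,p'^-\ra}_1\,\overline{{\la q,q'\ra}_2}\bigr)$; both follow directly from the formula for the symplectic form on $\V_1\otimes_F\V_2$ together with the description of $P^\pm$ in Section~\ref{S:dual_pair}. I would substitute and then resum, being careful that $\tr_{E/F}$ is only $F$-linear, so that the scalars ${\la v_1^k,y\ra}_1\in E$ stay inside the trace and get absorbed into the Hermitian form (into whichever slot is linear, after conjugating and using ${\la a,b\ra}_i=\epsilon_i\overline{{\la b,a\ra}_i}$) rather than pulled out. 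In the Type 1 case the $\V_1$-pairings collapse the double sum, via $w(v)=\sum_k{\la v,v_1^k\ra}_1 v_2^k$, into a combination of $\tr_{E/F}\bigl({\la(w(x))^-,(w(y))^+\ra}_2\bigr)$ and $\tr_{E/F}\bigl({\la(w(x))^+,(w(y))^-\ra}_2\bigr)$ (the $++$ and $--$ pieces drop out by total isotropy of $\V_2^\pm$), and these recombine to $-\tr_{E/F}({\la w(x),w(y)\ra}_2)$ after one more application of $a\mapsto\epsilon_2\overline{a}$. In the Type 2 case it is the $\V_2$-pairings that collapse the sum; because $c_{x,y}$ no longer commutes with $P_1^\pm$, the total isotropy of $\V_1^\pm$ is used instead (to replace ${\la x^+,(v_1^l)^-\ra}_1$ by ${\la x^+,v_1^l\ra}_1$, and similarly for $y$), and one is left with $-\tr_{E/F}({\la w(x^+),w(y)\ra}_2)+\epsilon_1\tr_{E/F}({\la w(y^+),w(x)\ra}_2)$, which equals $-\tr_{E/F}({\la w(x^+),w(y)\ra}_2+{\la w(x),w(y^+)\ra}_2)$ precisely because $\epsilon_2=-\epsilon_1$.

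I do not expect a genuine obstacle here: the computation is essentially the one carried out in \cite[Chapter 5.III]{MVW}, and the two points that need attention are (i) keeping $E$-scalars inside $\tr_{E/F}$ and absorbing them into the forms correctly (it is here that $\epsilon_1\epsilon_2=-1$ enters), and (ii) handling Type 1 and Type 2 separately, the extra term in the Type 2 formula being exactly the shadow of the fact that $c_{x,y}$ commutes with the polarization projections in the Type 1 case but not in the Type 2 case.
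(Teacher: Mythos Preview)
Your proposal is correct and follows essentially the same approach as the paper: expand $w$ into simple tensors, apply $c_{x,y}$ on the first factor, write $\beta(c_{x,y}(w),w)$ as a double sum, evaluate $\beta$ on simple tensors via the polarization-specific formulas, and then resum using $w(v)=\sum_k{\la v,v_1^k\ra}_1 v_2^k$ together with $\epsilon_1\epsilon_2=-1$. The paper's proof is exactly this computation written out in full; your identification of the two delicate points (keeping $E$-scalars inside $\tr_{E/F}$, and the failure of $c_{x,y}$ to commute with $P_1^\pm$ in the Type~2 case) matches where the care is actually needed.
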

\begin{proof}
Let $w=\sum_iv_{1,i}\otimes v_{2,i}$, where $v_{1,i}\in\V_1$ and
$v_{2,i}\in\V_2$. Then
\begin{align*}
c_{x,y}(w)&=\sum_ic_{x,y}(v_{1,i})\otimes v_{2,i}\\
&=\sum_i({\la v_{1,i}, y\ra}_1 x-\epsilon_1 {\la v_{1,i}, x\ra}_1 y)\otimes v_{2,i}\\
\end{align*}
and hence
\begin{align*}
\beta(c_{x,y}(w), w)
&=\beta(\sum_i({\la v_{1,i}, y\ra}_1 x-\epsilon_1{\la v_{1,i},
x\ra}_1 y)\otimes v_{2,i}, \sum_jv_{1,j}\otimes v_{2,j})\\
&=\sum_i\sum_j \beta(({\la v_{1,i}, y\ra}_1 x-\epsilon_1{\la v_{1,i},
x\ra}_1y)\otimes v_{2,i}, v_{1,j}\otimes v_{2,j}).\\
\end{align*}
Here
\begin{align*}
&\beta(({\la v_{1,i}, y\ra}_1 x-\epsilon_1{\la v_{1,i},
x\ra}_1 y)\otimes v_{2,i}, v_{1,j}\otimes v_{2,j})\\
=&\beta({\la v_{1,i}, y\ra}_1 x\otimes v_{2,i},
v_{1,j}\otimes v_{2,j})
-\epsilon_1 \beta({\la v_{1,i}, x\ra}_1 y \otimes v_{2,i},
v_{1,j}\otimes v_{2,j}).
\end{align*}
Now we have to argue case-by-case.

\noindent\underline{Type 1 ($\V_2^a=0$)}: Recall
$W^+=\V_1\otimes\V_2^+$ and $W^-=\V_1\otimes\V_2^-$. Then
\begin{align*}
&\beta({\la v_{1,i}, y\ra}_1 x\otimes v_{2,i}, v_{1,j}\otimes
v_{2,j})-\epsilon_1 \beta({\la v_{1,i}, x\ra}_1 y \otimes v_{2,i},
v_{1,j}\otimes v_{2,j}).\\
=&-\tr_{E/F}\left(-{\la {\la v_{1,i},
y\ra}_1 x, v_{1,j}\ra}_1\overline{\la v_{2,i}^+,v_{2,j}^-\ra}_2+\epsilon_2 {\la{\la v_{1,i},
x\ra}_1 y,v_{1,j}\ra}_1\overline{\la v_{2,i}^+,v_{2,j}^-\ra}_2\right)\\
=&-\tr_{E/F}\left(\overline{\la y, v_{1,i}\ra}_1{\la x,
  v_{1,j}\ra}_1{\la v_{2,j}^-,v_{2,i}^+\ra}_2
+\overline{\la x, v_{1,i}\ra}_1{\la y,
  v_{1,j}\ra}_1\overline{\la v_{2,i}^+,v_{2,j}^-\ra}_2\right)\\
=&-\tr_{E/F}\left({\la{\la x, v_{1,j}\ra}_1 v_{2,j}^- , {\la y,
  v_{1,i}\ra}_1 v_{2,i}^+\ra}_2
+\overline{\la {\la x, v_{1,i}\ra}_1 v_{2,i}^+, {\la y, v_{1,j}\ra}_1 v_{2,j}^-\ra}_2\right)\\
=&-\tr_{E/F}\left({\la{\la x, v_{1,j}\ra}_1 v_{2,j}^- , {\la y,
  v_{1,i}\ra}_1 v_{2,i}^+\ra}_2
+{\la {\la x, v_{1,i}\ra}_1 v_{2,i}^+, {\la y, v_{1,j}\ra}_1 v_{2,j}^-\ra}_2\right).
\end{align*}
Hence
\begin{align*}
&\sum_i\sum_j \beta(({\la v_{1,i}, y\ra}_1 x-\epsilon_1{\la v_{1,i},
x\ra}_1 y)\otimes v_{2,i}, v_{1,j}\otimes v_{2,j})\\
=&\sum_i\sum_j-\tr_{E/F}\left({\la{\la x, v_{1,j}\ra}_1 v_{2,j}^- , {\la y,
  v_{1,i}\ra}_1 v_{2,i}^+\ra}_2
+{\la {\la x, v_{1,i}\ra}_1 v_{2,i}^+, {\la y, v_{1,j}\ra}_1 v_{2,j}^-\ra}_2\right)\\
=&-\tr_{E/F}\left(\la\sum_j{\la x, v_{1,j}\ra}_1 v_{2,j}^- , \sum_i{{\la y,
  v_{1,i}\ra}_1 v_{2,i}^+\ra}_2
+\la\sum_i {\la x, v_{1,i}\ra}_1 v_{2,i}^+, \sum_j{{\la y, v_{1,j}\ra}_1
  v_{2,j}^-\ra}_2)\right)\\
=&-\tr_{E/F}\left({\la w(x)^-, w(y)^+\ra}_2+{\la w(x)^+,
  w(y)^-\ra}_2\right)\\
=&-\tr_{E/F}\left({\la w(x), w(y)\ra}_2\right)
\end{align*}

\quad

\noindent\underline{Type 2 ($\V_1^a=0$)}: Recall
$W^+=\V_1^+\otimes\V_2$ and $W^-=\V_1^-\otimes\V_2$. Then
\begin{align*}
&\beta({\la v_{1,i}, y\ra}_1 x\otimes v_{2,i}, v_{1,j}\otimes
v_{2,j})-\epsilon_1 \beta({\la v_{1,i}, x\ra}_1 y \otimes v_{2,i},
v_{1,j}\otimes v_{2,j})\\
=&-\tr_{E/F}\left({\la {\la v_{1,i}, y\ra}_1 x^+,
  v_{1,j}^-\ra}_1\overline{\la v_{2,i}, v_{2,j}\ra}_2
+\epsilon_1{\la{\la v_{1,i}, x\ra}_1y^+, v_{1,j}^-\ra}_1\overline{\la
  v_{2,i}, v_{2,j}\ra}_2\right)\\
=&-\tr_{E/F}\left(\overline{\la y, v_{1,i}\ra}_1{\la x^+,
  v_{i,j}^-\ra}_1{\la v_{2,j}, v_{2,i}\ra}_2+\overline{\la
    x,v_{1,i}\ra}_1{\la y^+, v_{1,j}^-\ra}_1\overline{\la v_{2,i},
    v_{2,j}\ra}_2\right)\\
=&-\tr_{E/F}\left({\la{\la x^+, v_{1,j}^-\ra}_1v_{2,j}, {\la y,
  v_{1,i}\ra}_1 v_{2,i}\ra}_2+\overline{\la{\la x, v_{1,i}\ra}_1 v_{2,i},
    {\la y^+,v_{1,j}^-\ra}_1v_{2,j}\ra}_2\right)\\
=&-\tr_{E/F}\left({\la{\la x^+, v_{1,j}^-\ra}_1v_{2,j}, {\la y,
  v_{1,i}\ra}_1 v_{2,i}\ra}_2+{\la{\la x, v_{1,i}\ra}_1 v_{2,i},
    {\la y^+,v_{1,j}^-\ra}_1v_{2,j}\ra}_2\right).
\end{align*}
Hence by taking $\sum_i\sum_j$ as above, one obtains the desired formula.
\end{proof}

Next we would like to know when we have $\alpha^u(w)\in\P^r$ for all
$w\in B$ when $u$ is a Cayley transform. Let us start with

\begin{Lem}\label{L:alpha^u2}
Let $w\in W$ be any.
\begin{enumerate}[(1)]
\item  Suppose $W$ is equipped with Type 1 polarization. Assume
  $u_{x,y}$ exists for $x, y\in\V_1$. Then
\[
\alpha^{u_{x,y}}(w)=0.
\]
\item Suppose $W$ is equipped with Type 2 polarization. Assume
  $u_{x,y}$ exists where $x, y\in\V_1^{\pm}$. Then
\[
\alpha^{u_{x,y}}(w)=
\begin{cases}\la c_{x,y}(w^-), w^-\ra=-2\tr_{E/F}({\la w(x),
  w(y)\ra}_2)\;&\text{if $x,y\in\V_1^+$};\\
\la w^+, c_{x,y}(w^+)\ra=2\tr_{E/F}({\la w(x),
  w(y)\ra}_2)\;&\text{if $x,y\in\V_1^-$};\\
0\;&\text{otherwise}.
\end{cases}
\]
\end{enumerate}
\end{Lem}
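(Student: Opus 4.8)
The plan is to read off $\alpha_{u_{x,y}}$ from the explicit formula $(\ref{E:alpha_g})$ of Lemma \ref{L:splitting} once the block structure of $u_{x,y}$ with respect to the relevant polarization is understood, and then use $\alpha^{u_{x,y}}(w)=-\alpha_{u_{x,y}}(u_{x,y}^{-1}w)$. Here $u_{x,y}\in U(\V_1)$ acts on $W=\V_1\otimes_F\V_2$ as $u_{x,y}\otimes\operatorname{id}_{\V_2}$, and $c_{x,y}$ as $c_{x,y}\otimes\operatorname{id}_{\V_2}$; the crucial input will be that $\V_1^+$ and $\V_1^-$ are totally isotropic, which kills most of the pairings occurring in $c_{x,y}$.

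For part (1): with Type 1 polarization $W^{\pm}=\V_1\otimes\V_2^{\pm}$, so $u_{x,y}\otimes\operatorname{id}$ preserves each of $W^+$ and $W^-$; thus $u_{x,y}$ lies in the Siegel Levi and $\alpha_{u_{x,y}}=0$ by the Remark following Lemma \ref{L:splitting}, whence $\alpha^{u_{x,y}}=0$. The ``otherwise'' subcase of part (2) goes the same way: with Type 2 polarization $W^{\pm}=\V_1^{\pm}\otimes\V_2$, and if say $x\in\V_1^+$, $y\in\V_1^-$, then for $v^+\in\V_1^+$ we get $c_{x,y}(v^+)=\la v^+,y\ra_1 x\in\V_1^+$ (the $y$-term drops since $\la v^+,x\ra_1=0$) and for $v^-\in\V_1^-$ we get $c_{x,y}(v^-)=-\epsilon_1\la v^-,x\ra_1 y\in\V_1^-$, so $c_{x,y}$, hence $u_{x,y}$, preserves each $W^{\pm}$ and again $\alpha_{u_{x,y}}=0=\alpha^{u_{x,y}}$.

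The work is in the two remaining subcases of part (2). Take $x,y\in\V_1^+$ (the case $x,y\in\V_1^-$ being symmetric). Since $\V_1^+$ is totally isotropic, $c_{x,y}(x)=c_{x,y}(y)=0$, so $c_{x,y}^2=0$ and $u_{x,y}=(1-c_{x,y})^2=1-2c_{x,y}$, $u_{x,y}^{-1}=1+2c_{x,y}$; moreover $c_{x,y}$ kills $W^+=\V_1^+\otimes\V_2$ and maps all of $W$ into $W^+$. Hence in the notation of Lemma \ref{L:splitting} the matrix of $u_{x,y}$ has $a=d=1$, $c=0$, $b=-2c_{x,y}|_{W^-}$, so $(\ref{E:alpha_g})$ gives $\alpha_{u_{x,y}}(w)=\tfrac12\la -2c_{x,y}(w^-),w^-\ra=-\la c_{x,y}(w^-),w^-\ra$; and since $u_{x,y}^{-1}w=w+2c_{x,y}(w^-)$ has the same $W^-$-component $w^-$ as $w$, we get $\alpha^{u_{x,y}}(w)=-\alpha_{u_{x,y}}(u_{x,y}^{-1}w)=\la c_{x,y}(w^-),w^-\ra$. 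Finally $c_{x,y}(w)=c_{x,y}(w^-)\in W^+$, so $\la c_{x,y}(w^-),w^-\ra=\la (c_{x,y}(w))^+,w^-\ra=\beta(c_{x,y}(w),w)$, and feeding $x^+=x$, $y^+=y$ into the Type 2 formula of Lemma \ref{L:beta(c)_concrete} yields $\beta(c_{x,y}(w),w)=-2\tr_{E/F}(\la w(x),w(y)\ra_2)$, as claimed. For $x,y\in\V_1^-$ the same computation with $W^+$ and $W^-$ interchanged gives $\alpha^{u_{x,y}}(w)=\la w^+,c_{x,y}(w^+)\ra$; since now $\beta(c_{x,y}(w),w)=0$ (its $W^+$-component vanishes), one instead identifies $\la w^+,c_{x,y}(w^+)\ra=\beta(w,c_{x,y}(w))$ and computes this trace directly.

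The one genuinely delicate point is this last identification in the $\V_1^-$ case: Lemma \ref{L:beta(c)_concrete} is stated for $\beta(c_{x,y}(w),w)$, which vanishes here, so one must expand $\beta(w,c_{x,y}(w))$ by hand on a representation $w=\sum_i v_{1,i}\otimes v_{2,i}$ and check, using $\overline{\la v,v'\ra_i}=\epsilon_i\la v',v\ra_i$, the Galois-invariance $\tr_{E/F}(\bar c)=\tr_{E/F}(c)$, and $\epsilon_1\epsilon_2=-1$, that each of the two terms of $c_{x,y}$ contributes $\tr_{E/F}(\la w(x),w(y)\ra_2)$, for a total of $2\tr_{E/F}(\la w(x),w(y)\ra_2)$; the sign flip relative to the $\V_1^+$ case is exactly the antisymmetry of the symplectic form. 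Everything else is routine bookkeeping with the block decomposition.
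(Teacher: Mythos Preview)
Your proof is correct and follows essentially the same route as the paper: identify the block form of $u_{x,y}$ with respect to the polarization, conclude $\alpha^{u_{x,y}}=0$ in the Siegel--Levi cases, and in the unipotent cases ($x,y\in\V_1^+$ or $x,y\in\V_1^-$) read off $\alpha^{u_{x,y}}(w)=\la c_{x,y}(w^-),w^-\ra$ (resp.\ $\la w^+,c_{x,y}(w^+)\ra$) from the formula $(\ref{E:alpha_g})$. The one difference is cosmetic: for $x,y\in\V_1^+$ you shortcut the trace identity by recognizing $\la c_{x,y}(w^-),w^-\ra=\beta(c_{x,y}(w),w)$ and invoking Lemma~\ref{L:beta(c)_concrete}, whereas the paper expands $\la c_{x,y}(w^-),w^-\ra$ directly on elementary tensors; since for $x,y\in\V_1^-$ you concede a direct expansion is needed anyway (because $\beta(c_{x,y}(w),w)=0$ there), the net savings is slight.
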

\begin{proof}
(1) This case is obvious because, if $W$ is given Type 1 polarization, then
$\alpha_g=0$ for any $g\in U(\V_1)$.

(2) First assume $x\in\V_1^+$ and $y\in\V_1^-$ (or $x\in\V_1^-$ and
$y\in\V_1^+$). Then one can see that $u_{x,y}$ is in the Siegel Levi
of $\Sp(W)$ and hence $\alpha^{u_{x,y}}=0$.

Next assume $x,y\in\V_1^+$, and for notational simplicity, let us
write $u=u_{x,y}$. Viewed as a operator on $W$, $c_{x,y}|_{W^+}=0$ and the image of
$c_{x,y}$ is in $W^+$. Hence $c_{x,y}^2=0$ and $c_{x,y}$ may be viewed
as a map from $W^-$ to $W^-$. Then one can see
\[
u=\begin{pmatrix}1&-2c_{x,y}\\0&1\end{pmatrix}
\quad{and}\quad
u^{-1}=\begin{pmatrix}1&2c_{x,y}\\0&1\end{pmatrix},
\]
where the matrix is chosen with respect to the polarization $W=W^++W^-$.
Now for $w=w^++w^-\in W$, 
\begin{align*}\allowdisplaybreaks
\alpha^u(w)
&=-\alpha_u(u^{-1}(w^++w^-))\\
&=-\alpha_u(w^++2c_{x,y}(w^-)+w^-)\\
&=-\frac{1}{2}\la -2c_{x,y}(w^-), w^-\ra\quad\text{by (\ref{E:alpha_g})}\\
&=\la c_{x,y}(w^-), w^-\ra.
\end{align*}
Let us write $w=\sum_iv_{1,i}\otimes v_{2,i}$. Since the
polarization is Type 2, we have $w^-=\sum_iv_{1,i}^-\otimes v_{2,i}$. Thus
\[
\la c_{x,y}(w^-), w^-\ra
=\sum_i\sum_j\la c_{x,y}(v_{1,i}^-\otimes v_{2,i}), v_{1,j}^-\otimes v_{2,j}\ra
\]
Here
\begin{align*}\allowdisplaybreaks
&\la c_{x,y}(v_{1,i}^-\otimes v_{2,j}), v_{1,i}^-\otimes v_{2,j}\ra\\
=&\tr_{E/F}\Big({\la c_{x,y}(v_{1,i}^-), v_{1,j}^-\ra}_1\overline{\la
  v_{2,i}, v_{2,j}\ra}_2\Big)\\
=&\tr_{E/F}\Big({\la \big({\la v_{1,i}^-, y\ra}_1x-\epsilon_1{\la
    v_{1,i}^-,x\ra}_1y\big), v_{1,j}^-\ra}_1\overline{\la
  v_{2,i}, v_{2,j}\ra}_2\Big)\\
=&\tr_{E/F}\Big({\la v_{1,i}^-, y\ra}_1{\la x, v_{1,j}^-\ra}_1\overline{\la
  v_{2,i}, v_{2,j}\ra}_2-\epsilon_1{\la v_{1,i}^-,x\ra}_1{\la y, v_{1,j}^-\ra}_1\overline{\la
  v_{2,i}, v_{2,j}\ra}_2\Big)\\
=&\tr_{E/F}\Big(\epsilon_1\overline{\la{\la y, v_{1,i}^-\ra}_1v_{2,i}, 
{\la x, v_{1,j}^-\ra}_1v_{2,j}\ra}_2
-\overline{\la{\la x, v_{1,i}^-\ra}_1v_{2,i}, 
{\la y, v_{1,j}^-\ra}_1v_{2,j}\ra}_2\Big).
\end{align*}
Now by taking $\sum_i\sum_j$, one can see that
\begin{align*}
\la c_{x,y}(w^-), w^-\ra
&=\tr_{E/F}\Big(\epsilon_1\overline{\la w^-(y), w^-(x)\ra}_2
-{\la w^-(x), w^-(y)\ra}_2\Big)\\
&=-\tr_{E/F}\Big({\la w^-(x), w^-(y)\ra}_2+{\la w^-(x),
  w^-(y)\ra}_2\Big)\\
&=-2 \tr_{E/F}\Big({\la w^-(x), w^-(y)\ra}_2\Big).
\end{align*}
Now since $x,y\in\V_1^+$, we have $w^-(x)=w(x)$ and
$w^-(y)=w(y)$. Thus the lemma follows.

The case $x,y\in\V_1^-$ is almost identical.
\end{proof}

This lemma immediately implies

\begin{Lem}\label{L:alpha^u1}
Let $w\in B$ and $x,y\in \V_1$. Assume $u_{x,y}$ exits.
\begin{enumerate}
\item Assume that $W$ is given Type 1 polarization. Then $\alpha^{u_{x,y}}(w)\in\P^r$;
\item Assume that $W$ is given Type 2 polarization and $x,
  y\in\V_1^{\pm}$ and $L$ is admissible. Assume further that, if $x,y\in\V_1^+$ or
  $x, y\in\V_1^-$, then the condition $(ii)$ of Proposition
  \ref{P:condition} is satisfied. Then $\alpha^{u_{x,y}}(w)\in\P^r$.
\end{enumerate}
\end{Lem}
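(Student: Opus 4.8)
The plan is to deduce everything from the explicit formulas for $\alpha^{u_{x,y}}(w)$ obtained in Lemma~\ref{L:alpha^u2}, combined with the support condition $w\in B=B(L)=L^\perp\otimes_{\OE}L_2$ and the order estimates packaged in Proposition~\ref{P:condition}. Part~(1) is immediate: if $W$ carries Type~1 polarization then $U(\V_1)$ lies in the Siegel Levi, so $\alpha_g=0$ for every $g\in U(\V_1)$, and since $u_{x,y}\in U(\V_1)$ we get $\alpha^{u_{x,y}}\equiv 0$, which is certainly in $\P^r$. This is just a restatement of Lemma~\ref{L:alpha^u2}(1) and the Remark following Lemma~\ref{L:splitting}.

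For part~(2) I would split into the three cases of Lemma~\ref{L:alpha^u2}(2). In the mixed case $x\in\V_1^+, y\in\V_1^-$ (or vice versa), $u_{x,y}$ is again in the Siegel Levi of $\Sp(W)$, so $\alpha^{u_{x,y}}(w)=0\in\P^r$ and there is nothing to prove. In the remaining cases $x,y\in\V_1^+$ or $x,y\in\V_1^-$, Lemma~\ref{L:alpha^u2}(2) gives $\alpha^{u_{x,y}}(w)=\mp 2\tr_{E/F}({\la w(x), w(y)\ra}_2)$, so it suffices to show $2\tr_{E/F}({\la w(x), w(y)\ra}_2)\in\P^r$, and for this it is enough to show $2{\la w(x), w(y)\ra}_2\in\PE^r$ (the trace of an element of $\PE^r$ lands in $\P^r$ since $E/F$ is unramified). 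Now compare this with the computation in the proof of Lemma~\ref{L:beta(c)_concrete}: the quantity $\beta(c_{x,y}(w),w)$ for Type~1 equals $-\tr_{E/F}({\la w(x), w(y)\ra}_2)$, and the case analysis in Proposition~\ref{P:condition}(4) shows precisely that $2c_{x,y}(w)\in A=L_1\otimes L_2$ under condition $(iv)$. The point is that the same pairing estimates that control $2c_{x,y}(w)$ also control $2{\la w(x),w(y)\ra}_2$.

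Concretely, write $w\in B$, so viewing $w\in\Hom_E(\V_1,\V_2)$ we have $w(L^\perp)\subseteq L_2$ and (since $L\subseteq L_1\subseteq L^\perp$) $w(\varpi^{-r_1}L_1)\subseteq\varpi^{-r_1}L_2$; more relevantly, for $l\in L^\perp$ one has $\la w(l),-\ra_2$ bounded using self-duality of $L_2$ with respect to $r_2$. The hypothesis that condition $(ii)$ of Proposition~\ref{P:condition} holds, namely $\ord_L(x)+\ord_L(y)\geq -r_1-e$, together with $L$ admissible and $x,y\in\V_1^{\pm}$, forces $\varpi^{-\ord_L(x)}x\in L$ and $\varpi^{-\ord_L(y)}y\in L$, so $w(x)\in\varpi^{\ord_L(x)}w(L)\subseteq\varpi^{\ord_L(x)}\varpi^{-r_1}L_2$ and similarly for $w(y)$; hence $\la w(x),w(y)\ra_2\in\varpi^{\ord_L(x)+\ord_L(y)-2r_1}\PE^{r_2}\cdot(\text{unit})$, giving $\ord_E(\la w(x),w(y)\ra_2)\geq \ord_L(x)+\ord_L(y)-2r_1+r_2\geq -r_1-e-2r_1+r_2$. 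I would need to sharpen the bookkeeping here so that after multiplying by $2$ (which contributes $+e$) the exponent is $\geq r=r_1+r_2$; the honest computation uses that $w\in B$ gives a better bound on $w(x)$, namely $w(\varpi^{-r_1}L_1)+L_2$ having the right duality, and the admissibility of $L$ is what lets us split $w$ along the polarization so that the relevant pairing is $\la w(x),w(y)\ra_2$ rather than a cross term. \textbf{The main obstacle} is exactly this last estimate: checking that the hypothesis ``condition $(ii)$ of Proposition~\ref{P:condition}'' is strong enough to yield $2{\la w(x),w(y)\ra}_2\in\PE^r$ for all $w\in B$, which is where the admissibility of $L$ and the precise relation between $\ord_L$, $\ord_{L_1}$ and the duality integer $r_1$ must be used carefully; everything else is formal.
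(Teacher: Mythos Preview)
Your strategy is sound and matches the paper's: everything reduces to Lemma~\ref{L:alpha^u2}, and the only nontrivial case is $x,y\in\V_1^+$ (or $\V_1^-$) under Type~2 polarization, where one must show $2\tr_{E/F}({\la w(x),w(y)\ra}_2)\in\P^r$. Your bookkeeping, however, contains a sign error that creates the ``obstacle'' you flag. The claim $w(L^\perp)\subseteq L_2$ is false in general; what is true for $w\in B(L)=L^\perp\otimes_{\OE}L_2$ is $w(L)\subseteq\varpi^{r_1}L_2$, since ${\la L,L^\perp\ra}_1\subseteq\PE^{r_1}$. Hence $w(x)\in\varpi^{\ord_L(x)+r_1}L_2$ and likewise for $w(y)$, giving ${\la w(x),w(y)\ra}_2\in\PE^{\ord_L(x)+\ord_L(y)+2r_1+r_2}$. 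Condition~$(ii)$ says $\ord_L(x)+\ord_L(y)\geq -r_1-e$, so this exponent is at least $r_1+r_2-e=r-e$, and multiplying by $2$ (which contributes $\varpi^e$) lands in $\PE^r$. Once the sign of $r_1$ is corrected the estimate is immediate, and no deeper use of admissibility is required at this point beyond ensuring $x,y\in\V_1^\pm$ have well-defined $L$-orders.

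The paper reaches the same conclusion by a parallel route: rather than estimating ${\la w(x),w(y)\ra}_2$, it works with the intermediate form $\alpha^{u_{x,y}}(w)=\la c_{x,y}(w^-),w^-\ra$ from Lemma~\ref{L:alpha^u2}. Admissibility of $L$ ensures $w^-\in B$, and condition~$(ii)$ is precisely what forces $c_{x,y}(w^-)$ (up to the factor of $2$ already present in the formula) into $L\otimes L_2=B^\perp$, so the symplectic pairing lies in $\P^r$ by lattice duality. The two arguments are equivalent; yours simply unwinds that duality one step further into the explicit ${\la-,-\ra}_2$-pairing.
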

\begin{proof}
(1) is obvious. For (2), the only case that is not too obvious is when $x,y\in\V_1^+$ or
  $x, y\in\V_1^-$. Assume $x,y\in\V_1^+$. If the condition $(ii)$ is
  satisfied, we have $c_{x,y}w^-\in L\otimes L_2$. (Note that since
  $L$ is admissible, if $w\in B$ then $w^-\in B$.) Hence
  $\alpha^{u_{x,y}}(w)=\la c_{x,y}(w^-), w^-\ra\in\P^r$. The proof for
  case $x,y\in\V_1^-$ is essentially the same.
\end{proof}

One consequence of the lemma, especially  for Type 2 polarization, is
the following.

\begin{Prop}
Let $x,y\in \V_1$ be such that the pair $(x,y)$ satisfies $(i)$ and $(ii)$ of Proposition
\ref{P:condition}. 
\begin{enumerate}
\item If $W$ is given Type 1 polarization,  then $u_{x,y}\in J^\circ$.
\item If $W$ is given Type 2 polarization and $L$ is admissible, then
  $u_{x,y}\in J^\circ$ as long as $x,y\in\V_1^\pm$.
\end{enumerate}
\end{Prop}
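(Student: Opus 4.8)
The plan is to deduce both parts from Proposition~\ref{P:condition}(2), the Remark following Proposition~\ref{P:inclusion} that describes $J^\circ$, and Lemma~\ref{L:alpha^u1}, with all the bookkeeping carried by two elementary inclusions. First I would record these inclusions. Since $L\subseteq L_1=L_1^\perp$ we have $L^\perp\supseteq L_1$, hence
\[
A=L_1\otimes L_2\subseteq L^\perp\otimes L_2=B;
\]
and by Proposition~\ref{P:condition}(1) the hypothesis $(i)$ gives $u_{x,y}L_1\subseteq L_1$, so in fact $u_{x,y}L_1=L_1$ ($u_{x,y}$ being an isometry of $\V_1$), and viewing $u_{x,y}$ as an operator on $W=\V_1\otimes\V_2$ we get $u_{x,y}A=A$. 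Combining $u_{x,y}A=A$ with $A\subseteq B$ and the identity $\alpha_{u_{x,y}}(a)=-\alpha^{u_{x,y}}(u_{x,y}a)$ shows that if $\alpha^{u_{x,y}}(w)\in\P^r$ for all $w\in B$, then $\alpha_{u_{x,y}}(a)\in\P^r$ for all $a\in A$.

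For part (1) I would observe that with Type~1 polarization $U(\V_1)$ lies in the Siegel Levi, so $\alpha_g=0$ for every $g\in U(\V_1)$; in particular $\alpha_{u_{x,y}}\equiv 0$, and $\psi(\alpha_{u_{x,y}}(a))=1$ trivially for all $a\in A$. Proposition~\ref{P:condition}(2), applied with the hypotheses $(i)$ and $(ii)$, then yields $u_{x,y}\in J$, and since $J=J^\circ$ for Type~1 polarization (the Remark after Proposition~\ref{P:inclusion}), we conclude $u_{x,y}\in J^\circ$.

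For part (2) the same Remark says that for Type~2 polarization and admissible $L$ one has $J^\circ=\{u\in J:\alpha^u(w)\in\P^r\text{ for all }w\in B\}$, so it suffices to prove (a) $\alpha^{u_{x,y}}(w)\in\P^r$ for all $w\in B$ and (b) $u_{x,y}\in J$. Statement (a) is exactly Lemma~\ref{L:alpha^u1}(2): its hypotheses---$L$ admissible, $x,y\in\V_1^\pm$, together with condition $(ii)$ of Proposition~\ref{P:condition} in the cases $x,y\in\V_1^+$ or $x,y\in\V_1^-$---are all among our assumptions, while in the mixed case $\alpha^{u_{x,y}}$ vanishes identically by Lemma~\ref{L:alpha^u2}(2). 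For (b), by the last sentence of the first paragraph, (a) gives $\alpha_{u_{x,y}}(a)\in\P^r$, i.e.\ $\psi(\alpha_{u_{x,y}}(a))=1$, for all $a\in A$; Proposition~\ref{P:condition}(2), again with $(i)$ and $(ii)$, then yields $u_{x,y}\in J$. Hence $u_{x,y}\in J^\circ$. I do not expect a genuine obstacle here: all the analytic content is already packaged in the earlier propositions, and the only point requiring a little care is the passage between $\alpha_{u_{x,y}}$ on $A$ and $\alpha^{u_{x,y}}$ on $B$, which is precisely what the two inclusions $A\subseteq B$ and $u_{x,y}A=A$ supply.
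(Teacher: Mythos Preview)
Your proof is correct and follows essentially the same path as the paper. The only organizational difference is that the paper works directly from the definition $J^\circ=\bigcap_{w\in B}\ker\psi_1^w$---using Lemma~\ref{L:alpha^u1} to reduce $\psi_1^w(u_{x,y})=1$ to $\beta(u_{x,y}^{-1}w-w,w)\in\P^r$, and then to $(u_{x,y}-1)L^\perp\subseteq L$ via admissibility and Proposition~\ref{P:condition}(2)---whereas you invoke the packaged characterization of $J^\circ$ from the Remark after Proposition~\ref{P:inclusion}. Your version is in fact a bit more careful: you make explicit the passage from $\alpha^{u_{x,y}}|_B\in\P^r$ to $\alpha_{u_{x,y}}|_A\in\P^r$ (needed to place $u_{x,y}$ in $K_1(L)^\circ$ and hence in $J$), a step the paper leaves to the reader.
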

\begin{proof}
We prove only (2). Let $u=u_{x,y}$. Assume $W$ is given Type 2
polarization. By the above lemma, $u\in J^\circ$ if and only if
$\beta(u^{-1}w-w,w)\in\P^r$ for all $w\in B$. But this happens if and
only if $u^{-1}w-w\in L\otimes L_2$ for all $w\in B$, because $L$ is
admissible. Hence the proposition follows by Proposition
\ref{P:condition} (2). (The reader should notice that this does not necessarily follow if $L$
is not admissible.)
\end{proof}

We should mention that if $W$ is given Type 1 polarization, we
actually have $J^\circ=J$.


\quad

For $w\in W$ and $t\in\Z$ with $t\geq 0$, let us define
the condition $(wt)$ as follows:
\[
\tag{$wt$}
\varpi^{-r_1}w(2\varpi L_1\cap L)\subseteq\varpi^{-t}L_2.
\]
We define
\[
S_t:=\{s\in S: \text{ $(wt)$ holds for all }
(w,0)\in\supp(s)\}.
\]

Let us note that $S_{0}\subseteq S_{1}\subseteq S_{2}\subseteq\cdots$, and
$S=\bigcup_{t\geq 0}S_t$. The space $S_t$ is stable under the action
of $J^\circ$ because $J^\circ$ preserves the space $2\varpi L_1\cap
L$. Hence we have
\[
S^{J^\circ}=\bigcup_{t\geq 0}S_t^{J^\circ}.
\]
One can also check that if $w\in W$ satisfies the condition $(wt)$,
then $w\in S_t^{J^\circ}$.

In what follows, we will show

\begin{Prop}\label{P:step1}
Assume $\V_2$ is symplectic (if the residue characteristic of $F$ is
even), and $L$ is admissible if the polarization of $W$ is Type
2. (If the polarization of $W$ is Type 1, $L$ does not have to be admissible.)
We have the inclusion $S^{J^\circ}\subseteq\omega(\He_2)
S_{0}^{J^\circ}$.
\end{Prop}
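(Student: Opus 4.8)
The plan is to prove, by induction on $t$, the sharper assertion that $S_t^{J^\circ}\subseteq\omega(\He_2)S_0^{J^\circ}$ for all $t\ge0$; the proposition then follows since $S^{J^\circ}=\bigcup_{t\ge0}S_t^{J^\circ}$. The base case $t=0$ is trivial: any $f\in S_0^{J^\circ}$ is fixed by some open compact subgroup $K$ of $\Ut(\V_2)$, so $f=\omega(e_K)f$ with $e_K\in\He_2$ the idempotent attached to $K$. For the inductive step, since $S_t^{J^\circ}$ is spanned (cf.\ Lemma \ref{L:generated}) by the $s[w]$ with $w$ satisfying $(wt)$, and since $\He_2*\He_2=\He_2$ gives $\omega(\He_2)\omega(\He_2)V=\omega(\He_2)V$, it suffices to show: if $w\in W$ has $s[w]\ne0$ and the least $t$ for which $(wt)$ holds is $\ge1$, then $s[w]\in\omega(\He_2)$ applied to the span of the functions $s[w']\ne0$ for which $(w'(t-1))$ already holds (each such $s[w']$ lying in $S_{t-1}^{J^\circ}$).

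To produce the required Hecke operator I would follow \cite[Ch.~5.III]{MVW}. Viewing $w$ as an element of $\Hom_E(\V_1,\V_2)$, the failure of $(w(t-1))$ is located in the image lattice $w(\varpi^{-r_1}(2\varpi L_1\cap L))$ relative to $\varpi^{-(t-1)}L_2$. Using Lemma \ref{L:lattice_lin_ind} (valid since $\V_2$ is symplectic) one picks a hyperbolic decomposition of $\V_2$ adapted to this, and chooses an isometry $g_t\in U(\V_2)$ of the type used in \cite[Ch.~5.III]{MVW} — acting as multiplication by $\varpi^{-1}$ on a suitable isotropic block and by $\varpi$ on its dual — possibly after first normalizing $w$ within its $K_2$- and $J^\circ$-orbit. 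One then sets $\varphi_t=\mathbf 1_{K_2^\circ g_t K_2^\circ}\in\He_2$ and computes $\omega(\varphi_t)$ on the function $s[w']$ attached to a suitable correction $w'$ of $g_t^{-1}w$, using the integral formula \eqref{E:action_on_lattice_model} together with Remark \ref{R:important2}. By construction $w'$ satisfies $(w'(t-1))$, so $s[w']\in S_{t-1}^{J^\circ}$, while $\omega(\varphi_t)s[w']$ is a finite linear combination of $s[w'']$'s in which $s[w]$ appears with nonzero coefficient and every other $w''$ again satisfies $(w''(t-1))$; solving for $s[w]$ and folding the remaining terms back into the induction completes the step. A secondary induction — on the corank of $w$'s image in $\varpi^{-(t-1)}L_2/\varpi^{-t}L_2$, or an iteration of $g_t$ over the bad directions one at a time — is probably needed to keep the ``nonzero coefficient on $s[w]$'' bookkeeping honest.

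This is where the hypotheses enter. The nonvanishing of $s[w']$ and the computation of the coefficient of $s[w]$ are governed by Proposition \ref{P:non_vanishing} and the character orthogonality it packages, which forces one to carry the characters $\psi_1^w$ of Lemma \ref{L:action} and to work with $J^\circ$ rather than $J$; for Type 2 polarization this is where admissibility of $L$ is used, so that the cocycle terms $\alpha^{u_{x,y}}(w)$ land in $\P^r$ (Lemmas \ref{L:alpha^u2}, \ref{L:alpha^u1}) and the Cayley transforms $u_{x,y}$ used to slide base points actually lie in $J^\circ$ (Propositions \ref{P:condition}, \ref{P:condition2}). And reducing $K_2^\circ g_t K_2^\circ$ to a workable set of representatives on the $\V_2$-side uses Witt extension for self-dual lattices, i.e.\ Lemma \ref{L:lattice_Witt} (equivalently Lemma \ref{L:lattice_lin_ind}), which in even residual characteristic is available only because $\V_2$ is symplectic — exactly why that hypothesis is imposed, and why this proposition does not yet need $L\subseteq 2\varpi L_1$.

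I expect the main obstacle to be keeping the powers of $2$ under control in even residual characteristic. Concretely: (i) the elements of $U(\V_1)$ used to adjust supports must be kept inside $J^\circ$, forcing the sharpened order bounds of Propositions \ref{P:condition} and \ref{P:condition2} with the ubiquitous $e=\ord_F(2)$, and the special Cayley transforms $u_{x,x}$ with $\bar a=a$ when $\epsilon_1=-1$; (ii) one must verify that $\omega(\varphi_t)$, restricted to the finite-dimensional span of the relevant $s[w'']$, is upper-triangular for the filtration $S_0\subseteq S_1\subseteq\cdots$ with invertible diagonal, so that $s[w]$ genuinely lies in the image; and (iii) the characters $\psi_1^w$ of Lemma \ref{L:action}, the normalizer $\int_{A/A_g}\psi(\alpha^g(a))\,da$ from Proposition \ref{P:Gamma_A_action}, and the integrals \eqref{E:action_on_lattice_model} must be combined so that the factors of $2$ — invisible when the residue characteristic is odd — cancel. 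None of this is conceptually deep, but it is the technical core of the argument and the point at which a proof that runs verbatim in the odd case breaks down.
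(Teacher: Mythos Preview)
Your outline has the right inductive framework and correctly identifies all the structural inputs (Lemma \ref{L:lattice_lin_ind} for $\V_2$ symplectic, the Cayley-transform machinery of Propositions \ref{P:condition}--\ref{P:condition2} and Lemmas \ref{L:alpha^u2}--\ref{L:alpha^u1} to keep things inside $J^\circ$, admissibility of $L$ for Type 2), and the key isometry scaling isotropic complements by $\varpi^{\pm1}$ is exactly what the paper uses. In that sense you are on the same track.

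Where you diverge is in the \emph{direction} of the argument, and this is where you make your life harder. You want to apply a double-coset Hecke operator $\varphi_t=\mathbf 1_{K_2^\circ g_t K_2^\circ}$ to some $s[w']\in S_{t-1}^{J^\circ}$ and then extract $s[w]$ from the result, which forces you to control all the other terms and to verify the nonzero-coefficient/upper-triangular bookkeeping you yourself flag as the main obstacle. The paper goes the other way: it takes the single element
\[
u=\varpi\,\Id_X\oplus\Id_{\V_2^\circ}\oplus\varpi^{-1}\,\Id_Y\in U(\V_2)
\]
(built from the decomposition furnished by Lemma \ref{L:lattice_lin_ind} after Lemma \ref{L:pair} gives the needed congruences ${\la\varpi^{t-r_1}w(x_i),\varpi^{t-r_1}w(x_j)\ra}_2\in\PE^{r_2+t+1}$), fixes a preimage $\tilde u$, and simply checks via the support formula \eqref{E:action_on_lattice_model} that $\omega(\tilde u)s[w]\in S_{t-1}$. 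Since the actions commute, $\omega(\tilde u)s[w]\in S_{t-1}^{J^\circ}$, and then $s[w]=\omega(\tilde u^{-1})\bigl(\omega(\tilde u)s[w]\bigr)\in\omega(\He_2)S_{t-1}^{J^\circ}$ because acting by a single group element on a smooth vector is already a Hecke operator. No coset decomposition, no triangularity, no secondary induction --- the concerns in your final paragraph simply do not arise.

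So your proposal is not wrong, but it sets up an unnecessary inversion problem. Rewriting your inductive step to push $s[w]$ \emph{down} into $S_{t-1}$ with one group element, rather than pulling it \emph{up} out of a Hecke-operator image, collapses the hard bookkeeping to a direct support calculation.
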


Before going into the proof,  let us mention that this immediately implies 

\begin{Prop}\label{P:step1.5}
Under the assumption of the previous proposition, if $L\subseteq
2\varpi L_1$,  we have the equality
\[
S^{J^\circ}=\omega(\He_2)S_L,
\]
which is nothing but the first key lemma.
\end{Prop}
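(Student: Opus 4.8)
The plan is to obtain the equality as a quick corollary of Proposition~\ref{P:step1}, the only extra input being the observation that, when $L\subseteq 2\varpi L_1$, one has $S_0^{J^\circ}=S_L$. I would prove the two inclusions $\omega(\He_2)S_L\subseteq S^{J^\circ}$ and $S^{J^\circ}\subseteq\omega(\He_2)S_L$ separately.

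The inclusion $\omega(\He_2)S_L\subseteq S^{J^\circ}$ is the easy direction and needs nothing new: Proposition~\ref{P:inclusion} gives $S_L\subseteq S^{J^\circ}$ (recall $J^\circ=J_1(L)^\circ$), and $S^{J^\circ}$ is stable under $\omega(\He_2)$ because the actions of $\He_1$ and $\He_2$ commute --- precisely the remark made right after the statement of Conjecture~\ref{L:key1}.

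For the reverse inclusion, Proposition~\ref{P:step1} already yields $S^{J^\circ}\subseteq\omega(\He_2)S_0^{J^\circ}$, so it suffices to check $S_0^{J^\circ}\subseteq S_L$ and then apply $\omega(\He_2)$. This is the step that uses $L\subseteq 2\varpi L_1$: in that case $2\varpi L_1\cap L=L$, so for $w\in W$ (viewed in $\Hom_E(\V_1,\V_2)$) the defining condition $(w0)$ of $S_0$ becomes simply $w(L)\subseteq\varpi^{r_1}L_2$. On the other hand, from $B(L)^\perp=B(L^\perp)$, the description of the symplectic form on $W$ in terms of $\tr_{E/F}{\la-,-\ra}_2$, and the surjectivity of $\tr_{E/F}$ on the relevant ideals, one identifies $B(L)=\{w\in W:\,w(L)\subseteq\varpi^{r_1}L_2\}$. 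Hence $(w0)$ holds for $w$ exactly when $w\in B(L)$. Now if $s\in S_0^{J^\circ}$, every $(w,0)\in\supp(s)$ satisfies $(w0)$ and therefore lies in $B(L)$; since $L\subseteq L_1=L_1^\perp\subseteq L^\perp$ gives $A=L_1\otimes_{\OE}L_2\subseteq L^\perp\otimes_{\OE}L_2=B(L)$, each coset $A+w$ occurring in $\supp(s)$ lies in $B(L)$, so $\supp(s)\subseteq B(L)\times F$ and $s\in S_L$. The same computation gives the reverse containment $S_L\subseteq S_0\cap S^{J^\circ}=S_0^{J^\circ}$, so in fact $S_0^{J^\circ}=S_L$. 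Putting the inclusions together, $S^{J^\circ}\subseteq\omega(\He_2)S_0^{J^\circ}=\omega(\He_2)S_L\subseteq S^{J^\circ}$, forcing equality throughout; this is exactly the first assertion of the First Key Lemma for the lattice $L$.

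I do not expect a genuine obstacle here --- all the real content is in Proposition~\ref{P:step1}, established earlier in the section through the Cayley-transform computations. The only place where one must be slightly careful is the bookkeeping identification $\{w:(w0)\text{ holds}\}=B(L)$, i.e.\ reconciling the ``operator'' description of the lattice $B(L)\subseteq W$ with its tensor-product description $L^\perp\otimes_{\OE}L_2$; this is routine once the symplectic form is unwound through $\tr_{E/F}$, and it is also the point where the hypothesis $L\subseteq 2\varpi L_1$ (equivalently $L\subseteq\varpi^{1+e}L_1$) is genuinely used, since it is what collapses $2\varpi L_1\cap L$ to $L$.
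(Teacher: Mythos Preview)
Your argument is correct and is essentially the paper's one-line proof, expanded with the bookkeeping made explicit: the paper simply observes that if $L\subseteq 2\varpi L_1$ then $S_0=S_L$, and the result follows from Proposition~\ref{P:step1} and the easy inclusion $\omega(\He_2)S_L\subseteq S^{J^\circ}$. Your identification of $\{w:(w0)\text{ holds}\}$ with $B(L)$ is right, though it can be done more directly without invoking $\tr_{E/F}$: writing $w=\sum v_i\otimes e_i$ with $\{e_i\}$ a basis of $L_2$, the condition $w(l)=\sum{\la l,v_i\ra}_1e_i\in\varpi^{r_1}L_2$ for all $l\in L$ is equivalent to each $v_i\in L^\perp$, i.e.\ $w\in B(L)$.
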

\begin{proof}
If  $L\subseteq 2\varpi L_1$, then $S_0=S_L$.
\end{proof}

To prove Proposition \ref{P:step1}, recall from
Proposition \ref{P:inclusion} that for each
$w\in W$ we have shown $s[w]\neq 0$ if and only if
$\psi(-\beta(u^{-1}w-w,w)+\alpha^u(w))=1$ for all $u\in J^\circ$ such
that $u^{-1}w\in A+w$.

\begin{Lem}\label{L:pair}
Let $w\in W$ be such that $s[w]\neq 0$. Also let $x,y\in
\V_1$. Further assume $x, y\in\V_1^\pm$ and $L$ is admissible in case $W$ is given Type 2
polarization. Then if the pair $(x,y)$ satisfies $(i), (ii)$ and $(iv)$ of
Proposition \ref{P:condition}. Then
\[
2{\la w(x), w(y)\ra}_2\in\PE^r.
\]
(Here $\V_2$ does not have to be symplectic.)
\end{Lem}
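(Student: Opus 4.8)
The plan is to feed the single Cayley transform $u:=u_{x,y}$ into the non-vanishing criterion of Proposition~\ref{P:non_vanishing}. First I would verify that $u$ lies in $J^{\circ}$ and stabilizes $A+w$. Conditions $(i)$ and $(ii)$, together with the standing hypotheses (for Type~2 polarization: $x,y\in\V_1^{\pm}$ and $L$ admissible), give $u\in J^{\circ}$ by the unnumbered Proposition above asserting $u_{x,y}\in J^{\circ}$ under $(i)$, $(ii)$ (equivalently, Proposition~\ref{P:condition}(2) gives $(u-1)L^{\perp}\subseteq L$ while Lemma~\ref{L:alpha^u1} gives $\alpha^{u}(a)\in\P^{r}$ for $a\in A$). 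Conditions $(i)$ and $(iv)$ give $uw\in A+w$ by Proposition~\ref{P:condition}(4); since $(i)$ also forces $uL_{1}\subseteq L_{1}$ and hence $uA=A$ (an isometry mapping a lattice into itself fixes it), the relation $uw\in A+w$ is equivalent to $u(A+w)=A+w$ and hence also to $u^{-1}w\in A+w$. So $u$ is precisely of the type occurring in Proposition~\ref{P:non_vanishing}, and the hypothesis $s[w]\neq 0$ forces
\[
\psi\bigl(-\beta(u^{-1}w-w,w)+\alpha^{u}(w)\bigr)=1,\qquad\text{that is,}\qquad \beta(u^{-1}w-w,w)-\alpha^{u}(w)\in\P^{r}.
\]

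Next I would compute the left-hand side explicitly. By Lemma~\ref{L:beta(u)_and_beta(c)} one has $\beta(u^{-1}w-w,w)\equiv 2\beta(c_{x,y}(w),w)\bmod 4\P^{r}$, and $4\P^{r}=\P^{r+2e}\subseteq\P^{r}$. Substituting the explicit formulas of Lemma~\ref{L:beta(c)_concrete} for $\beta(c_{x,y}(w),w)$ and of Lemma~\ref{L:alpha^u2} for $\alpha^{u}(w)$, and running through the cases (Type~1 with $x,y$ arbitrary; Type~2 with $x,y\in\V_1^{+}$, with $x,y\in\V_1^{-}$, and with $x,y$ in opposite summands), a short bookkeeping calculation shows that in every case
\[
2\beta(c_{x,y}(w),w)-\alpha^{u}(w)=-2\tr_{E/F}\bigl({\la w(x),w(y)\ra}_2\bigr).
\]
Combining the two displays gives $2\tr_{E/F}({\la w(x),w(y)\ra}_2)\in\P^{r}$. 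It is precisely the cancellation here between the term $2\beta(c_{x,y}(w),w)$ and the cocycle term $\alpha^{u}(w)$ --- which is nonzero exactly in the Type~2 cases $x,y\in\V_1^{+}$ and $x,y\in\V_1^{-}$ by Lemma~\ref{L:alpha^u2} --- that makes the hypotheses on $x,y$ and on $L$ indispensable.

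Finally I would upgrade the trace bound to the asserted bound on ${\la w(x),w(y)\ra}_2$. If $E=F$ the trace is the identity and we are done. If $E\neq F$, recall that $w\colon\V_1\to\V_2$ is $E$-linear, so $w(bx)=b\,w(x)$ for $b\in\OE$; moreover $\ord_{L_1}(bx)\geq\ord_{L_1}(x)$, $\ord_{L}(bx)\geq\ord_{L}(x)$ and $\ord_{L_2}(w(bx))\geq\ord_{L_2}(w(x))$, and $\V_1^{\pm}$ is an $E$-subspace, so the pair $(bx,y)$ again satisfies $(i),(ii),(iv)$ together with the standing hypotheses. Applying the previous paragraph to $(bx,y)$ gives $\tr_{E/F}\bigl(b\cdot 2{\la w(x),w(y)\ra}_2\bigr)\in\P^{r}$ for every $b\in\OE$. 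Since $E/F$ is unramified, the trace form $\OE\times\OE\to\O$ is perfect (the different is trivial), so an element of $E$ whose products with all of $\OE$ have trace in $\P^{r}=\varpi^{r}\O$ must lie in $\varpi^{r}\OE=\PE^{r}$; hence $2{\la w(x),w(y)\ra}_2\in\PE^{r}$. The only step that really needs care is the case-by-case collapse in the second paragraph; everything else is assembled from results already established.
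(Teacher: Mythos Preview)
Your argument is correct and follows essentially the same route as the paper: feed $u_{x,y}\in J^\circ$ into Proposition~\ref{P:non_vanishing}, compute $-\beta(u^{-1}w-w,w)+\alpha^u(w)$ via Lemmas~\ref{L:beta(u)_and_beta(c)}, \ref{L:beta(c)_concrete}, \ref{L:alpha^u2}, and then vary $x\mapsto bx$ to pass from a trace condition to $2\langle w(x),w(y)\rangle_2\in\PE^r$. Your presentation collapses the Type~1 and Type~2 subcases into the single identity $2\beta(c_{x,y}(w),w)-\alpha^{u}(w)=-2\tr_{E/F}\langle w(x),w(y)\rangle_2$, whereas the paper treats the Type~2 subcases $(x^+,y^-)$, $(x^-,y^+)$, $(x^+,y^+)$, $(x^-,y^-)$ one at a time; but the underlying computation is identical.
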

\begin{proof}
Recall that if the pair $(x,y)$ satisfies $(i), (ii)$ and $(iv)$ of
Proposition \ref{P:condition}, then $u_{x,y}\in J^\circ$. 

Assume $W$ is given Type 1 polarization. By Lemmas
\ref{L:beta(u)_and_beta(c)} and \ref{L:beta(c)_concrete}, one has
\[
\psi(2\tr_{E/F}{\la w(x), w(y)\ra}_2)=1.
\]
By replacing $(x,y)$ by $(ax,x)$ for any $a\in\OE^\times$, we
still have the same, namely
\[
\psi(2\tr_{E/F}{\la w(ax), w(y)\ra}_2)=\psi(2\tr_{E/F}{a\la w(x),
  w(y)\ra}_2) =1
\quad\text{for all $a\in\OE^\times$},
\]
which implies
\[
2{\la w(x), w(y)\ra}_2\in\PE^r.
\]

Assume $W$ is given Type 2 polarization. Note that
\[
{\la w(x), w(y)\ra}_2={\la w(x^+), w(y^+)\ra}_2+{\la w(x^-),
  w(y^-)\ra}_2+{\la w(x^+), w(y^-)\ra}_2+{\la w(x^-), w(y^+)\ra}_2,
\]
and so it suffices to show that $2$ times each of those terms is in $\PE^r$.

First consider the pair $(x^+, y^-)$. Since $L$ is admissible, the
pair $(x^+, y^-)$ also satisfies the conditions $(i), (ii)$ and $(iv)$
of Proposition \ref{P:condition}. By Lemma \ref{L:alpha^u1}, we
know $\alpha^{u_{x^+, y^-}}=0$. Thus by Lemmas
\ref{L:beta(u)_and_beta(c)} and \ref{L:beta(c)_concrete}, one has
\[
\psi(2\tr_{E/F}{\la w(x^+), w(y^-)\ra}_2)=1.
\]
Arguing as in the previous case, this implies 
\[
2{\la w(x^+), w(y^-)\ra}_2\in\PE^r.
\]
The case for $(x^-, y^+)$ is the same.

Next consider the pair $(x^+, y^+)$. By Lemma \ref{L:alpha^u1}, we
know $u_{x^+, y^+}\in J^\circ$. By Lemma \ref{L:alpha^u2} together with Lemmas
\ref{L:beta(u)_and_beta(c)} and \ref{L:beta(c)_concrete}, we conclude
that
\begin{align*}
\psi(-\beta(u^{-1}w-w,w)+\alpha^u(w))&=
\psi(4\tr_{E/F}{\la w(x^+), w(y^+)\ra}_2-2\tr_{E/F}{\la w(x^+),
  w(y^+)\ra}_2)\\
&=\psi(2\tr_{E/F}{\la w(x^+), w(y^+)\ra}_2)\\
&=1.
\end{align*}
Arguing as before, this implies 
\[
2{\la w(x^+), w(y^+)\ra}_2\in\PE^r.
\]
The case for $(x^-, y^-)$ is essentially the same.
\end{proof}

Using this lemma, we have

\begin{Lem}
Let $t>0$ be fixed, and let $w\in W$ be such that $s[w]\neq 0$ and
$s[w]\in S_t^{J^\circ}$, so $w$ satisfies the condition $(wt)$.
Assume $x,y\in 2\varpi L_1\cap L $ and assume further that $x,y\in
\V_1^\pm$ and $L$ is admissible if $W$ is given Type 2 polarization. Then we have
\[
{\la \varpi^{t-r_1}w(x),
  \varpi^{t-r_1}w(y)\ra}_2\in\PE^{t+r_2+1}\subseteq \PE^{r_2+2}.
\]
(Again $\V_2$ does not have to be symplectic.)
\end{Lem}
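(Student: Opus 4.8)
The plan is to deduce this from Lemma~\ref{L:pair} applied to the pair $(x,y)$ after rescaling by a power of $\varpi$; we may assume $x,y\neq 0$, the statement being trivial otherwise.

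The algebraic fact I would isolate first is that, since $E/F$ is unramified the uniformizer $\varpi$ was chosen in $F$, so $\overline{\varpi^m}=\varpi^m$ for every $m\in\Z$; combined with the $E$-linearity of $w$ (so that $w(\varpi^m v)=\varpi^m w(v)$) this gives, for any $m\in\Z$,
\[
{\la \varpi^m w(x),\,\varpi^m w(y)\ra}_2=\varpi^{2m}\,{\la w(x),w(y)\ra}_2={\la w(\varpi^m x),\,w(\varpi^m y)\ra}_2 .
\]
In particular the assertion to be proved is equivalent to ${\la w(x),w(y)\ra}_2\in\PE^{\,2r_1+r_2-t+1}$, and any bound established for a rescaled pair transports back to the one we want.

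Next I would apply Lemma~\ref{L:pair} to $(x',y'):=(\varpi^{-c}x,\varpi^{-c}y)$ for a suitable integer $c\ge 0$. We have $s[w]\neq 0$ by hypothesis; multiplication by a power of $\varpi$ preserves $\V_1^+$ and $\V_1^-$, so $x',y'\in\V_1^\pm$ whenever $x,y$ are; and $L$ is assumed admissible in the Type~2 case. Hence it only remains to verify that $(x',y')$ satisfies conditions $(i)$, $(ii)$ and $(iv)$ of Proposition~\ref{P:condition}. For this I use three order estimates: from $x,y\in 2\varpi L_1=\varpi^{1+e}L_1$ one gets $\ord_{L_1}(x'),\ord_{L_1}(y')\ge 1+e-c$; from $x,y\in L$ one gets $\ord_L(x'),\ord_L(y')\ge -c$; and from the condition~$(wt)$, which reads $w(2\varpi L_1\cap L)\subseteq\varpi^{\,r_1-t}L_2$, one gets $\ord_{L_2}(w(x')),\ord_{L_2}(w(y'))\ge r_1-t-c$ since $w(x')=\varpi^{-c}w(x)$. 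Substituting these into $(i)$, $(ii)$, $(iv)$ turns each condition into an upper bound on $c$. Lemma~\ref{L:pair} then yields $2\,{\la w(x'),w(y')\ra}_2\in\PE^r$, that is $2\varpi^{-2c}\,{\la w(x),w(y)\ra}_2\in\PE^r$; since $\ord_E(2)=e$ this says ${\la w(x),w(y)\ra}_2\in\PE^{\,r+2c-e}$.

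Combining this with the identity above, ${\la \varpi^{t-r_1}w(x),\,\varpi^{t-r_1}w(y)\ra}_2\in\PE^{\,r+2c-e+2(t-r_1)}$, and with $r=r_1+r_2$ the exponent equals $t+r_2+(t-r_1-e+2c)$; so one wants to choose $c$ with $2c\ge r_1+e-t+1$. The crux --- and the step I expect to be the main obstacle --- is to reconcile this lower bound on $c$ with the upper bounds on $c$ coming from conditions $(ii)$ and $(iv)$ (the binding ones); this amounts to an optimization over $c$ in which one must genuinely use that $x,y$ lie in $2\varpi L_1$ rather than merely in $\varpi L_1$, together with $t\ge1$, in order to gain the extra ``$+1$'' over the trivial bound ${\la \varpi^{t-r_1}w(x),\,\varpi^{t-r_1}w(y)\ra}_2\in\PE^{r_2}$ (which holds because $\varpi^{t-r_1}w(x),\varpi^{t-r_1}w(y)\in L_2$ by $(wt)$ and $L_2$ is self-dual with respect to $r_2$).
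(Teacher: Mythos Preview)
Your overall strategy is the right one and matches the paper's: apply Lemma~\ref{L:pair} to a rescaled pair and unwind the valuation. The gap is exactly where you flagged it, in the optimization over $c$, and it is not merely a matter of bookkeeping. With the symmetric rescaling $(x',y')=(\varpi^{-c}x,\varpi^{-c}y)$ your three upper bounds and one lower bound become
\[
r_1+e-t+1 \;\le\; 2c \;\le\; \min\bigl(r_1+e,\;1+2e+r_1-t\bigr),
\]
and for $t=1$ (which is allowed) the binding upper bound is $r_1+e$, so the interval collapses to the single value $2c=r_1+e$. When $r_1+e$ is odd there is no integer $c$ meeting this, and you fall one unit short of the target exponent. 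So the symmetric choice is genuinely insufficient.

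The fix, and what the paper does, is to rescale \emph{asymmetrically}: take $x'=\tfrac{1}{2}\varpi^{\,t-1-r_1}x$ and leave $y'=y$. With $x,y\in 2\varpi L_1\cap L$ and $(wt)$ you get $\ord_{L_1}(x')\ge t-r_1$, $\ord_L(x')\ge t-1-r_1-e$, $\ord_{L_2}(w(x'))\ge -1-e$, while $\ord_{L_1}(y')\ge 1+e$, $\ord_L(y')\ge 0$, $\ord_{L_2}(w(y'))\ge r_1-t$; one checks conditions $(i)$, $(ii)$, $(iv)$ directly with no parity obstruction. Lemma~\ref{L:pair} then gives $2\,{\la w(x'),w(y')\ra}_2\in\PE^r$, i.e.\ $\varpi^{\,t-1-r_1}{\la w(x),w(y)\ra}_2\in\PE^r$, hence ${\la \varpi^{t-r_1}w(x),\varpi^{t-r_1}w(y)\ra}_2\in\PE^{\,t+r_2+1}$ on the nose. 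Equivalently, had you allowed independent exponents $c_1,c_2$ on $x$ and $y$, your constraint would involve $c_1+c_2$ rather than $2c$, and the parity issue evaporates; the paper's choice is just a clean asymmetric instance of this. Note also that scalar multiplication preserves $\V_1^\pm$, so the Type~2 hypotheses of Lemma~\ref{L:pair} are unaffected by the asymmetric scaling.
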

\begin{proof}
First notice that the pair $(\frac{1}{2}\varpi^{t-1-r_1}x, y)$
satisfies the conditions $(i), (ii)$
and $(iv)$. So by the above lemma, we have
$2{\la w(\frac{1}{2}\varpi^{t-1-r_1}x), w(y)\ra}_2\in\PE^r$, which gives
\[
{\la \varpi^{t-r_1}w(x), \varpi^{t-r_1}w(y)\ra}_2\in\PE^{r_2+t+1},
\]
where recall $r=r_1+r_2$. The case for Type 2 polarization can be
proven in the same way.
\end{proof}

Now we are ready to prove Proposition \ref{P:step1}. Though our proof is
almost identical to the one given in \cite[p.118-119]{MVW}, we give
details here for the sake of completeness. 

\begin{proof}[Proof of Proposition \ref{P:step1}]
We only give a proof for the case of Type 2 polarization, leaving the
Type 1 case to the reader. Hence for what follows, $L$ is admissible.

Since we have
$S^{J^\circ}=\bigcup_{t\geq 0}S_t^{J^\circ}$, it suffices to show
$S_t^{J^\circ}\subseteq \omega(\He_2) S_{t-1}^{J^\circ}$ for $t\geq
1$. From Lemma \ref{L:generated} we know that  the functions of the
form $s[w]$ where $w$ satisfies $(wt)$, \ie $\varpi^{-r_1}w(2\varpi
L_1\cap L)\subseteq\varpi^{-t}L_2$, generate the space
$S_t^{J^\circ}$. Hence we have to show that for such $s[w]$, there exists
$u\in U(\V_2)$ such that $\omega(\tilde{u})s[w]\in S_{t-1}$, where
$\tilde{u}$ is some metaplectic preimage of $u$. (Note that if
$\omega(\tilde{u})s[w]\in S_{t-1}$, then $\omega(\tilde{u})s[w]\in
S_{t-1}^{J^\circ}$ due to the commutativity of the actions of $U(\V_1)$
and $U(\V_2)$.) 

Let $\overline{X}$ be the image of $\varpi^{t-r_1}w(2\varpi L_1\cap L)$ in
$L_2/\varpi L_2$. Let $x_1,\dots,x_s\in2\varpi L_1\cap L$ be such that
the reductions of $\varpi^{t-r_1}w(x_1),\dots, \varpi^{t-r_1}w(x_s)$ in
$L_2/\varpi L_2$ form a basis of $\overline{X}$. (Here we may assume
each $x_i$ is in $(2\varpi L_1\cap L)^\pm$ for the following reason: If the
reductions of $\varpi^{t-r_1}w(x_1),\dots, \varpi^{t-r_1}w(x_s)$ form a
basis of $\overline{X}$, then the reductions of
$\varpi^{t-r_1}w(x_1^+),\dots, \varpi^{t-r_1}w(x_s^+),
\varpi^{t-r_1}w(x_1^-),\dots, \varpi^{t-r_1}w(x_s^-)$ span
$\overline{X}$, and hence can be reduced to a basis. Moreover since
$L$ and $L_1$ are admissible, $x_i^\pm\in 2\varpi L_1\cap L$.) Now by the
above lemma applied to the pair $(x_i, x_j)$, we have
\[
{\la \varpi^{t-r_1}w(x_i), \varpi^{t-r_1}w(x_j)\ra}_2\in\PE^{r_2+t+1}
\subseteq\PE^{r_2+2}
\]
for all $i,j\in\{1,\dots,s\}$. Hence by Lemma \ref{L:lattice_lin_ind}, there
exist elements $e_1,\dots, e_s\in L_2$ and subspaces $X, \V_2^\circ$ and
$Y$ of $\V_2$ such that 
\begin{itemize}
\item $\V_2=X\oplus \V_2^\circ\oplus Y$;
\item $\{e_1,\dots,e_s\}$ is a basis of $X$;
\item $X$ and $Y$ are totally isotropic;
\item $X+Y$ is orthogonal to $\V_2^\circ$;
\item $L_2=L_X\oplus L_2^\circ\oplus L_Y$, where $L_X=X\cap L_2,
  L_2^\circ=\V_2^\circ\cap L_2$ and $L_Y=Y\cap L_2$;
\item $e_i\equiv \varpi^{t-r_1}w(x_i)\mod \varpi^{t+1}L_2$ for $i\in\{1,\dots,r\}$.
\end{itemize}

Let $x\in (2\varpi L_2\cap L)^\pm$. Since $L$ and $L_1$ are admissible, $x\in
2\varpi L_1\cap L$. Let us write
\[
\varpi^{t-r_1}w(x)=v_X+v^\circ+v_Y\in L_X\oplus L_2^\circ\oplus L_Y,
\]
where $v_X\in L_X, v^\circ\in L_2^\circ$ and $v_Y\in L_Y$. Since the
reduction of $\varpi^{t-r_1}w(x_i)$ is in $\overline{X}$, the
reductions of $v^\circ$ and $v_Y$ are zero, and in particular
$v_Y\in\varpi L_2$. For each $i\in\{1,\dots,r\}$ we have
\begin{align*}
{\la e_i, v_Y\ra}_2&={\la e_i, \varpi^{t-r_1}w(x)\ra}_2\quad\text{ by the
  decomposition $L_2=L_X\oplus L_2^\circ\oplus L_Y$}\\
&={\la \varpi^{t-r_1}w(x_i)+\varpi^{t+1}l,
  \varpi^{t-r_1}w(x)\ra}_2\quad\text{ for some $l\in L_2$ }\\
&\equiv {\la \varpi^{t-r_1}w(x_i), \varpi^{t-r_1}w(x)\ra}_2\mod
\PE^{r_2+t+1}\\
&\equiv 0\mod\PE^{r_2+t+1}\quad \text{by the above lemma applied to the
  pair $(x_i, x)$}.
\end{align*}
Since $\{e_1,\dots,e_s\}$ is a basis of $L_X$ and
$L_Y\cong\Hom_{\OE}(L_X, \PE^{r_2})$ via the map $y\mapsto{\la -,
  y\ra}_2$, we obtain $v_Y\in\varpi^{t+1}L_Y$. Therefore
\[
\varpi^{t-r_1}w(2\varpi L_1\cap L)\subseteq L_X\oplus \varpi
L_2^\circ\oplus \varpi^{t+1}L_Y.
\]
Now let $w'\in C(w)$, so we may assume $w'=uw+a$ for some $a\in A$ and $u\in
J^\circ$. Then
\begin{align*}
\varpi^{t-r_1}w'(2\varpi L_1\cap L)
&=\varpi^{t-r_1}(uw+a)(2\varpi L_1\cap L)\\
&=\varpi^{t-r_1}\Big(uw(2\varpi L_1\cap L)+a(2\varpi L_1\cap L)\Big)\\
&=\varpi^{t-r_1}\Big(w(u^{-1}(2\varpi L_1\cap L))+a(2\varpi L_1\cap L)\Big)\\
&\subseteq L_X\oplus \varpi L_2^\circ\oplus\varpi^{t+1}L_Y+2\varpi^{t+1}L_2\\
&\subseteq L_X\oplus \varpi L_2^\circ\oplus\varpi^{t+1}L_Y.
\end{align*}

Let us define the operator $u\in U(\V_2)$ by
\[
u=\varpi\Id_X\oplus\Id_{\V_1^\circ}\oplus \varpi^{-1}\Id_Y,
\]
where $\Id_X, \Id_{\V_1^\circ}$ and $\Id_Y$ are the identity operators
on the corresponding spaces.  (It is clear that $u$ is indeed in
$U(\V_2)$.)  Fix a preimage $\tilde{u}$ of $u$ in $\widetilde{U}(\V_2)$.
Define $s:=\omega(\tilde{u})s[w]\in \omega(\widetilde{U}(\V_2))S_t$. We
will show $s\in S_{t-1}$, which will imply
$s[w]=\omega(\tilde{u}^{-1})s\in\omega(\He_2)S_{t-1}^{J_0}$,
and will complete the proof. For this, it
suffices to show that for all $w'\in W$ such that $s(w',0)\neq 0$ we
have $\varpi^{-r_1}w'(2\varpi L_1\cap L)\subseteq
\varpi^{t-1}L_2$. Now from $(\ref{E:action_on_lattice_model})$ one can
  see that $s(w',0)$ is (a scalar multiple of)
\[
M_{u}\circ s[w](w',0)
=\int_{A/A_u}\psi(\beta(a,w'))s[w](u^{-1}(a+w'),\alpha^u(a+w'))\;da.
\]
Hence in order for this integral to be non-zero, we must have 
\[
u^{-1}(a+w')\in C(w)\;\text{ for some $a\in A$}.
\]
But if $u^{-1}(a+w')\in C(w)$ as we have shown above, we have
\[
\varpi^{t-r_1}u^{-1}(a+w')(2\varpi L_1\cap L)
\subseteq L_X\oplus \varpi L_2^\circ\oplus\varpi^{t+1}L_Y.
\]
By multiplying $u$ to both sides, one obtains
\[
\varpi^{t-r_1}(a+w')(2\varpi L_1\cap L)
\subseteq \varpi L_X\oplus \varpi L_{t+1}^\circ\oplus\varpi L_Y=\varpi L_2,
\]
which implies
\[
\varpi^{t-r_1}w'(2\varpi L_1\cap L)
\subseteq \varpi L_2
\]
because $\varpi^{t-r_1}a(2\varpi L_1\cap L)\subseteq \varpi L_2$. This gives
$\varpi^{-r_1}w'(2\varpi L_1\cap L)\subseteq \varpi^{t-1}L_2$. The
proposition is proven.
\end{proof}

As in Proposition \ref{P:step1.5},  if $L\subseteq 2\varpi  L_1$, Proposition
\ref{P:step1} implies the desired equality
$S^{J^\circ}=\omega(\He_2)S_L$. 

\quad\\

Now, unfortunately it seems that (if the residue characteristic is
even) we cannot proceed further than this
simply by following the arguments in \cite{MVW} to show
$S^{J^\circ}=\omega(\He_2)S_L$ if $L\not\subseteq 2\varpi  L_1$ for
several reasons. For example, the analogue of Lemma II. 7 in
\cite[p.112]{MVW} cannot be shown for the case of even residual
characteristic.


\section{\bf A proof of a special case of the second key lemma}


In this section, we will give a proof of the second key lemma to the
extend necessary to prove our main theorem. Let us start with a couple
of lemmas:

\begin{Lem}\label{L:max_lattice1}
Let $L\subseteq L_1$ be a lattice. For
$w\in B(L)$, $w(\varpi^{-r_2}L_2)+L_1\neq L^\perp$ if and only if there
exists a lattice $L'$ such that
$L\subsetneq L'\subseteq L_1$ and $w\in B(L')$.
\end{Lem}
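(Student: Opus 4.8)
The plan is to deduce the statement from two formal facts: the dictionary ``$w\in B(L')\iff w(\varpi^{-r_2}L_2)\subseteq(L')^\perp$'' (viewing $w\in\Hom_E(\V_2,\V_1)$), and the fact that $M\mapsto M^\perp$ is an inclusion-reversing involution on the lattices of $\V_1$; the point will be that $(L_1+w(\varpi^{-r_2}L_2))^\perp$ is the largest sublattice of $L_1$ through which $w$ still factors in the required sense, so the two sides of the equivalence are both simply asserting that this lattice is strictly bigger than $L$.

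First I would record the dictionary. Writing $w=\sum v_1\otimes v_2$ and viewing it as $w\in\Hom_E(\V_2,\V_1)$, $w(v)=\sum\la v,v_2\ra_2 v_1$, one has $w\in B(L')=(L')^\perp\otimes_{\OE}L_2$ if and only if $w(\varpi^{-r_2}L_2)\subseteq(L')^\perp$. The ``only if'' is immediate from $\la\varpi^{-r_2}L_2,L_2\ra_2\subseteq\OE$. For the ``if'', expand $w=\sum_i a_i\otimes m_i$ in an $\OE$-basis $\{m_i\}$ of $L_2$; since $L_2$ is self-dual with respect to $r_2$, the element $u_j\in\V_2$ determined by $\la u_j,m_i\ra_2=\delta_{ij}$ lies in $\varpi^{-r_2}L_2$, and $a_j=w(u_j)\in w(\varpi^{-r_2}L_2)\subseteq(L')^\perp$, so $w\in(L')^\perp\otimes_{\OE}L_2$. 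This is the same computation already used to see that $w\in B(L_w)$ when $L_w^\perp=L_1+w(\varpi^{-r_2}L_2)$.

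Next I would set $N:=L_1+w(\varpi^{-r_2}L_2)\subseteq\V_1$. Since $L\subseteq L_1=L_1^\perp$ gives $L_1\subseteq L^\perp$, and $w\in B(L)$ gives $w(\varpi^{-r_2}L_2)\subseteq L^\perp$ by the dictionary, we have $L_1\subseteq N\subseteq L^\perp$; in particular $N$ is a lattice, and the hypothesis $w(\varpi^{-r_2}L_2)+L_1\neq L^\perp$ is precisely $N\neq L^\perp$. The key observation is then: for a lattice $L'\subseteq L_1$, the condition $w\in B(L')$ is equivalent to $N\subseteq(L')^\perp$ (because $L'\subseteq L_1$ already forces $L_1\subseteq(L')^\perp$), hence to $L'\subseteq N^\perp$; and $N^\perp\subseteq L_1^\perp=L_1$ holds automatically since $L_1\subseteq N$. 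Applying the dictionary to $L$ itself, $N\subseteq L^\perp$ gives $L\subseteq N^\perp$.

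Both implications then follow in one line each. If some $L'$ with $L\subsetneq L'\subseteq L_1$ satisfies $w\in B(L')$, then $L\subsetneq L'\subseteq N^\perp$, so $L\neq N^\perp$, \ie $N\neq L^\perp$. Conversely, if $N\neq L^\perp$, take $L':=N^\perp$: then $w\in B(L')$ since $(L')^\perp=N\supseteq w(\varpi^{-r_2}L_2)$, we have $L'\subseteq L_1$ and $L\subseteq L'$ as noted above, and $L\neq L'$ since $L=N^\perp$ would force $L^\perp=N$; hence $L\subsetneq L'\subseteq L_1$, as required. I do not expect any genuine obstacle here: the only step that needs a short verification is the ``if'' half of the dictionary, i.e.\ the self-duality computation for $L_2$, and everything else is formal manipulation with the involution $M\mapsto M^\perp$.
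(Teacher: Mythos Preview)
Your proof is correct and follows essentially the same approach as the paper: both take $L'=(w(\varpi^{-r_2}L_2)+L_1)^\perp$ for the nontrivial direction and use the inclusion-reversing involution $M\mapsto M^\perp$ to pass between strict inclusions. You are simply more explicit in verifying the equivalence $w\in B(L')\iff w(\varpi^{-r_2}L_2)\subseteq(L')^\perp$, which the paper uses without comment.
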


\begin{proof}
Assume $w(\varpi^{-r_2}L_2)+L_1\neq L^\perp$.
Note that it is always true
that $w(\varpi^{-r_2}L_2)+L_1\subseteq L^\perp$. Hence we assume
$w(\varpi^{-r_2}L_2)+L_1\subsetneq L^\perp$. Then we can take
$L'$ to be such that
\[
{L'}^\perp=w(\varpi^{-r_2}L_2)+L_1,
\]
so ${L'}^\perp\subsetneq L^\perp$,
which implies  $L\subsetneq {L'}$. But one can see $w\in B(L')$.

Conversely, assume there exists a lattice $L'$ such that
$L\subsetneq L'\subseteq L_1$ and $w\in B(L')$. Then
${L'}^\perp\subsetneq L^\perp$, which implies
$w(\varpi^{-r_2}L_2)+L_1\subseteq {L'}^\perp\subsetneq L^\perp$.
\end{proof}

\begin{Lem}\label{L:max_lattice2}
For a lattice $L\subseteq L_1$ and $w\in
B(L)$, $w(\varpi^{-r_2}L_2)+L_1=L^\perp$ if and only if
$w^{-1}(\varpi^{r_1}L_2)\cap L_1=L$, where
$w^{-1}(\varpi^{r_1}L_2)$ is the inverse image of
$\varpi^{r_1}L_2$ when $w$ is viewed as a map $w:\V_1\rightarrow \V_2$.
\end{Lem}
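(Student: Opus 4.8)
The plan is to reduce both sides of the claimed equivalence to a single duality identity and then invoke the fact that $\perp$ is an involution on lattices. Set
\[
N:=w(\varpi^{-r_2}L_2)+L_1,\qquad M:=w^{-1}(\varpi^{r_1}L_2)\cap L_1,
\]
where in the definition of $N$ the element $w$ is viewed as a map $\V_2\rightarrow\V_1$, and in the definition of $M$ it is viewed as a map $\V_1\rightarrow\V_2$. Both $N$ and $M$ are lattices of $\V_1$ ($N$ contains $L_1$, and $M$ is an $\OE$-submodule of the lattice $L_1$ which, by what follows, is its own double dual). The heart of the proof is the identity
\[
N^\perp=M
\]
with $\perp$ taken with respect to $r_1$. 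Granting this, the lemma is immediate: $N=L^\perp$ holds iff $N^\perp=L^{\perp\perp}=L$ holds iff $M=L$, where I use $X^{\perp\perp}=X$ for lattices $X$ of a nondegenerate $\epsilon$-Hermitian space (applied to $X=L$ and $X=N$), the same standard fact already used in the proof of Lemma \ref{L:dual_is_admissible}.

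To prove $N^\perp=M$, I would first record the adjunction relating the two incarnations of $w$. Writing $w=\sum_k a_k\otimes b_k$ with $a_k\in\V_1$ and $b_k\in\V_2$, a direct unwinding of the definitions gives
\[
{\la w(v_1),v_2\ra}_2=\epsilon_2{\la v_1,w(v_2)\ra}_1
\]
for all $v_1\in\V_1$ and $v_2\in\V_2$, where on the left $w$ is viewed as a map $\V_1\rightarrow\V_2$ and on the right as a map $\V_2\rightarrow\V_1$; only the fact that $\epsilon_2\in\OE^\times$ will be used. Now unwind membership $v\in N^\perp$: by definition this says ${\la v,L_1\ra}_1\subseteq\PE^{r_1}$ and ${\la v,w(\varpi^{-r_2}L_2)\ra}_1\subseteq\PE^{r_1}$. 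The first condition is exactly $v\in L_1^\perp=L_1$ (self-duality of $L_1$ with respect to $r_1$). The adjunction turns the second into ${\la w(v),\varpi^{-r_2}L_2\ra}_2\subseteq\PE^{r_1}$, i.e. ${\la w(v),L_2\ra}_2\subseteq\PE^{r_1+r_2}$, i.e. $\varpi^{-r_1}w(v)\in L_2^\perp=L_2$ (self-duality of $L_2$ with respect to $r_2$), i.e. $w(v)\in\varpi^{r_1}L_2$. Combining, $v\in N^\perp$ iff $v\in L_1$ and $w(v)\in\varpi^{r_1}L_2$, which is precisely $v\in M$. Hence $N^\perp=M$, completing the proof.

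I do not expect a real obstacle in this lemma; the only points needing care are bookkeeping ones — keeping the two roles of $w$ (as $\V_1\rightarrow\V_2$ and as $\V_2\rightarrow\V_1$) straight, and tracking the twist scalars $\varpi^{r_1}$, $\varpi^{r_2}$ and $\epsilon_2$ through the dual-lattice manipulations, all harmless since $\epsilon_2$ is a unit and the other scalars are explicit powers of $\varpi$. It is worth noting, for orientation, that the hypothesis $w\in B(L)$ is not actually needed for the equivalence: it only ensures the a priori inclusions $N\subseteq L^\perp$ and $L\subseteq M$ (each side of the equivalence then asserts that an automatic inclusion is an equality), which is a convenient consistency check but plays no logical role above.
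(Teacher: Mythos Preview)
Your argument is correct and takes a cleaner route than the paper. The paper handles the two implications separately: for the forward direction it chooses a basis $\{e_i\}$ of $L_2$, writes $w=\sum_i m_i\otimes e_i$ with $m_i\in L^\perp$, and checks directly that any $l\in w^{-1}(\varpi^{r_1}L_2)\cap L_1$ pairs into $\PE^{r_1}$ with each $m_i$ and hence with all of $L^\perp$; for the reverse direction it argues by contradiction, invoking the preceding Lemma~\ref{L:max_lattice1} to produce a strictly larger $L'$ with $w\in B(L')$. Your single duality identity $N^\perp=M$, established once via the adjunction ${\la w(v_1),v_2\ra}_2=\epsilon_2{\la v_1,w(v_2)\ra}_1$, dispatches both directions simultaneously by appealing to $\perp\perp=\mathrm{id}$, and makes transparent that the hypothesis $w\in B(L)$ is inessential to the equivalence. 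What you gain is symmetry and independence from Lemma~\ref{L:max_lattice1}; what the paper's hands-on approach buys is that one never needs to isolate the adjunction formula explicitly.
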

\begin{proof}
Assume $w(\varpi^{-r_2}L_2)+L_1=L^\perp$. If ${\{e_i\}}_i$ is a
basis of $L_2$, then we can write $w=\sum_im_i\otimes
e_i$, where $m_i\in L^\perp$. We then see that
\[
w(\varpi^{-r_2}L_2)+L_1=\Span{\{m_i\}}_i+L_1=L^\perp.
\]
Now let $l\in
w^{-1}(\varpi^{r_1}L_2)\cap L_1$. So $w(l)=\sum_i {\la l,
m_i\ra}_1 e_i\in \varpi^{r_1}L_2$, which means ${\la l,
m_i\ra}_1\in\PE^{r_1}$ for all $i$. Thus for any $x\in
\Span{\{m_i\}}_i+L_1= L^\perp$, we have ${\la l,
  x\ra}_1\in\PE^{r_1}$, which implies $l\in L$. The other inclusion
$L\subseteq w^{-1}(\varpi^{r_1}L_2)\cap L_1$ is clear
because $w\in B(L)$. 

Conversely, assume $w^{-1}(\varpi^{r_1}L_2)\cap
L_1=L$. But
assume $w(\varpi^{-r_2}L_2)+L_1\neq L^\perp$. By the above
lemma, there exists a lattice $L'$ such that
$L\subsetneq L'$ and $w\in B(L')$, which
implies ${L'}\subseteq
w^{-1}(\varpi^{r_1}L_2)\cap L_1$, and hence $L\subsetneq
w^{-1}(\varpi^{r_1}L_2)\cap L_1$, which is a
contradiction.
\end{proof}

Now we specialize to the orthogonal-symplectic dual pair.

\begin{Prop}\label{P:key2_for_L}
Assume the pair $(\V_1, \la-,-\ra_1)$ and $(\V_2, \la-,-\ra_2)$ is
such that $\V_2$ is symplectic. We let $\dim\V_1=m$ and
$\dim\V_2=2n$.
We give $W=\V_1\otimes\V_2$ Type 1
polarization, \ie $W^+=\V_1\otimes\V_2^+$ and
$W^-=\V_1\otimes\V_2^-$. Assume $L$ is a sublattice of
$L_1$ which admits a basis of the form
\[
\{e_1,\dots,e_s,\varpi^{t_{s+1}}e_{s+1},\dots,\varpi^{t_m}e_m\}
\]
for some $s\in\{0,\dots,m-1\}$, where $\{e_1,\dots,e_m\}$ is a basis
of $L_1$ and $t_i\geq 1+e$ for all $i\in\{r+1,\dots,m\}$.  Let $w\in
B(L)$ be such that $w(\varpi^{-r_2}L_2)+L_1=L^\perp$. Let $w'\in B(L)$ be such that 
\begin{itemize}
\item $w'(\varpi^{-r_2}L_2)+L_1=w(\varpi^{-r_2}L_2)+L_1=L^\perp$;
\item $\psi_1^{w'}=\psi_1^{w}$ as characters of $H_1(L)$.
\end{itemize}
Then there exists $k\in K_2$ such that $A+w=k(A+w')$.
\end{Prop}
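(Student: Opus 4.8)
The strategy will be to reproduce, for the orthogonal--symplectic pair, the argument of \cite[Proposition I.5, p. 103]{MVW}, with Witt's extension theorem over the residue field replaced by its $\varpi$-adic refinement for symplectic self-dual lattices (Lemmas \ref{L:lattice1}, \ref{L:lattice_symplectic}, \ref{L:lattice_Witt}). That refinement is available exactly because $\V_2$ is symplectic, and the index bound $t_i\ge 1+e$ is precisely the slack needed to make enough Cayley transforms serve as test elements. The delicate point, flagged at the end, will be extracting honest congruences among inner products from the $\C^\times$-valued identity $\psi_1^{w}=\psi_1^{w'}$.

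First I would restate the conclusion in terms of the maps $w,w'\colon\V_1\to\V_2$. Since $K_2\subseteq\Gamma_A$ we have $kA=A$, so $A+w=k(A+w')$ is equivalent to $w-k\circ w'\in A=L_1\otimes_{\OE}L_2$, i.e. to $(w-k\circ w')(L_1)\subseteq\varpi^{r_1}L_2$. From $w,w'\in B(L)$ one gets $w(e_i),w'(e_i)\in\varpi^{r_1}L_2$ for $i\le s$ and $w(e_i),w'(e_i)\in\varpi^{r_1-t_i}L_2$ for $i>s$, so the vectors $f_i:=\varpi^{t_i-r_1}w(e_i)$ and $f_i':=\varpi^{t_i-r_1}w'(e_i)$ lie in $L_2$. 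The maximality hypothesis for $w$ and for $w'$, via Lemma \ref{L:max_lattice2}, reads $w^{-1}(\varpi^{r_1}L_2)\cap L_1=(w')^{-1}(\varpi^{r_1}L_2)\cap L_1=L$; this forces each $f_i$ and $f_i'$ to be primitive in $L_2$ and, grouping the indices by the value of $t_i$ and proceeding in increasing order, the relevant reductions to be linearly independent. Hence it suffices to produce $k\in U(\V_2)$ with $kL_2=L_2$ and $kf_i'\equiv f_i\pmod{\varpi^{t_i}L_2}$ for all $i>s$, the conditions for $i\le s$ being automatic.

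The core step will be to convert $\psi_1^{w}=\psi_1^{w'}$ into congruences for the $\la f_i,f_j\ra_2$. As the polarization is Type 1, $\alpha^h=0$ for $h\in U(\V_1)$, so Lemma \ref{L:action} gives $\psi_1^{w}(h)=\psi(-\beta(h^{-1}w-w,w))$. Feeding in the Cayley transforms $h=u_{x,y}$ for $(x,y)=(\varpi^{a}e_i,\varpi^{b}e_j)$ with $i\le j$ and $a+b=t_j-r_1-e$ --- the smallest value compatible with conditions $(i)$ and $(iii)$ of Proposition \ref{P:condition}, so that $u_{x,y}\in H_1(L)$ (this is exactly where $t_j\ge 1+e$ enters) --- Lemmas \ref{L:beta(u)_and_beta(c)} and \ref{L:beta(c)_concrete} (Type 1) yield
\[
\psi_1^{w}(u_{x,y})=\psi\bigl(2\varpi^{a+b}\,\tr_{E/F}(\la w(e_i),w(e_j)\ra_2)\bigr).
\]
Comparing with the same formula for $w'$, and then replacing $e_i$ by $ce_i$ for every unit $c\in\OE^\times$ --- the device used in the proof of Lemma \ref{L:pair} --- will promote $\psi_1^{w}=\psi_1^{w'}$ to $2\varpi^{a+b}(\la w(e_i),w(e_j)\ra_2-\la w'(e_i),w'(e_j)\ra_2)\in\PE^{r}$; unwinding this with $r=r_1+r_2$, $2=\varpi^{e}\cdot\text{unit}$, and the chosen value of $a+b$ gives $\la f_i,f_j\ra_2-\la f_i',f_j'\ra_2\in\PE^{r_2+t_i}$ for $i\le j$. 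Because $\V_2$ is symplectic there is no diagonal ($i=j$) term to control.

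Finally I would run the lattice machinery on the symplectic self-dual lattice $L_2\subseteq\V_2$: ordering the $t_i$ increasingly and working level by level, Lemmas \ref{L:lattice1} and \ref{L:lattice_symplectic} replace $\{f_i'\}$ by $\{f_i''\}$ with $f_i''-f_i'\in\varpi^{t_i}L_2$ and $\la f_i'',f_j''\ra_2=\la f_i,f_j\ra_2$ exactly, and Lemma \ref{L:lattice_Witt} (valid for symplectic $L_2$) then produces $k_0\in U(\V_2)$ with $k_0L_2=L_2$ and $k_0f_i=f_i''$. Taking $k=k_0^{-1}$ gives $kf_i'\equiv f_i\pmod{\varpi^{t_i}L_2}$, hence $(w-k\circ w')(L_1)\subseteq\varpi^{r_1}L_2$, i.e. $w-k\circ w'\in A$; as $kL_2=L_2$ forces $kA=A$, we conclude $k\in K_2$ and $A+w=k(A+w')$. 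The main obstacle is the conversion in the previous paragraph: Lemma \ref{L:lattice1} demands the inner products agree modulo \emph{exactly} $\PE^{r_2+t_i}$, whereas the character only sees $\psi\circ(2\tr_{E/F})$ of them; the factor $2$ in $\beta(u_{x,y}^{-1}w-w,w)\equiv2\beta(c_{x,y}w,w)$ costs one power of $\varpi^{e}$, and the supply of admissible test pairs is limited, so that the hypotheses $t_i\ge 1+e$ and $\V_2$ symplectic are exactly what make the two losses cancel. Keeping the graded structure of $L_1/L$ (the varying $t_i$) straight throughout --- both in the linear independence of the $f_i$ and in the repeated application of Lemma \ref{L:lattice1} --- is the remaining bookkeeping.
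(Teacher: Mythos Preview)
Your proposal is correct and follows essentially the same route as the paper: set $f_i=\varpi^{t_i-r_1}w(e_i)$ (the paper's $z_i$), feed Cayley transforms $u_{x,y}$ with $x=\tfrac12 e_i$, $y=\varpi^{t_j-r_1}e_j$ (a special case of your $a+b=t_j-r_1-e$) into $\psi_1^{w}=\psi_1^{w'}$ to obtain $\la f_i,f_j\ra_2\equiv\la f_i',f_j'\ra_2\pmod{\PE^{r_2+t_i}}$, and then apply Lemmas~\ref{L:lattice_symplectic} and~\ref{L:lattice_Witt}. One small omission: to invoke Lemma~\ref{L:beta(u)_and_beta(c)} you also need condition (iv) of Proposition~\ref{P:condition}, which the paper checks explicitly and which does hold here because $w,w'\in B(L)$.
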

\begin{proof}
Though this can be proven simply by modifying the proof of
Proposition I.5 of \cite[p.124-125]{MVW}, we will give the details.

First note that for each $i\in\{1,\dots,s\}$, we have $w(e_i),
w'(e_i)\in L_2$. Hence by adding some elements in $A$ to $w$ and $w'$,
respectively, one can assume $w(e_i)=w'(e_i)=0$ for all
$i\in\{1,\dots,r\}$. Also of course we may assume
$t_i\leq t_j$ for $i\leq j$. Let
\[
z_i=w(\varpi^{t_i-r_1}e_i)\quad\text{ and }\quad z'_i=w'(\varpi^{t_i-r_1}e_i)
\]
for each $i\in\{s+1,\dots,m\}$. Since $w,w'\in B(L)$, $z_i, z_i'\in L_2$. Moreover, the
reductions $\bar{z}_i$ in $L_2/\varpi L_2$ are linearly
independent over the residue field. To show this, consider
\[
\sum_{i=s+1}^md_iz_i\in\varpi L_2
\]
for $d_i\in\O$. Then
\[
\sum_{i=s+1}^md_i\varpi^{t_i-r_1}e_i\in w^{-1}(\varpi
L_2)\cap\varpi^{1-r_1}L_1=\varpi^{1-r_1}L,
\]
where we used the above lemme to obtain the equality. Considering the vectors
$\varpi^{t_i}e_i$ are basis vectors of $L$, we have $d_i\in\varpi\O$. Hence
the reductions $\bar{z}_i$ are linearly independent over the residue field. Similarly the
reductions $\bar{z}'_i$ are linearly independent.

Now for $i,j\in\{s+1,\dots,m\}$ with $i\leq j$ (so $t_i\leq t_j$), let $x=\frac{1}{2}e_i$ and
$y=\varpi^{t_j-r_1}e_j$. Then the pair $(x,y)$ satisfies the conditions (i) and (iii) of
Proposition \ref{P:condition}, and so $u_{x,y}\in H_1(L)$. So we have
\[
\psi_1^w(u_{x,y})=\psi_1^{w'}(u_{x,y}), 
\]
\ie
\[
\psi(-\beta(u_{x,y}^{-1}w-w,w)+\alpha^{u_{x,y}}(w))
=\psi(-\beta(u_{x,y}^{-1}w'-w',w')+\alpha^{u_{x,y}}(w')).
\]
 Also the pair $(x,y)$ satisfies the condition (iv) of Proposition
 \ref{P:condition}, so by Lemmas \ref{L:beta(u)_and_beta(c)},
 \ref{L:beta(c)_concrete} and \ref{L:alpha^u2}, this gives
\[
\psi(2{\la w(x), w(y)\ra}_2)=\psi(2{\la w'(x), w'(y)\ra}_2).
\]
By replacing $(x,y)$ by $(ax,y)$ for any $a\in\O^\times$, we still
have the same equality, and hence have
\[
2{\la w(x), w(y)\ra}_2\equiv 2{\la w'(x), w'(y)\ra}_2\mod\P^r.
\]
Thus we have
\[
{\la w(\varpi^{t_i-r_1}e_i), w(\varpi^{t_j-r_1}e_j)\ra}_2
\equiv {\la w'(\varpi^{t_i-r_1}e_i), w'(\varpi^{t_j-r_1}e_j)\ra}_2
\mod\P^{r_2+t_i},
\]
\ie
\[
{\la z_i, z_j\ra}_2\equiv {\la z_i', z_j'\ra}_2\mod\P^{r_2+t_i}.
\]
By taking $m_{ij}={\la z_i, z_j\ra}_2$ in Lemma \ref{L:lattice_symplectic},
there exist elements $z_1'',\dots,z''_m\in L_2$ such that
\begin{align*}
&z_i''-z_i'\in\P^{t_i}\quad\text{ for all $i\in\{s+1,\dots,m\}$};\\
&{\la z''_i, z''_j\ra}_2= {\la z_i, z_j\ra}_2\quad\text{ for all $i, j\in\{r+1,\dots,m\}$}.
\end{align*}
By Lemma \ref{L:lattice_Witt}, there exists $k\in K_2$ such that
$kz_i=kz_i''$ for all $i\in\{s+1,\dots,m\}$. Define $a\in A$ by
\[
a(e_i)=\begin{cases}0\quad&\text{ for all $i\in\{1,\dots,s\}$};\\
\varpi^{-t_i}(z_i''-z_i')\quad&\text{ for all $i\in\{s+1,\dots,m\}$}.
\end{cases}
\]
Indeed $a\in A$ thanks to $z_i''-z_i'\in\P^{t_i}$. One can verify that
\[
(w'+a)(e_i)=kw(e_i) \quad\text{ for all $i\in\{1,\dots,m\}$},
\]
so $w'+a=kw$. This completes the proof.
\end{proof}

\begin{Rmk}
In the above proof, the fact that the
reductions $\bar{z}_i$ are linearly independent actually
implies that $\dim \V_2\geq m-r$. Namely if $\dim \V_2< m-r$,
simply there is no $w$ with $w(L_2)+L_1=L^\perp$. So in particular, if
$L\subseteq 2\varpi L_1$, then we must that $\dim \V_2\geq\dim \V_1$.
\end{Rmk}

\begin{Prop}\label{P:key2_for_M}
Assume the pair $(\V_1, \la-,-\ra_1)$ and $(\V_2, \la-,-\ra_2)$ is
such that $\V_2$ is symplectic and $\dim\V_1=dim\V_2=2n$.
We give $W=\V_1\otimes\V_2$ Type 1
polarization, \ie $W^+=\V_1\otimes\V_2^+$ and
$W^-=\V_1\otimes\V_2^-$. Assume $L=\varpi^kL_1$ for some $k\geq 1+e$.
Let $w\in B(L)$ be such that $w(\varpi^{-r_2}L_2)+L_1=L^\perp$.
Let $M=M_w=(w(\varpi^{-r_2}L_1)+L_2)^\perp$, so $w\in B(M)$. Assume $w'\in W$ is
such that
\begin{itemize}
\item $w'(\varpi^{-r_1}L_1)+L_2=w(\varpi^{-r_1}L_1)+L_2=M^\perp$;
\item $\psi_2^{w'}=\psi_2^{w}$ as characters of $H_2(M)$.
\end{itemize}
Then there exists $k\in K_1$ such that $A+w=k(A+w')$.
\end{Prop}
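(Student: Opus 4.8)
The plan is to run the proof of Proposition~\ref{P:key2_for_L} with the roles of $\V_1$ and $\V_2$ interchanged: the sublattice now sits inside $L_2$, the character $\psi_2$ and the Cayley transforms in $U(\V_2)$ take over from $\psi_1$ and the Cayley transforms in $U(\V_1)$, and the output group is $K_1$. The preliminary point is to identify $M=M_w$. Since $w\in B(L)=L^\perp\otimes L_2$ with $L=\varpi^kL_1$ and $w(\varpi^{-r_2}L_2)+L_1=L^\perp=\varpi^{-k}L_1$, putting $w$ in Smith normal form relative to $L_1,L_2$ and using $\dim\V_1=\dim\V_2=2n$ forces every elementary divisor of $w$ to be a unit times $\varpi^{-k}$; hence $w|_{L_2}\colon L_2\xrightarrow{\ \sim\ }\varpi^{r_2-k}L_1$ and $M^\perp=w(\varpi^{-r_1}L_1)+L_2=\varpi^{-k}L_2$, i.e. $M=\varpi^kL_2$. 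In particular $M$ is admissible (recall $\V_2^a=0$) with basis $\{\varpi^kg_1,\dots,\varpi^kg_{2n}\}$, where $\{g_j\}$ is a basis of $L_2$ chosen so that $\{w(g_j)\}$ is a basis of $\varpi^{r_2-k}L_1$; all exponents equal $k\ge 1+e$, and the vectors $z_j:=w(\varpi^{k-r_2}g_j)$ and $z_j':=w'(\varpi^{k-r_2}g_j)$ lie in $L_1$ and each form a basis of $L_1$ (for $z_j'$ one uses that $w'$, by the $\V_2$-analogue of Lemma~\ref{L:max_lattice2} applied to $w'(\varpi^{-r_1}L_1)+L_2=M^\perp$, also has all elementary divisors $\varpi^{-k}$).

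Next we would extract, from $\psi_2^w=\psi_2^{w'}$ on $H_2(M)$, the congruences that feed Lemma~\ref{L:lattice1} on $\V_1$. For $i\le j$, a unit $\eta\in\O^\times$, and $v$ running over the $\pm$-components of $g_i$, feed in the Cayley transform $u_{x,y}$ with $x=\tfrac12\eta\,v$ and $y=\varpi^{k-r_2}g_j$: by the $\V_2$-analogue of Proposition~\ref{P:condition} (conditions $(i)$, $(iii)$, $(iv)$) one has $u_{x,y}\in H_2(M)$ with $u_{x,y}w\in A+w$, the $\alpha^{u_{x,y}}$-term being controlled by the admissibility of $M$ as in Lemmas~\ref{L:alpha^u2} and \ref{L:alpha^u1}; then the $\V_2$-versions of Lemmas~\ref{L:beta(u)_and_beta(c)} and \ref{L:beta(c)_concrete} together with the $\pm$-bookkeeping of Lemma~\ref{L:pair} yield
\[
\la z_i,z_j\ra_1\equiv\la z_i',z_j'\ra_1\pmod{\PE^{r_1+k}},\qquad i\neq j.
\]
For the diagonal entries---the genuinely new input relative to Proposition~\ref{P:key2_for_L}, since $\V_1$ is orthogonal and condition~$(5)$ of Lemma~\ref{L:lattice1} is active---we would instead use the transforms $u_{x,y}$ with $x=\lambda y$, $\bar\lambda=\lambda$, supplied by Proposition~\ref{P:condition2} (legitimate since $\epsilon_2=-1$): taking $y$ and the extra $\tfrac14$ hidden in $\lambda$ small enough that conditions $(i)$--$(iv)$ of that proposition hold---which is where the extra ``$-e$'' and ``$-2e$'' of slack there get used---and balancing the two factors of $2$ (one from $c_{\lambda y,y}=2\lambda\,\la-,y\ra_2\,y$, one from Lemma~\ref{L:beta(u)_and_beta(c)}) against the $\alpha^{u_{x,y}}$-term of the $\V_2$-analogue of Lemma~\ref{L:alpha^u2}$(2)$, one obtains
\[
\la z_i,z_i\ra_1\equiv\la z_i',z_i'\ra_1\pmod{\PE^{r_1+k+e}}.
\]

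Given these congruences and $t_j=k\ge 1+e$, Lemma~\ref{L:lattice1} applied to $\V_1$ (with $m_{ij}=\la z_i,z_j\ra_1$) produces $z_1'',\dots,z_{2n}''\in L_1$ with $z_j''-z_j'\in\varpi^kL_1$ and $\la z_i'',z_j''\ra_1=\la z_i,z_j\ra_1$ for all $i,j$. As $\V_1$ is the split even orthogonal space ($\V_1^a=0$), Lemma~\ref{L:lattice_Witt}---i.e. Witt's extension theorem for self-dual lattices---gives $k_0\in K_1$ with $k_0z_j=z_j''$ for all $j$. Finally, as in the proof of Proposition~\ref{P:key2_for_L}, set $a\in A=L_1\otimes L_2$ with $a(g_j)=\varpi^{r_2-k}(z_j''-z_j')$ (an element of $A$ since $z_j''-z_j'\in\varpi^kL_1$); then $(w'+a)(g_j)=k_0w(g_j)$ for every $j$, so $w'+a=k_0w$, that is $A+w=k_0^{-1}(A+w')$, which is the assertion with $k=k_0^{-1}\in K_1$.

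The hard part is the diagonal congruence modulo $\PE^{r_1+k+e}$: it is precisely condition~$(5)$ of Lemma~\ref{L:lattice1}, and obtaining it forces one to work with the ``divide by $2$ twice'' Cayley transforms of Proposition~\ref{P:condition2} and to track carefully the cancellation between the $\beta$-term and the $\alpha^{u_{x,y}}$-term of Lemma~\ref{L:alpha^u2}$(2)$---a phenomenon absent from Proposition~\ref{P:key2_for_L}, where $\V_2$ is symplectic and condition~$(5)$ is vacuous. It is also the reason the hypothesis $k\ge 1+e$ (and, for the Main Theorem, $k>1+e$) cannot be dropped.
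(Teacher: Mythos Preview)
Your approach is essentially the paper's: run Proposition~\ref{P:key2_for_L} with $\V_1,\V_2$ swapped, identify $M=\varpi^kL_2$, extract the off-diagonal congruences $\bmod\ \P^{r_1+k}$ via Proposition~\ref{P:condition} and the diagonal ones $\bmod\ \P^{r_1+k+e}$ via Proposition~\ref{P:condition2}, then feed Lemma~\ref{L:lattice1}. Two differences are worth noting.

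First, a cosmetic one: the paper chooses a \emph{symplectic} basis $\{f_1,\dots,f_{2n}\}$ of $L_2$, so that every $f_i$ already lies in $\V_2^+$ or $\V_2^-$; this makes the $\alpha^{u_{x,y}}$-bookkeeping of Lemma~\ref{L:alpha^u2}(2) immediate and spares you the ``$v$ running over the $\pm$-components of $g_i$'' manoeuvre.

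Second, a genuine (though easily repaired) gap: at the end you invoke Lemma~\ref{L:lattice_Witt} to produce $k_0\in K_1$, and for this you assert $\V_1^a=0$. That hypothesis is \emph{not} in the proposition, and the Main Theorem is stated for $\mathrm{O}(2n)$ quasi-split and split over an unramified extension, which includes the case $\V_1^a=F'$; in even residue characteristic Lemma~\ref{L:lattice_Witt} is unavailable there. The paper avoids Witt extension altogether: since $\dim\V_1=2n$ and the $2n$ vectors $z_j$ (resp.\ $z_j''\equiv z_j'\bmod\varpi^kL_1$) have linearly independent reductions, each family is already an $\OE$-basis of $L_1$. One can therefore simply \emph{define} $k_0$ linearly by $k_0z_j=z_j''$; equality of Gram matrices forces $k_0\in U(\V_1)$, and $k_0L_1=L_1$ gives $k_0\in K_1$. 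The paper flags this explicitly: ``here unlike the previous proposition, we do not need Witt's extension theorem for lattices.'' With this one-line replacement your argument is complete and coincides with the paper's.
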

\begin{proof}
Assume $\dim\V_1=\dim\V_2=2n$. Let $\{e_1^\ast,\dots,e_{2n}^\ast\}$
be a basis of $L_1$ dual to $\{e_1,\dots,e_{2n}\}$,
  \ie ${\la e_i,e_j^\ast\ra}_1=\varpi^{r_1}\delta_{ij}$ for all
  $i,j\in\{1,\dots,2n\}$, so
  $L^\perp=\Span_{\O}\{\varpi^{-k}e_{r+1}^\ast,\dots,\varpi^{-k}e_{2n}^\ast\}$. Then
  $w$ can be written as
\[
w=\sum_{i=1}^{2n}\varpi^{-k}e_i^\ast\otimes v_i,
\]
where $v_i\in L_2$. Hence
\[
w(\varpi^{-r_1}L_1)+L_2=\Span_{\O}\{\varpi^{-k}v_{1},\dots,\varpi^{-k}v_{2n}\}+L_2.
\]
We define $z_i=w(\varpi^{k}e_i)$ as in the previous
lemma. Since the reductions $\bar{z_i}$ are linearly
independent, one can see that $\{v_{1},\dots,v_{2n}\}$ can be extended
to an $\O$-basis of $L_2$ by Lemma \ref{L:basis_extension}. But
because $\dim_{\O}L_1=2n$, after all $\{v_{1},\dots,v_{2n}\}$ is an
$\O$-basis of $L_2$. Thus we have
\[
w(\varpi^{-r_1}L_1)+L_2=\Span_{\O}\{\varpi^{-k}v_{1},\dots,\varpi^{-k}v_{2n}\}.
\]
So if we let $\{v_1^\ast,\dots,v_{2n}^\ast\}$ be a basis dual to
$\{v_1,\dots,v_{2n}\}$, \ie ${\la
  v_i,v_j^\ast\ra}_2=\varpi^{r_2}\delta_{ij}$, then
\[
M=(w(\varpi^{-r_1}L_1)+L_2)^\perp
=\Span_{\O}\{\varpi^{k}v_{1}^\ast,\dots,\varpi^{k}v_{2n}^\ast\}=\varpi^kL_2.
\]
Clearly $M$ is admissible. Let $\{f_1,\dots,f_{2n}\}$ be a symplectic
basis of $\V_2$, so for each $i$, either $f_i\in L_2^+$ or $f_i\in
L_2^-$. Note that 
\[
M=\Span_{\O}\{\varpi^kf_1,\dots,\varpi^kf_{2n}\}.
\]

Now let $w'\in B(M)$ be as in the proposition and let 
\[
u_i=w(\varpi^kf_i)\quad\text{and}\quad u'_i=w'(\varpi^kf_i).
\]
Let $i, j\in\{1,\dots,2n\}$ be such that $i<j$. Also it satisfies the
condition (iv). By arguing as in
the previous proposition, we obtain
\[
{\la u_i, u_j\ra}_1\equiv {\la u_i', u_j'\ra}_1\mod\P^{r_1+k}.
\]
(Let us mention that in the previous proposition, to use Lemmas
 \ref{L:beta(c)_concrete} and \ref{L:alpha^u1}, we did not need to
 assume $L$ is admissible because the polarization of $W$ is of Type
 1. But this time, we do need $M$ to be admissible, because here we
 switch the roles of $\V_1$ and $\V_2$, \ie ``from the point of view
 of $\V_2$'', the polarization of $W$ is of Type 2. This is why we
 have to assume $L$ is of the form $\varpi^kL_1$.) Next for
$i=j\in\{1,\dots,2n\}$, let $x=\frac{1}{4}f_i^\ast$ and
$y=\varpi^{k-r_2}f_i^\ast$. The pair $(x, y)$ satisfies the
conditions (i) and (iii) of Proposition \ref{P:condition2}, and hence
$u_{x,y}\in H_2(M)$. By arguing as before, we obtain
\[
{\la u_i, u_i\ra}_1\equiv {\la u_i', u_i'\ra}_1\mod\P^{r_1+k+e}.
\]

Hence by Lemma \ref{L:lattice1}, there exist $u_1'',\dots,u_{2n}''\in
L_1$ such that 
\begin{align*}
&u_i''-u_i'\in\P^{t_i}\quad\text{ for all $i\in\{1,\dots,2n\}$};\\
&{\la u''_i, u''_j\ra}_1= {\la u_i, u_j\ra}_1\quad\text{ for all $i, j\in\{1,\dots,2n\}$}.
\end{align*}
Let $k\in K_2$ be defined by
$ku_i=ku_i''$ for all $i\in\{1,\dots,2n\}$. (Note that here unlike the
previous proposition, we do not
need Witt's extension theorem for lattices). Define $a\in A$ by
\[
a(e_i)=
\varpi^{-t_i}(u_i''-u_i')\quad\text{ for all $i\in\{1,\dots,2n\}$}.
\]
Indeed $a\in A$ thanks to $u_i''-u_i'\in\P^{t_i}$. One can verify that
\[
(w'+a)(e_i)=kw(e_i) \quad\text{ for all $i\in\{1,\dots,2n\}$},
\]
so $w'+a=kw$. This completes the proof.
\end{proof}


\section{\bf A proof of the main theorem}


We are ready to give a proof of our main theorem on the Howe
duality conjecture. Throughout this section, we assume that the
unramified pair $(\V_1, {\la-,-\ra}_1)$ and $(\V_2, {\la-,-\ra}_2)$ is such that 
\begin{itemize}
\item $\V_2$ is symplectic;
\item $\dim\V_1=\dim\V_2=2n$;
\item $W=\V_1\otimes\V_2$ is given Type 1 polarization.
\end{itemize}
Accordingly we write $U(\V_1)=\OO(2n)$ and $U(\V_2)=\Sp(2n)$. It is
well-known that the group $\OO(2n)\cdot\Sp(2n)$ splits in the
metaplectic cover $\Spt(W)$, and hence we view it as a subgroup of
$\Spt(W)$. Also we may consider the Hecke algebra $\He_1$
(resp. $\He_2$) as the one
for $U(\V_1)$ (resp. $U(\V_2)$) rather than for $\widetilde{U}(\V_1)$
(resp. $\widetilde{U}(\V_2)$). 

Let us make the following definition.
\begin{Def}
 Fix a self-dual lattice $L_1\subseteq\V_1$ with respect to a fixed
integer $r_1$. We define the conductor of an irreducible admissible
 representation $(\pi, V_\pi)$ of
 $U(\V_1)$ to be the sublattice $L$ of $L_1$ such that
\begin{itemize}
\item $V_\pi^{J_1(L)}\neq 0$;
\item $V_\pi^{J_1(L')}=0$ for all $L'\subseteq L_1$ with
  $L'\supsetneq L$.
\end{itemize}
\end{Def}

Since the groups $J_1(L)$ form a fundamental system of neighborhood of
the identity of $U(\V_1)$, every irreducible admissible representation
has a conductor. Also note that if $L=\varpi^k L_1$ for some integer
$k$, then $J_1(L)$ fits in the exact sequence
\[
0\rightarrow J_1(L)\rightarrow\operatorname{O}(2n)(\O)\rightarrow 
\operatorname{O}(2n)(\O/\varpi^{2k}\O).
\]

Then the main theorem is

\begin{Thm}
Assume $(\pi, V_\pi)$ is an irreducible admissible representation of
$U(\V_1)$ whose conductor is of the form $\varpi^kL_1$ with $k\geq 1+e$. If
$\Theta_\psi(\pi)\neq 0$, then it has a unique non-zero irreducible
quotient.
\end{Thm}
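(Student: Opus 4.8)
The plan is to reproduce, in our setting, the classical argument of \cite[Ch.~5]{MVW} deducing the Howe duality principle from the two key lemmas, but now using the special-case versions of those lemmas that were established in Sections 7 and 8. The hypothesis that the conductor of $\pi$ is $L=\varpi^k L_1$ with $k\geq 1+e$ is exactly what allows us to invoke Proposition \ref{P:step1.5} (the first key lemma, since $L=\varpi^kL_1\subseteq 2\varpi L_1$ because $k\geq 1+e$) and Propositions \ref{P:key2_for_L} and \ref{P:key2_for_M} (the second key lemma for lattices of this shape). Concretely, I would proceed as follows.

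\textbf{Step 1: Reduce to a statement about $S^{J_1(L)^\circ}$ as a Hecke bimodule.} Since $V_\pi^{J_1(L)}\neq 0$ and $\pi$ is irreducible, the $\pi$-isotypic component of $S$ (viewed as an $\Ut(\V_1)\times\Ut(\V_2)$-module) is computed after applying the idempotent $e_{J_1(L)}$ attached to $J_1(L)$; that is, $e_{J_1(L)}S = S^{J_1(L)}$. The maximal $\pi$-isotypic quotient of $\omega_\psi$ has the form $\pi\otimes\Theta_\psi(\pi)$, and since taking $J_1(L)$-invariants is exact, $\Theta_\psi(\pi)$ is governed by the $\He_1(J_1(L))\otimes\He_2$-module $S^{J_1(L)}$, where $\He_1(J_1(L))$ is the Hecke algebra of $\Ut(\V_1)$ with respect to $J_1(L)$. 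The problem of the unique irreducible quotient of $\Theta_\psi(\pi)$ thus becomes: show that as a module over $\He_2$, the space $\Hom_{U(\V_1)}(\pi, S)$ is ``essentially cyclic'' in the sense needed.

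\textbf{Step 2: Use the first key lemma to identify $S^{J_1(L)^\circ}$ with a principal $\He_2$-module.} By Proposition \ref{P:step1.5} we have $S^{J_1(L)^\circ}=\omega(\He_2)S_L$, and moreover $S_L$ is generated over $\C$ by the $s_w$ with $w\in B(L)$. One then passes from $J_1(L)^\circ$ to $J_1(L)$ using the relation $S_L\subseteq S^{J_1(L)^\circ}$ and the fact (Proposition \ref{P:inclusion} and the surrounding discussion) that $J_1(L)^\circ\subseteq J_1(L)$; the key point is that $\He_2$-submodules of $S_L$ correspond, via $\omega(\He_2)$, to $\He_1(J_1(L))$-structure. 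The outcome I expect is that $S^{J_1(L)}$, as an $\He_2$-module, is generated by finitely many $s_w$, one for each $K_1$-orbit of maximal $w$'s in the relevant sense.

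\textbf{Step 3: Use the second key lemma to pin down the $K_1$-orbits of maximal $w$.} This is where Propositions \ref{P:key2_for_L} and \ref{P:key2_for_M} enter: the first controls, for fixed $\psi_1^w$, the $K_2$-orbit of $A+w$ among $w$ with $w(\varpi^{-r_2}L_2)+L_1=L^\perp$; the second does the symmetric statement with $M=M_w$ and $K_1$. Combined, these show that the isotypic pieces of $S^{J_1(L)}$ under the commuting actions of the two Hecke algebras are in bijection with a single orbit datum, which forces the $\Theta_\psi(\pi)$-side module to have a unique irreducible quotient: any two irreducible quotients of $\Theta_\psi(\pi)$ would have to correspond to the same $w$ up to $K_1$-action, hence coincide. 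Assembling Steps 1--3 gives the theorem.

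\textbf{Main obstacle.} The hard part will be Step 3, specifically verifying that the hypotheses of the second key lemma (Propositions \ref{P:key2_for_L} and \ref{P:key2_for_M})---in particular the ``maximality'' conditions $w(\varpi^{-r_2}L_2)+L_1=L^\perp$ and the requirement that $M$ be admissible, which is why $L=\varpi^kL_1$ is needed---are actually met by the $w$ arising as generators of $S^{J_1(L)}$ in Step 2. Equivalently, one must show that every nonzero $s[w]$ contributing to the conductor-$\varpi^kL_1$ part has $w$ of maximal type; this should follow from the assumption that $V_\pi^{J_1(L')}=0$ for all $L'\supsetneq\varpi^kL_1$ together with Lemmas \ref{L:max_lattice1} and \ref{L:max_lattice2}, but the bookkeeping translating ``conductor $=\varpi^kL_1$'' into ``all relevant $w$ are maximal'' is the delicate step. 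Once that is in place, the rest is the formal argument of \cite[pp.~103--106]{MVW} adapted to our normalization of $J_1(L)$ versus $J_1(L)^\circ$.
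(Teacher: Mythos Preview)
Your overall plan matches the paper's: both follow \cite[pp.~103--106]{MVW}, invoking Proposition \ref{P:step1.5} for the first key lemma and Propositions \ref{P:key2_for_L}, \ref{P:key2_for_M} for the second, and your ``main obstacle'' (that the conductor hypothesis forces the relevant $w$ to be maximal, via Lemmas \ref{L:max_lattice1}--\ref{L:max_lattice2}) is exactly right---this is Lemma \ref{L:non-empty} in the paper.

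However, your description of the endgame in Step 3 is not the actual mechanism, and as written would not close the argument. You say ``any two irreducible quotients of $\Theta_\psi(\pi)$ would have to correspond to the same $w$ up to $K_1$-action, hence coincide''---but irreducible quotients are not parametrized by $w$'s in any direct sense, so this reasoning has no force. What the paper (and MVW) actually does is the following: fix one $w$ with $\psi_1^w\in\Psi'(L)$, set $M=M_w$, and for \emph{any} smooth quotient $\sigma_2$ of $\Theta_\psi(\pi)$ show that the space $\overline{V}_\pi\otimes\overline{V}_{\sigma_2}$ (the $(H_1(L),\psi_1^w)\times(H_2(M),\psi_2^w)$-isotypic part) is cyclic under \emph{both} $\overline{\He}_1$ and $\overline{\He}_2$, generated by the single vector $p(s_w)$ (Proposition \ref{P:equality_Hecke}). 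The second key lemma is used here not to match up quotients, but to collapse all contributing $s_{w'}$ to $K_i$-translates of $s_w$, yielding cyclicity. One then applies a purely algebraic commutant lemma (Lemma \ref{L:algebra}): if two commuting subalgebras of $\End_\C(E)$ share a cyclic vector, each is the other's commutant. Taking $\sigma_2=\pi_2\oplus\pi_2'$ for two putative irreducible quotients, the projection onto $\overline{V}_\pi\otimes\overline{V}_{\pi_2}$ commutes with $\overline{\He}_1$, hence lies in $\sigma_2(\overline{\He}_2)$, which is absurd since it does not preserve the tensor factors correctly. This double-centralizer step is the genuine content of the ``formal argument'' you defer to, and your Step 3 should be rewritten to reflect it.

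A minor point: since $W$ carries Type 1 polarization here, $\alpha_g=0$ for $g\in U(\V_1)$ and hence $J_1(L)^\circ=J_1(L)$; your worry about translating between them is unnecessary in this case.
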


The rest of the section is devoted to a proof of this theorem. Our
proof follows the arguments in \cite[p.103-106]{MVW}. 

Let us start with the following definition: For any lattice
$L\subseteq L_1$, define
\[
\Psi(L):=\{\psi^w_1: w\in B(L)\text{ and } w(\varpi^{-r_2}L_2)+L_1=L^\perp\}.
\]
Recall that for each $w\in B(L)$, $\psi^w_1$ is a character on
$H_1(L)$. Also we define
\[
V_\pi[H_1(L), \psi_1]:=\{v\in V_\pi: \pi(h)v=\psi_1(h)v\},
\]
\ie the $H_1(L)$-isotypic component of $\pi$ of type $\psi_1$.
Then we have
\begin{Lem}\label{L:non-empty}
Let $L\subseteq 2\varpi L_1$ be a conductor of an irreducible
admissible representation $(\pi,V_\pi)$ of $\OO(2n)$. Assume
$\Theta_\psi(\pi)\neq 0$. Also let
$\Psi'(L)\subseteq\Psi(L)$ be the non-empty subset
defined by
\[
\Psi'(L)=\{\psi_1\in\Psi(L): V_\pi[H_1(L),\psi_1]\neq 0\}.
\]
Then
\[
V_\pi^{J_1(L)}=\bigoplus_{\psi_1\in\Psi'(L)}V_\pi[H_1(L), \psi_1].
\]
\end{Lem}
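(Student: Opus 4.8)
The plan is to combine Clifford theory for the finite abelian quotient $H_1(L)/J_1(L)$ with the special case of the second key lemma established in the previous section (Proposition~\ref{P:key2_for_L}), which applies here because the hypothesis $L\subseteq 2\varpi L_1$ forces the elementary divisors of $L$ in $L_1$ to have exponent $\geq 1+e$; the overall structure parallels the corresponding step in \cite[p.103-106]{MVW}. First I would record the structural facts that make the argument run. Since $\V_2$ is symplectic and $W$ carries Type~1 polarization, $\alpha_u=0$ for every $u\in U(\V_1)$, so $J_1(L)=J_1(L)^\circ$ and $K_1=K_1^\circ$; by Lemma~\ref{L:action}, $\omega(h)s_w=\psi_1^w(h)s_w$ for $w\in B(L)$ and $h\in H_1(L)$, and because two representatives of the same coset $A+w$ yield scalar multiples of the same function $s_w$, the character $\psi_1^w$ depends only on $A+w$. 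Moreover $H_1(L)$ normalizes $J_1(L)$ with $H_1(L)/J_1(L)$ finite abelian, and each $\psi_1^w$ with $w\in B(L)$ is trivial on $J_1(L)$, since $s_w\in S_L\subseteq S^{J_1(L)^\circ}=S^{J_1(L)}$ by Proposition~\ref{P:inclusion}; in particular $\Psi(L)\subseteq\widehat{H_1(L)/J_1(L)}$.

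Next I would invoke the spectral decomposition of the $J_1(L)$-invariants under $H_1(L)/J_1(L)$: as $H_1(L)$ normalizes $J_1(L)$, the space $V_\pi^{J_1(L)}$ is $H_1(L)$-stable and, $H_1(L)/J_1(L)$ being finite abelian, decomposes as $\bigoplus_\chi V_\pi[H_1(L),\chi]$ with $\chi$ running over $\widehat{H_1(L)/J_1(L)}$, the sum being effectively over those $\chi$ occurring in $V_\pi^{J_1(L)}$. Since $L$ is the conductor of $\pi$, $V_\pi^{J_1(L)}\neq 0$, so at least one $\chi$ occurs. Granting the claim that every $\chi$ occurring in $V_\pi^{J_1(L)}$ lies in $\Psi(L)$, the displayed decomposition is exactly $\bigoplus_{\psi_1\in\Psi'(L)}V_\pi[H_1(L),\psi_1]$ and simultaneously gives $\Psi'(L)\neq\emptyset$, which is the assertion.

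The heart of the matter is therefore the inclusion $\widehat{H_1(L)/J_1(L)}\subseteq\Psi(L)$: every character of $H_1(L)$ trivial on $J_1(L)$ is $\psi_1^w$ for some $w\in B(L)$ satisfying the maximality condition $w(\varpi^{-r_2}L_2)+L_1=L^\perp$. I would prove this by a counting argument. Rewriting maximality as $w^{-1}(\varpi^{r_1}L_2)\cap L_1=L$ via Lemma~\ref{L:max_lattice2}, Proposition~\ref{P:key2_for_L} shows that $w\mapsto\psi_1^w$ is injective on the set of $K_2$-orbits of maximal cosets $A+w$, so $|\Psi(L)|$ equals the number of such orbits, and it remains to check that this number equals $|H_1(L)/J_1(L)|$. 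Fixing a basis $\{e_1,\dots,e_{2n}\}$ of $L_1$ adapted to $L$ (a sublattice of a self-dual lattice always admits such a basis), both quantities reduce to an explicit product over the elementary divisors $\varpi^{t_i}$ of $L$ in $L_1$: on the $\Psi(L)$ side a maximal coset is pinned down by the reductions $w(\varpi^{t_i-r_1}e_i)\bmod\varpi L_2$ together with the Gram data ${\la w(\varpi^{t_i-r_1}e_i),w(\varpi^{t_j-r_1}e_j)\ra}_2$, and one divides out $K_2$ using Lemma~\ref{L:lattice_Witt}; on the other side one reads off $H_1(L)/J_1(L)$ from the inclusions $(u-1)L^\perp\subseteq L_1$ modulo $(u-1)L^\perp\subseteq L$, which the adapted basis makes explicit; the two counts then agree. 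Alternatively, one can bypass the count by producing, for a given $\chi$, an explicit $w$ realizing it via the Cayley-transform description of $H_1(L)/J_1(L)$ (Propositions~\ref{P:condition} and~\ref{P:condition2}) and evaluating $\psi_1^w$ with Lemmas~\ref{L:beta(u)_and_beta(c)} and~\ref{L:beta(c)_concrete}.

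The main obstacle is exactly this surjectivity/counting step in residual characteristic $2$: the factor of $2$ must be tracked throughout — it is what shifts the lattice conditions in Propositions~\ref{P:condition} and~\ref{P:condition2} and in Lemma~\ref{L:lattice1} by $1+e$, and therefore also shifts the orbit counts — and the hypothesis $L\subseteq 2\varpi L_1$ is genuinely used, both to guarantee the validity of Proposition~\ref{P:key2_for_L} and to keep these discrepancies under control. The remaining ingredients — the Clifford-theoretic decomposition, the triviality of each $\psi_1^w$ on $J_1(L)$, and the deduction of $\Psi'(L)\neq\emptyset$ — are routine once this step is in hand.
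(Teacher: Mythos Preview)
Your approach diverges from the paper's at the decisive step, and the gap you flag as ``the main obstacle'' is real and unfilled. You aim to prove the purely group-theoretic equality $\widehat{H_1(L)/J_1(L)}=\Psi(L)$ by a counting argument (or, alternatively, by constructing an explicit $w$ for a given $\chi$), but neither route is actually carried out: the orbit count on the $\Psi(L)$ side and the index $[H_1(L):J_1(L)]$ are only described in outline, and in residue characteristic~$2$ the factors of $2$ that shift all the lattice conditions make this delicate. Notice also that $\Psi(L)$ depends on $\V_2$ while $H_1(L)/J_1(L)$ does not; for smaller $\V_2$ the set $\Psi(L)$ is empty, so the equality you want is certainly not a general fact and would require the equal-rank hypothesis in a way you have not made precise.

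More to the point, you never use the hypothesis $\Theta_\psi(\pi)\neq 0$, which is exactly what the paper exploits to avoid your counting problem. The paper's argument is this: via the surjection $p:S\twoheadrightarrow V_\pi\otimes\Theta_\psi(\pi)$ and the first key lemma $S^{J_1(L)}=\omega(\He_2)S_L$ (Proposition~\ref{P:step1.5}), the space $V_\pi^{J_1(L)}\otimes\Theta_\psi(\pi)$ is generated under $\He_2$ by the images $p(s_w)$ with $w\in B(L)$; each nonzero $p(s_w)$ lies in $V_\pi[H_1(L),\psi_1^w]\otimes\Theta_\psi(\pi)$ because $H_1(L)$ acts on $s_w$ through $\psi_1^w$; and if $w$ fails the maximality condition then by Lemma~\ref{L:max_lattice1} there is $L'\supsetneq L$ with $s_w\in S_{L'}\subseteq S^{J_1(L')}$, forcing $p(s_w)\in V_\pi^{J_1(L')}\otimes\Theta_\psi(\pi)=0$ by the conductor hypothesis. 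Thus only characters in $\Psi(L)$ contribute, and the lemma follows without ever having to identify $\widehat{H_1(L)/J_1(L)}$ with $\Psi(L)$. The second key lemma (Proposition~\ref{P:key2_for_L}) is not used here at all; it enters only later, in Proposition~\ref{P:equality_Hecke}.
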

\begin{proof}
This is Lemma in \cite[p.104]{MVW}. But we will give a proof for the
sake of completeness. First let
\[
V'=\bigoplus_{\psi_1\in\Psi'(L)}V_\pi[H_1(L), \psi_1]
\]
We must show $V_\pi^{J_1(L)}=V'$. But the inclusion $V'\subseteq
V_\pi^{J_1(L)}$ is clear because the character $\psi_1^w$ is trivial
on $J_1(L)$ for all $w\in B(L)$

To show the inclusion $V_\pi^{J_1(L)}\subseteq V'$, it suffices to
show $V_\pi^{J_1(L)}\otimes\Theta_\psi(\pi)\subseteq
V'\otimes\Theta_\psi(\pi)$. To show this, we use the key first lemma
(Proposition \ref{P:step1.5}) and the maximality of the lattice $L$
with respect to the property $V_\pi^{J_1(L)}\neq 0$ in the following way:
Let $p:S\rightarrow\pi\otimes\Theta_\psi(\pi)$ be the
surjection. Since $V_\pi^{J_1(L)}\neq 0$, the map $p$ restricts to a
nonzero surjective map
\[
p:S^{J_1(L)}\rightarrow V_\pi^{J_1(L)}\otimes\Theta_\psi(\pi).
\]
By Proposition \ref{P:step1.5}, we have
$S^{J_1(L)}=\omega(\He_2)S_L$, so we have the surjective map
\[
p:\omega(\He_2)S_L\rightarrow V_\pi^{J_1(L)}\otimes\Theta_\psi(\pi).
\]
Hence the space $V_\pi^{J_1(L)}\otimes\Theta_\psi(\pi)$ is generated by the
vectors of the form $p(s_w)$ for $w\in B(L)$ under the action of the
Hecke algebra $\He_2$. For each $w\in B(L)$ such that $p(s_w)\neq 0$,
if we show $p(s_w)\in V_\pi[H_1(L), \psi_1^w]\otimes\Theta_\psi(\pi)$, we
will be done because this will imply $V_\pi^{J_1(L)}\otimes\Theta_\psi(\pi)\subseteq
V'\otimes\Theta_\psi(\pi)$. Assume to the contrary that $p(s_w)\in
V_\pi[H_1(L), \psi_1^w]\otimes\Theta_\psi(\pi)$. Then we have
$w(\varpi^{-r_2}L_2)+L_1\subsetneq L^\perp$. By Lemma \ref{L:max_lattice1}, there
exists a lattice $L'$ such that $L\subsetneq L'\subseteq L_1$ and
$w\in B(L')$. Then $s_w\in S_{L'}\subseteq S^{J_1(L')}$, which implies
$p(s_w)\in V_\pi^{J_1(L')}\otimes\Theta_\psi(\pi)$. But by the maximality
property of $L$, we have $V_\pi^{J_1(L')}=0$.
\end{proof}

For what follows, we fix $w\in B(L)$ to be such that
\begin{itemize}
\item $w(\varpi^{-r_2}L_2)+L_1=L^\perp$;
\item $V_\pi[H_1(L), \psi_1^w]\neq 0$, or equivalently $p(s_w)\neq 0$
  where  $p:S\rightarrow V_\pi\otimes\Theta_\psi(\pi)$ is the surjective map.
\end{itemize}
Also as we did in the previous section
\begin{itemize}
\item $M=M_w:=(w(\varpi^{-r_1}L_1)+L_2)^\perp$, so $w\in B(M)$.
\end{itemize}
Let us note that $\psi_2^w$ is a character on $H_2(M)$.
\quad

For each smooth but not necessarily irreducible representation $(\sigma_1, V_{\sigma_1})$ of
$U(\V_1)$, we define
\[
\overline{V}_{\sigma_1}:=\{v\in
V_{\sigma_1}:\sigma_1(h)v=\psi_1^w(h)v\text{ for all $h\in H_1(L)$}\}.
\]
Similarly for each smooth representation $(\sigma_2, V_{\sigma_2})$ of
$U(\V_2)$, we define
\[
\overline{V}_{\sigma_2}:=\{v\in
V_{\sigma_2}:\sigma_2(h)v=\psi_2^w(h)v\text{ for all $h\in H_2(M)$}\}.
\]

For each $i=1,2$, we let $e_i\in\He_i$ be the idempotent defined
by 
\[
e_i(u)=\begin{cases}[K_i:H_i]\psi_i^w(u)^{-1}&\text{for $u\in H_i$}\\
0&\text{for $u\in U(\V_i)$ but $u\notin H_i$},
\end{cases}
\]
where $H_1=H_1(L)$ and $H_2=H_2(M)$. Let us put 
\[
\overline{\He_i}=e_i\He_ie_i.
\]

\begin{Prop}\label{P:equality_Hecke}
Let $(\sigma_2, V_{\sigma_2})$ be a (not necessarily irreducible)
smooth non-zero representation of $U(\V_2)$, and $p:S\rightarrow
V_\pi\otimes V_{\sigma_2}$ a surjective $U(\V_1)\times
U(\V_2)$-intertwining map. Then $\overline{V}_{\sigma_2}\neq 0$ and we
have the equality
\[
\overline{V}_\pi\otimes\overline{V}_{\sigma_2}
=\pi(\overline{\He}_1)p(s_w)=\sigma_2(\overline{\He}_2)p(s_w),
\]
where $w$ is as fixed previously.
\end{Prop}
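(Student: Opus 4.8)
The plan is to follow the proof of Proposition~I.9 in \cite[p.\,105]{MVW}, with the full key lemmas replaced by our proven special cases: the first key lemma in the form of Proposition~\ref{P:step1.5} (available since $L=\varpi^kL_1\subseteq 2\varpi L_1$ and $J_1(L)=J_1(L)^\circ$ for Type~1 polarization) and the second key lemma in the forms of Propositions~\ref{P:key2_for_L} and \ref{P:key2_for_M}, together with the maximality of the conductor $L$. We work inside $\Spt(W)$ via the splittings of $U(\V_1)$ and $U(\V_2)$, and we use throughout that $w\in B(L)$ and, since $M=M_w$, also $w\in B(M)$; thus Lemma~\ref{L:action} gives $\omega(h)s_w=\psi_1^w(h)s_w$ for $h\in H_1(L)$ while its $\V_2$-counterpart gives $\omega(h)s_w=\psi_2^w(h)s_w$ for $h\in H_2(M)$. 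Applying $p$ and using $U(\V_1)\times U(\V_2)$-equivariance, $\pi(h)\otimes 1$ acts on $p(s_w)$ by $\psi_1^w(h)$ and $1\otimes\sigma_2(h)$ by $\psi_2^w(h)$, so $p(s_w)\in\overline{V}_\pi\otimes\overline{V}_{\sigma_2}$; in particular $\pi(e_1)$ and $\sigma_2(e_2)$ fix $p(s_w)$.

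The core step is the identity $\overline{V}_\pi\otimes V_{\sigma_2}=\sigma_2(\He_2)p(s_w)$. Since taking $J_1(L)$-invariants is exact and $p$ is surjective, $p\bigl(S^{J_1(L)}\bigr)=V_\pi^{J_1(L)}\otimes V_{\sigma_2}$, and by Proposition~\ref{P:step1.5} this equals $\sigma_2(\He_2)$ applied to $\Span\{p(s_{w'}):w'\in B(L)\}$. Now apply the idempotent $e_1$ on the $U(\V_1)$-factor: it commutes with $\sigma_2(\He_2)$, sends $p(s_{w'})$ to $p(\omega(e_1)s_{w'})$, and by Lemma~\ref{L:action} together with orthogonality of characters of $H_1(L)$, $\omega(e_1)s_{w'}$ equals $s_{w'}$ if $\psi_1^{w'}=\psi_1^w$ on $H_1(L)$ and vanishes otherwise. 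Among the surviving $w'$, those with $w'(\varpi^{-r_2}L_2)+L_1\subsetneq L^\perp$ have $w'\in B(L')$ for some $L'$ with $L\subsetneq L'\subseteq L_1$ by Lemma~\ref{L:max_lattice1}, so $s_{w'}\in S_{L'}\subseteq S^{J_1(L')}$ and $p(s_{w'})\in V_\pi^{J_1(L')}\otimes V_{\sigma_2}=0$ by maximality of the conductor. For the remaining $w'$, which satisfy $w'(\varpi^{-r_2}L_2)+L_1=L^\perp=w(\varpi^{-r_2}L_2)+L_1$ and $\psi_1^{w'}=\psi_1^w$, Proposition~\ref{P:key2_for_L} yields $k'\in K_2$ with $A+w=k'(A+w')$, so by Proposition~\ref{P:Gamma_A_action} $s_{w'}$ is a nonzero multiple of $\omega(k'^{-1})s_w$ and hence $p(s_{w'})\in\sigma_2(\He_2)p(s_w)$. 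This gives $\overline{V}_\pi\otimes V_{\sigma_2}\subseteq\sigma_2(\He_2)p(s_w)$, and the reverse inclusion holds since $p(s_w)\in\overline{V}_\pi\otimes V_{\sigma_2}$. As $\overline{V}_\pi\ne 0$ (by the choice of $w$) and $V_{\sigma_2}\ne 0$, this forces $p(s_w)\ne 0$, and then $p(s_w)\in\overline{V}_\pi\otimes\overline{V}_{\sigma_2}$ forces $\overline{V}_{\sigma_2}\ne 0$. Applying $\sigma_2(e_2)$ on the $U(\V_2)$-factor and using $\sigma_2(e_2)p(s_w)=p(s_w)$ turns the identity into $\overline{V}_\pi\otimes\overline{V}_{\sigma_2}=\sigma_2(\overline{\He_2})p(s_w)$. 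A byproduct worth noting: the inclusion $\sigma_2(\He_2)p(s_w)\subseteq(\text{span of the first-factor components of }p(s_w))\otimes V_{\sigma_2}$ shows these components span $\overline{V}_\pi$, so we may write $p(s_w)=\sum_{i=1}^d u_i\otimes y_i$ with $\{u_i\}$ a basis of $\overline{V}_\pi$ and $\{y_i\}$ linearly independent in $\overline{V}_{\sigma_2}$.

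For $\pi(\overline{\He_1})p(s_w)=\overline{V}_\pi\otimes\overline{V}_{\sigma_2}$, observe first that since $\pi$ is irreducible admissible, $\overline{V}_\pi=\pi(e_1)V_\pi$ is a finite-dimensional absolutely simple $\overline{\He_1}$-module, so by the Jacobson density theorem and the decomposition $p(s_w)=\sum_i u_i\otimes y_i$ above one gets $\pi(\overline{\He_1})p(s_w)=\overline{V}_\pi\otimes\Span\{y_i\}$; thus the assertion is equivalent to the $y_i$ spanning $\overline{V}_{\sigma_2}$. I would prove this by running the core step above with the roles of $\V_1$ and $\V_2$, and of $L$ and $M=M_w=\varpi^kL_2$, interchanged: one uses $w\in B(M)$ with $w(\varpi^{-r_1}L_1)+L_2=M^\perp$, the $\V_2$-versions of Lemmas~\ref{L:action} and \ref{L:max_lattice1}, Proposition~\ref{P:key2_for_M} in place of Proposition~\ref{P:key2_for_L}, and the $\V_1$-side analogue of Proposition~\ref{P:step1.5} for $M=\varpi^kL_2$. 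I expect this last item to be the main obstacle: from the point of view of $\V_2$ the polarization of $W$ is of Type~2 rather than Type~1, so the $\alpha^u$-corrections are no longer trivial and must be tracked carefully, and one needs $M=\varpi^kL_2$ to be admissible — which it is, and which is precisely why the hypothesis forces the conductor to be $\varpi^kL_1$ with $k\ge 1+e$, so that $M$ is a scalar multiple of the self-dual lattice $L_2$.
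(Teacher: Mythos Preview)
Your first half — establishing $\overline{V}_\pi\otimes\overline{V}_{\sigma_2}=\sigma_2(\overline{\He}_2)p(s_w)$ — is correct and matches the paper's argument essentially line for line.

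The second half has a real gap. You propose to rerun the core step with the roles of $\V_1$ and $\V_2$ swapped, invoking ``the $\V_1$-side analogue of Proposition~\ref{P:step1.5} for $M=\varpi^kL_2$''. But that analogue is \emph{not} available: the proof of Proposition~\ref{P:step1} rests on Lemma~\ref{L:lattice_lin_ind}, whose hypothesis in even residue characteristic is that the target space be symplectic. With roles swapped the target is $\V_1$, which is orthogonal. Admissibility of $M$ only addresses the Type~2 polarization requirement in Proposition~\ref{P:step1}; it does not touch the symplectic hypothesis coming from Lemma~\ref{L:lattice_lin_ind}. So your anticipated ``main obstacle'' is in fact a genuine obstruction, not a technicality to be tracked. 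A second, smaller issue: with roles swapped you would need a maximality property of $M$ for $\sigma_2$ to kill the ``small'' $w'$, but $\sigma_2$ is not assumed irreducible and has no conductor.

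The paper avoids both problems by \emph{not} using any form of the first key lemma on the $\V_2$ side. Instead it proves the auxiliary Lemma~\ref{L:H_2_character}: if $H_2(M)$ acts on $s_{w'}$ via a character, then already $w'\in B(M)$. This gives directly that $S_M$ surjects onto $V_\pi\otimes\overline{V}_{\sigma_2}$, which is all that is needed — one never has to identify $\omega(\He_1)S_M$ with a $J_2(M)^\circ$-invariant space. For the elimination of $w'$ with $w'(\varpi^{-r_1}L_1)+L_2\subsetneq M^\perp$, the paper does not appeal to any maximality on the $\V_2$ side either: it uses the index bijection $L_1/(w_0(\varpi^{-r_2}L_2)+L_1)^\perp\cong (w_0(\varpi^{-r_1}L_1)+L_2)/L_2$ to translate the strict inclusion back to the $\V_1$ side, producing a lattice $L'\supsetneq L$ with $w'\in B(L')$, and then the maximality of the conductor $L$ of $\pi$ forces $p(s_{w'})=0$. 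With those two substitutions, the $\pi(\overline{\He}_1)$ equality follows exactly as your $\sigma_2(\overline{\He}_2)$ argument did, and the Jacobson density detour is unnecessary.
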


This lemma corresponds to Lemma \cite[p.104]{MVW}. But since we do not
have the $\V_2$-analogue of Proposition \ref{P:step1.5}, our proof
slightly differs from the one in \cite{MVW}. We need a couple of
lemmas to prove the proposition.

\begin{Lem}
Let $M=\varpi^kL_2$ with $k\geq 1+e$ and $N$ be a lattice with $N\subsetneq M$. Then
$H_2(N)\subsetneq H_2(M)$.
\end{Lem}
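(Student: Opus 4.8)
The statement to prove is purely lattice-theoretic: for $M=\varpi^kL_2$ with $k\geq 1+e$ and $N\subsetneq M$ a sublattice of $\V_2$ (hence, since $M\subseteq L_2$, a sublattice of $L_2$), one has $H_2(N)\subsetneq H_2(M)$. Recall $H_2(M)=\{u\in K_2(M)^\circ:(u-1)M^\perp\subseteq L_2\}$ and similarly for $N$, with $M^\perp=\varpi^{-k}L_2$ since $L_2$ is self-dual. The inclusion $H_2(N)\subseteq H_2(M)$ should be the easy direction and the point of the lemma is the \emph{strictness}.

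First I would verify $H_2(N)\subseteq H_2(M)$. Since $N\subsetneq M$ we have $M^\perp\subseteq N^\perp$, so $K_2(N)^\circ\subseteq K_2(M)^\circ$ (an element fixing $N^\perp$ fixes... — actually one must be a little careful: $u N^\perp=N^\perp$ together with $uL_2=L_2$ gives $uM^\perp=M^\perp$ because $M^\perp=N^\perp\cap(\text{something})$; more cleanly, $u\in K_2(N)^\circ$ means $uL_2=L_2$ and $uN^\perp=N^\perp$, and since $M^\perp$ is the $\perp$ of $M=N^{\perp\perp}\cap\cdots$, one checks $uM=M$ forces $uM^\perp=M^\perp$, and $uM=M$ follows from $u$ preserving $L_2$ and $N$). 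Then for $u\in H_2(N)$, $(u-1)M^\perp\subseteq(u-1)N^\perp\subseteq N\subseteq M\subseteq L_2$, so $u\in H_2(M)$; the condition $\alpha_u(a)\in\P^r$ for $a\in A$ is the same in both cases. Hence $H_2(N)\subseteq H_2(M)$.

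For strictness, the plan is to exhibit explicitly an element $u\in H_2(M)\setminus H_2(N)$. Using the lemma quoted from \cite[p.112]{MVW} applied to $N\subseteq L_2$, choose a basis $\{f_1,\dots,f_{2n}\}$ of $L_2$ and integers $0\le s\le 2n$, $t_i\ge 1$ ($s+1\le i\le 2n$) so that $\{f_1,\dots,f_s,\varpi^{t_{s+1}}f_{s+1},\dots,\varpi^{t_{2n}}f_{2n}\}$ is a basis of $N$; since $N\subsetneq M=\varpi^kL_2$ we have $s<2n$ (otherwise $N=L_2\supsetneq M$, impossible) and in fact each $t_i\ge k\ge 1+e$ with at least one $t_i>k$ or $s<2n$ strictly forcing the inclusion $N\subsetneq M$ to be proper in at least one coordinate. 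Pick an index $j$ with $N$ strictly smaller than $M$ in the $f_j$-direction, i.e.\ $\varpi^{t_j}f_j\in N$ but $\varpi^{k}f_j\notin N$ (so $t_j>k$), or use an index $\le s$. Now take a Cayley transform $u_{x,y}$ with $x,y$ suitable multiples of $f_j$ and of a dual basis vector $f_j^*$ (say $x=\tfrac14\varpi^{a}f_j^*$, $y=\varpi^{b}f_j^*$ for appropriately chosen $a,b$), and use Proposition~\ref{P:condition2} (with $\epsilon=-1$, since $\V_2$ is symplectic) to arrange that $u_{x,y}\in K_2(M)^\circ$, that $(u_{x,y}-1)M^\perp\subseteq L_2$ — so $u_{x,y}\in H_2(M)$ — but that $(u_{x,y}-1)N^\perp\not\subseteq N$. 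Concretely $(u_{x,y}-1)\equiv -2c_{x,y}\bmod 4$, $c_{x,y}(v)=2\langle v,x\rangle_2 y$, so $(u_{x,y}-1)$ applied to the generator $\varpi^{-t_j}f_j$ of $N^\perp$ (in the $f_j$-direction) lands in $\varpi^{k}L_2$ (good for $M$) but outside $N$ precisely because $\varpi^{k}f_j\notin N$; the numerology works out to place it in $M\setminus N$ using $t_j>k\ge 1+e$ so the factor of $2$ (order $e$) is absorbed. One also needs the splitting condition $\psi(\alpha_u(a))=1$ for $a\in A$, which the Type-1 polarization hypothesis in the ambient application — or directly Lemma~\ref{L:open_closed}/Lemma~\ref{L:alpha^u1} — guarantees for Cayley transforms of this shape; alternatively one checks $\alpha_u=0$ when $u_{x,y}$ is unipotent in the Siegel Levi.

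\textbf{The main obstacle.} The genuine difficulty is the bookkeeping with the factor $2$: in residue characteristic two, $(u-1)M^\perp\subseteq L_2$ is strictly weaker than $c_{x,y}M^\perp\subseteq L_2$ (there is a gap of $\ord_F(2)=e$), and one must choose $a,b$ so that the Cayley transform lies in $H_2(M)$ \emph{and} visibly fails $(u-1)N^\perp\subseteq N$ — these two constraints pull in opposite directions, and making them compatible is exactly where the hypothesis $k\ge 1+e$ (and $t_j>k$) gets used. Everything else is routine: the easy inclusion, the basis-adaptation lemma, and the already-established Cayley-transform estimates in Propositions~\ref{P:condition} and \ref{P:condition2}.
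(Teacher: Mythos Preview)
There is a genuine gap: you have confused the defining condition for $H_2(N)$ with that of $J_2(N)$. By the paper's conventions,
\[
H_2(N)=\{u\in K_2(N)^\circ:(u-1)N^\perp\subseteq L_2\},\qquad
J_2(N)=\{u\in K_2(N)^\circ:(u-1)N^\perp\subseteq N\}.
\]
This slip already appears in your easy direction (you write ``$(u-1)N^\perp\subseteq N$'' instead of $\subseteq L_2$; the inclusion still goes through once corrected), but it is fatal in the strictness argument. Your plan is to produce $u_{x,y}\in H_2(M)$ with $(u_{x,y}-1)(\varpi^{-t_j}f_j)\in M\setminus N$. But $M\subseteq L_2$, so such an element would still satisfy $(u_{x,y}-1)N^\perp\subseteq L_2$ and hence \emph{lie in} $H_2(N)$, not outside it. What you actually need is an element of $N^\perp$ sent by $u_{x,y}-1$ \emph{outside} $L_2$.

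The paper's construction is different in exactly this respect. One picks $v\in L_2^+$ with $\varpi^kv\in M\setminus N$, takes the dual vector $v^\ast\in L_2^-$, and lets $x=\tfrac{1}{2}\varpi^{k-r_2}v$, $y=v^\ast$ (so $x$ and $y$ are \emph{not} proportional, and Proposition~\ref{P:condition} rather than \ref{P:condition2} is used). Conditions $(i)$ and $(iii)$ of Proposition~\ref{P:condition} give $u_{x,y}\in H_2(M)$; since $x\in\V_2^+$ and $y\in\V_2^-$ the transform sits in the Siegel Levi, so $\alpha^{u_{x,y}}=0$ is automatic. Choosing $s>0$ with $\varpi^{k+s}v\in N$ but $\varpi^{k+s-1}v\notin N$, one has $\varpi^{-k-s}v^\ast\in N^\perp$ and computes
\[
2c_{x,y}(\varpi^{-k-s}v^\ast)=\varpi^{-s}v^\ast\notin L_2,
\]
which shows $(u_{x,y}-1)N^\perp\not\subseteq L_2$, i.e.\ $u_{x,y}\notin H_2(N)$. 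Your choice of $x,y$ both proportional to $f_j^\ast$ cannot achieve this: the image of $c_{x,y}$ would then lie in the line $Ef_j^\ast$, and you have tuned the constants precisely so that it lands in $M\subseteq L_2$.
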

\begin{proof}
In what follows, we will construct an element in $H_2(M)$ not in
$H_2(N)$. Since $N\subsetneq M$, there exists $v\in L_2^+$ with $\ord_{L_2}(v)=0$
and $\varpi^kv\in M\backslash N$. There exists an integer $s>0$ such
that $\varpi^{k+s}v\in N$ but $\varpi^{k+s-1}v\notin N$. Then
$\varpi^{-k-s}v^\ast\in N^\perp\backslash M^\perp$, where $v^\ast$ is
the vector dual to $v$, \ie ${\la v,v^\ast\ra}_2=\varpi^{r_2}$. So
$v^\ast\in L_2^-$ and $\ord_{L_2}(v^\ast)=0$.
Let $x=\frac{\varpi^{k-r_2}}{2}v$ and $y=v^\ast$. The pair $(x,y)$ satisfies
the conditions (i) and (iii) of Proposition \ref{P:condition} (with,
of course, $L$, $L_1$ and $r_1$ replaced by $M$, $L_2$ and $r_2$, respectively), and
hence $u_{x,y}$ exists and $(u_{x,y}-1)M^\perp\subseteq L_2$. Also
since $x\in L_2^+$ and $y\in L_2^-$, one can see $u_{x,y}$ is in the
Sigel Levi of $\Sp(W)$ for our choice of polarization, so
$\alpha^{u_{x,y}}=0$. Hence $u_{x,y}\in H_2(M)$. Now
\[
u_{x,y}-1=2c_{x,y}+\text{higher terms},
\]
and
\begin{align*}
2c_{x,y}(\varpi^{-k-s}v^\ast)
&=2{\la \varpi^{-k-s}v^\ast, \frac{\varpi^{k-r_2}}{2}v\ra}_2y+2{\la
  \varpi^{-k-s}v^\ast, y\ra}_2 \frac{\varpi^{k-r_2}}{2}v\\
&={\la \varpi^{-k-s}v^\ast, \varpi^{k-r_2}v\ra}_2y\\
&=\varpi^{-s}y\notin L_2.
\end{align*}
Hence $u_{x,y}\notin H_2(N)$.
\end{proof}

\begin{Lem}\label{L:H_2_character}
Assume $M=\varpi^kL_2$. Let $w\in W$ be arbitrary. If $H_2(M)$
acts on $s_{w}$ via some character, then $w\in B(M)$ \ie $s_w\in S_M$.
\end{Lem}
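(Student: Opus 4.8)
The plan is to first convert the hypothesis into a statement about the support of $s_w$, and then to feed into that statement a supply of explicit symplectic transvections lying in $H_2(M)$.

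First I would argue that if $H_2(M)$ acts on $s_w$ through a character $\chi$, then $hw-w\in A$ for every $h\in H_2(M)$. Indeed $\omega(h)s_w=\chi(h)s_w$ with $\chi(h)\neq 0$ (since $\omega(h)$ is invertible and $s_w\neq 0$), and, exactly as in the proof of Lemma \ref{L:action}, $\omega(h)s_w(w',z)=s_w(h^{-1}w',z+\alpha^h(w'))$ is nonzero precisely when $w'\in h(A+w)=A+hw$, using $hA=A$ since $h\in\Gamma_A$. Comparing $\supp(\omega(h)s_w)=(A+hw)\times F$ with $\supp(s_w)=(A+w)\times F$ forces $hw-w\in A$. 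Next I would reinterpret this by viewing each $w\in W$ as an element of $\Hom_E(\V_2,\V_1)$ in the usual way: there $h\in U(\V_2)$ acts as $1\otimes h$, i.e.\ by precomposition with $h^{-1}$, and $A=L_1\otimes_{\OE}L_2$ corresponds to $\{\phi\in\Hom_E(\V_2,\V_1):\phi(L_2)\subseteq\varpi^{r_2}L_1\}$. After replacing $h$ by $h^{-1}$ and using that $H_2(M)$ is a group, the condition becomes
\[
w\big((h-1)L_2\big)\subseteq\varpi^{r_2}L_1\qquad\text{for all }h\in H_2(M).
\]

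The main point is then to produce enough elements of $H_2(M)$. For each primitive vector $v$ of $L_2^+$ or of $L_2^-$ (i.e.\ $\ord_{L_2}(v)=0$ and $v\in\V_2^{\pm}$) I would use the symplectic transvection $h_v(x)=x+\varpi^{\,k-r_2}{\la x,v\ra}_2\,v$; here it is essential that $\V_2$ is symplectic, so that $h_v\in U(\V_2)$ with no factor of $2$ in the way. It is straightforward that $h_v\in\Gamma_A$, that $h_vL_2=L_2$, and that $(h_v-1)M^\perp=(h_v-1)\varpi^{-k}L_2=\OE v\subseteq L_2$; the delicate part is $h_v\in\Gamma_A^\circ$, i.e.\ $\alpha_{h_v}(a)\in\P^r$ for every $a\in A$. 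Since $v\in\V_2^{\pm}$, the operator $1\otimes h_v$ lies in the unipotent radical of the Siegel parabolic for the Type 1 polarization $W=(\V_1\otimes\V_2^+)\oplus(\V_1\otimes\V_2^-)$, so by Lemma \ref{L:splitting} $\alpha_{h_v}$ is given by a single term; an order count shows that this term lies in $\frac{1}{2}\P^{\,r+k}=\P^{\,r+k-e}\subseteq\P^r$, and this is where the hypothesis $k\geq 1+e$ is used. Hence $h_v\in H_2(M)$.

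Finally I would compute $(h_v-1)L_2=\{\varpi^{\,k-r_2}{\la l,v\ra}_2v:l\in L_2\}=\varpi^{\,k-r_2}\PE^{r_2}v=\varpi^{k}\OE v$, the middle equality using that $v$ is primitive so ${\la L_2,v\ra}_2=\PE^{r_2}$. Plugging $h=h_v$ into the displayed inclusion gives $\varpi^{k}w(v)\in\varpi^{r_2}L_1$ for all primitive $v\in L_2^{\pm}$; since these span $L_2^+\oplus L_2^-=L_2$ over $\OE$, we get $w(M)=w(\varpi^kL_2)\subseteq\varpi^{r_2}L_1$. By $B(M)^\perp=B(M^\perp)$ together with the identification $B(M)=\{\phi\in\Hom_E(\V_2,\V_1):\phi(M)\subseteq\varpi^{r_2}L_1\}$, this says precisely $w\in B(M)$, i.e.\ $s_w\in S_M$. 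The hard part, I expect, will be the verification that $h_v\in\Gamma_A^\circ$: this is the only place the hypothesis $k\geq 1+e$ enters, and it forces the choice $v\in\V_2^{\pm}$ so that $\alpha_{h_v}$ is governed by one manageable term; the remaining identifications of $A$ and $B(M)$ as lattices of homomorphisms, and of the $U(\V_2)$-action by precomposition (with the attendant $\epsilon_2$- and Galois-conjugation bookkeeping), are routine but must be arranged so that the orders line up.
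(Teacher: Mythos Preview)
Your argument is correct, but it follows a different route from the paper's. The paper argues by contradiction via the preceding lemma (that $H_2(N)\subsetneq H_2(M)$ whenever $N\subsetneq M=\varpi^kL_2$): assuming $w\notin B(M)$, one sets $N=(M^\perp+w(\varpi^{-r_1}L_1))^\perp\subsetneq M$, observes that $(h-1)w\in A$ together with $(h-1)M^\perp\subseteq L_2$ forces $(h-1)N^\perp\subseteq L_2$, hence $H_2(M)\subseteq H_2(N)$, contradicting that lemma. Your approach is direct: you manufacture enough explicit transvections $h_v\in H_2(M)$ (one for each primitive $v\in L_2^{\pm}$) and read off $w(\varpi^k v)\in\varpi^{r_2}L_1$ from the support condition, which immediately gives $w(M)\subseteq\varpi^{r_2}L_1$.

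Both arguments ultimately require producing explicit elements of $H_2(M)$; the paper does this in the preceding lemma via a Cayley transform, while you fold the construction into the present proof with transvections. Your version is self-contained and in fact needs only $k\geq e$ for the $\Gamma_A^\circ$ verification (your order count gives $\alpha_{h_v}(a)\in\frac{1}{2}\P^{r+k}=\P^{r+k-e}$), whereas the paper's Cayley-transform element in the preceding lemma genuinely needs $k\geq 1+e$; so your remark that ``this is where $k\geq 1+e$ is used'' slightly overstates the requirement, though of course $k\geq 1+e$ is the standing hypothesis anyway. The paper's two-step packaging has the advantage of isolating the purely group-theoretic statement $H_2(N)\subsetneq H_2(M)$, which may be of independent interest.
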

\begin{proof}
 If $H_2(M)$ acts on $s_{w}$ via a character, we have 
$w\in\supp(\omega(h)s_w)$ for all $h\in H_2(M)$, \ie
 $\omega(h)s_w(w,0)\neq 0$. But $\omega(h)s_w(w,0)=s_w(h^{-1}w,\alpha^u(w))\neq 0$,
 which implies $h^{-1}w\in A+w$. So $(h-1)w\in A$. Now assume $w\notin
 B(M)$. Then $w(\varpi^{-r_1}L_1)\not\subseteq M^\perp$. Let
 $N=(M^\perp+w(\varpi^{-r_1}L_1))^\perp$, so $N\subsetneq M$ and $w\in
 B(N)$. Then
 $(h-1)N^\perp\subseteq L_2$, and so $h\in H_2(N)$ for all $h\in H_2(M)$, which would imply
 $H_2(N)=H_2(M)$. By the above lemma, we must have $N=M$. But
 $N\subsetneq M$, which is a contradiction. So $w\in B(M)$.
\end{proof}

Now we are ready to prove Proposition \ref{P:equality_Hecke}.

\begin{proof}[Proof of Proposition \ref{P:equality_Hecke}]
First we will show $\overline{V_\pi}\otimes\overline{V}_{\sigma_2}
=\pi(\overline{\He}_1)p(s_w)$. The argument is the same as in
\cite{MVW}, but we will repeat it here. 

Since $\overline{V}_\pi\subseteq V_\pi^{J_1(L)}$, and we know
$S^{J_1(L)}=\omega(\He_2)S_L$ by Proposition \ref{P:step1.5}, by
taking Lemma \ref{L:non-empty} into account, we can conclude that
$\overline{V}_\pi\otimes V_{\sigma_2}$ is generated under the action of $\He_2$ by
the elements of the form $q(s_{w'})$ with $w'\in B(M)$,
$w'(\varpi^{-r_2}L_2)+L_1=L^\perp$ and $\psi_1^{w'}=\psi_1^w$. By Proposition
\ref{P:key2_for_L}, there exists $k\in K_2$ such that $A+w=k(A+w')$,
which implies $s_w$ is proportional to $\omega(k)s_{w'}$. Thus
$\overline{V}_\pi\otimes V_{\sigma_2}$ is generated by $q(s_w)$ under
$\He_2$. The group $H_2(M)$ acts on $s_w$ via
the character $\psi_2^w$, and so $\overline{V}_{\sigma_2}\neq 0$. So
we have
\[
\overline{V}_\pi\otimes V_{\sigma_2}=\sigma_2(\He_2)p(s_w)
=\sigma_2(\He_2e_2)p(s_w),
\]
because $e_2^2=1$ and $\sigma_2(e_2)p(s_w)=p(s_w)$. Considering
$\sigma_2(e_2) V_2=\overline{V}_{\sigma_2}$, we have
\[
\overline{V}_\pi\otimes
\overline{V}_{\sigma_2}=\sigma_2(e_2)(\overline{V}_\pi\otimes
V_{\sigma_2})=\sigma_2(\overline{\He}_2)p(s_w).
\]

Next we will show the equality 
\[
\overline{V}_\pi\otimes\overline{V}_{\sigma_2}
=\pi(\overline{\He}_1)p(s_w).
\]
Let $w'\in W$ be such that $p(s_{w'})\neq 0$. (Note that viewed as a
representation of the compact group $H_2(M)$, the space $S$ of the Weil
representation can be decomposed as $S=S'\oplus\ker p$ for some
subspace $S'$, where both
$S'$ and $\ker p$ are spaces of representations of $H_2(W)$. Hence we
may assume $s_{w'}\in S'$.)
If $p(s_{w'})\in
{V}_\pi\otimes\overline{V}_{\sigma_2}$, then the group $H_2(M)$ has to
act on $p(s_{w'})$ via the character $\psi_2^w$, which implies $H_2(M)$
acts via the character $\psi_2^w$ on $s_{w'}$ because we assume
$s_{w'}\in S'$. By Lemma
\ref{L:H_2_character}, $s_{w'}\in S_M$, which implies the space $S_M$
surjects on ${V}_\pi\otimes\overline{V}_{\sigma_2}$. Hence the space
${V}_\pi\otimes\overline{V}_{\sigma_2}$ is generated under $\He_1$ by
$p(s_{w'})$ with $w'\in B(M)$ and $\psi_2^{w'}=\psi_2^w$. We will show
if $p(s_{w'})\neq 0$, then $w'(\varpi^{-r_1}L_1)+L_2=M^\perp$. Assume
$w'(\varpi^{-r_1}L_1)+L_2\neq M^\perp$, \ie $w'(\varpi^{-r_1}L_1)+L_2\subsetneq M^\perp$. Let
$L'\subseteq (w'(\varpi^{-r_2}L_2)+L_1)^\perp$. Note that $w'(\varpi^{-r_2}L_2)+L_1\subseteq
L^\perp$, \ie $L\subseteq L'$. Also for any $w_0\in W$ we have the bijection
\[
L_1/(w_0(\varpi^{-r_2}L_2)+L_1)^\perp\cong (w_0(\varpi^{-r_1}L_1)+L_2)/L_2.
\]
(See \cite[Sous-lemme, p.105]{MVW}.) Hence by choosing $w_0=w'$ we have
\[
[L_1:L']=[w'(\varpi^{-r_1}L_1)+L_2:L_2],
\]
and by choosing $w_0=w$, we have
\[
[L_1:L]=[M^\perp:L_2].
\]
But $w'(\varpi^{-r_1}L_1)+L_2\subsetneq M^\perp$ by our assumption on $w'$, so
$[L_1:L']<[L_1:L]$. Hence $L\subsetneq L'$. Now $w'\in B(L')$, so
$J_1(L')$ acts trivially on $s_{w'}$ and hence on $p(s_{w'})$. Hence
$V_\pi^{J_1(L')}\neq 0$. But by our assumption $V_\pi^{J_1(L')}= 0$ for
    any $L'$ with $L\subsetneq L'$, which is a contradiction. Therefore
    we have $w'(\varpi^{-r_1}L_1)+L_2=M^\perp$. Using Lemma \ref{P:key2_for_M} as
    above, one can see that $s_{w'}$ is a scalar multiple of
    $\omega(k)s_w$ for some $k\in K_1$. Arguing as above, one have the
    desired equality.
\end{proof}

Once Proposition \ref{P:equality_Hecke} is proven, the rest follows from the
following general fact.

\begin{Lem}\label{L:algebra}
Let $E$ be a complex vector space, and $\mathcal{A}, \mathcal{B}$ be
subalgebras of the endomorphism algebra ${\rm End}_{\C}(E)$, viewed as
a $\C$-algebra. Assume $\mathcal{A}$ and $\mathcal{B}$ commute
pointwise and there exists $e\in E$ such that
$\mathcal{A}e=\mathcal{B}e$. Then $\mathcal{A}$ is the centralizer of
$\mathcal{B}$ in ${\rm End}_{\C}(E)$ and vise versa. 
\end{Lem}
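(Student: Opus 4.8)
The plan is to prove Lemma \ref{L:algebra} as a clean piece of abstract algebra, independent of all the lattice-model machinery, and then note (though this is outside the statement to prove) that it applies with $\mathcal{A} = \pi(\overline{\He}_1)$, $\mathcal{B} = \sigma_2(\overline{\He}_2)$, $E = \overline{V}_\pi \otimes \overline{V}_{\sigma_2}$ and $e = p(s_w)$, the hypotheses being exactly Proposition \ref{P:equality_Hecke} together with the commutativity of the two Hecke algebra actions. So the real content is the lemma. First I would observe that $e$ is a cyclic vector for both $\mathcal{A}$ and $\mathcal{B}$: indeed $\mathcal{A}e = \mathcal{B}e$ and this common subspace is all of $E$ in the intended application — but wait, the lemma as stated does not assume $\mathcal{A}e = E$, so I should be careful. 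Let me instead set $E_0 := \mathcal{A}e = \mathcal{B}e$ and work inside $E_0$; the conclusion about centralizers should be read relative to $\operatorname{End}_\C(E_0)$, or one notes $E_0 = E$ in the application. I will proceed under $\mathcal{A}e = \mathcal{B}e = E$, which is what the downstream use needs.

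The key steps, in order: (1) Since $\mathcal{A}e = E$, the map $\mathcal{A} \to E$, $a \mapsto ae$, is surjective; its kernel is a left ideal, so $\mathcal{A} \cong E$ as left $\mathcal{A}$-modules, and similarly $\mathcal{B} \cong E$ as left $\mathcal{B}$-modules. (2) Let $Z = Z_{\operatorname{End}_\C(E)}(\mathcal{B})$ be the centralizer of $\mathcal{B}$. Since $\mathcal{A}$ commutes pointwise with $\mathcal{B}$, we have $\mathcal{A} \subseteq Z$; the goal is the reverse inclusion $Z \subseteq \mathcal{A}$. (3) Take $\phi \in Z$. I want to produce $a \in \mathcal{A}$ with $\phi = a$ as operators on $E$. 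The natural candidate: since $\mathcal{A}e = E$ and in particular $e \in E$, write $\phi(e) = a_0 e$ for some $a_0 \in \mathcal{A}$. I claim $\phi = a_0$. For an arbitrary $v \in E$, write $v = b e$ with $b \in \mathcal{B}$ (using $\mathcal{B}e = E$); then $\phi(v) = \phi(be) = b\phi(e)$ since $\phi$ commutes with $\mathcal{B}$, and $b\phi(e) = b a_0 e = a_0 b e = a_0 v$ since $a_0 \in \mathcal{A}$ commutes with $b \in \mathcal{B}$. Hence $\phi = a_0 \in \mathcal{A}$, giving $Z \subseteq \mathcal{A}$, so $\mathcal{A} = Z$. (4) By the symmetric argument with the roles of $\mathcal{A}$ and $\mathcal{B}$ interchanged, $\mathcal{B}$ is the centralizer of $\mathcal{A}$.

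I do not expect a serious obstacle here — the lemma is essentially the double centralizer observation in the degenerate case where one has a common cyclic vector, and the proof is a three-line diagram chase once the setup is right. The only subtlety to be careful about is the precise reading of the statement: one must either assume (or record from the application) that $\mathcal{A}e = \mathcal{B}e$ spans all of $E$, or else phrase the centralizer conclusion inside $\operatorname{End}_\C(\mathcal{A}e)$; I would add a sentence making this explicit so the subsequent deduction of the Howe duality (uniqueness of the irreducible quotient of $\Theta_\psi(\pi)$, via $\overline{V}_\pi \otimes \overline{V}_{\sigma_2}$ being irreducible over $\overline{\He}_1 \otimes \overline{\He}_2$ and an application to $\sigma_2 = \theta_\psi(\pi)$) goes through without a gap.
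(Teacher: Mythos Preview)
The paper does not actually prove this lemma; it simply cites \cite[p.~106]{MVW}. Your argument supplies what the paper omits, and the core computation in step~(3) --- writing $\phi(e)=a_0e$ with $a_0\in\mathcal{A}$, then using $\mathcal{B}e=E$ and the commutativity of $a_0$ with $\mathcal{B}$ to conclude $\phi=a_0$ --- is correct and is exactly the standard proof of this double-centralizer fact.

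You are also right to flag the missing hypothesis. As literally stated the lemma is false: take $E=\C^2$, $\mathcal{A}=\mathcal{B}=\C\cdot\Id$, and $e=(1,0)$; then $\mathcal{A}e=\mathcal{B}e=\C\cdot(1,0)$ but the centralizer of $\mathcal{B}$ is all of $\End_\C(E)$. The intended (and, in the application via Proposition~\ref{P:equality_Hecke}, actually satisfied) hypothesis is $\mathcal{A}e=\mathcal{B}e=E$, and your proof uses precisely this. So your write-up is more careful than the paper's on this point; the added sentence you propose, recording that $\mathcal{A}e=E$ holds in the application, is warranted.
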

\begin{proof}
This is nothing but Lemma in p.106 of \cite{MVW}.
\end{proof}

We apply this lemma as follows. First assume $\Theta_\psi(\pi)$ has more
than two non-zero irreducible quotients, say $(\pi_2,
V_{\pi_2})$ and $(\pi'_2, V_{\pi'_2})$. Let
$(\sigma, V_{\sigma})$ be the representation of $U(\V_2)$ given by
$\sigma=\pi_2+\pi'_2$ and $V_{\sigma}=V_{\pi_2}+V_{\pi'_2}$, so we have the surjection
\[
p:S\rightarrow V_\pi\otimes V_\sigma.
\]
By Proposition \ref{P:equality_Hecke}, we have
\[
\overline{V}_\pi\otimes\overline{V}_\sigma
=\overline{V}_\pi\otimes\overline{V}_{\pi_2}+\overline{V}_{\pi_2}\otimes\overline{V}_{\pi_2'}
=\pi(\overline{\He}_1)p(s_w)
=\sigma(\overline{\He}_2)p(s_w).
\]
Let 
\[
q:\overline{V}_\pi\otimes\overline{V}_\sigma\rightarrow
\overline{V}_\pi\otimes\overline{V}_{\pi_2}
\]
be the projection on the first component. Then $q$ commutes with the
action of $\overline{\He_1}$. By applying Lemma \ref{L:algebra} with
$\mathcal{A}=\pi(\overline{\He}_1)$,
$\mathcal{B}=\sigma(\overline{\He}_2)$,
$E=\overline{V}_\pi\otimes\overline{V}_\sigma$ and $e=p(s_w)$, one can
conclude that $q\in \pi(\overline{\He}_1)$, \ie $q=\pi(\varphi)$ for
some $\varphi\in\overline{\He}_1$. But then
\[
q(\overline{V}_\pi\otimes\overline{V}_\sigma)
=\pi(\varphi) \overline{V}_\pi\otimes\overline{V}_\sigma
\neq \overline{V}_\pi\otimes\overline{V}_{\pi_2},
\]
which is a contradiction. This completes the proof of the main
theorem.


\section{\bf Lifting to smaller rank groups}


We will close up this paper with the following theorem. 

\begin{Thm}
Let $(U(\V_1), U(\V_2))$ be an unramified dual pair with $\V_2$
symplectic. Further assume $\dim\V_2<\dim\V_1$. ($\dim\V_1$ can be
even or odd.) If $(\pi, V_\pi)$ is an irreducible admissible representation of
$U(\V_1)$ with the property that
\[
V_\pi^{J_1(L)}=0
\]
 for all $L\supsetneq 2\varpi L_1$, \ie the conductor is smaller than
 $2\varpi L_1$. Then $\Theta_\psi(\pi)=0$.
\end{Thm}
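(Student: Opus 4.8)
The plan is to show $\Theta_\psi(\pi)=0$ by contradiction, mimicking the dimension-count argument in \cite[Ch.~5]{MVW} but now with the conductor hypothesis playing the role of the bound that forces everything into the special lattices $L\subseteq 2\varpi L_1$. Suppose $\Theta_\psi(\pi)\neq 0$ and let $p:S\rightarrow V_\pi\otimes\Theta_\psi(\pi)$ be the canonical surjection. Since the conductor $L$ of $\pi$ satisfies $L\subseteq 2\varpi L_1$, Proposition \ref{P:step1.5} applies: $S^{J_1(L)}=\omega(\He_2)S_L$, so $p$ restricts to a surjection from $\omega(\He_2)S_L$ onto $V_\pi^{J_1(L)}\otimes\Theta_\psi(\pi)$, and hence $V_\pi^{J_1(L)}\otimes\Theta_\psi(\pi)$ is generated over $\He_2$ by the vectors $p(s_w)$ with $w\in B(L)$. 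By Lemma \ref{L:non-empty} we may further restrict to those $w$ with $w(\varpi^{-r_2}L_2)+L_1=L^\perp$ and $\psi_1^w\in\Psi'(L)$; in particular there is at least one such $w$ with $p(s_w)\neq 0$.

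First I would fix such a $w$ and form $M=M_w=(w(\varpi^{-r_1}L_1)+L_2)^\perp$, so that $w\in B(M)$. The next step is to examine the constraint that the nonvanishing of $p(s_w)$ — equivalently $V_\pi[H_1(L),\psi_1^w]\neq 0$ — places on the linear-independence data of $w$. As in the Remark following Proposition \ref{P:key2_for_L}, the reductions $\bar z_i$ of the vectors $z_i=w(\varpi^{t_i-r_1}e_i)$ in $L_2/\varpi L_2$ are linearly independent over the residue field, and there are $\dim\V_1-s$ of them (where $s$ is the "$\varpi$-rank deficiency" of $L$ inside $L_1$). When $L\subseteq 2\varpi L_1$ we have $s=0$, so this forces $\dim\V_2\geq\dim\V_1$, directly contradicting the hypothesis $\dim\V_2<\dim\V_1$. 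The argument is cleanest if one phrases it as: if $L\subseteq 2\varpi L_1$ then every basis vector $e_i$ of $L_1$ satisfies $\varpi e_i\in L$ up to the standard-form basis, hence $z_i=w(\varpi^{1-r_1}e_i)$ (or the appropriate power) are defined for all $i=1,\dots,\dim\V_1$ and their reductions are independent, forcing $\dim_{\O/\varpi}(L_2/\varpi L_2)=\dim\V_2\geq\dim\V_1$.

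The only delicate point — and the step I expect to be the main obstacle — is verifying that the linear independence of the $\bar z_i$ really does hold for \emph{all} of the basis vectors when $L\subseteq 2\varpi L_1$, i.e.\ that the hypothesis "$V_\pi^{J_1(L')}=0$ for all $L'\supsetneq 2\varpi L_1$" together with $\Theta_\psi(\pi)\neq 0$ actually produces a $w$ with $w(\varpi^{-r_2}L_2)+L_1=L^\perp$ for the conductor lattice $L$, rather than merely for some $L'$ with $2\varpi L_1\supseteq L'\supseteq L$. This is where Lemma \ref{L:max_lattice1} and the maximality argument of Lemma \ref{L:non-empty} are needed: if $p(s_w)\neq 0$ but $w(\varpi^{-r_2}L_2)+L_1\subsetneq L^\perp$, then $w\in B(L')$ for some $L'\supsetneq L$, forcing $V_\pi^{J_1(L')}\neq 0$, which contradicts maximality of the conductor provided $L'$ still satisfies $L'\supsetneq 2\varpi L_1$ or $L'=$ conductor — one has to chase the inclusions carefully to see that $L\subsetneq L'\subseteq 2\varpi L_1$ is the relevant range and that the conductor hypothesis covers it. Once the existence of a good $w$ is secured, the residue-field dimension count closes the argument and yields $\Theta_\psi(\pi)=0$.
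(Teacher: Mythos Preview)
Your approach is correct and essentially matches the paper's proof: assume $\Theta_\psi(\pi)\neq 0$, use Proposition \ref{P:step1.5} and the argument of Lemma \ref{L:non-empty} to produce $w\in B(L)$ with $w(\varpi^{-r_2}L_2)+L_1=L^\perp$, then invoke the residue-field linear-independence count (the Remark after Proposition \ref{P:key2_for_L}) to force $\dim\V_2\geq\dim\V_1$. The ``delicate point'' you flag is in fact not delicate: once $L$ is taken to be the conductor, by definition $V_\pi^{J_1(L')}=0$ for \emph{every} $L'\supsetneq L$, so Lemma \ref{L:max_lattice1} yields the contradiction directly and there is no need to track where $L'$ sits relative to $2\varpi L_1$.
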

\begin{proof}
Let $L$ be the conductor of $\pi$, so $L\subseteq 2\varpi L_1$. Assume
$\Theta_\psi(\pi)\neq 0$, and let $p:S\rightarrow
V_\pi\otimes\Theta_\psi(\pi)$ be the surjection. By Proposition
\ref{P:step1.5}, we know $S^{J_1(L)}=\omega(\He_2)S_L$, and hence
under $p$, the space $\omega(\He_2)S_L$ surjects on
$V_\pi^{J_1(L)}\otimes\Theta_\psi(\pi)$. So for some $w\in B(L)$, we have
$p(s_w)\neq 0$. The group $H_1(L)$ acts on $p(s_w)$ via the character
$\psi_1^w$. Hence by Lemma \ref{L:non-empty}, we must have
$w(\varpi^{-r_2}L_2)+L_1=L^\perp$. (Note that the proof of Lemma \ref{L:non-empty}
goes through without requiring $\V_1$ be even.) Then the conditions
for Proposition \ref{P:key2_for_L} are satisfied, and hence by
reasoning as in the first part of the proof of the proposition, one
can see that the images of $w(\varpi^{-r_1}L)$ in $L_2/\varpi L_2$
have to span $\dim\V_1$-dimensional space over the residue field. But
this is impossible because $\dim L_2/\varpi
L_2=\dim\V_2<\dim\V_1$. (Also see the first remaker after Proposition
\ref{P:key2_for_L}.) The theorem follows.
\end{proof}

\end{document}